\documentclass{article}

\title{Effective bounds for adelic Galois representations attached to elliptic curves over the rationals}
\author{Lorenzo Furio}
\date{}

\usepackage[usenames,dvipsnames]{color}
\usepackage{amsmath}%
\usepackage{amsfonts}%
\usepackage{amssymb}%
\usepackage{amsthm}%
\usepackage{graphicx}
\usepackage{multirow}
\usepackage[utf8]{inputenc}
\usepackage[english]{babel}
\usepackage{mathrsfs}
\usepackage{array}
\usepackage{rotating}
\usepackage{multirow}
\usepackage{faktor}
\usepackage{tikz}
\usepackage{tikz-cd}
\usepackage{bbm}
\usepackage[title,titletoc]{appendix}
\usepackage{quiver}
\usepackage{enumerate}

\usepackage{a4wide}

\setlength\extrarowheight{3pt}

\usepackage{hyperref}

\hypersetup{
	colorlinks,
	citecolor=black,
	linkcolor=black,
	urlcolor=black
}

\newtheorem{theorem}{Theorem}[section]

\newtheorem{claim}[theorem]{Claim}

\newtheorem{conjecture}[theorem]{Conjecture}
\newtheorem{corollary}[theorem]{Corollary}

\newtheorem{lemma}[theorem]{Lemma}
\newtheorem{notation}[theorem]{Notation}

\newtheorem{proposition}[theorem]{Proposition}

\newtheorem{question}[theorem]{Question}

\theoremstyle{definition}
\newtheorem{definition}[theorem]{Definition}

\newtheoremstyle{named}{}{}{\itshape}{}{\bfseries}{.}{.5em}{#1 \thmnote{#3}}
\theoremstyle{named}

\theoremstyle{remark}
\newtheorem{remark}[theorem]{Remark}

\addtolength{\topmargin}{-.5in}
\addtolength{\textheight}{0.5in}

\newcommand{\N}{\mathbb{N}}
\newcommand{\Z}{\mathbb{Z}}
\newcommand{\Q}{\mathbb{Q}}
\newcommand{\C}{\mathbb{C}}
\newcommand{\F}{\mathbb{F}}
\newcommand{\OK}{\mathcal{O}_K}

\newcommand{\Gal}{\operatorname{Gal}}

\newcommand{\Aut}{\operatorname{Aut}}
\newcommand{\GL}{\operatorname{GL}}
\newcommand{\SL}{\operatorname{SL}}
\newcommand{\GalQ}{\operatorname{Gal}\left(\faktor{\overline{\mathbb{Q}}}{\mathbb{Q}}\right)}
\newcommand{\GalK}{\operatorname{Gal}\left(\faktor{\overline{K}}{K}\right)}
\newcommand{\Fheight}{\operatorname{h}_\mathcal{F}}
\newcommand{\uhp}{\mathcal{H}}
\newcommand{\Qab}{\mathbb{Q}^{\operatorname{ab}}}

\newcommand{\Ind}{\operatorname{Ind}}
\newcommand{\Ent}{\operatorname{Ent}}

\setlength{\parindent}{10pt}
\numberwithin{equation}{section}

\begin{document}
	
	\maketitle
	
	\begin{abstract}
		Given an elliptic curve $E$ defined over $\Q$ without complex multiplication, we provide an explicit sharp bound on the index of the image of the adelic representation $\rho_E$. In particular, if $\Fheight(E)$ is the stable Faltings height of $E$, we show that $[\GL_2(\widehat{\Z}) : \operatorname{Im}\rho_E]$ is bounded above by $10^{21} (\Fheight(E)+40)^{4.42}$, and, for $\Fheight(E)$ tending to infinity, by $\Fheight(E)^{3+o(1)}$. We also classify the possible (conjecturally non-existent) images of the representations $\rho_{E,p^n}$ whenever $\operatorname{Im}\rho_{E,p}$ is contained in the normaliser of a non-split Cartan. This result improves previous work of Zywina and Lombardo.
	\end{abstract}
	
	\tableofcontents

	\section{Introduction}

	Let $K$ be a number field and let $E$ be an elliptic curve defined over $K$. It is known that for a positive integer $N$, the action of the absolute Galois group of $K$ on the $N$-torsion points of $E$ defines a representation
	\begin{equation*}
		\rho_{E,N}: \GalK \to \Aut(E[N]) \cong \GL_2\left(\faktor{\Z}{N\Z}\right).
	\end{equation*}
	If $p $ is a fixed prime, we can restrict to values of $N$ of the form $p^n$ and take the limit over $n$. We then obtain the representation
	\begin{equation*}
		\rho_{E,p^\infty}: \GalK \to \Aut(T_pE) \cong \GL_2(\Z_p),
	\end{equation*}
	where $T_pE = \varprojlim E[p^n]$ is the $p$-adic Tate module of $E$, and taking the product over all primes we define the adelic representation
	\begin{equation*}
		\rho_E = \prod_{p \text{ prime}} \rho_{E,p^\infty} : \GalK \to \prod_{p \text{ prime}} \Aut(T_pE) \cong \GL_2(\widehat{\Z}).
	\end{equation*}
	In 1972 Serre \cite{serre72} proved his celebrated open image theorem, stating that if the elliptic curve $E$ does not have (potential) complex multiplication, then the image of the representation $\rho_{E}$ ; equivalently, the image of $\rho_E$ has finite index in $\GL_2(\widehat{\Z})$. From now on we will say that $E$ does not have complex multiplication (CM) it does not have complex multiplication over $\overline{K}$.
	In the same paper, Serre asked a question, which can be proved to be equivalent to the following.
	
	\begin{question}\label{question: uniformity, adelic version}
		Let $K$ be a number field. Does there exist a constant $c$, depending only on $K$, such that for every non-CM elliptic curve $\faktor{E}{K}$ we have $[\GL_2(\widehat{\Z}) : \operatorname{Im}\rho_E] \le c$?
	\end{question}
	
	Although this problem has been widely studied, the question is still open, even in the case $K=\Q$. However, in this case, it is conjectured that it can be answered affirmatively (see \cite[Conjecture 1.5]{zywina22}).
	
	\begin{conjecture}\label{conj: adelic suq}
		For every elliptic curve $\faktor{E}{\Q}$, the index of the image of $\rho_{E}$ is at most $2736$.
	\end{conjecture}
	
	At present, we are not able to find a uniform bound on the adelic index of non-CM elliptic curves. However, one can try to find some bounds on this index depending on the curve $E$.
	Recently, Zywina \cite{zywina11} provided a bound on the adelic index in the case in which the elliptic curve $E$ is defined over $\Q$, polynomial in terms of the height $\operatorname{h}(j(E))$. Moreover, he also gave a bound in terms of the conductor of $E$.
	
	\begin{theorem}[Zywina]
		Let $E$ be a non-CM elliptic curve defined over $\Q$.
		\begin{enumerate}
			\item There are absolute constants $C$ and $\gamma$ such that
			$$[\GL_2(\widehat{\Z}) : \operatorname{Im}\rho_E] \le C(\max\{1, \operatorname{h}(j(E))\})^\gamma,$$
			where $\operatorname{h}(j(E))$ is the logarithmic Weil height of the $j$-invariant of $E$.
			\item Let $N$ be the product of the primes of bad reduction of $E$. There is an absolute constant $C$ such that
			$$[\GL_2(\widehat{\Z}) : \operatorname{Im}\rho_E] \le C\left(68N(1+\log\log N)^\frac{1}{2}\right)^{24\omega(N)},$$
			where $\omega(N)$ is the number of distinct prime divisors of $N$.
		\end{enumerate}
	\end{theorem}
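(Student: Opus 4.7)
The plan is to reduce the computation of $[\GL_2(\widehat{\Z}):\operatorname{Im}\rho_E]$ to a finite product of local indices at the primes where $\rho_{E,p^\infty}$ fails to be surjective, and then bound each factor effectively.

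By Serre's open image theorem, $\operatorname{Im}\rho_E$ is open in $\GL_2(\widehat{\Z})$, so there is a minimal positive integer $m_E$ (the \emph{adelic level}) such that $\operatorname{Im}\rho_E$ is the full preimage of its reduction modulo $m_E$. Standard arguments, using that $\det \rho_E$ is the cyclotomic character (Weil pairing) and that $\SL_2(\Z_p)$ is topologically perfect for $p \geq 5$, show that $m_E$ is divisible only by the primes $p$ for which $\rho_{E,p^\infty}$ is not surjective, and that, up to a uniformly bounded ``entanglement'' factor coming from the intersections of the various $p$-adic images,
\begin{equation*}
[\GL_2(\widehat{\Z}):\operatorname{Im}\rho_E] \ \le \ C_{\mathrm{ent}} \prod_{p\mid m_E} [\GL_2(\Z_p):\operatorname{Im}\rho_{E,p^\infty}].
\end{equation*}
It therefore suffices to bound (i) the set of primes $p$ for which $\rho_{E,p^\infty}$ is not surjective, and (ii) each individual local index on the right-hand side.

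For (i), the combined results quoted in the introduction, namely Mazur's theorem (Borel case), Theorem \ref{thm:split} (split Cartan normaliser case), and Serre's lemma on exceptional subgroups, imply that for $p > 37$ any non-surjective $\rho_{E,p}$ must have image in the normaliser of a non-split Cartan, so that $E$ yields a non-cuspidal, non-CM rational point on $X_{\mathrm{ns}}^+(p)$. An effective form of Serre's open image theorem then bounds the largest such prime: in terms of $\operatorname{h}(j(E))$ via isogeny-height estimates (Pellarin, Gaudron--R\'emond, Le Fourn), and in terms of the conductor $N$ via the Masser--W\"ustholz form of Faltings' isogeny theorem, exploiting the relation between the mod-$p$ image and rational isogenies of degree $p$. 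For (ii), a local analysis of maximal subgroups of $\GL_2(\Z_p)$ shows that, once the mod-$p$ image is pinned down, the depth of $\operatorname{Im}\rho_{E,p^\infty}$ inside $\GL_2(\Z_p)$ is controlled up to an index of the form $p^{O(1)}$. Multiplying over all non-surjective primes produces the polynomial bound in $\operatorname{h}(j(E))$ asserted in part (1) and, after inserting the conductor-type bounds and collecting the $\omega(N)$ local contributions, the product bound $\bigl(68 N(1+\log\log N)^{1/2}\bigr)^{24\omega(N)}$ of part (2).

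The main obstacle is step (i): Serre's original techniques yield only an exponential dependence on $\operatorname{h}(j(E))$ and on $N$, and extracting a polynomial bound requires the genuinely deep input of effective isogeny theorems, themselves resting on Arakelov geometry and transcendence. The remaining technical work consists in a careful bookkeeping of constants through the case analysis of maximal subgroups of $\GL_2(\F_p)$ and their $p$-adic lifts, so that the final exponents $\gamma$ and $24\omega(N)$ emerge in the sharp form stated.
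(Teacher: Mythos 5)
This theorem is Zywina's result \cite{zywina11}, which the paper cites as background without proof; the paper's own contribution is the improved bound of Theorem~\ref{thm:adelicboundwithj}, whose proof strategy (effective surjectivity theorem, classification of non-split Cartan lifts, entanglement estimates) is what your sketch should be compared against.

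Your high-level decomposition is correct in outline, but there is a real gap in step~(ii). You write that ``once the mod-$p$ image is pinned down, the depth of $\operatorname{Im}\rho_{E,p^\infty}$ inside $\GL_2(\Z_p)$ is controlled up to an index of the form $p^{O(1)}$'' by local group theory. This is false precisely in the hard case, where $\operatorname{Im}\rho_{E,p} \subseteq C_{ns}^+(p)$: for each $n \ge 1$ the group of matrices congruent to an element of $C_{ns}^+(p^n)$ modulo $p^n$ is a legitimate closed open subgroup of $\GL_2(\Z_p)$ with the required mod-$p$ reduction, and no local argument can bound $n$. Bounding the level~$n$ (more precisely, the product $\prod p^{n_p}$ over the non-split Cartan primes) requires a global arithmetic input, namely the effective isogeny theorem of Gaudron--R\'emond (used in Theorems~\ref{thm:effiso} and~\ref{thm:totaleffiso} here, and Masser--W\"ustholz in Zywina's original argument). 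You invoke effective isogeny theorems only for step~(i) (bounding the largest non-surjective prime); they are equally essential for the vertical depth in step~(ii), and omitting this leaves a hole in the argument.

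Two smaller points. Your assertion that the adelic level $m_E$ is divisible only by primes with non-surjective $\rho_{E,p^\infty}$ is wrong over $\Q$: Serre showed that $\operatorname{Im}\rho_E$ has index at least~$2$ in $\GL_2(\widehat{\Z})$ even when every $p$-adic representation is surjective, so there is always a contribution from entanglement with the cyclotomic character. And the entanglement factor $C_{\mathrm{ent}}$ is not a priori an absolute constant: in the paper's own analysis (Lemma~\ref{lemma:boundnonsuq}) it grows like $6^{\alpha}$ where $\alpha$ is the number of primes $p>5$ with image in $C_{ns}^+(p)$, and must itself be controlled via the effective surjectivity bound and the ramification arguments of Section~\ref{subsec:entanglement}. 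Presenting $C_{\mathrm{ent}}$ as uniformly bounded, without saying why the number of relevant primes is bounded, short-circuits the hard part of the estimate.
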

	
	The bound in terms of the height of $j(E)$ relies on previous results of Masser and W\"ustholz on isogenies \cite{masser-wustholz-1, masser-wustholz-2}, while the bound in terms of the conductor builds on the work of Serre \cite{serre81} (under GRH) and Kraus \cite{kraus95} to bound the largest prime $p$ for which $\rho_{E,p}$ is not surjective. Later, Cojocaru \cite[Theorem 2]{cojocaru05} provided a similar statement for the product of non-surjective primes; however, as Zywina notes in \cite[Remark 3.4]{zywina11}, there seems to be an error in the second part of \cite[Theorem 2]{cojocaru05}. More recent effective results assuming GRH are given in \cite{maylewang24}.
	
	Both of Zywina's bounds are ineffective. Later, Lombardo \cite[Remark 1.1]{lombardo15} gave a bound for the adelic index for elliptic curves defined over a generic number field. This bound is effective and polynomial in terms of the Faltings height of the curve (which is approximately $\frac{1}{12}\operatorname{h}(j(E))$ as shown in Theorem \ref{thm:heights}).
	
	\begin{theorem}[Lombardo]
		Let $E$ be a non-CM elliptic curve defined over a number field $K$ and let $\Fheight(E)$ be the stable Faltings height of $E$. We have
		$$\left[\GL_2(\widehat{\Z}) : \rho_E\left(\Gal(\overline{K}/K)\right) \right] < \gamma_1 \cdot [K:\Q]^{\gamma_2} \cdot \max\left\{ 1, \Fheight(E), \log[K:\Q] \right\}^{\gamma_2},$$
		where $\gamma_1= \exp(1.9 \cdot 10^{10})$ and $\gamma_2=12395$.
	\end{theorem}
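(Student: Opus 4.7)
The plan is to bound, for each prime $\ell$, the $\ell$-adic index $[\GL_2(\Z_\ell) : \operatorname{Im}\rho_{E,\ell^\infty}]$, and then aggregate these local bounds to control the adelic index. As a first reduction, a standard lemma on closed subgroups of $\GL_2(\Z_\ell)$ — using that the kernel of reduction modulo $\ell$ inside $\SL_2(\Z_\ell)$ is pro-$\ell$ for $\ell \ge 5$ — shows that surjectivity of $\rho_{E,\ell}$ propagates to surjectivity of $\rho_{E,\ell^\infty}$. Hence only primes at which the mod-$\ell$ representation is non-surjective contribute to the adelic index, and for such primes the $\ell$-adic index is a divisor of $|\GL_2(\Z/\ell^{c_\ell}\Z)|$ for some small $c_\ell$ depending on the type of maximal subgroup containing $\operatorname{Im}\rho_{E,\ell}$.

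Next, I would invoke the effective Masser--W\"ustholz isogeny theorem to bound the largest prime $L$ for which $\rho_{E,\ell}$ can fail to be surjective. Serre's classification of maximal subgroups of $\GL_2(\F_\ell)$ reduces the problem, for $\ell > 13$, to three cases: Borel, normaliser of a split Cartan, and normaliser of a non-split Cartan. In the Borel case the non-surjectivity directly produces a $K$-rational isogeny of degree $\ell$, so Masser--W\"ustholz gives a polynomial bound on $\ell$ in terms of $\Fheight(E)$ and $[K:\Q]$. In the Cartan normaliser cases, one passes to the quadratic extension $K'/K$ cut out by the associated signature character; over $K'$ the image lands in the Cartan itself, and one can extract a $K'$-rational isogeny (via the characters into which the Cartan splits) to which Masser--W\"ustholz applies, at the cost of replacing $[K:\Q]$ by $2[K:\Q]$.

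Combining the two steps, $L$ is bounded by $C \cdot [K:\Q]^a \max\{1, \Fheight(E), \log[K:\Q]\}^a$ for explicit $C, a$. The product of the local indices is then at most $\prod_{\ell \le L} |\GL_2(\Z/\ell^{c_\ell}\Z)|$, which is of size $\exp(O(L \log L))$ and fits into a bound of the stated shape after readjusting the exponent. To go from the product of local indices to the adelic index, I would use a Goursat-style analysis combined with the fact that $\det \circ \rho_E$ is the cyclotomic character: any entanglement between the different $\ell$-adic representations must factor through cyclotomic subfields, and hence contributes at most a bounded power of $L$ to the discrepancy.

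The main obstacle is the Cartan case in the second step. Unlike the Borel case, the Cartan cases do not immediately produce an isogeny to feed into Masser--W\"ustholz: one must first descend via the quadratic twist, then carefully track how the Faltings height changes under that twist and under restriction/extension of scalars. These manipulations, together with the need to keep the final exponent uniform across all the maximal subgroup types and to cover also the small primes via a crude but totally explicit $|\GL_2(\Z/\ell^{c_\ell}\Z)|$ bound, are precisely what inflate the constants to the enormous values $\gamma_1 = \exp(1.9 \cdot 10^{10})$ and $\gamma_2 = 12395$ recorded in the statement.
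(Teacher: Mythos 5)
This theorem is not proved in the paper: it is imported verbatim from Lombardo's article and cited as background, so there is no ``paper's own proof'' to compare against. Evaluating your sketch on its merits, it is close in overall shape to what Lombardo actually does (reduce to local indices, use an effective Masser--W\"ustholz statement to bound the non-surjective primes, aggregate with group-theoretic bookkeeping), but there is a genuine gap in your treatment of the non-split Cartan case.

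You write that, after passing to the quadratic extension $K'/K$ where the image lands inside the Cartan itself, ``one can extract a $K'$-rational isogeny (via the characters into which the Cartan splits).'' This works for the split Cartan, which is diagonalisable over $\F_\ell$ and therefore fixes two lines, each giving a cyclic $\ell$-isogeny. It fails for the non-split Cartan: $C_{ns}(\ell) \cong \F_{\ell^2}^\times$ acts irreducibly on $\F_\ell^2$, so it stabilises no line over $\F_\ell$ (its characters are only defined over $\F_{\ell^2}$), and passing to $K'$ changes nothing about this. Consequently there is no rational $\ell$-isogeny of $E$ to hand to Masser--W\"ustholz, over $K$ or over $K'$. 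The standard workaround, used by Masser--W\"ustholz themselves, by Gaudron--R\'emond, by Le Fourn, by Lombardo, and by the present paper in Section~\ref{sec:effiso}, is to move to the abelian surface $E \times E$ and use the non-split Cartan element to build a Galois-stable subgroup of $E[\ell^n]^2$ (such as the graph $\{(x, g\cdot x)\}$ for $g$ in the Cartan); this produces an isogeny of abelian surfaces of controlled degree, and one then applies the period-theoretic or isogeny-theoretic version of Masser--W\"ustholz to that surface. Without this construction your step 2 breaks for exactly the case that is hardest to rule out unconditionally, and the resulting bound on $L$ would not cover all non-surjective primes.
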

	
	The main aim of this article is to provide a much better bound on the adelic index when $E$ is defined over $\Q$. Moreover, we also give an improved and effective version of Zywina's bound in terms of the conductor. The following result combines Theorem \ref{thm:adelicbound} and Theorem \ref{thm:conductoradelicbound}.
	
	\begin{theorem}\label{thm:adelicboundwithj}
		Let $\faktor{E}{\Q}$ be an elliptic curve without CM and let $\Fheight(E)$ be its stable Faltings height, with the normalisation given in \cite{deligne85}.
		\begin{enumerate}
			\item If $\Fheight(E)$ is the stable Faltings height of $E$, we have $$[\GL_2(\widehat{\Z}) : \operatorname{Im}\rho_E] < 10^{21} (\Fheight(E) + 40)^{4.42}.$$
			\item We have $$[\GL_2(\widehat{\Z}) : \operatorname{Im}\rho_E] < \Fheight(E)^{3+O\left(\frac{1}{\log\log\Fheight(E)}\right)}$$ as $\Fheight(E)$ tends to $\infty$, where the constant is explicit.
			\item If $N$ is the product of the primes of bad reduction of $E$ and $\omega(N)$ is the number of distinct prime factors of $N$, we have
			$$[\GL_2(\widehat{\Z}) : \operatorname{Im}\rho_E] < 2488320 \left(51 N(1+\log\log N)^\frac{1}{2}\right)^{3 \omega(N)}.$$
		\end{enumerate}
	\end{theorem}
	
	The approach in the proof of Theorem \ref{thm:adelicboundwithj} is different from that of Lombardo. Indeed, it builds on Zywina's techniques, sharpening the study of $p$-adic groups to improve the bound, and exploiting new recent results on Galois images of small level by Rouse--Sutherland--Zureick-Brown. We will give more details on the proof later.
	
	\medskip
	
	A more general goal is to classify all the possible images of $\rho_E$ inside $\GL_2(\widehat{\Z})$. This is known as Mazur's `Program B' \cite{mazur77b}. In recent years, much progress has been made in the case of elliptic curves defined over $\Q$. Most of the results are given either in the `vertical' or `horizontal' direction, i.e. they either classify the possible images of the $p$-adic representations $\rho_{E,p^\infty}$, or study the entanglement phenomenon at composite level.
	
	Recently, building on the work of many authors (for example \cite{greenberg12, greenberg14, momose86, momoseshimura02, rousezb15, sutherlandzywina17}), Rouse, Sutherland, and Zureick-Brown \cite{rszb22} gave a detailed description of all the possible $p$-adic images for all primes $p$ whenever the image of $\rho_{E,p}$ is not contained in the normaliser of a non-split Cartan.
	As part of the proof of Theorem \ref{thm:adelicboundwithj}, we give a classification of possible $p$-adic images in the non-split Cartan case. We state the result below, as it is of independent interest and represents progress on Mazur’s Program B.
	
	Building on the work of Zywina \cite{zywina15}, we give a restricted list of subgroups $G < \GL_2(\Z_p)$ such that $\operatorname{Im}\rho_{E,p^\infty}$ is possibly equal to $G$ whenever $\operatorname{Im}\rho_{E,p}$ is contained in the normaliser of a non-split Cartan. Given an integer $\varepsilon$ which is not a square modulo $p$, denote with 
	\begin{equation*}
		C_{ns}^+(p^n):= \left\lbrace \begin{pmatrix} 1 & 0 \\ 0 & \pm 1 \end{pmatrix} \cdot \begin{pmatrix} a & \varepsilon b \\ b &  a \end{pmatrix} \,\middle|\, a,b \in \faktor{\Z}{p^n\Z}, \ (a,b) \not\equiv (0,0) \mod p \right\rbrace \subseteq \GL_2\left(\faktor{\Z}{p^n\Z}\right)
	\end{equation*}
	the normaliser of a non-split Cartan (see Section \ref{sec:grouptheory} for definitions). The following theorem is a simple form of Theorem \ref{thm:ellipticcartantower}. The reader should consult Theorem \ref{thm:ellipticcartantower} to see a classification covering the cases $p=3,5$.
	
	\begin{theorem}\label{thm:introellcartantower}
		Let $\faktor{E}{\Q}$ be an elliptic curve without complex multiplication. Let $p>5$ be a prime such that $\operatorname{Im}\rho_{E,p} \subseteq C_{ns}^+(p)$ up to conjugation, and let $n \ge 1$ be the smallest integer such that $\operatorname{Im}\rho_{E,p^\infty} \supseteq I + p^n M_{2 \times 2}(\Z_p)$. One of the following holds:
		\begin{itemize}
			\item The image of $\rho_{E,p^n}$ is equal to $C_{ns}^+(p^n)$ up to conjugation;
			\item $n=2$ and
			$$\operatorname{Im}\rho_{E,p^2} \cong C_{ns}^+(p) \ltimes \left\lbrace I + p\begin{pmatrix} a & \varepsilon b \\ -b & c \end{pmatrix} \right\rbrace,$$
			with the semidirect product defined by the conjugation action.
		\end{itemize}
	\end{theorem}
	
	On the other hand, many authors started studying the `horizontal' classification problem, i.e. the classification of entanglement fields. Serre \cite[Proposition 22]{serre72} proved that for every non-CM elliptic curve $E$ defined over $\Q$, the image of $\rho_E$ lies in an index-$2$ subgroup of $\GL_2(\widehat{\Z})$, even if the $p$-adic representation $\rho_{E,p^\infty}$ is surjective for every prime $p$. More recent results focus on the study of the intersection of $\Q(E[p])$ and $\Q(E[q])$, for two different primes $p,q$, usually small (see for example \cite{entanglement1, entanglement2, entanglement3, entanglement4, lozanorobledo23}).
	In Section \ref{subsec:entanglement}, we prove some general theorems to bound the degree of entanglement fields, especially in the case where one of the division fields has Galois group contained in the normaliser of a non-split Cartan. We then give a bound on the growth of the adelic index with respect to the product of the $p$-adic indices due to the entanglement phenomenon. In particular, in Remark \ref{rmk: adelic vs p-adic} we show that if $\faktor{E}{\Q}$ is a non-CM elliptic curve that does not satisfy the uniformity conjecture (Conjecture \ref{conj: adelic suq}), then the ratio of the product of the $p$-adic indices $\prod_{p \text{ prime}} [\GL_2(\Z_p) : \operatorname{Im}\rho_{E,p^\infty}]$ to the adelic index $[\GL_2(\widehat{\Z}) : \operatorname{Im}\rho_E]$ is smaller than $73728 \cdot 6^\alpha$, where $\alpha$ is the number of primes $p>5$ for which the image of $\rho_{E,p}$ is contained in $C_{ns}^+(p)$.
	
	\medskip
	
	We now describe the strategy behind the proof of Theorem \ref{thm:adelicboundwithj}. It combines the different results described above about the growth of the adelic index in the `vertical' and `horizontal' directions. In particular, the proof can be articulated in three main steps.
	\begin{itemize}
		\item For every odd prime $p$, we classify the possible images of $\rho_{E,p^n}$ whenever the image of $\rho_{E,p}$ is contained in the normaliser of a non-split Cartan (Theorem \ref{thm:introellcartantower}). The main aim will be to show that if $n$ is the smallest integer for which $\operatorname{Im}\rho_{E,p^\infty}$ contains $I + p^nM_{2 \times 2}(\Z_p)$, then the image of $\rho_{E,p^n}$ is exactly $C_{ns}^+(p^n)$. This will allow us to obtain a good bound on the $p$-adic index. The arguments involved are mainly group-theoretic and are presented in Section \ref{sec:grouptheory}.
		\item We provide an `effective surjectivity theorem' to show that the product of the prime powers $p^n$ for which the image of $\rho_{E,p^n}$ is contained in $C_{ns}^+(p^n)$ is bounded linearly in the stable Faltings height of $E$ (Theorems \ref{thm:effiso} and \ref{thm:totaleffiso}). This is used to bound the product of the $p$-adic indices for all the primes $p$ such that $\operatorname{Im}\rho_{E,p} \subseteq C_{ns}^+(p)$. Theorem \ref{thm:effiso} generalises a theorem by Le Fourn (\cite[Theorem 5.2]{lefourn16}) and implements the improvements developed in \cite{furiolombardo23}. Le Fourn's theorem is based on the effective isogeny theorem of Gaudron and R\'emond \cite{gaudron-remond}, which refines the arguments originally due to Masser and W\"ustholz \cite{masser-wustholz-1, masser-wustholz-2}. All the details are given in Section \ref{sec:effiso}.
		\item We give a bound on the entanglement phenomenon among all primes to obtain the bound on the adelic index from the bound on the product of the $p$-adic indices obtained via the surjectivity theorem. The main ingredient to obtain a good bound is the study of the ramification index of $p$ inside $\Q(E[p^n])$. Indeed, when the image of $\rho_{E,p^n}$ is contained in the normaliser of a non-split Cartan subgroup, $p$ is `almost totally' ramified in $\Q(E[p])$. On the other hand, by a variant of the N\'eron--Ogg--Shafarevich criterion (see for example \cite[Proposition 1]{kraus90}) we know that the ramification index of $p$ inside $\Q(E[N])$ for $p \nmid N$ is low. This shows that the intersection $\Q(E[p]) \cap \Q(E[N])$ is small. The ramification arguments rely on the work of Lozano-Robledo \cite{lozanorobledo16} and Smith \cite{smith23}. All the statements and proofs about this strategy are contained in Section \ref{subsec:entanglement}, and they are then combined in Lemma \ref{lemma:boundnonsuq}.
	\end{itemize}
	
	The proofs of Lemma \ref{lemma:boundnonsuq} and Lemma \ref{lemma:boundsuq} contains some computations in MAGMA which can be found in the file \verb*|cyc-index.m| in the ancillary files of the arXiv version of this paper \cite{furio24}.

	\subsection{Current progress on Serre's Uniformity Question}

	As already mentioned, Question \ref{question: uniformity, adelic version} was posed by Serre \cite{serre72} in different formulation. The original one, known as \emph{Serre's Uniformity Question}, is the following.
	
	\begin{question}[Serre's uniformity question]\label{question: uniformity for Q, mod-p version}
		Let $K$ be a number field. Does there exist a constant $c$, depending only on $K$, such that for every non-CM elliptic curve $\faktor{E}{K}$ and for every prime $p>c$ the residual representation $$\rho_{E,p} : \Gal\left(\faktor{\overline{K}}{K}\right) \to \Aut(E[p]) \cong \operatorname{GL}_2(\F_p)$$ is surjective?
	\end{question}
	
	As for Conjecture \ref{conj: adelic suq}, it is conjectured that this question can be answered affirmatively.
	
	\begin{conjecture}\label{conj:suqQ}
		For every elliptic curve $\faktor{E}{\Q}$ without CM and for every prime $p>37$, the representation $\rho_{E,p}$ is surjective.
	\end{conjecture}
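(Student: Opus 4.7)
The plan is to follow Serre's classical strategy: for $p > 37$ show that $\operatorname{Im}\rho_{E,p}$ cannot be contained in any proper maximal subgroup of $\GL_2(\F_p)$. Since $\det \rho_{E,p}$ equals the mod-$p$ cyclotomic character, hence is surjective onto $\F_p^*$, a classification of such maximal subgroups (due to Dickson) leaves four families to rule out: Borel subgroups, normalisers of split Cartan subgroups, normalisers of non-split Cartan subgroups, and the exceptional subgroups whose image in $\operatorname{PGL}_2(\F_p)$ is $A_4$, $S_4$, or $A_5$.

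First I would exclude the Borel case: if $\operatorname{Im}\rho_{E,p}$ lies in a Borel, then $E$ admits a $\Q$-rational $p$-isogeny, which by Mazur's classification of rational isogeny degrees is impossible for $p > 37$. For the split Cartan normaliser I would appeal to the theorems of Bilu--Parent and Bilu--Parent--Rebolledo, which determine the rational points of $X_{\operatorname{sp}}^+(p)$ for all $p \geq 11$ and show that they consist only of cusps and CM points. The exceptional subgroups can be ruled out once $p$ is at least $13$ by combining the standard bound on the order of the projective image with local analysis at primes of good reduction, as in Serre's original paper.

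The remaining case, and the real obstacle, is the normaliser of a non-split Cartan: one must show that the modular curve $X_{\operatorname{ns}}^+(p)$ has no non-CM rational points for any prime $p > 37$. Several small primes have been settled, most notably $p = 37$ (the ``cursed curve'' of Mazur), handled by Balakrishnan--Dogra--M\"uller--Tuitman--Vonk via the quadratic Chabauty method. No uniform approach is currently known: for large $p$ the Mordell--Weil rank of the Jacobian $J_{\operatorname{ns}}^+(p)(\Q)$ is expected to match or exceed the genus of $X_{\operatorname{ns}}^+(p)$, which obstructs classical Chabauty, and the refined (quadratic or iterated) Chabauty methods require either nonabelian data or Picard-number input that is not available in families. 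A Mazur-style formal immersion argument or a Runge-type analytic bound are natural alternatives, but all such attempts have so far fallen short. Consequently, while the reduction step above is essentially routine, completing the proof of the full statement lies beyond any currently available technique, which is precisely why it remains a conjecture.
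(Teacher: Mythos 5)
You correctly recognize that this is a conjecture, not a theorem; the paper makes no attempt to prove it, and your reduction to the four maximal-subgroup families (Borel, split/non-split Cartan normalisers, exceptional) with the non-split Cartan normaliser as the irreducibly open case is the standard and accurate picture, matching how the paper frames the problem.

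Two factual slips should be corrected. The ``cursed curve'' refers to $X_{\operatorname{ns}}^+(13)$, not a level-$37$ curve, and the name is due to Elkies rather than Mazur; the quadratic Chabauty work of Balakrishnan--Dogra--M\"uller--Tuitman--Vonk settles the level-$13$ split and non-split Cartan curves, not level $37$. Separately, the threshold $p>37$ in the conjecture comes from the Borel (isogeny) case: Mazur's classification leaves genuine non-CM rational $37$-isogenies (for $j\in\{-7\cdot 11^3,\ -7\cdot 137^3\cdot 2083^3\}$), so the bound cannot be lowered below $37$; this has nothing to do with the non-split Cartan obstruction you attach it to. With those attributions fixed, your assessment that the full statement lies beyond current technique -- precisely because no uniform method controls $X_{\operatorname{ns}}^+(p)(\Q)$ for all large $p$ -- is correct and consistent with the paper's stance.
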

	
	Over the years, many mathematicians provided various partial results towards an answer to Conjecture \ref{conj:suqQ}. Whenever the representation $\rho_{E,p}$ is not surjective, its image must be contained in a maximal subgroup of $\GL_2(\F_p)$. Serre classified all the maximal subgroups of $\GL_2(\F_p)$ and proved that they can be of three types: some so-called `exceptional' subgroups, the Borel subgroups, and the normalisers of (split or non-split) Cartan subgroups.
	He then showed \cite[§8.4, Lemma 18]{serre81} that for $p>13$ the exceptional subgroups cannot contain the image of $\rho_{E, p}$. Later, Mazur \cite{mazur78} proved that there are no isogenies of prime degree $p$ between non-CM elliptic curves over $\mathbb{Q}$ for $p>37$: this is equivalent to the fact that for $p>37$ the image of $\rho_{E, p}$ is not contained in a Borel subgroup. More precisely, he proved the following theorem.
	
	\begin{theorem}[Mazur]\label{thm:mazurisogeny}
		Let $\faktor{E}{\Q}$ be an elliptic curve without CM, and let $p$ be a prime such that $E$ admits a rational isogeny of degree $p$. One of the following is true:
		\begin{itemize}
			\item $p \in \{2,3,5,7,13\}$;
			\item $p=11$ and $j(E) \in \left\lbrace -11^2, \ -11 \cdot 131^3 \right\rbrace$;
			\item $p=17$ and $j(E) \in \left\{-2^{-1} \cdot 17^2 \cdot 101^3, \ -2^{-17} \cdot 17 \cdot 373^3\right\}$;
			\item $p=37$ and $j(E) \in \left\{-7 \cdot 11^3, \ -7 \cdot 137^3 \cdot 2083^3\right\}$.
		\end{itemize}
	\end{theorem}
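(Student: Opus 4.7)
The plan is to realise this as a finiteness-of-rational-points statement on the modular curves $X_0(p)$. Recall that the non-cuspidal rational points of $X_0(p)$ classify pairs $(E,C)$ over $\Q$ where $C$ is a cyclic subgroup of order $p$ stable under $\GalQ$; equivalently, an elliptic curve over $\Q$ with a rational isogeny of degree $p$. So the theorem amounts to computing $X_0(p)(\Q)$ for every prime $p$ and checking that, except for cusps and the listed $j$-invariants, there is nothing for $p>37$ or for $p\in\{11,17,37\}$ beyond the exceptional points.

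First I would dispose of the small primes: for $p\in\{2,3,5,7,13\}$ the curve $X_0(p)$ has genus zero and a rational cusp, hence is isomorphic to $\mathbb{P}^1_\Q$, giving infinitely many isogenies and the first bullet. For $p=11,17,19,37$ the curve $X_0(p)$ is an elliptic curve or has genus $\le 3$ and can be analysed directly: one computes $X_0(p)(\Q)$ by exhibiting the (finite, by Mordell--Weil of rank zero) Mordell--Weil group of its Jacobian and checking which points are cusps, CM points, or sporadic; the remaining $j$-invariants in the statement come out of this explicit computation.

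The hard part, and the core of Mazur's argument, is proving $X_0(p)(\Q)=\{\text{cusps}\}$ for every prime $p\ge 23$ not equal to $37$. The plan here is to use the Eisenstein quotient (equivalently, the winding quotient) $J_e$ of the Jacobian $J_0(p)$: this is an abelian variety quotient of $J_0(p)$ whose $L$-function does not vanish at $s=1$, whence by Kolyvagin--Logachev (or the analytic rank zero results) one obtains $J_e(\Q)$ finite. If $P\in X_0(p)(\Q)$ is a non-cuspidal point, then, denoting by $\infty$ the cusp at infinity, the class $[P-\infty]$ lies in $J_0(p)(\Q)$; composing with $J_0(p)\to J_e$ gives a point of the finite group $J_e(\Q)$. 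The next step is a \emph{formal immersion} argument: one shows that the morphism $X_0(p)\to J_e$ sending $P\mapsto [P-\infty]$ is a formal immersion at the cusp $\infty$ modulo a suitable small auxiliary prime $\ell$ (typically $\ell=3$), using that the differentials pulled back from $J_e$ span the cotangent space at $\infty$ over $\F_\ell$ because the Hecke operators $T_\ell-(\ell+1)$ act invertibly on $J_e$. Combined with the bound on $|J_e(\Q)|$ and the finiteness of $X_0(p)(\F_\ell)$, this forces $P$ and $\infty$ to coincide mod $\ell$, and then the formal immersion property forces $P=\infty$, contradicting non-cuspidality.

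The main obstacle is the rank-zero statement for $J_e$ together with the formal immersion input, which rest on deep modularity and Heegner-point techniques; for a proof in this paper's context, I would cite Mazur's original work and the subsequent refinements, and would spend the bulk of my own effort on the case-by-case verification that the listed $j$-invariants for $p\in\{11,17,37\}$ are exactly the non-cuspidal non-CM rational points on the corresponding $X_0(p)$, which is a finite computation on explicit models.
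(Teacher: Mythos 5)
The paper does not prove this theorem: it simply cites Mazur \cite{mazur78} (together with the implicit fact that the specific $j$-invariants for $p\in\{11,17,37\}$ come out of the explicit rational-point computations on $X_0(11)$, $X_0(17)$, $X_0(37)$). So there is no ``paper's own proof'' to compare against; what you have written is a sketch of the content of the cited reference.

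Your outline is a faithful high-level account of Mazur's argument: reduce to rational points on $X_0(p)$, handle the genus-zero cases $p\in\{2,3,5,7,13\}$ and the small exceptional genera $p\in\{11,17,19,37\}$ by direct Mordell--Weil computation, and for large $p$ use a quotient of $J_0(p)$ with finite Mordell--Weil group together with a formal-immersion argument at the cusp $\infty$ modulo a small auxiliary prime. Two small corrections are worth making if this were to be expanded into an actual proof. First, the rank-zero input in Mazur's 1978 paper is \emph{not} Kolyvagin--Logachev (which postdates it by a decade); Mazur constructs the Eisenstein quotient and proves finiteness of its Mordell--Weil group directly by a descent argument built on the Eisenstein ideal. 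The winding-quotient reformulation you mention is Merel's later streamlining, and citing Kolyvagin--Logachev there would be anachronistic (though not incorrect in a modernized exposition). Second, to conclude $P\equiv\infty\pmod\ell$ before invoking the formal immersion you need an a priori reduction statement: one must argue that any non-cuspidal $P\in X_0(p)(\Q)$ specializes to a cusp modulo $\ell$, which Mazur establishes by showing (for $p$ large, using the Hasse bound and the structure of the isogeny character) that the corresponding $E$ has potentially multiplicative reduction at $\ell$. Your sketch moves directly from ``$[P-\infty]$ is torsion'' to ``$P\equiv\infty\pmod\ell$''; that step needs this extra input.
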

	
	More recently, Bilu and Parent developed their version of Runge's method for modular curves \cite{bilu11runge}, which, together with reduction results from Mazur \cite{mazur78}, Momose \cite{momose84}, and Merel \cite{merel07}, and explicit isogeny theorems obtained by Gaudron and R\'emond \cite{gaudron-remond}, allowed them to prove that $\operatorname{Im}\rho_{E,p}$ is not contained in the normaliser of a split Cartan subgroup for sufficiently large $p$ \cite{bilu11split}. The result was then sharpened by Bilu--Parent--Rebolledo \cite{bilu13}, who showed that the same statement holds for every $p \geq 11$, with the possible exception of $p=13$.
	Finally, the result was extended to also cover the prime $p=13$ by means of the so-called \textit{quadratic Chabauty} method \cite{balakrishnan19}.
	
	\begin{theorem}\label{thm:split}
		Let $\faktor{E}{\Q}$ be an elliptic curve without CM. For every prime $p > 7$ the image of the representation $\rho_{E,p}$ is not contained in the normaliser of a split Cartan subgroup.
	\end{theorem}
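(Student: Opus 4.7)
The plan is to reinterpret the statement in terms of rational points on a modular curve. Let $X_{\mathrm{sp}}^+(p)$ denote the modular curve whose non-cuspidal $\Q$-points parametrise elliptic curves $E/\Q$ whose mod-$p$ Galois image lies in the normaliser of a split Cartan subgroup; equivalently, $X_{\mathrm{sp}}^+(p)$ is the quotient of $X_0(p^2)$ by the Atkin--Lehner involution $w_{p^2}$. The theorem then reduces to showing that, for every prime $p > 7$, every $\Q$-rational point of $X_{\mathrm{sp}}^+(p)$ is either a cusp or corresponds to a CM elliptic curve.

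For $p$ sufficiently large, I would apply Runge's method in the form developed by Bilu and Parent. The key inputs are: (i) the curve $X_{\mathrm{sp}}^+(p)$ has enough cusps to satisfy the combinatorial hypotheses of Runge's method, with a controlled Galois action; (ii) one can construct modular units $u_c$ attached to cusps $c$, with explicit leading terms in their $q$-expansions at every cusp. At a non-cuspidal integral point $P$ the values $u_c(P)$ are then $S$-integers for a controlled finite set $S$, and the product formula combined with the $q$-expansions forces $P$ to be Archimedeanly extremely close to exactly one cusp. This yields an effective upper bound on $\operatorname{h}(j(E))$, leaving only a finite explicit list of candidate $j$-invariants to eliminate, which one handles by direct computation together with Theorem \ref{thm:mazurisogeny}.

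For the remaining small primes the analysis must be refined. Bilu--Parent--Rebolledo sharpen the Runge argument above to cover all primes $p \geq 11$ with the sole exception of $p = 13$, via a tighter choice of modular units and more careful Archimedean estimates across all cusps simultaneously. The case $p = 13$ is the genuinely hard one: on $X_{\mathrm{sp}}^+(13)$ the classical Chabauty--Coleman method does not apply because the Mordell--Weil rank of the Jacobian is too large relative to the genus. To handle it, I would apply the \emph{quadratic Chabauty} method: for an auxiliary prime $\ell$ of good reduction, construct $\ell$-adic height pairings and iterated Coleman integrals whose common vanishing cuts out a finite set of $\ell$-adic points containing $X_{\mathrm{sp}}^+(13)(\Q)$, then compute this set explicitly and verify that it consists only of cusps and CM $j$-invariants.

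The main obstacle is the case $p = 13$: Runge's method cannot reach it because too few cusps of $X_{\mathrm{sp}}^+(13)$ are $\Q$-rational for the key Runge inequality to hold, and classical Chabauty fails for rank reasons. Overcoming it requires developing effective algorithms for $\ell$-adic heights and iterated Coleman integrals on an explicit model of the curve, which is the technical heart of the whole statement; every other $p > 7$ is handled uniformly within the Runge/Bilu--Parent--Rebolledo framework.
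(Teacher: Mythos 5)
Your outline reproduces exactly the argument the paper relies on: the paper states Theorem \ref{thm:split} without proof, citing Bilu--Parent's Runge method for modular curves, the Bilu--Parent--Rebolledo refinement covering all $p \geq 11$ except $p=13$, and the quadratic Chabauty computation of Balakrishnan--Dogra--M\"uller--Tuitman--Vonk for $p=13$. So your proposal takes essentially the same approach as the paper.
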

	
	The only unsolved case is that in which the image of $\rho_{E,p}$ is contained in the normaliser of a non-split Cartan subgroup. However, Le Fourn and Lemos \cite{lefournlemos21} proved that for $p > 1.4 \cdot 10^7$ the image of $\rho_{E,p}$ cannot be contained in a proper subgroup of the normaliser of a non-split Cartan. This result was then sharpened by the author and Lombardo \cite{furiolombardo23} to also cover all primes $p>5$.
	
	\begin{theorem}\label{thm:index3incartan}
		Let $\faktor{E}{\Q}$ be an elliptic curve without CM, and let $p \ge 5$ be a prime such that the image of $\rho_{E,p}$ is contained in the normaliser of a non-split Cartan subgroup. We have that either $\operatorname{Im}\rho_{E,p}$ is equal to this normaliser, or $p=5$ and $\operatorname{Im}\rho_{E,p}$ has index $3$ in it.
	\end{theorem}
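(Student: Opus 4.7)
The plan is to reduce the theorem to a classification of subgroups of $N := N_{\mathrm{ns}}(p) \subseteq \GL_2(\mathbb{F}_p)$ followed by an analysis of a small list of modular curves. Denote by $C := C_{\mathrm{ns}}(p) \cong \mathbb{F}_{p^2}^\times$ the non-split Cartan, which is cyclic of order $p^2 - 1$ and of index $2$ in $N$. By the Weil pairing, $\det \circ \rho_{E,p}$ equals the mod-$p$ cyclotomic character, so $H := \operatorname{Im}\rho_{E,p}$ must satisfy $\det H = \mathbb{F}_p^\times$. If $H \subseteq C$, then $H$ is abelian, which contradicts Serre's open image theorem for non-CM curves. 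Hence $H \cap C$ is a cyclic subgroup of some order $d \mid p^2-1$, and $[N : H] = (p^2 - 1)/d$; up to $\GL_2(\mathbb{F}_p)$-conjugacy, $H$ is determined by $d$ and the coset of an element of $H \setminus C$.

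The first step is to enumerate the admissible $d$. The restriction $\det|_C$ is the norm map $\mathbb{F}_{p^2}^\times \to \mathbb{F}_p^\times$, whose kernel has order $p+1$, so surjectivity of $\det H$ translates into the arithmetic condition $d / \gcd(d, p+1) = p-1$. The second step is to attach to each resulting proper subgroup $H \subsetneq N$ a modular curve $X_H/\Q$ equipped with a forgetful cover $X_H \to X_{\mathrm{ns}}^+(p)$, so that non-cuspidal, non-CM rational points on $X_H$ correspond exactly to elliptic curves $E/\Q$ with $\operatorname{Im}\rho_{E,p}$ conjugate into $H$. The third step is to rule out such points for every candidate $H$ other than the announced index-$3$ subgroup at $p=5$.

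The main obstacle lies in this third step, particularly for small primes. For $p$ large, the genus of $X_H$ grows quickly and the rational points can be bounded via Mazur's formal immersion method combined with Runge's method, which is essentially the approach of Le Fourn--Lemos. For small $p$, however, the modular curves $X_H$ may have small genus, so one must explicitly determine their Jacobians and Mordell--Weil groups (or use isogeny-theoretic arguments), and then carefully distinguish CM from non-CM points among those found. The exception at $p=5$ arises precisely because the modular curve associated to the unique (up to conjugacy) index-$3$ subgroup of $N_{\mathrm{ns}}(5)$ does admit non-CM rational points. The proof thus consists of a finite case analysis: enumerate the conjugacy classes of proper subgroups $H \subsetneq N$ with surjective determinant, discard each one by explicit modular-curve techniques, and verify that only the index-$3$ case at $p=5$ survives. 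The bulk of the new input compared to Le Fourn--Lemos is the treatment of the remaining small-prime cases, which is the technically heaviest part of the argument.
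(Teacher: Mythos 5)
The paper's proof of this statement is a one-line aggregation of citations: Zywina's classification of mod-$p$ images for small $p$ (\cite[Theorems 1.4, 1.5, 1.6]{zywina15}), his treatment of $p\ge 17$ (\cite[Proposition 1.13]{zywina15}), the quadratic-Chabauty result for $p=13$ (\cite[Corollary 1.3]{balakrishnan19}), and the author's earlier work with Lombardo (\cite[Theorem 1.6]{furiolombardo23}) which rules out proper subgroups of $C_{ns}^+(p)$ for all $p>5$. Your outline correctly identifies the overall architecture that these sources implement (enumerate admissible proper subgroups $H\subsetneq N$, pass to the modular curve $X_H$, and kill non-CM rational points), so in spirit you are following the same route. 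However, as written the proposal contains a concrete error and leaves the entire load-bearing part unproved, so it does not yet constitute a proof.

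The error is in your determinant criterion. You claim $\det H=\F_p^\times$ is equivalent to $d/\gcd(d,p+1)=p-1$ where $d=|H\cap C|$, but this only accounts for the image of $\det$ on the Cartan part. Elements of $H\setminus C$ also contribute to $\det H$: if $\begin{pmatrix}1&0\\0&-1\end{pmatrix}b\in H$ with $b\in C$, its determinant $-\det(b)$ may generate a further index-$2$ extension, so the correct condition is that $D:=\det(H\cap C)$ equals $\F_p^\times$ \emph{or} has index $2$ in $\F_p^\times$ with $-\det(b)\notin D$. Your enumeration of candidate $H$ is therefore incomplete. A second, smaller inaccuracy: Le Fourn--Lemos do not use Runge's method for proper subgroups of $C_{ns}^+(p)$ (Runge \`a la Bilu--Parent applies to $X_{sp}^+$ and to the full $X_{ns}^+$); they use a Momose-type isogeny-character argument that only covers very large $p$, and it is precisely the sharpening in \cite{furiolombardo23} that brings this down to $p>5$. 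Finally, the heart of the matter is the case analysis on the small-prime modular curves (including the $p=13$ quadratic-Chabauty computation), which you acknowledge as ``the technically heaviest part'' but do not carry out or cite. To turn this into a proof you would either need to reproduce those computations or, as the paper does, invoke \cite{zywina15}, \cite{balakrishnan19}, and \cite{furiolombardo23} explicitly.
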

	
	\begin{proof}
		The result follows combining \cite[Theorems 1.4, 1.5, 1.6]{zywina15}, \cite[Proposition 1.13]{zywina15}, \cite[Corollary 1.3]{balakrishnan19}, and \cite[Theorem 1.6]{furiolombardo23}.
	\end{proof}
	
	We now briefly sketch the proof of the equivalence between Question \ref{question: uniformity, adelic version} and Question \ref{question: uniformity for Q, mod-p version}.
	First we notice that a uniform bound on the adelic index, easily implies that the non-surjective primes are uniformly bounded, hence it suffices to show the other implication.
	If there exists an integer $c$ such that for every prime $p$ greater than $c$ the mod-$p$ representation is surjective, then by \cite[IV-23, Lemma 3]{serre-abrep} the same holds for $p$-adic representations. Consider now, for every prime $p$ smaller than or equal to $c$, all the possible subgroups of $\operatorname{GL}_2(\F_p)$ and their corresponding modular curves. Some of these curves will have finitely many $K$-rational points and we can ignore them. For the other modular curves, if we consider a rational point on one of them and the corresponding elliptic curve $E$, either the image of the $p$-adic representation $\rho_{E,p^\infty}$ contains the group $I + pM_{2 \times 2}(\Z_p)$ (and hence the $p$-adic index is bounded), or $E$ corresponds to a $K$-rational point on a level $p^2$ modular curve. We can then repeat the same argument for modular curves of level $p^2$, and go on with higher powers of $p$. Since the genus of the modular curves grows with the level, there exists an integer $n$ such that all the modular curves of level $p^n$ have genus greater than $1$, and then they will have a finite number of $K$-rational points by Faltings theorem (see \cite[Theorem 1.3]{arai08}).
	This gives a bound on the indices of the $p$-adic representations. Serre proved that the image of $\rho_{E}$ has finite index in the product $\prod \operatorname{Im}\rho_{E,p^\infty}$ over the finite set of primes containing $2,3,5$ and those primes for which $\rho_{E,p^\infty}$ is not surjective \cite[IV-26, Lemma 5]{serre-abrep}. Since for every $p$ the pro-$p$ Sylow subgroup of $\GL_2(\Z_p)$ has a finite index, it suffices to show that the intersection of the image of $\prod \rho_{E,p^\infty}$ with the product of the pro-$p$ Sylow subgroups has finite index. However, a subgroup of a product of $p$-groups (for different primes $p$) is a product of subgroups, and since the projections on $\GL_2(\Z_p)$ have finite index, so have their product.
	
	\paragraph{Unsolved cases}
	
	At present, the only open case of Question \ref{question: uniformity for Q, mod-p version} over $\Q$ is that where the image $\rho_{E,p}$ is conjugated to the normaliser of a non-split Cartan subgroup. Methods such as the formal immersion argument and Runge's method for modular curves, developed to study the Borel and split Cartan cases, fail to tackle it. It can be shown that the same problems occur for non-split Cartan subgroups modulo $p^n$, which is why we were unable to exclude these cases.
	
	In Theorem \ref{thm:introellcartantower}, another open case other than $C_{ns}^+(p^n)$ is the level $p^2$ group $G(p^2)$ isomorphic to $C_{ns}^+(p) \ltimes (\Z/p\Z)^3$. In this case, the group-theoretic results of Section \ref{sec:grouptheory} are ineffective. For the modular curves $X_{G(p^2)}$ corresponding to the groups $G(p^2)$, we have again the same difficulties as for the modular curves $X_{ns}^+(p^n)$. However, they seem to be much more likely to be addressed, for instance, by local arguments at $p$. As an example, in the case $p=7$ the curve $X_{G(p^2)}$ appears as an unsolved case in the work of Rouse, Sutherland and Zureick-Brown (\cite[Theorem 1.6]{rszb22}). This curve has a rational CM point, making impossible to exclude the $G(7^2)$ case by purely group-theoretic means. If one managed to exclude the case in which $\operatorname{Im}\rho_{E,p^2} = G(p^2)$, they would be able to improve Theorem \ref{thm:adelicboundwithj} and show that $[\GL_2(\widehat{\Z}) : \operatorname{Im}\rho_E] < \Fheight(E)^{2+o(1)}$.
	This last problem, is being treated by the author in an ongoing project with Matt Bisatt and Davide Lombardo.
	
	\paragraph{Cited preprints}
	
	In this paper, we frequently refer to several unpublished works of Zywina, namely \cite{zywina11, zywina15, zywina15index, zywina22}. The first of these, \cite{zywina11}, which largely inspired the present work, was thoroughly reviewed by the author, and all the required lemmas are reproduced in this article together with their proofs. The second work, \cite{zywina15}, contains results that are now classical and have since been verified and superseded by more recent publications, such as \cite{sutherlandzywina17, rszb22, lefournlemos21}. From the third paper, \cite{zywina15index}, we only use Lemma~2.3, whose proof has been independently verified by the author. The fourth paper, \cite{zywina22}, contains an algorithm that was later generalized by Zywina in the published article \cite{zywina25}, which was still unpublished at the time of the first draft of this paper.
	
	\paragraph{Acknowledgments}
	
	I am grateful to my Ph.D. supervisor Davide Lombardo for the \textit{uncountably many} helpful discussions without which this work would not have been the same, and for all the time he spent reviewing preliminary versions of this paper. I would also like to thank Samuel Le Fourn for interesting discussions on this topic, for his careful reading of this article and for his valuable comments. Special thanks also to \'Eric Gaudron and Ga\"el R\'emond for their crucial help in finding some important inaccuracies in a preliminary version of this paper. Finally, I thank the anonymous referee for a thorough reading of the article and for numerous valuable suggestions that significantly improved the exposition.
	
	During the preparation of this article, the author was affiliated with the University of Pisa and was a member of the INdAM group GNSAGA.

	\section{Preliminaries}

	In this section, we prove some auxiliary results that will be used in the next sections. In particular, we focus on the properties of subgroups of $\GL_2(\Z_p)$ for $p$ prime, on the Faltings height of elliptic curves defined over $\Q$, on the rational points $P$ on the modular curves $X_{ns}^+(N)$ such that $j(P) \in \Z$, and on an effective variant of Mertens's theorems.

	\subsection{Schur--Zassenhaus for $p$-adic matrices}

	Let $p$ be an odd prime and let $K$ be a finite extension of $\Q_p$ with ring of integers $\OK$, uniformiser $\pi_K$ and residue field $\F_q=\F_{p^k}$.
	
	The following proposition is a profinite version of the Schur-Zassenhaus theorem, and can be found in \cite[Proposition 2.3.3]{wilson98}.
	
	\begin{proposition}\label{prop:schurzassenhaus}
		Let $G$ be a profinite group and let $N$ be a normal subgroup such that $|N|$ and $\left| \faktor{G}{N} \right|$ are coprime (where the cardinalities are supernatural numbers defined as in \cite[Definition 2.1.1]{wilson98}). Then $G$ has subgroups $H$ such that $G=NH$ and $H \cap N = 1$; moreover, all such subgroups $H$ are conjugate in $G$. In particular, for any such $H$ we have an isomorphism $G \cong H \ltimes N$, with the action given by conjugation.
	\end{proposition}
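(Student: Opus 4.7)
The plan is to reduce to the classical (finite) Schur--Zassenhaus theorem via the standard inverse limit description of a profinite group, and then extract a closed complement by a compactness argument. The main work is checking that the finite complements assemble coherently and that conjugacy propagates to the limit.

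First, I would fix a cofinal system $\{U\}$ of open normal subgroups of $G$, so that $G = \varprojlim_U G/U$. For each $U$, let $N_U = NU/U$ denote the image of $N$ in $G/U$. The supernatural coprimality hypothesis implies that the (ordinary) integers $|N_U|$ and $|(G/U)/N_U|$ are coprime, since each divides the corresponding supernatural order. Thus by the classical Schur--Zassenhaus theorem the set
\[
\mathcal{C}_U := \{\,\text{complements of } N_U \text{ in } G/U\,\}
\]
is non-empty and finite. If $U' \subseteq U$ and $H_{U'} \in \mathcal{C}_{U'}$, then its image $H_U$ in $G/U$ still satisfies $H_U N_U = G/U$, and since $|H_U|$ divides $|H_{U'}|$, which is coprime to $|N_U|$, one has $H_U \cap N_U = 1$. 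So $\mathcal{C}_U$ is a non-empty inverse system of finite sets.

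Next, I would pick an element $(H_U)_U$ of $\varprojlim_U \mathcal{C}_U$ (non-empty because each $\mathcal{C}_U$ is finite and non-empty) and set $H := \bigcap_U \pi_U^{-1}(H_U)$, a closed subgroup of $G$. The equality $H \cap N = 1$ is immediate from $H_U \cap N_U = 1$ at every finite level. For $HN = G$: since $H$ and $N$ are closed in the compact Hausdorff group $G$, the product $HN$ is compact, hence closed; modulo each $U$ it surjects to $G/U$, so $HN = G$. Once $H$ is a complement with $N$ normal in $G$, the map $H \ltimes N \to G$, $(h,n) \mapsto hn$, is a bijective continuous homomorphism between compact Hausdorff groups, hence an isomorphism of topological groups.

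Finally, for uniqueness up to conjugation, given two complements $H_1, H_2$, set
\[
\tilde C_U := \{\, g \in G \,:\, g (H_1 U/U) g^{-1} = H_2 U/U \,\}.
\]
Each $\tilde C_U$ is a non-empty (by finite Schur--Zassenhaus) clopen subset of $G$, and the family is decreasing in $U$, hence has the finite intersection property. By compactness of $G$ the intersection $\bigcap_U \tilde C_U$ contains some $g$; this $g$ conjugates $H_1 U$ to $H_2 U$ for every $U$, which forces $g H_1 g^{-1} = H_2$ since both sides are closed and agree modulo every open normal subgroup. The main obstacle, if any, is simply bookkeeping: one has to keep the supernatural-order hypothesis, the compatibility of the inverse systems, and the closedness of the various subsets straight, but once that is set up the argument is entirely formal.
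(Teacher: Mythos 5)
The paper does not give its own proof of this proposition; it simply cites Wilson's \emph{Profinite Groups} (Proposition 2.3.3), and your reduction to the finite Schur--Zassenhaus theorem is the standard argument found there. Your handling of the supernatural-order bookkeeping, the compactness argument for conjugacy, and the topological-group isomorphism at the end are all correct.

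The one place that needs an extra sentence is the existence step. You conclude that $HN$ surjects onto each $G/U$, but for this you need $\pi_U(H) = H_U$, whereas the definition $H = \bigcap_U \pi_U^{-1}(H_U)$ only gives $\pi_U(H) \subseteq H_U$ directly. The point is that $H$ is canonically $\varprojlim_U H_U$ inside $G = \varprojlim_U G/U$, and because you chose $(H_U)_U$ to be a \emph{compatible} family for the transition maps of $\mathcal{C}_U$, the restriction $H_{U'} \to H_U$ is surjective by construction (you defined $H_U$ to be the image of $H_{U'}$); an inverse limit of finite nonempty sets along surjective transition maps surjects onto every term, so $\pi_U(H) = H_U$. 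With that line inserted, $\pi_U(HN) = H_U N_U = G/U$ follows and the rest of the argument goes through as written.
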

	
	\begin{proposition}\label{prop:groupteichmuller}
		Let $\mathcal{G}' < \GL_n(\OK)$ be a subgroup and let $\pi : \GL_n(\OK) \to \GL_n\left(\faktor{\OK}{\pi_K}\right) = \GL_n(\F_q)$ be the canonical projection. Let $G < G' := \pi(\mathcal{G}')$ be a subgroup of order prime to $p$.
		\begin{itemize}
			\item There exists a finite subgroup $\mathcal{G} < \mathcal{G}'$ such that $\pi$ induces an isomorphism $\mathcal{G} \cong G$. Moreover, $\mathcal{G}$ is unique up to conjugation in $\mathcal{G}'$.
			\item Suppose $G = G'$, let $\mathcal{G}$ be as above, and let $\mathcal{N} := \ker \pi \cap \mathcal{G}'$. We have
			$$\mathcal{G'} = \mathcal{G} \mathcal{N} \cong \mathcal{G} \ltimes \mathcal{N} \cong G \ltimes \mathcal{N},$$
			where the action is given by conjugation of $\mathcal{G}$ on $\mathcal{N}$.
		\end{itemize}
	\end{proposition}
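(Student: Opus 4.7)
The plan is to deduce this from Proposition \ref{prop:schurzassenhaus} applied to a suitable closed subgroup of $H$, after verifying the coprimality hypothesis on supernatural orders.

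First I would reduce to the situation of the profinite Schur--Zassenhaus theorem. Let $H_G := \pi^{-1}(G) \cap H$ and $N := \ker\pi \cap H$; for part 1 it suffices to produce $\widetilde{G}$ inside $H_G$, since any lift of $G$ to $H$ automatically lies there. The subgroup $N$ is normal in $H_G$ (it is the kernel of the restriction of $\pi$ to $H_G$), and the induced map $H_G/N \to G$ is an isomorphism. The key observation is that $\ker\pi \subset \GL_n(\OK)$ is the first principal congruence subgroup $I + \pi_K M_n(\OK)$, which is a pro-$p$ group; hence $N$, being a closed subgroup of a pro-$p$ group, has supernatural order a power of $p$, while $|H_G/N| = |G|$ is prime to $p$ by hypothesis. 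With this, Proposition \ref{prop:schurzassenhaus} applies to $H_G \triangleright N$ and furnishes a (closed) subgroup $\widetilde{G} < H_G$ with $H_G = \widetilde{G} N$ and $\widetilde{G} \cap N = 1$, unique up to conjugation in $H_G$. Since $\widetilde{G} \cap \ker\pi = \widetilde{G} \cap N = 1$ and $\pi(\widetilde{G}) = \pi(\widetilde{G} N) = \pi(H_G) = G$, the restriction $\pi|_{\widetilde{G}} \colon \widetilde{G} \to G$ is an isomorphism. Uniqueness up to conjugation in $H$ is then immediate, because conjugation in $H_G$ is a special case of conjugation in $H$.

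For part 2, the hypothesis $G = \pi(H)$ forces $H_G = H$, so the construction above produces $\widetilde{G} < H$ with $H = \widetilde{G} N$ and $\widetilde{G} \cap N = 1$. Since $N = \ker(\pi|_H)$ is normal in $H$, this is exactly a (closed) semidirect product decomposition $H \cong \widetilde{G} \ltimes N$ with the action given by conjugation of $\widetilde{G}$ on $N$, and identifying $\widetilde{G}$ with $G$ via $\pi$ gives the stated isomorphism $H \cong G \ltimes N$.

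The only genuinely delicate point, and the one I would spell out most carefully, is the verification that the supernatural orders of $N$ and $H_G/N$ are coprime. This reduces to the pro-$p$ nature of the first congruence subgroup $I + \pi_K M_n(\OK)$, which follows from the fact that its successive quotients $(I + \pi_K^i M_n(\OK))/(I + \pi_K^{i+1} M_n(\OK))$ are isomorphic to the additive group $M_n(\F_q)$, a finite abelian $p$-group. All other steps are then formal consequences of Proposition \ref{prop:schurzassenhaus}; no separate argument for odd $p$ is required here, since the pro-$p$ statement for $\ker\pi$ holds uniformly in the residue characteristic.
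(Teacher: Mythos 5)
Your proposal is correct and follows essentially the same route as the paper: both define the pullback $\pi^{-1}(G)\cap H$ and $N=\ker\pi\cap H$, observe that $N$ is pro-$p$ while $G$ has order prime to $p$, and invoke the profinite Schur--Zassenhaus theorem to get existence and conjugacy-uniqueness of the complement, with part 2 as the special case $\pi(H)=G$. Your explicit verification that $\ker\pi=I+\pi_K M_n(\OK)$ is pro-$p$ via its abelian graded pieces is a small amount of extra care that the paper dispatches with ``it is not difficult to notice,'' but it is not a different argument.
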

	
	\begin{proof}
		Consider the groups
		\begin{align*}
			\mathcal{G}'':= \{A \in \mathcal{G}' \mid \pi(A) \in G \} \qquad \text{and} \qquad \mathcal{N}:= \{A \in \mathcal{G}' \mid A \equiv I \pmod {\pi_K} \}. 
		\end{align*}
		It is not difficult to notice that $\mathcal{N}$ is a pro $p$-group, $\mathcal{N} \lhd \mathcal{G}''$ and $\faktor{\mathcal{G}''}{\mathcal{N}} = G$. By Proposition \ref{prop:schurzassenhaus} there exists $\mathcal{G} < \mathcal{G}''$ such that $\mathcal{G}\mathcal{N} = \mathcal{G}''$ and $\mathcal{G} \cap \mathcal{N} = 1$, hence
		$$G = \frac{\mathcal{G}''}{\mathcal{N}} = \frac{\mathcal{G}\mathcal{N}}{\mathcal{N}} \cong \frac{\mathcal{G}}{\mathcal{G} \cap \mathcal{N}} = \mathcal{G}.$$
		This implies that $|\mathcal{G}| = |G|$, and since $\pi$ surjects $\mathcal{G}$ onto $G$, as $\pi(\mathcal{G}'') = \pi(\mathcal{G}\mathcal{N}) = \pi(\mathcal{G})$, it is an isomorphism. Suppose now that $\widehat{\mathcal{G}} < \mathcal{G}'$ is another group with the same property. Since $\pi(\widehat{\mathcal{G}}) = G$, we have $\widehat{\mathcal{G}} < \mathcal{G}''$ and $\widehat{\mathcal{G}}\mathcal{N} = \mathcal{G}'' = \mathcal{G}\mathcal{N}$. Moreover, the homomorphism $\pi|_{\widehat{\mathcal{G}}}$ is injective, so $\widehat{\mathcal{G}} \cap \mathcal{N} = 1$.
		By Proposition \ref{prop:schurzassenhaus} we conclude that $\mathcal{G}$ and $\widehat{\mathcal{G}}$ are conjugate in $\mathcal{G}''$ (and hence in $\mathcal{G}'$). The second part follows immediately from Proposition \ref{prop:schurzassenhaus}.
	\end{proof}

	\subsection{Heights}

	In this section, we give a relation between the stable Faltings height of an elliptic curve defined over $\Q$ and the Weil height of its $j$-invariant. In particular, we extend \cite[Theorem 2.8]{furiolombardo23} to also cover small values of $|j|$.
	
	In the next result, as well as in the rest of the paper, we denote by $\Fheight(E)$ the stable Faltings height of an elliptic curve $E$, with the normalisation of \cite[Section 1.2]{deligne85}, and by $\operatorname{h}(x)$ the logarithmic Weil height of a number $x \in \overline{\Q}$. For a definition of $\Fheight$, we refer the reader to \cite{deligne85}. In the case where $x = \frac{a}{b} \in \Q$, with $(a,b) =1$, we have $\operatorname{h}(x) = \log\max\{|a|,|b|\}$.
	
	\begin{theorem}\label{thm:heights}
		Let $\faktor{E}{\Q}$ be an elliptic curve with stable Faltings height $\Fheight(E)$.
		\begin{enumerate}
			\item If $|j(E)| > 3500$, then 
			\begin{equation*}\label{eq:heights-ineq-1}
				\frac{\operatorname{h}(j(E))}{12} - \frac{1}{2}\log\log|j(E)| -0.406 < \Fheight(E) < \frac{\operatorname{h}(j(E))}{12} - \frac{1}{2}\log\log|j(E)| +0.159,
			\end{equation*}
			\item If $|j(E)| \le 3500$, then
			\begin{equation*}
				\frac{1}{12}\operatorname{h}(j(E)) - 1.429 < \Fheight(E) < \frac{1}{12}\operatorname{h}(j(E)) - 0.135.
			\end{equation*}
		\end{enumerate}
	\end{theorem}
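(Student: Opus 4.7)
The first inequality, for $|j(E)| > 3500$, is exactly the content of \cite[Theorem 2.8]{furiolombardo23}, so the essential new work is the range $|j(E)| \leq 3500$. My plan is to use the standard explicit formula expressing the stable Faltings height in terms of a minimal Weierstrass model and its complex period. Namely, after base change to a finite extension where $E$ acquires everywhere semistable reduction, there is an identity of the shape
$$12\,\Fheight(E) = (\text{arithmetic contribution from the minimal discriminant}) - \log\bigl((2\pi)^{12}|\Delta(\tau)|\operatorname{Im}(\tau)^{6}\bigr),$$
where $\tau$ is a period of $E$ in the standard fundamental domain for $\operatorname{SL}_2(\Z)$ acting on $\mathcal{H}$ and $\Delta(\tau)$ is the modular discriminant. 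When $|j(E)|$ is large, $\operatorname{Im}(\tau) \sim \frac{1}{2\pi}\log|j(E)|$, which is what produces the $-\frac{1}{2}\log\log|j(E)|$ term in Case 1; in contrast, when $|j(E)| \leq 3500$ the point $\tau$ varies over a compact subregion of the fundamental domain, so the archimedean correction remains uniformly bounded.

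The concrete steps are: first, describe explicitly the compact region of $\tau$-values corresponding to $|j(\tau)| \leq 3500$ (essentially $\operatorname{Im}(\tau)$ bounded above by roughly $\frac{1}{2\pi}\log 3500$); second, produce sharp upper and lower bounds for the archimedean correction $-\log((2\pi)^{12}|\Delta(\tau)|\operatorname{Im}(\tau)^{6})$ on this region using the product expansion $\Delta(\tau) = (2\pi)^{12}q\prod_{n \geq 1}(1-q^n)^{24}$; and third, identify the arithmetic contribution with $\frac{1}{12}\operatorname{h}(j(E))$ up to controlled error, via the relation $j = c_{4}^{3}/\Delta$ and careful accounting of the local contributions at the primes of bad reduction after base change. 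Combining the three yields a bound of the desired shape $\frac{1}{12}\operatorname{h}(j(E)) + c_{\min} < \Fheight(E) < \frac{1}{12}\operatorname{h}(j(E)) + c_{\max}$, with $c_{\min}$ and $c_{\max}$ equal to $-1.429$ and $-0.135$ respectively.

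The main obstacle is producing constants sharp enough to match the stated values. Qualitative compactness gives boundedness for free, but to pin down numerical sharpness one must carefully locate the extrema of the archimedean correction on the compact region: these should occur either on the boundary $|j(\tau)| = 3500$ (where the Case 1 bound applies by continuity and offers a useful consistency check) or at the elliptic points $\tau = i$ and $\tau = e^{2\pi i/3}$, where $j$ takes the values $1728$ and $0$. The computation is entirely explicit but delicate; the fact that the bounds of Case 1 and Case 2 are consistent on the overlap at $|j(E)|=3500$ provides a useful sanity check on the resulting numerical constants.
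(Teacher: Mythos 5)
Your proposal follows the same broad recipe as the paper: start from an explicit formula for $12\Fheight(E)$ that separates a non-archimedean contribution from an archimedean correction term, bound the archimedean term over the compact set of $\tau$'s with $|j(\tau)|\le 3500$, and translate the non-archimedean piece into $\operatorname{h}(j(E))$. The paper does exactly this, applying the identity (taken from \cite[equation (2.5)]{furiolombardo23})
\[
12\Fheight(E) = \sum_{p}\log\max\{1,\|j\|_p\} - \log|q| - 6\log 2 - 6\log|\log|q|| - 24\sum_{n\ge 1}\log|1-q^n|,
\]
together with \cite[Lemma 2.5]{pazuki19} to pin down $\pi\sqrt{3}\le|\log|q||<8.41$ on the region $|j|\le 3500$, and the Weierstrass-product tail estimate already isolated in the proof of \cite[Theorem 2.8]{furiolombardo23}.

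Two points where your plan diverges from (and is weaker than) what the paper actually does. First, you propose to start from the formula with the minimal discriminant and then convert to $\operatorname{h}(j)$ ``via $j = c_4^3/\Delta$ and careful accounting of local contributions.'' This conversion is not as innocuous as you suggest: $\operatorname{h}(j)$ sees $c_4$ as well as $\Delta$, and passing from $\log N(\Delta_{\min})$ (after semistabilization) to the non-archimedean part of $\operatorname{h}(j)$ requires the observation that $v(\Delta)=-v(j)$ for a semistable model of potentially multiplicative type, plus the analysis of good-reduction primes. The paper avoids this entirely by quoting a version of the Faltings height formula already written in terms of $\|j\|_p$. Second, you propose to locate the extrema of the archimedean correction carefully (boundary versus elliptic points); the paper does nothing so refined---it just uses the crude two-sided bound on $|\log|q||$ quoted above. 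The actual numerical constants then come from plugging these endpoints in, and the visible asymmetry between $-1.429$ and $-0.135$ stems from the additional slack $\log\max\{1,|j|\}\le\log 3500$ used on the non-archimedean side in the lower bound; that step does not appear explicitly in your outline and you would need it to recover the gap between the two constants.
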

	
	\begin{proof}
		The first part is the same as in \cite[Theorem 2.8]{furiolombardo23}, hence we just need to prove the second part.
		Let $\tau \in \uhp$ be the point in the standard fundamental domain $\mathcal{F}$ such that $E(\C) \cong \faktor{\C}{\Z \oplus \tau\Z}$, and set $q=e^{2\pi i \tau}$. As in \cite[equation (2.5)]{furiolombardo23} we have
		\begin{equation*}\label{eq:height}
			12\Fheight(E) = \sum_{p \text{ prime}}\log\max\{1,\|j\|_p\} -\log|q| -6\log2 -6\log|\log|q|| -24\sum_{n=1}^\infty \log|1-q^n|,
		\end{equation*}
		and as shown in the proof of \cite[Theorem 2.8]{furiolombardo23} we have
		$$ -\frac{24|q|}{1-|q|} < -24\sum_{n=1}^\infty \log|1-q^n| < - \frac{4\pi^2}{\log|q|}.$$
		As we are assuming that $j(E) \le 3500$, by \cite[Lemma 2.5]{pazuki19} we know that $\pi \sqrt{3} \le 2\pi\Im\{\tau\} = |\log|q|| < \log(3500+970.8) < 8.41$, and so we have
		\begin{align*}
			|\log|q|| -6\log|\log|q|| +\frac{4\pi^2}{|\log|q||} < 2.533, \\
			|\log|q|| -6\log|\log|q|| -\frac{24|q|}{1-|q|} > -4.828.
		\end{align*}
		Using the inequality $0 \le \log\max\{1,|j|\}$, we can then write
		\begin{align*}
			12\Fheight(E) < \sum_{p \text{ prime}}\log\max\{1,\|j\|_p\} + \log\max\{1,|j|\} -6\log2 +2.533,
		\end{align*}
		which gives the desired upper bound.
		For the lower bound, we have that $\log\max\{1,|j|\} \le \log3500$, and then we conclude by writing
		\begin{equation*}
			12\Fheight(E) > \sum_{p \text{ prime}}\log\max\{1,\|j\|_p\} + \log\max\{1,|j|\} -\log3500 -6\log2 -4.828. \qedhere
		\end{equation*}
	\end{proof}
	
	\begin{remark}\label{minimalheight}
		As noticed by Deligne in \cite[page 29]{deligne85}, for every number field $K$ and every elliptic curve $\faktor{E}{K}$ we have that $\Fheight(E) > -0.75$.
	\end{remark}

	\subsection{Integral points on modular curves}

	The aim of this section is to study integral points on some modular curves associated with normalisers of non-split Cartan subgroups.
	Recently, Bajolet, Bilu and Matschke \cite{bilu21} stated the following.
	
	\begin{claim}\label{claim:bilu}
		Let $7 < p < 100$ be a prime. Every $P \in X_{ns}^+(p)(\Q)$ such that $j(P) \in \Z$ is a CM point.
	\end{claim}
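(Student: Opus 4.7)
The strategy I would pursue combines an effective height bound on integer-$j$ rational points with a finite, case-by-case computational verification.

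First, for each prime $p$ with $7 < p < 100$, the plan is to produce an explicit constant $B_p$ such that every $P \in X_{ns}^+(p)(\Q)$ with $j(P) \in \Z$ satisfies $|j(P)| \le B_p$. The natural tool is Runge's method for modular curves in the form developed by Bilu--Parent (and further refined by Bilu--Parent--Rebolledo): using enough modular functions on $X_{ns}^+(p)$ with explicitly computable $q$-expansions at the cusps, one constructs a polynomial identity that, when combined with archimedean estimates on the upper half-plane $\uhp$, forces $|j(P)|$ to be small, because otherwise an implausible cancellation between leading $q$-expansion coefficients would be required. The integrality hypothesis on $j(P)$ is crucial at this stage: it prevents $p$-adic compensation of large archimedean contributions, which is exactly what makes a Runge-type argument succeed.

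Second, for each prime $p$ in the finite set $\{11,13,17,\dots,97\}$, I would enumerate the integers $n$ with $|n| \le B_p$ and test each to see whether it could be the $j$-invariant of a rational point on $X_{ns}^+(p)$. A Galois-theoretic filter makes this efficient: pick any $E_n/\Q$ with $j(E_n)=n$, and for a few small auxiliary primes $\ell \ne p$ of good reduction, compute the trace of Frobenius $a_\ell$; the assumption that $\operatorname{Im}\rho_{E_n,p}$ is contained in the normaliser of a non-split Cartan forces $a_\ell \bmod p$ to lie in a rather restricted set (depending on whether $\ell$ is a square modulo $p$), so a single failure at one $\ell$ discards $n$. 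The surviving candidates are then compared against the short explicit list of rational CM $j$-invariants, and each match verified directly to conclude that the point is indeed CM.

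The main obstacle, as I see it, is Stage 1: making $B_p$ small enough, and uniformly enough in $p$, to keep the enumeration in Stage 2 feasible. Off-the-shelf Runge bounds on $X_{ns}^+(p)$ tend to be astronomical, and the real content of a Bajolet--Bilu--Matschke-style argument lies in the careful sharpening of the analytic estimates, exploiting the specific geometry of $X_{ns}^+(p)$ — its cusp, its Atkin--Lehner involution, and the precise asymptotic behaviour of well-chosen modular units. A secondary, more routine obstacle is the explicit construction of equations for $X_{ns}^+(p)$ for the larger primes in the range; this has been carried out in the literature for each $p<100$, but the bookkeeping required to feed these models into the Runge machinery is substantial.
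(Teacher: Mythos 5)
You should first note that the paper does not prove this statement at all: it is deliberately labelled a \emph{claim}, not a proposition or lemma, and immediately after stating it the paper says that there appears to be a mistake in the cited article \cite{bilu21} where this claim was proved, so that it ``may not necessarily be true.'' Accordingly the paper only ever invokes Claim \ref{claim:bilu} conditionally. Your proposal therefore attempts to prove something the paper explicitly declines to rely on, and your sketch essentially reproduces the strategy of \cite{bilu21} whose gap the paper points out.

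There are two concrete problems with your Stage 1. First, Runge's method in the Bilu--Parent form does not apply to $X_{ns}^+(p)$: the $(p-1)/2$ cusps of $X_{ns}^+(p)$ form a single $\Gal(\overline{\Q}/\Q)$-orbit (their field of definition is the real cyclotomic field $\Q(\zeta_p)^+$), whereas Runge's condition for bounding $\Q$-rational integral points requires strictly more than one cusp orbit. This is exactly why Bilu--Parent's Runge breakthrough handled the \emph{split} Cartan curves, where the cusps split into several orbits, and why the non-split case has resisted the same attack. What Bajolet--Bilu--Matschke actually do is use Baker's method (lower bounds for linear forms in logarithms) to produce an astronomically large initial bound $B_p$, followed by an LLL / Baker--Davenport-style reduction to make $B_p$ computable.

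Second, and decisively, the error the paper identifies sits in precisely that reduction step. In Section 9 of \cite{bilu21} the authors introduce $\delta = \delta_2/\delta_1$ and $\lambda = (\delta_2\theta_1 - \delta_1\theta_2)/\delta_1$, and the reduction hinges on finding an integer $r$ such that $r\delta$ is close to an integer while $r\lambda$ is not. As the remark following the claim explains, when $(p-1)/2$ is odd one has $\lambda \in \Z[\delta]$, so $r\lambda$ is automatically close to an integer whenever $r\delta$ is; in fact one can normalise (adjusting their $\eta_0$) so that $\theta_1 = \dots = \theta_{d-1} = 0$ and $\lambda = 0$, which defeats the argument outright for those $p$. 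Until this gap is repaired or replaced by a genuinely different method, the claim is open, and your Stage 2 enumeration has nothing finite to enumerate.
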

	
	Unfortunately, there is a mistake in the article where they prove this statement, so this may not necessarily be true. Actually, there is evidence that the claim actually holds. If we assume it, some of our estimates in Section \ref{sec:effiso} and Section \ref{sec:adelicbound} can be slightly improved, however, we will avoid this assumption to lighten the proofs and give a unique unconditional result.
	\begin{remark}
		We give a quick explanation about the mistake in the proof of Claim \ref{claim:bilu}.
		In \cite[Section 9]{bilu21}, the authors define $\delta=\frac{\delta_2}{\delta_1}$ and $\lambda=\frac{\delta_2\theta_1 - \delta_1\theta_2}{\delta_1}$, and they assume that there exists an integer $r$ such that the number $r\delta$ is close to an integer, while $r\lambda$ is not. However, one can show that when $\frac{p-1}{2}$ is odd $\lambda$ belongs to $\Z[\delta]$, and so $r\lambda$ will be close to an integer too. In particular, one can assume (possibly changing their definition of $\eta_0$) that $\theta_1 = \ldots = \theta_{d-1} = 0$, and so $\lambda = 0$.
	\end{remark}
	
	We will gather most of the results we need about integral points on non-split Cartan modular curves in the following lemma.
	
	\begin{lemma}\label{lemma:cartan15e7e9}
		Consider the modular curves $X_{ns}^+(7)$, $X_{ns}^+(9)$, $X_{ns}^+(11)$, and $X_{ns}^+(3) \times_{X(1)} X_{ns}^+(5)$.
		\begin{enumerate}
			\item If $P \in X_{ns}^+(7)(\Q)$ is such that $j(P) \in \Z$, then either $P$ is a CM point or
			\begin{align}\label{eq:intjcartan7}
				j(P) \in \{2^3 \cdot 5^3 \cdot 7^5, \quad 2^{15} \cdot 7^5, \quad 2^9 \cdot 17^6 \cdot 19^3 \cdot 29^3 \cdot 149^3, \quad 2^6 \cdot 11^3 \cdot 23^3 \cdot 149^3 \cdot 269^3\}
			\end{align}
			and for the corresponding elliptic curves $E_P$ we have $[\GL_2(\widehat{\Z}) : \operatorname{Im}\rho_{E_P}] \in \{84, 504\}$.
			\item If $P \in X_{ns}^+(9)(\Q)$ is such that $j(P) \in \Z$, then either $P$ is a CM point or $j(P) = 3^3 \cdot 41^3 \cdot 61^3 \cdot 149^3$ and for the corresponding elliptic curve $E_P$ we have $[\GL_2(\widehat{\Z}) : \operatorname{Im}\rho_{E_P}] = 108$.
			\item If $P \in X_{ns}^+(11)(\Q)$ is such that $j(P) \in \Z$, then $P$ is a CM point.
			\item If $P \in (X_{ns}^+(3) \times_{X(1)} X_{ns}^+(5))(\Q)$ is such that $j(P) \in \Z$, then $P$ is a CM point.
		\end{enumerate}
	\end{lemma}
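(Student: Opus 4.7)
The plan is to treat each of the three parts by reducing it to an existing classification of integral points on the relevant modular curve, and then, for any surviving non-CM $j$-invariant, verifying the stated adelic index by direct computation on that specific elliptic curve.

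For part 1, I would invoke Kenku's theorem \cite{kenku85} on integral points of $X_{ns}^+(7)$, which yields either a CM point or one of the four non-CM $j$-invariants displayed in \eqref{eq:intjcartan7}. For each of these four $j$-invariants I would identify the corresponding elliptic curve in the LMFDB, read off its mod-$N$ images from the database, and use the Rouse--Sutherland--Zureick-Brown classification \cite{rszb22} to pin down the $p$-adic images at every prime. Combining the $p$-adic indices and correcting for entanglement (Serre's mandatory index-$2$ obstruction plus any further coincidences among division fields) should yield the claimed values $84$ and $504$.

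For part 2 I would proceed analogously, using Baran's classification of integral points on $X_{ns}^+(9)$ in \cite{baran09}; this leaves only the non-CM $j$-invariant $3^3 \cdot 41^3 \cdot 61^3 \cdot 149^3$, whose adelic index $108$ can be read off from the corresponding LMFDB entry in the same way. For part 3, the key observation is that $C_{ns}^+(15) \cong C_{ns}^+(3) \times C_{ns}^+(5)$ by the Chinese Remainder Theorem, so the fibre product $X_{ns}^+(3) \times_{X(1)} X_{ns}^+(5)$ is canonically isomorphic to $X_{ns}^+(15)$, and Chen's theorem \cite{chen99} then asserts that every rational point on $X_{ns}^+(15)$ with integral $j$-invariant is CM.

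The main obstacle lies in the adelic-index computations in parts 1 and 2: while each $p$-adic image is straightforward to look up, one must verify that no unexpected entanglements occur beyond Serre's index-$2$ obstruction, which requires checking that the division fields $\Q(E[p])$ at different primes are as linearly disjoint as possible. Fortunately, the five sporadic non-CM curves appearing here are completely explicit, so this verification reduces to a short finite computation (or a direct database lookup) and does not require any new theoretical input.
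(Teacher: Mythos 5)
Your overall structure matches the paper's: Kenku for level~$7$, Baran for level~$9$, Chen for level~$15$, then a computation of the adelic index for the sporadic non-CM $j$-invariants. Two points deserve attention.

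First, there is a genuine gap in how you invoke Kenku. Kenku's published table in \cite{kenku85} contains typos: he lists $2^2 \cdot 5^3 \cdot 7^5$ and $7^5 \cdot 2^5$ rather than the correct values $2^3 \cdot 5^3 \cdot 7^5$ and $2^{15} \cdot 7^5$. Quoting Kenku at face value would therefore propagate wrong $j$-invariants into \eqref{eq:intjcartan7}. You need to notice the discrepancy and repair it; the paper does so by cross-checking against Elkies \cite[p.~93, after eq.~(4.37)]{elkies99}, where the correct $j$-invariants for $X_{ns}^+(7)$ are computed.

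Second, your route for the adelic index is workable but more roundabout than necessary. You propose assembling the adelic image from the individual $p$-adic images in \cite{rszb22} and then bounding the entanglement by hand; this requires a nontrivial verification that no extra coincidences among division fields occur beyond Serre's index-$2$ obstruction, and you would also need a justification that the index you compute for one curve with the given $j$-invariant transfers to all quadratic twists — the paper handles this by citing \cite[Corollary~2.3]{zywina15index}, which shows the adelic index depends only on $j(E)$. The paper instead runs Zywina's \texttt{FindOpenImage} algorithm \cite{zywina22} on one curve per $j$-invariant, which returns the full adelic image (hence the index) in a single step and sidesteps the manual entanglement bookkeeping entirely. Both methods should give the same $\{84, 504\}$ and $108$, but \texttt{FindOpenImage} is the cleaner and less error-prone choice, and it is what the paper uses. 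Your observation for part~3 that the fibre product is $X_{ns}^+(15)$ via CRT and that Chen's result applies is correct and is exactly the content of \cite[Corollary~6.5]{chen99} cited in the paper.
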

	
	\begin{proof}
		If $P \in X_{ns}^+(7)(\Q)$ and $j(P) \in \Z$, by \cite{kenku85} we know that either $P$ is a CM point or $j(P)$ belongs to the list \eqref{eq:intjcartan7}.
		Actually, Kenku's list in \cite{kenku85} contains some typos, i.e. he writes $2^2 \cdot 5^3 \cdot 7^5$ and $7^5 \cdot 2^5$ instead of $2^3 \cdot 5^3 \cdot 7^5$ and $2^{15} \cdot 7^5$ in the $j$-invariants column. The correct $j$-invariants are computed, for example, after equation (4.37) in \cite[p. 93]{elkies99}.
		By using the algorithm \verb|FindOpenImage| developed by Zywina in \cite{zywina22} (See the \href{https://github.com/davidzywina/OpenImage}{GitHub repository} accompaining the paper) we can compute the index of the image of the adelic representations attached to elliptic curves with $j$-invariant in the list \eqref{eq:intjcartan7}. Indeed, the index of $\operatorname{Im}\rho_E$ only depends on $j(E)$, as shown in \cite[Corollary 2.3]{zywina15index}.
		The first two $j$-invariants of the list give rise to elliptic curves with $[\GL_2(\widehat{\Z}) : \operatorname{Im}\rho_{E}] = 84$, while for the last two $j$-invariants we have $[\GL_2(\widehat{\Z}) : \operatorname{Im}\rho_{E}] = 504$.
		For the level $9$ case, we know by \cite[Table 5.2]{baran09} that $P$ is either a CM point or $j(P) = 3^3 \cdot 41^3 \cdot 61^3 \cdot 149^3$. We can then compute the index $[\GL_2(\widehat{\Z}) : \operatorname{Im}\rho_{E_P}]$ by using again the algorithm \verb|FindOpenImage|.
		The proofs of part 3 and part 4 can be found in \cite{schoof12} and \cite[Corollary 6.5]{chen99} respectively.
	\end{proof}

	\subsection{An effective variant of Mertens's theorems}

	In Section \ref{subsec:conductor}, we will need the following lemma, which is an effective variant of Mertens's theorems.
	
	\begin{lemma}\label{lemma: refined Kraus lemma}
		Let $N>6$ be a positive integer. We have
		\begin{equation*}\label{eq: refined Kraus lemma}
			\prod_{\substack{{p \mid N} \\ {p \text{ prime}}}} \left(1+\frac{1}{p}\right) < \frac{6e^\gamma}{\pi^2} (1+\log\log N),
		\end{equation*}
		where $\gamma$ is the Euler--Mascheroni constant.
	\end{lemma}
	
	A first result in this direction was given by Kraus in \cite{kraus95}, which bounds the product above with $4(1+\log\log N)$. In the same article \cite{kraus95}, in a note at the end of the paper, he wrote that Serre remarked that one can improve the bound by replacing the constant $4$ with $2.4$. More recently, in \cite[Corollary 2]{dedekindPsi} the authors proved that the product above is bounded by $e^\gamma \log\log N$, where $\gamma$ is the Euler--Mascheroni constant and $e^\gamma \approx 1.78$. Exploiting the results contained in \cite{dedekindPsi}, we show that we can actually replace the constant in Kraus's lemma with $\frac{6e^\gamma}{\pi^2} \approx 1.081$, which is asymptotically optimal (as shown in \cite[Proposition 3]{dedekindPsi}).
	
	\begin{proof}
		First of all, we notice that if the statement holds for a number $N$, then it holds for all the numbers $N'>N$ divisible by the same primes as $N$, hence it suffices to verify the inequality for squarefree numbers. Moreover, if $N=\prod_{i=1}^k p_i$ and $N'=\prod_{i=1}^k q_i$ with $p_i \le q_i$ for every $i$, then
		$$\frac{\prod_{p \mid N} \left(1+\frac{1}{p}\right)}{1 + \log\log N} \ge \frac{\prod_{q \mid N'} \left(1+\frac{1}{q}\right)}{1 + \log\log N'}:$$
		indeed, this is true if and only if
		$$\prod_{i=1}^k \frac{1+1/p_i}{1+1/q_i} \ge \frac{1 + \log\left(\sum_{i=1}^k \log p_i\right)}{1 + \log\left(\sum_{i=1}^k \log q_i\right)},$$
		which is true because $LHS \ge 1$ and $RHS \le 1$. 
		Therefore, it suffices to consider the primorials $N_k = \prod_{i=1}^k p_i$, where $p_i$ is the $i$-th prime number and $k \ge 3$, and the numbers $N$ whose radical is smaller than $7$. In the latter case, it suffices to notice that either $N$ is a power of a prime, and in this case we have
		$$\frac{\prod_{p \mid N} \left(1 + \frac{1}{p}\right)}{1+\log\log N} \le \frac{3}{2(1 + \log\log 7)} < 1 < \frac{6e^\gamma}{\pi^2},$$
		or $N$ has radical equal to $6$, and so we have $N \ge 12$ and
		$$\frac{\prod_{p \mid N} \left(1 + \frac{1}{p}\right)}{1+\log\log N} \le \frac{\frac{3}{2} \cdot \frac{4}{3}}{1 + \log\log 12} < 1.05 < \frac{6e^\gamma}{\pi^2}.$$
		If instead $N$ is the primorial $N_k$, suppose first that $k \ge 2263$ (or equivalently $p_k > 20000$). By \cite[Proposition 4]{dedekindPsi} we have
		\begin{align*}
			\prod_{p \mid N_k} \left(1 + \frac{1}{p}\right) &= \prod_{i=1}^k \left(1 + \frac{1}{p_i}\right) \le \frac{6\exp\left(\gamma + \frac{2}{p_k}\right)}{\pi^2} \left( \log\log N_k + \frac{1.125}{\log p_k} \right) \\
			&< \frac{6e^\gamma}{\pi^2} \left( \log\log N_k + \frac{1.125}{\log p_k} \right) + \frac{6e^\gamma}{\pi^2} \cdot \frac{3}{p_k} \left( \log\log N_k + \frac{1.125}{\log p_k} \right),
		\end{align*}
		where the last inequality comes from the fact that $e^\frac{2}{x} < 1 + \frac{3}{x}$ for $x > 20000$.
		Using the trivial inequality
		$$\log\log N_k < \log k + \log\log p_k < 2\log p_k$$
		and the fact that $p_k > 20000$ we obtain
		\begin{align}
			\begin{split}
				\prod_{p \mid N_k} \left(1 + \frac{1}{p}\right) &< \frac{6e^\gamma}{\pi^2} \left( \log\log N_k + \frac{1.125}{\log p_k} + \frac{6\log p_k}{p_k} + \frac{4}{p_k\log p_k} \right) \\
				&< \frac{6e^\gamma}{\pi^2} \left( \log\log N_k + 0.12 \right),
			\end{split}
		\end{align}
		which is better than the statement of the lemma. We can then test the remaining cases with $2 < k < 2263$; the computation takes less than one second in MAGMA.
	\end{proof}

	\section{Group theory and the level $p^n$}\label{sec:grouptheory}

	This is one of the crucial sections of the article. The aim of this section is to improve some of the results of \cite{zywina11}. We will prove that given an elliptic curve $\faktor{E}{\Q}$ without CM and a prime $p$ such that $\operatorname{Im}\rho_{E,p} \subseteq C_{ns}^+(p)$, in most cases, there exists $n \ge 1$ such that $\operatorname{Im}\rho_{E,p^n} = C_{ns}^+(p^n)$ and $\operatorname{Im}\rho_{E,p^\infty} \supset I + p^n M_{2 \times 2}(\Z_p)$. This classification result will allow us to compute the index $[\GL_2(\Z_p) : \operatorname{Im}\rho_{E,p^\infty}]$ with quite good precision.

	\subsection{Cartan lifts}

	To study the possible images of a $p$-adic Galois representation attached to an elliptic curve, we start by considering a generic subgroup of $\GL_2(\Z_p)$ satisfying some of the usual properties of these images. In particular, we will focus on the Cartan case.
	
	\begin{definition}\label{def: group kers and projs}
		Given a prime $p$ and a subgroup $G < \GL_2(\Z_p)$, for every $n \ge 1$ we define:
		\begin{itemize}
			\item $G(p^n):= G \pmod {p^n} \subseteq \GL_2\left(\faktor{\Z}{p^n\Z}\right)$;
			\item $G_n := \left\lbrace A \in G \mid A \equiv I \pmod {p^n} \right\rbrace$.
		\end{itemize}
	\end{definition}
	
	\begin{remark}
		It is not difficult to notice that $G(p^n) = \faktor{G}{G_n}$.
	\end{remark}
	
	Let $\mathfrak{gl}_2(\F_p)$ be the additive group of $2 \times 2$ matrices with coefficients in $\F_p$ and let $\mathfrak{sl}_2(\F_p)$ be the subgroup of trace $0$ matrices. They are Lie algebras over $\F_p$ when equipped with the usual bracket $[A,B]=AB-BA$.
	
	\begin{definition}\label{def:liealgebra}
		For every $n \ge 1$, we have an injective group homomorphism $\faktor{G_n}{G_{n+1}} \hookrightarrow \mathfrak{gl}_2(\F_p)$, sending $I + p^n A$ to the class of $A$ modulo $p$. We call $\mathfrak{g}_n$ the image of this homomorphism, and $\mathfrak{s}_n := \mathfrak{g}_n \cap \mathfrak{sl}_2(\F_p)$. 
	\end{definition}
	
	Given a group $G < \GL_2(\Z_p)$, throughout this section, we will call $S:=G \cap \operatorname{SL}_2(\Z_p)$. Recall that $\det(I+xA) \equiv 1 + x \operatorname{tr}A \pmod {x^2}$ as polynomials in $x$. Therefore, if $\det(G_n) \subseteq 1 + p^{n+1}\Z_p$, then $\mathfrak{g}_n \subseteq \mathfrak{sl}_2(\F_p)$. In particular, if $G=S$, then $\mathfrak{g}_n \subseteq \mathfrak{sl}_2(\F_p)$ for all $n \ge 1$, and so $\mathfrak{s}_n$ is the image of $\faktor{S_n}{S_{n+1}}$ in $\mathfrak{gl}_2(\F_p)$.
	
	As shown in \cite[Lemma 2.2]{zywina11}, the groups $\mathfrak{g}_n$ have the following properties.
	
	\begin{lemma}[Zywina]\label{lemma:gncontainment}
		Let $G < \GL_2(\Z_p)$ be a closed subgroup and let $n \ge 1$ be an integer.
		\begin{enumerate}
			\item If $p>2$, $\mathfrak{g}_n \subseteq \mathfrak{g}_{n+1}$. If $p=2$, the same statement holds for $n \ge 2$.
			\item If $\mathfrak{g}_n = \mathfrak{gl}_2(\F_p)$, then $G \supseteq I + p^nM_2(\Z_p)$.
			\item If $\mathfrak{g}_n = \mathfrak{g}_{2n}$, then $\mathfrak{g}_n$ is a Lie subalgebra of $\mathfrak{gl}_2(\F_p)$.
			\item Let $H:=[G,G]$ be the commutator subgroup of $G$, and call $\mathfrak{h}_n$ its filtration. If $\mathfrak{g}_n = \mathfrak{gl}_2(\F_p)$, then $\mathfrak{h}_{2n+e} = \mathfrak{sl}_2(\F_p)$, where $e$ equals $0$ or $1$ if $p$ is odd or even respectively.
		\end{enumerate}
	\end{lemma}
	
	We write here a short proof of the lemma for convenience.
	
	\begin{proof}
		Take any $B_1, B_2, B_3 \in \mathfrak{g}_n$. There exist $A_i \in M_2(\Z_p)$ such that $A_i = B_i \pmod p$ and $g_i := I + p^nA_i \in G$ for $i=1,2,3$.
		\begin{enumerate}
			\item We have $$g_1^p = (I+p^nA_1)^p \equiv I + p^{n+1}A_1 + \binom{p}{2} p^{2n} A^2 \pmod {p^{3n}}.$$
			When $p$ is odd, we this implies that $I + p^{n+1}A_1$ is congruent to $(I+p^nA_1)^p$ modulo $p^{n+2}$, and it therefore belongs to $G(p^{n+2})$. In particular, we have $B_1 \in \mathfrak{g}_{n+1}$. The case $p=2$ follows the same argument, noting that $2n \ge n+2$ for $n \ge 2$.
			\item By part (1), for every $i \ge n$ we have $\mathfrak{g}_i = \mathfrak{gl}_2(\F_p)$. In particular, for all $i \ge n$ we have $|G(p^i)| = |G(p^n)| p^{4(i-n)}$, and since $G$ is closed this is equivalent to say that $G \supseteq I + p^nM_2(\Z_p)$.
			\item Consider the commutator $g = g_1 g_2 g_1^{-1} g_2^{-1}$. This belongs to $[G,G] \subseteq G$, and we have
			\begin{align*}
				g &= (I+p^nA_1)(I+p^nA_2)(I+p^nA_1)^{-1}(I+p^nA_2)^{-1} \\
				&= \Big((I+p^nA_2)(I+p^nA_1) + p^{2n}(A_1A_2 - A_2A_1)\Big)(I+p^nA_1)^{-1}(I+p^nA_2)^{-1} \\
				&= I + p^{2n}(A_1A_2 - A_2A_1)(I+p^nA_1)^{-1}(I+p^nA_2)^{-1} \\
				&\equiv I + p^{2n}(A_1A_2 - A_2A_1) \mod {p^{3n}}.
			\end{align*}
			Therefore, $[B_1, B_2] = B_1B_2 - B_2B_1 \equiv A_1A_2 - A_2A_1 \pmod p$ is an element of $\mathfrak{g}_{2n}$. Since $\mathfrak{g}_n = \mathfrak{g}_{2n}$, we deduce that $\mathfrak{g}_n$ is closed under the Lie bracket $[ \cdot , \cdot]$. 
			\item Define $B_1 = \footnotesize \begin{pmatrix} 0 & 1 \\ 0 & 0 \end{pmatrix}$, $B_2 = \footnotesize \begin{pmatrix} 0 & 0 \\ 1 & 0 \end{pmatrix}$, and $B_3 = \footnotesize \begin{pmatrix} 1 & 0 \\ 0 & -1 \end{pmatrix}$: they generate $\mathfrak{sl}_2(\F_p)$. As in part 3 of the proof, the commutator $h = g_ig_jg_i^{-1}g_j^{-1}$ belongs to $H \cap (I + p^{2n}M_2(\Z_p))$ and $h \equiv I + p^{2n} [B_i, B_j] \pmod {p^{2n+1+e}}$. Moreover, we have $[B_1, B_2] = B_3$, $[B_2, B_3] = 2B_2$, and $[B_3, B_1] = 2B_1$. The conclusion follows.
			\qedhere
		\end{enumerate}
	\end{proof}
	
	\begin{definition}\label{def:cartan}
		Let $p$ be an odd prime and let $\varepsilon$ be a fixed integer whose reduction modulo $p$ represents a quadratic non-residue in $\F_p^\times$. We define the following subgroups of $\GL_2(\Z_p)$:
		\begin{align*}
			\text{Borel:} \qquad &B:= \left\lbrace \begin{pmatrix} a & b \\ 0 & c \end{pmatrix} \,\middle|\, a,b,c \in \Z_p, \ a,c \in \Z_p^\times \right\rbrace, \\
			\text{split Cartan:} \qquad &C_{sp}:= \left\lbrace \begin{pmatrix} a & 0\\ 0 & b \end{pmatrix} \,\middle|\, a,b \in \Z_p^\times \right\rbrace, \\
			\text{non-split Cartan:}  \qquad &C_{ns}:= \left\lbrace \begin{pmatrix} a & \varepsilon b \\ b & a \end{pmatrix} \,\middle|\, a,b \in \Z_p, \ (a,b) \not\equiv (0,0) \mod p \right\rbrace.
		\end{align*}
		Define also $C_{sp}^+ := C_{sp} \cup \begin{pmatrix} 0 & 1 \\ 1 & 0 \end{pmatrix} C_{sp}$ and $C_{ns}^+:= C_{ns} \cup \begin{pmatrix} 1 & 0 \\ 0 & -1 \end{pmatrix} C_{ns}$. These are the normalisers of $C_{sp}$ and $C_{ns}$ respectively.
	\end{definition}
	
	Throughout this section, we will indicate with $C$ a generic Cartan subgroup, which is either $C_{sp}$ or $C_{ns}$. Moreover, we will assume that $p$ is an odd prime.
	
	\begin{definition}
		Let $p$ be an odd prime and let $G < \GL_2(\Z_p)$ be a subgroup. We will say that $G$ is an \emph{N-Cartan lift} if it satisfies the following properties:
		\begin{itemize}
			\item $G$ is closed;
			\item $\det(G) = \Z_p^\times$;
			\item $G(p)$ is contained in the normaliser of a Cartan subgroup, but it is not contained in the Cartan subgroup itself;
			\item $G(p)$ contains an element of the Cartan subgroup which is not a multiple of the identity.
		\end{itemize}
		We will say that $G$ is a \emph{split N-Cartan lift} or a \emph{non-split N-Cartan lift} if $G(p)$ is contained in the normaliser of a split or non-split Cartan respectively.
	\end{definition}
	
	Let $G < \GL_2(\Z_p)$ be a subgroup such that $p \nmid |G(p)|$. We notice that for every $n \ge 1$, the group $G$ acts on $G_n$ by conjugation, and hence also on the quotient $\faktor{G_n}{G_{n+1}}$. This action factors through the group $G(p)$. In particular, this implies that $\mathfrak{g}_n$ is an $\F_p[G(p)]$-module, with $G(p)$ acting by conjugation, and hence it is a submodule of $\mathfrak{gl}_2(\F_p)$. The following result by Zywina (\cite[Lemma 2.4]{zywina11}) describes the decomposition of $\mathfrak{gl}_2(\F_p)$ in irreducible $\F_p[G(p)]$-submodules.
	
	\begin{lemma}[Zywina]\label{lemma:subrepdecomposition}
		Let $G < \GL_2(\Z_p)$ be an N-Cartan lift with respect to the Cartan group $C$. Suppose that there exists an element $\alpha \in G(p) \cap C(p)$ whose image in $\operatorname{PGL}_2(\F_p)$ has order greater than $2$. 
		Then, the $\F_p[G(p)]$-module $\mathfrak{gl}_2(\F_p)$ decomposes in irreducible non-isomorphic submodules as $\mathfrak{gl}_2(\F_p) = V_1 \oplus V_2 \oplus V_3$, where $V_1 = \F_p \cdot I$, $V_2 = \{ A \in C(p) \mid \operatorname{tr}(A) = 0 \} \cup \{0\}$, and $V_3$ has dimension $2$ over $\F_p$ and it is not a Lie subalgebra of $\mathfrak{gl}_2(\F_p)$.
		Moreover:
		\begin{itemize}
			\item If $G$ is a non-split N-Cartan lift, then
			$$V_1=\F_p\cdot I, \quad V_2=\F_p \begin{pmatrix} 0 & \varepsilon \\ 1 & 0 \end{pmatrix}, \quad V_3=\F_p \begin{pmatrix} 1 & 0 \\ 0 & -1 \end{pmatrix} \oplus \F_p \begin{pmatrix} 0 & \varepsilon \\ -1 & 0 \end{pmatrix}.$$
			\item If $G$ is a split N-Cartan lift, then
			$$V_1=\F_p\cdot I, \quad V_2=\F_p \begin{pmatrix} 1 & 0 \\ 0 & -1 \end{pmatrix}, \quad V_3=\F_p \begin{pmatrix} 0 & 1 \\ 1 & 0 \end{pmatrix} \oplus \F_p \begin{pmatrix} 0 & 1 \\ -1 & 0 \end{pmatrix}.$$
		\end{itemize}
	\end{lemma}
	
	We write here Zywina's proof for convenience.
	
	\begin{proof}
		We notice that $V_1 := \F_p \cdot I$ and $V_2 := \{ A \in C(p) \mid \operatorname{tr}(A) = 0 \}$ are stable under conjugation by every matrix in $C^+(p)$, and hence by $G(p)$. In particular, there exists a $2$-dimensional $\F_p[G(p)]$-module $V_3$ such that $\mathfrak{gl}_2(\F_p) = V_1 \oplus V_2 \oplus V_3$. Notice that $\dim V_3$ is strictly greater than $\dim V_1$ and $\dim V_2$, and that $G(p)$ acts trivially on $V_1$ and non-trivially on $V_2$ (because $G(p) \subseteq C^+(p)$ but $G(p) \subsetneq C(p)$), hence $V_1, V_2, V_3$ are pairwise non-isomorphic.
		We now show that $V_3$ is irreducible. We notice that the action of $G(p)$ on $\mathfrak{gl}_2(\F_p)$ factors through a faithful action of $H := \faktor{G(p)}{G(p) \cap \F_p^\times}$, hence, to show that $V_3$ is irreducible, it suffices to show that $H$ is not abelian.
		Since the order of $\alpha$ in $H$ is greater than $2$, we notice that $\operatorname{tr}(\alpha) \ne 0$. On the other hand, if $H$ were abelian, we would have an element $m \in G(p) \setminus C(p)$ such that $m \alpha m^{-1} \alpha^{-1} = \zeta \in \F_p^\times$. In particular, $\alpha$ and $m \alpha m^{-1} = \zeta\alpha$ are both roots of the polynomial $t^2 - \operatorname{tr}(\alpha) t + \det(\alpha)$, and so $0 \ne \operatorname{tr}(\alpha) \cdot I = (1+\zeta) \alpha$. Taking the trace we obtain $2\operatorname{tr}(\alpha) = (1+\zeta) \operatorname{tr}(\alpha)$, and since $\operatorname{tr}(\alpha) \ne 0$ we obtain that $\zeta = 1$, and therefore $\alpha$ is a scalar matrix. However, this is impossible, as $\alpha$ would have order $1$ in $H$. \\
		By Definition \ref{def:cartan}, we can verify that the descriptions of $V_3$ given in the split and non-split cases are stable under the action of $C^+(p)$. Moreover, $V_3$ is never a Lie subalgebra of $\mathfrak{gl}_2(\F_p)$, since in the split case we have $\footnotesize \left[\begin{pmatrix} 0 & 1 \\ -1 & 0 \end{pmatrix}, \begin{pmatrix} 0 & 1 \\ 1 & 0 \end{pmatrix}\right] = \begin{pmatrix} 2 & 0 \\ 0 & -2 \end{pmatrix}$, while in the non-split case we have $\footnotesize \left[\begin{pmatrix} 1 & 0 \\ 0 & -1 \end{pmatrix}, \begin{pmatrix} 0 & \varepsilon \\ -1 & 0 \end{pmatrix}\right] = \begin{pmatrix} 0 & \varepsilon \\ 1 & 0 \end{pmatrix}$.
	\end{proof}
	
	\begin{remark}\label{rmk:cartanliealg}
		If $G=C$ is a Cartan subgroup, by a direct computation it is easy to check that for every $n \ge 1$ we have $\mathfrak{g}_n = V_1 \oplus V_2$. Similarly, if $G=C^+$, since $[C^+:C]=2$, we have $\mathfrak{g}_n = V_1 \oplus V_2$.
	\end{remark}
	
	\begin{remark}\label{rmk:V3decomposes}
		When every element in the image of $G(p) \cap C(p)$ in $\operatorname{PGL}_2(\F_p)$ has order $1$ or $2$, one can verify that $V_3$ decomposes into two irreducible submodules. In the split case, $V_3$ decomposes as $\footnotesize\F_p \begin{pmatrix} 0 & 1 \\ 1 & 0 \end{pmatrix} \oplus \F_p \begin{pmatrix} 0 & 1 \\ -1 & 0 \end{pmatrix}$. In the non-split case, $V_3$ decomposes as $\footnotesize\F_p \begin{pmatrix} 1 & 0 \\ 0 & -1 \end{pmatrix} \oplus \F_p \begin{pmatrix} 0 & \varepsilon \\ -1 & 0 \end{pmatrix}$.
	\end{remark}
	
	The following lemma is proved by Zywina in \cite[Lemma 2.5]{zywina11}.
	
	\begin{lemma}[Zywina]\label{lemma:ovvissimoSL2}\label{cor:v1inliealg}
		Suppose $G < \GL_2(\Z_p)$ is a closed subgroup such that $\det G \supseteq 1+p\Z_p$ and $p \nmid |G(p)|$. Then, for every $n \ge 1$ we have $\operatorname{tr}(\mathfrak{g}_n) = \F_p$. In particular, if $G < \GL_2(\Z_p)$ is an N-Cartan lift, then $V_1 \subseteq \mathfrak{g}_n$ for every $n \ge 1$.
	\end{lemma}
	
	\begin{proof}
		Since for every $A \in M_2(\Z_p)$ we have $\det(I + p^nA) \equiv I + p^n \operatorname{tr}A \pmod {p^{n+1}}$, we notice that $g \in G_{n}$ has $\det(g) \equiv 1 \pmod {p^{n+1}}$ if and only if $g = I + p^n A$ with $\operatorname{tr}A \equiv 0 \pmod p$. Since $\det G \supseteq 1 + p\Z_p$, there exists $g_0 \in G$ with $\det g_0 \not\equiv 1 \pmod {p^2}$. Using the fact that $p \nmid |G(p)|$, we notice that $h = g_0^{p^{n-1}|G(p)|}$ is an element of $G_n$ such that $\det h \not\equiv 1 \pmod {p^{n+1}}$. In particular, this implies that $\operatorname{tr}(\mathfrak{g}_n) \ne 0$, and so $\operatorname{tr}(\mathfrak{g}_n) = \F_p$.
		The last statement follows from Lemma \ref{lemma:subrepdecomposition}, noting that $\operatorname{tr}(V_2 \oplus V_3) = 0$, and hence $V_1 \subseteq \mathfrak{g}_n$.
	\end{proof}
	
	\hspace{-2pt}The following lemma shows that if $G$ is an N-Cartan lift, then $\dim\mathfrak{g}_n$ is almost never equal to $3$.
	
	\begin{lemma}\label{lemma:dimg}
		Let $G$ be an N-Cartan lift. Suppose that the image of $G(p) \cap C(p)$ in $\operatorname{PGL}_2(\F_p)$ contains an element of order greater than $2$.
		\begin{enumerate}
			\item If $\dim\mathfrak{g}_1=2$, then for every $n>1$ we have $\dim\mathfrak{g}_n \in \{2,4\}$, and if $\dim\mathfrak{g}_n=4$ for some $n$, then for every $m>n$ the equality $\dim\mathfrak{g}_m=4$ holds.
			\item If $\dim\mathfrak{g}_1=3$, then $\dim\mathfrak{g}_n=4$ for every $n>1$.
		\end{enumerate}
	\end{lemma}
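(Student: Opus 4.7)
The plan is to combine three ingredients already available: the $\F_p[G(p)]$-module decomposition $\mathfrak{gl}_2(\F_p) = V_1 \oplus V_2 \oplus V_3$ from Lemma \ref{lemma:subrepdecomposition}, the containment $\mathfrak{g}_n \subseteq \mathfrak{g}_{n+1}$ from Lemma \ref{lemma:gncontainment}, and the standard commutator identity $[\mathfrak{g}_i, \mathfrak{g}_j] \subseteq \mathfrak{g}_{i+j}$. The first step is to enumerate the $\F_p[G(p)]$-submodules of $\mathfrak{gl}_2(\F_p)$ containing $V_1$: since $V_2$ is $1$-dimensional irreducible and $V_3$ is $2$-dimensional irreducible, the only options are $V_1$, $V_1 \oplus V_2$, $V_1 \oplus V_3$ and $\mathfrak{gl}_2(\F_p)$, of dimensions $1, 2, 3, 4$ respectively. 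By Corollary \ref{cor:v1inliealg}, each $\mathfrak{g}_n$ belongs to this list.

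Given this, part 1 is almost formal. The hypothesis forces $\mathfrak{g}_1 = V_1 \oplus V_2$, and monotonicity gives $\mathfrak{g}_n \supseteq V_1 \oplus V_2$ for all $n \geq 1$; the only modules on the list containing $V_1 \oplus V_2$ are $V_1 \oplus V_2$ itself and $\mathfrak{gl}_2(\F_p)$, so $\dim \mathfrak{g}_n \in \{2, 4\}$. The persistence clause is immediate: if some $\mathfrak{g}_n$ has dimension $4$ then $\mathfrak{g}_n = \mathfrak{gl}_2(\F_p)$, and $\mathfrak{g}_m \supseteq \mathfrak{g}_n$ for $m \geq n$ gives $\mathfrak{g}_m = \mathfrak{gl}_2(\F_p)$ too.

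Part 2 is the real content: starting from $\mathfrak{g}_1 = V_1 \oplus V_3$, I need to exclude the possibility $\mathfrak{g}_n = V_1 \oplus V_3$ for some $n > 1$. I would appeal to the commutator identity, whose proof is a direct computation: for $X = I + p^i A$ and $Y = I + p^j B$ one checks $XYX^{-1}Y^{-1} \equiv I + p^{i+j}[A, B] \pmod{p^{i+j+1}}$, so that $[\mathfrak{g}_i, \mathfrak{g}_j] \subseteq \mathfrak{g}_{i+j}$. Applied with $i = j = 1$, this gives $[\mathfrak{g}_1, \mathfrak{g}_1] \subseteq \mathfrak{g}_2$, and since $V_1$ is central the left-hand side equals $[V_3, V_3]$. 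By the final clause of Lemma \ref{lemma:subrepdecomposition}, $V_3$ is not a Lie subalgebra, so $[V_3, V_3]$ is a non-zero subspace not contained in $V_3$; as commutators have trace zero it lies in $\mathfrak{sl}_2(\F_p)$, while $V_1$ is spanned by the identity and thus has non-zero trace, so the extra component of $[V_3, V_3]$ must sit in $V_2$. Hence $V_2 \subseteq \mathfrak{g}_2$, which together with $V_1 \oplus V_3 \subseteq \mathfrak{g}_2$ forces $\mathfrak{g}_2 = \mathfrak{gl}_2(\F_p)$; monotonicity then propagates this to $\mathfrak{g}_n$ for every $n > 1$. The only non-formal step is the bracket bookkeeping, but the explicit bases for $V_3$ recorded in Lemma \ref{lemma:subrepdecomposition} make a direct verification straightforward in either Cartan case as a backup if the trace/submodule argument above is deemed insufficient.
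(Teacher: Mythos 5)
Your proposal is correct and follows essentially the same route as the paper. For part~1 the reasoning is identical: monotonicity of the $\mathfrak{g}_n$ and the submodule classification from Lemma~\ref{lemma:subrepdecomposition} exclude the dimension-$3$ case once $\mathfrak{g}_1 = V_1 \oplus V_2$. For part~2 the paper packages the key step as a citation to Zywina's Lemma~2.2(iv), which gives the Lie-algebra structure on $\mathfrak{g}_n$, and runs a proof by contradiction ($\mathfrak{g}_2 = \mathfrak{g}_1$ would force $V_3$ to be a Lie subalgebra); you unpack that citation into the explicit commutator identity $[\mathfrak{g}_i,\mathfrak{g}_j]\subseteq\mathfrak{g}_{i+j}$ and run it directly to produce $V_2 \subseteq \mathfrak{g}_2$. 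The logical content is the same, and your trace/centrality bookkeeping for locating the extra component in $V_2$ is sound, so the only real difference is that your write-up is self-contained where the paper delegates to a reference.
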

	
	\begin{proof}
		To prove the first part, it suffices to notice that by Lemma \ref{lemma:subrepdecomposition} and Lemma \ref{cor:v1inliealg} we have $\mathfrak{g}_1 = V_1 \oplus V_2$, where $V_1$, $V_2$ and $V_3$ are defined in Lemma \ref{lemma:subrepdecomposition}. Using Lemma \ref{lemma:gncontainment}(1) we see that for every $n \ge 1$ we have either $\mathfrak{g}_n = V_1 \oplus V_2$ or $\mathfrak{g}_m = \mathfrak{gl}_2$ for every $m \ge n$, and hence the conclusion follows. \\
		To prove the second part, we similarly prove that $\mathfrak{g}_1 = V_1 \oplus V_3$ and if $\dim\mathfrak{g}_2<4$, then $\mathfrak{g}_2=\mathfrak{g}_1$ (by Lemma \ref{lemma:gncontainment}(1)). By Lemma \ref{lemma:gncontainment}(3), this implies that $\mathfrak{g}_1$ is a Lie subalgebra of $\mathfrak{gl}_2(\F_p)$, hence also $\mathfrak{s}_1=\mathfrak{g}_1 \cap \mathfrak{sl}_2(\F_p) = V_3$ is a Lie subalgebra of $\mathfrak{gl}_2(\F_p)$, which contradicts Lemma \ref{lemma:subrepdecomposition}.
	\end{proof}
	
	The following proposition is a stronger version of \cite[Proposition 1.2]{zywina11}.
	
	\begin{proposition}\label{prop:cartanzywina}
		Let $G < \GL_2(\Z_p)$ be an N-Cartan lift with respect to the Cartan group $C$ such that $\dim \mathfrak{g}_1 > 1$, and suppose that there exists an element in  $G(p) \cap C(p)$ whose image in $\operatorname{PGL}_2(\F_p)$ has order greater than $2$. For every integer $n \ge 1$ we have the following.
		\begin{enumerate}
			\item If $\dim \mathfrak{g}_n = 2$, then $G(p^n)$ is contained in $C^+(p^n)$ up to conjugation and $[C^+(p^n) : G(p^n)] = [C^+(p) : G(p)]$;
			\item If $\dim\mathfrak{g}_n=3$, then $n=1$ and $G \supset I + p^2 M_{2 \times 2}(\Z_p)$;
			\item If $\dim \mathfrak{g}_n = 4$, then $G \supset I + p^{n} M_{2 \times 2}(\Z_p)$.
		\end{enumerate}
	\end{proposition}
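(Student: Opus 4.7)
I would handle the three parts in the order 3, 2, 1, with Part 1 carrying the main content.

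For Part 3, Lemma~\ref{lemma:gncontainment} gives $\mathfrak{g}_k \subseteq \mathfrak{g}_{k+1}$, so $\dim\mathfrak{g}_n = 4$ forces $\mathfrak{g}_k = \mathfrak{gl}_2(\F_p)$ for every $k \ge n$. The inclusion $G_k/G_{k+1} \hookrightarrow (I + p^k M_{2\times 2}(\Z_p))/(I + p^{k+1} M_{2\times 2}(\Z_p))$ is therefore an equality of sets of size $p^4$ at each level $k \ge n$, and since $G$ is closed a standard induction-and-limit argument yields $G_k = I + p^k M_{2\times 2}(\Z_p)$; in particular $G \supseteq I + p^n M_{2\times 2}(\Z_p)$. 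Part 2 follows at once: by Lemma~\ref{lemma:dimg} one has $\dim\mathfrak{g}_m \in \{2,4\}$ for $m > 1$, so $\dim\mathfrak{g}_n = 3$ forces $n=1$ and $\dim \mathfrak{g}_1 = 3$; then Lemma~\ref{lemma:dimg}(2) gives $\dim\mathfrak{g}_2 = 4$, and Part 3 applied at level $2$ yields $G \supseteq I + p^2 M_{2\times 2}(\Z_p)$.

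For Part 1, combining $\dim\mathfrak{g}_n = 2$ with Lemma~\ref{lemma:gncontainment}, the hypothesis $\dim \mathfrak{g}_1 > 1$, and Corollary~\ref{cor:v1inliealg} gives $\dim \mathfrak{g}_k = 2$ and $V_1 \subseteq \mathfrak{g}_k$ for all $1 \le k \le n$. Since $V_3$ is irreducible of dimension $2$ in the decomposition of Lemma~\ref{lemma:subrepdecomposition}, the unique $2$-dimensional $\F_p[G(p)]$-submodule of $\mathfrak{gl}_2(\F_p)$ containing $V_1$ is $V_1 \oplus V_2$, which coincides with the mod-$p$ reduction of the Lie algebra $\mathfrak{c}$ of $C$. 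Hence $\mathfrak{g}_k = V_1 \oplus V_2$ for every $1 \le k \le n$.

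The main step is to upgrade this graded information into the containment $G(p^n) \subseteq C^+(p^n)$, which I will allow to be up to conjugation by an element of the preimage of $C^+(p)$ in $\GL_2(\Z_p)$ (such a conjugation preserves each $\mathfrak{g}_k$, as one checks by expanding the conjugation to leading order in $p$). My plan has two pieces. First, Proposition~\ref{prop:groupteichmuller} produces a Schur--Zassenhaus decomposition $G = H \ltimes G_1$ with $H$ a finite subgroup lifting $G(p)$; since any two lifts of $G(p)$ inside the preimage of $C^+(p)$ are conjugate and $C^+$ itself contains such a lift, after conjugation I may assume $H \subseteq C^+$. Second, I would show $G_1(p^n) \subseteq C_1(p^n)$ via the $p$-adic Lie correspondence (available because $p$ is odd): setting $\mathfrak{G} := \log G_1$ and $\mathfrak{G}_k := \mathfrak{G} \cap p^k M_{2\times 2}(\Z_p)$, a downward induction from $k = n+1$ using at each step the identification $\mathfrak{G}_k/\mathfrak{G}_{k+1} \cong \mathfrak{g}_k = V_1 \oplus V_2$ yields $\mathfrak{G} \subseteq p\mathfrak{c} + p^{n+1} M_{2\times 2}(\Z_p)$; applying $\exp$ and the Baker--Campbell--Hausdorff formula (whose correction terms land in $p^{n+1} M_{2\times 2}(\Z_p)$) gives $G_1(p^n) \subseteq C_1(p^n)$. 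A direct count, $|G_1(p^n)| = \prod_{k=1}^{n-1}|\mathfrak{g}_k| = p^{2(n-1)} = |C_1(p^n)|$, upgrades this to an equality. Combined with $H \subseteq C^+$ and the normality of $C_1(p^n)$ in $C^+(p^n)$ this gives $G(p^n) \subseteq C^+(p^n)$, and
$$[C^+(p^n) : G(p^n)] = \frac{|C^+(p)|\cdot |C_1(p^n)|}{|G(p)|\cdot |G_1(p^n)|} = [C^+(p) : G(p)].$$

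The principal technical obstacle is the Lie-theoretic containment $\mathfrak{G} \subseteq p\mathfrak{c} + p^{n+1} M_{2\times 2}(\Z_p)$: the successive approximation must be set up so that at each stage the ``error'' is absorbed into $\mathfrak{G}_{k+1}$ rather than into an arbitrary element of $p^{k+1} M_{2\times 2}(\Z_p)$. This is delicate because $\mathfrak{G}$ need not be a rank-$2$ $\Z_p$-submodule of $M_{2\times 2}(\Z_p)$ (the higher graded pieces $\mathfrak{g}_k$ for $k > n$ may well be larger than $V_1 \oplus V_2$), so one must carefully keep track of the filtration by $\mathfrak{G}_k$ rather than by $p^k M_{2\times 2}(\Z_p)$.
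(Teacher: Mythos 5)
Your handling of Parts~2 and~3 is fine and matches the paper (Part~3 is exactly Zywina's \cite[Lemma~2.2(ii)]{zywina11}, and Part~2 reduces to Part~3 via Lemma~\ref{lemma:dimg} just as you describe). The difficulty is entirely in Part~1, where your route diverges from the paper's and has two genuine gaps.

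\textbf{The downward induction does not close.} You correctly identify the ``principal technical obstacle,'' but it is not merely delicate --- the induction as stated is broken. Knowing $\mathfrak{G}_k/\mathfrak{G}_{k+1} \cong V_1 \oplus V_2$ only tells you that for $X \in \mathfrak{G}_k$ one can write $X = p^k Y + p^{k+1}Z$ with $Y \in \mathfrak{c}$, but the error term $p^{k+1}Z = X - p^kY$ is \emph{not} an element of $\mathfrak{G}_{k+1}$ (since $p^kY$ is generally not in $\mathfrak{G}$), so you cannot feed it back into the filtration. The repair is to abandon the filtration and instead decompose $\mathfrak{G}$ into $H$-isotypic pieces: because $p \nmid |H|$ and $V_1, V_2, V_3$ are pairwise inequivalent, the lattice $M_{2\times 2}(\Z_p)$ splits as $\widetilde{V_1} \oplus \widetilde{V_2} \oplus \widetilde{V_3}$ over $\Z_p[H]$, the $\Z_p[H]$-submodule $\mathfrak{G}$ inherits this splitting, and $\mathfrak{G} \cap p\widetilde{V_3}$ must be $p^a\widetilde{V_3}$ with $a \ge n+1$ since $\mathfrak{g}_k$ has no $V_3$-component for $k \le n$. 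This is essentially ``Schur--Zassenhaus for modules,'' not the filtration argument you sketch, and it is the missing ingredient.

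\textbf{You prove a strictly weaker statement.} Even granting the Lie-theoretic containment, you need $\widetilde{V_1} \oplus \widetilde{V_2} = \mathfrak{c}$, which holds only when $H \subseteq C^+$. You arrange this by conjugating $G$ by some $\gamma$ with $\bar\gamma \in C^+(p)$. But then the conclusion is $G(p^n) \subseteq \gamma C^+(p^n)\gamma^{-1}$, a \emph{conjugate} of $C^+(p^n)$, not $C^+(p^n)$ itself. The Proposition asserts literal containment, and this is used: in the proof of Theorem~\ref{thm:cartantower} the literal containment $G(p^{n-1}) \subseteq C^+(p^{n-1})$ from Proposition~\ref{prop:cartanzywina} is what lets one define $\Gamma = \{A : A \pmod{p^{n-1}} \in C^+(p^{n-1})\}$ and verify that the later conjugating element preserves $C_1(p^n)$. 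The paper's own proof of Part~1 sidesteps both issues with a more elementary argument: it exhibits $H := S_1 \pmod{p^{n+1}}$ as a cyclic $p$-group generated by $h = I + pA$ with $A \pmod p \in V_2$, uses \cite[Lemma~2.1(iv)]{zywina11} to place $H \subseteq C(p^{n+1})$, shows $G(p^{n+1})$ normalises $C_1(p^{n+1})$, and reads off directly that $G(p^n)$ normalises $C(p^n)$ and hence lies in $C^+(p^n)$ --- no lifting, no Lie logarithm, and no conjugation in the conclusion.
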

	
	Before proving it, we need the following lemma, which is a simple version \cite[Lemma 2.1]{zywina11}.
	
	\begin{lemma}\label{lemma: zywina 2.1(iv)}
		Let $\alpha \in \GL_2\left(\Z_p\right)$ be an element such that $\operatorname{tr}(\alpha)^2 - 4\det(\alpha) \not\equiv 0 \pmod p$.
		Let $n$ be a positive integer, and let $H$ be a cyclic subgroup of $\GL_2\left(\faktor{\Z}{p^n\Z}\right)$ generated by a matrix of the form $h = I + p^iA$, with $1 \le i < n$.
		\begin{itemize}
			\item there exists $\beta \in \GL_2\left(\Z_p\right)$ such that $\beta\alpha\beta^{-1} \in C$ for some Cartan group $C$.
			\item Suppose that $\alpha \in C$ and $A \mod p \in C(p)$. If $H$ is stable under conjugation by $\alpha \mod p^n$, then $H \subseteq C(p^n)$.
		\end{itemize}
	\end{lemma}
	
	\begin{proof}
		Set $\Delta := \operatorname{tr}(\alpha)^2 - 4\det(\alpha)$. This is the discriminant of the characteristic polynomial of $\alpha$. Since it is not $0$ modulo $p$, the reduction of $\alpha$ modulo $p$ has distinct eigenvalues in $\overline{\F}_p$, and hence it is diagonalisable. The same must hold in $\Z_p$. Notice that if $\Delta = b^2$ is a square, then the diagonal form of $\alpha$ is $\footnotesize \beta = \begin{pmatrix} (\operatorname{tr}(\alpha)+b)/2 & 0 \\ 0 & (\operatorname{tr}(\alpha)-b)/2 \end{pmatrix} \in C_{sp}$, because it has the same eigenvalues. If instead $\Delta = \varepsilon b^2$ is not a square, $\alpha$ has the same diagonal form as $\footnotesize \beta = \begin{pmatrix} \operatorname{tr}(\alpha)/2 & \varepsilon b \\ b & \operatorname{tr}(\alpha)/2 \end{pmatrix} \in C_{ns}$. In particular, these means that $\alpha$ and $\beta$ are conjugated over $\Q_p$. We now show that they are actually conjugated over $\Z_p$. We prove it in the split case; the non-split case is proved in the same way after passing to the quadratic unramified extension of $\Q_p$. Consider two eigenvectors $e_1, e_2$ for the two eigenvalues of the matrix $\alpha$. We can assume that $e_1, e_2 \in \Z_p^2$ with at least one invertible coordinate. If we take the matrix $M=( e_1 , e_2 )$, we have that $\alpha = M \beta M^{-1}$. Reducing modulo $p$, we see that $e_1, e_2$ are eigenvectors for $\alpha \mod p$, and since $\alpha \mod p$ is diagonalisable we obtain that $M \mod p$ must be invertible. This means that $\det M \in \Z_p^\times$, and hence that $M^{-1}$ also has coefficients in $\Z_p$. \\
		We now prove the second part. Since $(I+p^iA)^k \equiv 0 \pmod {p^{n}}$ if and only if $k \equiv 0 \pmod {p^{n-i}}$, the matrix $h$ has order $p^{i}$ in $G(p^{n})$, and hence $|H| = p^{n-i}$. If we identify $\alpha$ with its reduction modulo $p^n$, by hypothesis, $\alpha h \alpha^{-1}$ is another generator of $H$, hence there exists a unique $m \in \left(\faktor{\Z}{p^{n-i}\Z}\right)^\times$ such that $\alpha h \alpha^{-1} = h^m$. Since $A \pmod p \in C(p)$, we have $\alpha A \alpha^{-1} \equiv A \pmod p$, and $A$ is non-zero. In particular, we have
		$$I + p^iA \equiv \alpha h \alpha^{-1} = h^m \equiv I + mp^i A \pmod {p^{i+1}},$$
		and so $m \equiv 1 \pmod p$. This means that the conjugation by $\alpha$ gives a group automorphism of $H$ with order dividing $p^{n-i-1}$, hence $\gamma := \alpha^{p-i-1}$ commutes with every element of $H$. Since the order of $\alpha \pmod p$ is coprime with $p$, we still have $\operatorname{tr}(\beta)^2 - 4 \det(\beta) \not\equiv 0 \pmod p$, and one can verify that every element in $C(p^n)$ with this property has centraliser equal to $C(p^n)$. We then conclude that $H \subseteq C(p^n)$.
	\end{proof}
	
	\begin{proof}[\textbf{Proof of Proposition \ref{prop:cartanzywina}}]
		Set $S=G \cap \operatorname{SL}_2(\Z_p)$. The proof of this proposition follows that of \cite[Proposition 2.3]{zywina11}.
		If $\dim \mathfrak{g}_n = 3$, by Lemma \ref{lemma:dimg} we know that $n=1$ and $\dim \mathfrak{g}_2 = 4$, so by Lemma \ref{lemma:gncontainment}(2) we have $G \supset I + p^2 M_{2 \times 2}(\Z_p)$. If $\dim\mathfrak{g}_n=4$, by Lemma \ref{lemma:gncontainment}(2) we have $G \supset I + p^n M_{2 \times 2}(\Z_p)$. We now focus on the case $\dim \mathfrak{g}_n = 2$. By Lemma \ref{cor:v1inliealg} we have that $\dim \mathfrak{s}_i = 1$, so by immediate induction, $\faktor{S_1}{S_{i+1}}$ is of order $p^i$ for every $i \in \{1, \dots, n\}$. In particular, lifting to $S_1$ a
		non-zero element of $\mathfrak{s}_1$ and projecting it to $S(p^{n+1})$, we find an element $h= I + pA \in S(p^{n+1})$ such that $A \not\equiv 0 \pmod p$. Since $(I+pA)^k \equiv 0 \pmod {p^{n+1}}$ if and only if $k \equiv 0 \pmod {p^n}$, the matrix $h$ has order $p^{n}$ in $G(p^{n+1})$, and hence, by cardinality arguments, the group $H:= S_1 \pmod {p^{n+1}}$ is generated by $h$. As $\dim \mathfrak{s}_1 = 1$, by Lemma \ref{lemma:subrepdecomposition} we have $\mathfrak{s}_1 = V_2$, and hence the matrix $A \pmod p$ is a non-zero element of $V_2$. In particular, we have $A \pmod p \in C(p)$.
		Let $\alpha \in G(p^{n+1})$ be an element whose reduction modulo $p$ lies in $C(p)$ and its image in $\operatorname{PGL}_2(\F_p)$ has order grater than $2$. It is easy to verify that such an element always satisfies $\operatorname{tr}(\alpha)^2 - 4\det(\alpha) \not\equiv 0 \pmod p$. In particular, since $H$ is stable under conjugation by $\alpha \in G(p^{n+1})$ and $A \pmod p$ is an element of $C(p)$, by Lemma \ref{lemma: zywina 2.1(iv)} we can assume (up to conjugation) that $\alpha \in C(p^{n+1})$ and hence that $H \subset C(p^{n+1})$. We have
		$$H = \left\{ g \in C(p^{n+1}) \cap \operatorname{SL}_2\left(\faktor{\Z}{p^{n+1}\Z}\right) : g \equiv I \pmod p \right\},$$
		since the inclusion ``$\subseteq$" is trivial and the equality follows by cardinality. 
		Consider the group $C_1(p^{n+1}) := \{ M \in C(p^{n+1}) \mid M \equiv I \pmod p \}$: this is generated by the subgroups $H$ and $\{(1+p\alpha)I\}$; indeed, they are disjoint and the product of their cardinalities equals $|C_1(p^{n+1})|$. As $G(p^{n+1})$ normalises $H$, it also normalises the group $C_1(p^{n+1})$, since every matrix in this group can be written as $M=(1+p\alpha)h^k$, for some $k \in \N$ and $\alpha \in \faktor{\Z}{p^{n+1}\Z}$.
		Consider an element $I+pA \in \GL_2\left(\faktor{\Z}{p^{n+1}\Z}\right)$: this is in $C_1(p^{n+1})$ if and only if $A \pmod {p^{n}}$ is contained in the algebra $R(p^n)$ generated by $C(p^n)$ inside $M_2\left(\faktor{\Z}{p^n\Z}\right)$ 
		(which is a rank-$2$ module over $\faktor{\Z}{p^n\Z}$).
		For every $g \in G$ and $A \in R(p^n)$ we have $g^{-1}(I+pA)g = I + pg^{-1}Ag \in C_1(p^{n+1})$, and so $g^{-1}Ag \pmod {p^n} \in R(p^n)$. This implies that $G(p^n)$ normalises $R(p^n)$, and so $G(p^n) \subseteq C^+(p^n)$. Moreover, $$|G(p^n)| = |G(p)| \cdot \prod_{i=1}^{n-1} |\mathfrak{g}_i| = |G(p)| \cdot p^{2n-2} = |G(p)| \cdot \frac{|C^+(p^n)|}{|C^+(p)|},$$
		and hence we have $[C^+(p^n) : G(p^n)] = [C^+(p) : G(p)]$.
	\end{proof}
	
	If $G$ is an N-Cartan lift such that $\dim \mathfrak{g}_n = 4$ for sufficiently large $n$, a statement equivalent to the proposition above (if we are not in the case $\dim \mathfrak{g}_1 = 3$) is that if $n$ is the largest positive integer such that $G(p^n) \subseteq C_{ns}^+(p^n)$, then $G \supset I + p^{n+1} M_{2 \times 2}(\Z_p)$. However, if we add the hypothesis that $G$ contains many scalar matrices, we can prove a stronger result.
	
	\begin{theorem}\label{thm:cartantower}
		Let $G < \GL_2(\Z_p)$ be an N-Cartan lift as in Proposition \ref{prop:cartanzywina} and such that $G \supset (1+p\Z_p) I$.
		One of the following holds:
		\begin{itemize}
			\item $G < C^+$ up to conjugation and $[C^+ : G] = [C^+(p) : G(p)]$;
			\item There exists $n \ge 1$ such that $G \supseteq I + p^n M_{2 \times 2}(\Z_p)$ and $G(p^n) \subseteq C^+(p^n)$ up to conjugation, with $[C^+(p^n) : G(p^n)] = [C^+(p) : G(p)]$;
			\item $G \supseteq I + p^2 M_{2 \times 2}(\Z_p)$ and
			$$G(p^2) \cong G(p) \ltimes (V_1 \oplus V_3),$$
			with $V_i$ defined as in Lemma \ref{lemma:subrepdecomposition} and the semidirect product defined by the conjugation action.
		\end{itemize}
	\end{theorem}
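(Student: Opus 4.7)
My plan is to split the proof by the behaviour of $\dim \mathfrak{g}_n$, using Lemma \ref{lemma:dimg} (together with Proposition \ref{prop:cartanzywina}(3) in the case $\dim \mathfrak{g}_1 = 4$ not covered by the lemma). The main tools will be Proposition \ref{prop:cartanzywina}, the Schur--Zassenhaus-type decomposition of Proposition \ref{prop:groupteichmuller}, and the hypothesis $(1+p\Z_p) I \subseteq G$.

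Two of the three bullets are relatively direct. If $\dim \mathfrak{g}_n = 2$ for every $n \geq 1$, then Proposition \ref{prop:cartanzywina}(1) applied at each level gives $G(p^n) \subseteq C^+(p^n)$ with $[C^+(p^n) : G(p^n)] = [C^+(p) : G(p)]$; passing to the inverse limit, using closedness of $G$, yields the first bullet. If $\dim \mathfrak{g}_1 = 3$, Lemma \ref{lemma:dimg}(2) forces $\dim \mathfrak{g}_2 = 4$, and Proposition \ref{prop:cartanzywina}(3) gives $G \supseteq I + p^2 M_{2 \times 2}(\Z_p)$; since $|G(p)|$ divides $|C^+(p)| = 2(p^2-1)$ and is therefore coprime to $p$, Proposition \ref{prop:groupteichmuller} splits the sequence $1 \to G_1/G_2 \to G(p^2) \to G(p) \to 1$ as a semidirect product, and Lemma \ref{lemma:subrepdecomposition} identifies $G_1/G_2 \cong \mathfrak{g}_1 = V_1 \oplus V_3$ with $G(p)$ acting by conjugation, giving the third bullet.

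The substantive case is the remaining one: let $n_0 \geq 1$ be the smallest integer with $\dim \mathfrak{g}_{n_0} = 4$ (so $n_0 \geq 2$ if $\dim \mathfrak{g}_1 = 2$, and $n_0 = 1$ if $\dim \mathfrak{g}_1 = 4$). Proposition \ref{prop:cartanzywina}(3) gives $G \supseteq I + p^{n_0} M_{2 \times 2}(\Z_p)$, and the cardinality identities $|G(p^{n_0})| = |G(p)| \cdot p^{2(n_0-1)}$ and $|C^+(p^{n_0})| = |C^+(p)| \cdot p^{2(n_0-1)}$ make the index equality automatic once the inclusion $G(p^{n_0}) \subseteq C^+(p^{n_0})$ is secured up to conjugation. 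The case $n_0 = 1$ is trivially the N-Cartan lift hypothesis. For $n_0 \geq 2$, I would exploit the proof of Proposition \ref{prop:cartanzywina}(1) at level $n_0 - 1$: its internal argument shows, via cyclic generation of $S_1/S_{n_0}$ by a single element lying in $C$, that the pro-$p$ SL-part $S_1/S_{n_0}$ of $G(p^{n_0})$, with $S = G \cap \SL_2(\Z_p)$, lies in $C(p^{n_0})$ after conjugation. Combined with $Z_{n_0} := \{(1+pz)I : z \in \Z/p^{n_0-1}\Z\} \subseteq C(p^{n_0})$ coming from the scalar hypothesis, a cardinality count ($|Z_{n_0}| = |S_1/S_{n_0}| = p^{n_0-1}$, trivial intersection, and $|G_1/G_{n_0}| = p^{2(n_0-1)}$) yields $G_1/G_{n_0} = Z_{n_0} \cdot (S_1/S_{n_0}) \subseteq C(p^{n_0})$. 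Proposition \ref{prop:groupteichmuller} then writes $G(p^{n_0}) = \widetilde{G(p)} \ltimes G_1/G_{n_0}$, and a final conjugation by an element of $C^+(p^{n_0}) \cap \pi^{-1}(G(p))$ (which normalises $C(p^{n_0})$) places $\widetilde{G(p)}$ inside $C^+(p^{n_0})$.

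The principal technical obstacle is this final conjugation: the conjugating element must simultaneously place $\widetilde{G(p)}$ into $C^+(p^{n_0})$ and preserve the inclusion $G_1/G_{n_0} \subseteq C(p^{n_0})$. Because the $\GL_2$-normaliser of $C$ is exactly $C^+$, the conjugating element is forced to lie in $C^+(p^{n_0})$; the existence of such an element should follow from the uniqueness clause of Proposition \ref{prop:groupteichmuller}, applied to compare the two lifts of $G(p)$ produced by Schur--Zassenhaus inside $G$ and inside $C^+$ respectively, whose compatibility must be argued carefully in order to land inside the smaller normaliser rather than the full preimage $\pi^{-1}(G(p))$.
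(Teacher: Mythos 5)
Your overall architecture matches the paper's: split on the dimension of $\mathfrak{g}_n$, use Proposition \ref{prop:cartanzywina} as the workhorse, and rely on Proposition \ref{prop:groupteichmuller} for the semidirect decomposition. The treatment of the first bullet ($\dim\mathfrak{g}_n = 2$ for all $n$ gives $G < C^+$ by inverse limit) and the third bullet ($\dim\mathfrak{g}_1 = 3$ gives $G(p^2) \cong G(p) \ltimes (V_1\oplus V_3)$) is correct and is essentially what the paper does. Your description of the second bullet is also on the right track up to the point where you yourself flag a gap, and that gap is real: the proposal does not actually resolve the final conjugation step, and the mechanism you sketch for resolving it is not the right one.

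The specific problem is this sentence: ``Because the $\GL_2$-normaliser of $C$ is exactly $C^+$, the conjugating element is forced to lie in $C^+(p^{n_0})$.'' That inference is false, and — more importantly — trying to force $\gamma \in C^+(p^{n_0})$ is harder than necessary. The uniqueness clause of Proposition \ref{prop:groupteichmuller} gives a conjugating element inside whichever ambient group you apply Schur--Zassenhaus to; applied inside $G$ it lands in $G$, applied inside $C^+$ it lands in $C^+$, and neither of these directly produces an element that simultaneously moves $\widetilde{G(p)}$ into $C^+$ and preserves $G_1(p^{n_0})$. The paper's resolution is to introduce the group $\Gamma := \{A \in \GL_2(\Z_p) \mid A \bmod p^{n_0-1} \in C^+(p^{n_0-1})\}$, observe that $G < \Gamma$ (from Proposition \ref{prop:cartanzywina} applied at level $n_0 - 1$) and $C^+ < \Gamma$, and apply the uniqueness clause inside $\Gamma$. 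The payoff is that every $\gamma \in \Gamma$ normalises $H = G_1(p^{n_0}) = \{g \in C(p^{n_0}) : g \equiv I \bmod p\}$: writing $I + pA \in H$, conjugation gives $I + p\gamma^{-1}A\gamma$, and since $\gamma \bmod p^{n_0-1} \in C^+(p^{n_0-1})$ normalises $C(p^{n_0-1})$ the result stays in $H$. So the conjugating element need not land in $C^+$ at all; what matters is that it lives in $\Gamma$, which is large enough to contain the two competing lifts and small enough to normalise $H$. Without this intermediate group the ``careful argument'' you defer does not obviously close, and this is the missing idea.

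One further small comment: the paper's construction of the reference group $G' < C^+$ with $G'_1 = C^+_1$ and $G'(p) = G(p)$ is what lets one compare apples to apples when $G(p) \subsetneq C^+(p)$; your plan to apply Schur--Zassenhaus ``inside $C^+$'' would actually produce a lift of $C^+(p)$, not of $G(p)$, and one needs this intermediate $G'$ to extract the right sub-lift before invoking uniqueness.
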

	
	Notice that the main improvement of Theorem \ref{thm:cartantower} with respect to Proposition \ref{prop:cartanzywina} is that when $\dim \mathfrak{g}_{n} = 2$ we have $G(p^{n+1}) \subseteq C^+(p^{n+1})$, which is better than $G(p^{n}) \subseteq C^+(p^{n})$.
	
	\begin{proof}
		Suppose that $\dim \mathfrak{g}_1 =3$. By Proposition \ref{prop:cartanzywina}(1) we know that $G \supset I + p^2M_{2 \times 2}(\Z_p)$. We can then apply Proposition \ref{prop:groupteichmuller} to obtain $G \cong G(p) \ltimes G_1$, and projecting modulo $p^2$ we have $G(p^2) \cong G(p) \ltimes \mathfrak{g}_1$, where by Lemma \ref{lemma:subrepdecomposition} we have $\mathfrak{g}_1 = V_1 \oplus V_3$.
		Suppose now $\dim \mathfrak{g}_1 \ne 3$. We prove that, given $n \ge 2$, if $\dim \mathfrak{g}_{n-1} < 4$ we have $G(p^n) \subseteq C^+(p^n)$ up to conjugation and $[C^+(p^n) : G(p^n)] = [C^+(p) : G(p)]$. Moreover, we will show that this is sufficient to conclude.
		We divide the proof in 5 steps. \\
		\textbf{1.} Since $2 \le \dim \mathfrak{g}_{n-1} < 4$ and $\dim \mathfrak{g}_{n-1} \ne 3$, we have $\dim \mathfrak{g}_{n-1} = 2$, and so by Proposition \ref{prop:cartanzywina}(1) we know that $G(p^{n-1}) \subseteq C^+(p^{n-1})$, with $[C^+(p^{n-1}) : G(p^{n-1})] = [C^+(p) : G(p)]$. \\
		\textbf{2.}  We now prove that the subgroup $G_1(p^n) \subseteq G(p^n)$ coincides with the group $H = C_1(p^n) := \{g \in C(p^n) : g \equiv I \pmod p \}$.
		By the proof of Proposition \ref{prop:cartanzywina} we know that $$H_2 := \left\{ g \in C(p^{n}) \cap \operatorname{SL}_2\left(\faktor{\Z}{p^{n}\Z}\right) : g \equiv I \pmod p \right\} \subset G(p^n),$$
		moreover, by hypothesis we have that the group $H_1 = \{(1+pk)I \mod p^n\}$ is also contained in $G(p^n)$. We notice that $|H_1|=|H_2|=p^{n-1}$ and $|H|=p^{2n-2}$. Moreover, $H_1$ is normal in $H$, hence $H_1H_2$ is a subgroup of $H \cap G(p^n)$. It is easy to notice that $\det (1+kp)I \equiv 1 \pmod {p^n}$ if and only if $k \equiv 0 \pmod {p^{n-1}}$ and so if and only if $(1+kp)I \equiv I \pmod {p^n}$. This implies that $H_1 \cap H_2 = \{I\}$, and so $|H_1H_2| = |H_1| \cdot |H_2| = |H|$, in particular $H = H_1H_2 \subseteq G(p^n)$. As by Lemma \ref{lemma:dimg} we know that $|G_1(p^n)| = \prod_{i=1}^{n-1} |\mathfrak{g}_i| = p^{2n-2}$ and $H \subseteq G_1(p^n)$, we have that $G_1(p^n) = H$. \\
		\textbf{3.}	Since $p \nmid |G(p)|$, by Proposition \ref{prop:groupteichmuller} there exists a subgroup $\widetilde{G(p)} < G$ such that the projection modulo $p$ induces an isomorphism $\widetilde{G(p)} \cong G(p)$, and modulo $p^n$ we have $G(p^n) = \widetilde{G(p)} \cdot G_1(p^n) = \widetilde{G(p)} \cdot H$, where we identified $\widetilde{G(p)}$ with its projection modulo $p^n$.
		Consider
		$$\Gamma := \{A \in \GL_2(\Z_p) \mid A \mkern-10mu \pmod {p^{n-1}} \in C^+(p^{n-1})\} < \GL_2(\Z_p).$$
		By Proposition \ref{prop:cartanzywina} we know that $G < \Gamma$, and obviously $C^+ < \Gamma$. By Proposition \ref{prop:groupteichmuller}, there is a group $\widetilde{C^+(p)} < C^+$ isomorphic to $C^+(p)$ via the projection modulo $p$ such that $C^+ = \widetilde{C^+(p)} \cdot C^+_1 \cong \widetilde{C^+(p)} \ltimes C^+_1$. We can then consider the unique subgroup $G' < C^+$ such that $G'_1 = C^+_1$ and $G'(p) = G(p)$. By Proposition \ref{prop:groupteichmuller} we have $\widetilde{G'(p)} < \widetilde{C^+(p)}$, and since $G, G' < \Gamma$ we have $\widetilde{G(p)} \equiv \widetilde{G'(p)} \pmod {p^{n-1}}$. Moreover, $\widetilde{G(p)}$ and $\widetilde{G'(p)}$ are conjugate in $\Gamma$, i.e. there exists $\gamma \in \Gamma$ such that $\gamma^{-1}\widetilde{G(p)}\gamma = \widetilde{G'(p)}$. \\
		\textbf{4.} We notice that $G_1(p^n) = G'_1(p^n) = C^+_1(p^n) = H$. If we identify $\widetilde{G(p)}$, $\widetilde{G'(p)}$ and $\widetilde{C^+(p)}$ with their projections modulo $p^n$, we have
		\begin{align*}
			G(p^n) = \widetilde{G(p)} \cdot H, \qquad G'(p^n) = \widetilde{G'(p)} \cdot H, \qquad C^+(p^n) = \widetilde{C^+(p)} \cdot H.
		\end{align*}
		Notice that $\gamma^{-1}H\gamma = H$: indeed, given $I+pA \in H$ we have $\gamma^{-1} (I+pA) \gamma = I + p\gamma^{-1}A\gamma \in H$, as $\gamma \pmod {p^{n-1}} \in C^+(p^{n-1})$. Therefore, we have
		\begin{equation*}
			\gamma^{-1} G(p^n) \gamma = \gamma^{-1} \widetilde{G(p)} \gamma \cdot \gamma^{-1}H\gamma = \widetilde{G'(p)} \cdot H = G'(p^n)
		\end{equation*}
		as desired. Finally, it is easy to check that
		\begin{equation*}
			[C^+(p^n) : G(p^n)] = [C^+(p^n) : G'(p^n)] = [C^+(p) : G(p)].
		\end{equation*}
		\textbf{5.} If there exists $n$ such that $\dim \mathfrak{g}_{n-1} = 4$, and $n$ is minimal, then we proved that $G(p^n) \subseteq C^+(p^n)$ and by Proposition \ref{prop:cartanzywina}(3) we have that $G \supseteq I + p^nM_{2 \times 2}(\Z_p)$.
		If instead $\dim \mathfrak{g}_n =2$ for every $n$, we notice that the element $\gamma$ at the end of step 3 can be taken to be $\gamma \equiv I \pmod {p^{n-1}}$: indeed, since $\widetilde{G(p)} \pmod {p^{n-1}}$ is contained in $C^+(p^{n-1})$, we can take $\delta \in \widetilde{G(p)}$ such that $\delta\gamma \equiv I \pmod {p^{n-1}}$, and hence $\gamma^{-1} \widetilde{G(p)} \gamma = (\delta\gamma)^{-1} \widetilde{G(p)} \delta\gamma$.
		In particular, this means that for every $n$ we can choose $\gamma_n \equiv I \pmod {p^{n-1}}$ such that $\gamma_n^{-1} G \gamma_n \pmod {p^n}$ is contained in $C^+(p^n)$. If we take the product $\gamma := \cdots \gamma_2 \gamma_1$, this converges (as $\gamma_n$ tends to $I$) and we have $\gamma^{-1} G \gamma \subseteq C^+$.
	\end{proof}

	\section{Local properties}

	In this section, we investigate some properties of the reduction modulo a prime $\mathfrak{p} \mid p$ of elliptic curves $E$ such that $\operatorname{Im}\rho_{E,p}$ is contained in the normaliser of a non-split Cartan subgroup.
	
	\begin{proposition}\label{prop:goodreductionforcartan}
		Let $E$ be an elliptic curve defined over a number field $K$, $n$ a positive integer, and $p$ an odd prime such that $\operatorname{Im}\rho_{E,p^n} \subseteq C_{ns}^+(p^n)$ up to conjugation.
		For any prime $\lambda \subseteq \OK$ that does not divide $p$ and such that $N_{K/\Q}(\lambda) \not\equiv \pm 1 \pmod {p^n}$, the elliptic curve $E$ has potentially good reduction at $\lambda$. Moreover, given $\mathfrak{p} \mid p$ in $K$, if $p^{n-1}(p-1) \nmid 2e(\mathfrak{p}|p)$, then the elliptic curve $E$ has potentially good reduction at $\mathfrak{p}$.
	\end{proposition}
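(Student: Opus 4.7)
I plan to prove both parts by contradiction, supposing instead that $E$ has (potentially) multiplicative reduction at the prime in question. For the first assertion, let $\lambda \nmid p$ and assume $E$ has potentially multiplicative reduction at $\lambda$. After at most a quadratic extension of $K_\lambda$, the curve $E$ becomes a Tate curve with parameter $q \in K_\lambda^\times$ of positive valuation; in the Tate basis $(\zeta_{p^n},\, q^{1/p^n})$ of $E[p^n]$, the image of the decomposition group at $\lambda$ is upper triangular, with the image of inertia consisting of unipotent matrices $\bigl(\begin{smallmatrix} 1 & c \\ 0 & 1 \end{smallmatrix}\bigr)$ coming from Kummer theory on $q^{1/p^n}$, and Frobenius having eigenvalues $N_{K/\Q}(\lambda)$ and $1$ (possibly multiplied by an overall sign if a quadratic twist is present).

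The key technical step is to show that no non-trivial unipotent or negative-unipotent element of $\GL_2(\Z/p^n\Z)$ can be conjugated into $C_{ns}^+(p^n)$. Elements in the coset $C_{ns}^+ \setminus C_{ns}$ have trace $0$, whereas $\pm$-unipotents have trace $\pm 2 \not\equiv 0 \pmod p$ for odd $p$; so any such element conjugate into $C_{ns}^+(p^n)$ must already lie in $C_{ns}(p^n)$. Inside $C_{ns}(p^n)$, the elements with determinant $1$ and both eigenvalues equal to $\pm 1$ are those of the form $\pm I + bJ$ with $J = \bigl(\begin{smallmatrix} 0 & \varepsilon \\ 1 & 0 \end{smallmatrix}\bigr)$ and $b^2 \equiv 0 \pmod{p^n}$; however, the image of $bJ$ acting on $(\Z/p^n\Z)^2$ is a free rank-two submodule, whereas the image of a non-trivial nilpotent $p^k \bigl(\begin{smallmatrix} 0 & 1 \\ 0 & 0 \end{smallmatrix}\bigr)$ is a cyclic submodule of rank one, and this rank is preserved under $\GL_2(\Z/p^n\Z)$-conjugation. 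Hence the image of inertia at $\lambda$ on $E[p^n]$ must be trivial modulo $p^n$.

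Once inertia is trivial modulo $p^n$, the image of the decomposition group is generated by Frobenius $\phi$, which still stabilises the line $\mu_{p^n} \subset E[p^n]$ and has eigenvalues in $\{\pm 1,\, \pm N_{K/\Q}(\lambda)\}$. If $\phi \in C_{ns}(p^n)$, one checks that the only elements stabilising a cyclic submodule of order $p^n$ are the scalars (for $b \not\equiv 0$ the requirement $b(s^2 - \varepsilon t^2) \equiv 0 \pmod{p^n}$ forces $\varepsilon$ to be a square modulo a positive power of $p$, which is impossible), so $\phi$ is scalar and both its eigenvalues coincide. If instead $\phi \in C_{ns}^+ \setminus C_{ns}$, its trace vanishes and its eigenvalues are negatives of each other. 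Either case forces $N_{K/\Q}(\lambda) \equiv \pm 1 \pmod{p^n}$, contradicting the hypothesis.

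For the second assertion the argument at $\mathfrak{p} \mid p$ runs in the same spirit, the main difference being that the cyclotomic character $\chi_p$ is now ramified at $\mathfrak{p}$ and appears as the upper-left diagonal entry on $\mu_{p^n}$ in the Tate picture; its restriction $\chi_p|_{I_\mathfrak{p}}$ has image of order $[K_\mathfrak{p}(\zeta_{p^n}) : K_\mathfrak{p}]$ in $(\Z/p^n\Z)^\times$. The same line-stabilisation/trace dichotomy, applied to the elements coming from this image, forces the order to be at most $2$; since $\Q_p(\zeta_{p^n})/\Q_p$ is totally ramified of degree $p^{n-1}(p-1)$ with a unique index-$2$ subfield, the condition $[K_\mathfrak{p}(\zeta_{p^n}):K_\mathfrak{p}] \le 2$ forces $K_\mathfrak{p}$ to contain that subfield (or the full $\Q_p(\zeta_{p^n})$), and ramification multiplicativity then yields $p^{n-1}(p-1) \mid 2 e(\mathfrak{p}|p)$. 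The main obstacle throughout is the careful conjugacy analysis in $\GL_2(\Z/p^n\Z)$ rather than merely in $\GL_2(\F_p)$: beyond characteristic polynomial data one must also track the rank of $M - \lambda I$ as a $\Z/p^n\Z$-submodule in order to rule out the ``nearly-unipotent'' candidates inside $C_{ns}(p^n)$.
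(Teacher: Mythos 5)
Your argument is correct and arrives at the same pivotal fact as the paper --- that $\chi_{p^n}$ restricted to the local Galois group at the relevant prime takes values in $\{\pm 1\}$ --- but via a somewhat different route, and one piece of the organization can be streamlined. The paper works with an arbitrary $\sigma$ in the local Galois group and splits according to whether $\chi_{p^n}(\sigma) \equiv 1 \pmod p$: in the distinct-eigenvalue case it appeals to Hensel's lemma (Lemma~\ref{lemma:eigenvaluesmodp^n}) to lift the mod-$p$ dichotomy ``scalar vs.\ irreducible vs.\ coset'' to level $p^n$, and in the repeated-eigenvalue case it uses the Lie-algebra rank argument ($\mathfrak{g}_r \subseteq V_1 \oplus V_2$ contains no rank-one matrices, Remark~\ref{rmk:cartanliealg}). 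You instead separate inertia from Frobenius: you kill the Kummer part of inertia by comparing the abelian-group structure of the images of $bJ$ and of a nilpotent, and handle Frobenius by the observation that a non-scalar element of $C_{ns}(p^n)$ stabilises no cyclic direct summand of rank one. That stable-line lemma is a nice elementary substitute for the Hensel step, and in fact it subsumes your inertia analysis entirely --- every element of the local image fixes $\mu_{p^n}$, so the stable-line/trace dichotomy could be applied uniformly (as you do for $\mathfrak{p}\mid p$), eliminating the separate unipotent discussion. A few small imprecisions worth tightening: for $b$ of positive $p$-adic valuation the image of $bJ$ on $(\Z/p^n\Z)^2$ is isomorphic to $(\Z/p^{n-v_p(b)}\Z)^2$, so the invariant you actually want is ``not cyclic'' rather than ``free of rank two''; the phrase about $\varepsilon$ being ``a square modulo a positive power of $p$'' is cleaner replaced by the direct remark that $s^2-\varepsilon t^2$ is a unit for any primitive $(s,t)$, since $\varepsilon$ is a non-residue; and ``inertia trivial'' should read ``inertia acting by $\pm I$'' to account for the quadratic twist $\psi$ (this costs nothing downstream, since $\pm I$ is central). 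With these adjustments the proof is sound and essentially as efficient as the paper's.
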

	
	\begin{proof}
		The proof follows and generalises those of \cite[Proposition 3.3]{lemos19split} and \cite[Proposition 2.2]{lemos19borel}. We can assume that $E$ does not have CM, as CM curves have potentially good reduction everywhere. Let $\lambda \mid \ell$ be a prime of potentially multiplicative reduction, let $E_{K_\lambda}$ be the base change of $E$ to $K_\lambda$, and let $E_q$ be the Tate curve with parameter $q \in K_\lambda^\times$, isomorphic to $E$ over a quadratic extension of $K_\lambda$. Rename $E=E_{K_\lambda}$. There is a quadratic character $\psi$ such that $\rho_{E_q,p^n} \cong \rho_{E,p^n} \otimes \psi$, and up to conjugation we have
		\begin{equation}
			\rho_{E,p^n} \cong \psi \otimes \begin{pmatrix} \chi_{p^n} & \ast \\ 0 & 1 \end{pmatrix} = \begin{pmatrix} \psi\chi_{p^n} & \ast \\ 0 & \psi \end{pmatrix},
		\end{equation}
		where $\chi_{p^n}$ is the cyclotomic character modulo $p^n$. Consider an automorphism $\sigma \in \Gal\left(\faktor{\overline{K}_\lambda}{K_\lambda}\right)$, and set $A:= \rho_{E_q, p^n}(\sigma) = \begin{pmatrix} \chi_{p^n}(\sigma) & \ast \\ 0 & 1 \end{pmatrix}$. By our hypothesis on $\operatorname{Im}\rho_{E,p^n}$ there exists an element of $C_{ns}^+(p^n)$ conjugate to $A$ (up to changing sign by multiplying by $-I$). We now divide cases according to whether $\chi_{p^n}(\sigma) \equiv 1 \pmod p$ or $\chi_{p^n}(\sigma) \not\equiv 1 \pmod p$.
		\begin{enumerate}[(i)]
			\item Suppose first that $\chi_{p^n}(\sigma) \not\equiv 1 \pmod p$. We know that the roots of the characteristic polynomial of $A$ are $1$ and $\chi_{p^n}(\sigma)$. In particular, there exists an element $g$ in $C_{ns}^+(p^n)$ satisfying the polynomial equation $(g-1)(g - \chi_{p^n}(\sigma))=0$. If $g \in C_{ns}(p^n)$, then $\overline{g} = g \pmod p$ must be a scalar matrix, because non-scalar matrices in $C_{ns}(p)$ have eigenvalues in $\F_{p^2} \setminus \F_p$. As $1$ is an eigenvalue of $\overline{g}$, the matrix $\overline{g}$ is equal to the identity, contradicting the fact that $\chi_{p^n}(\sigma) \not\equiv 1 \pmod p$. This implies that $g \in C_{ns}^+(p^n) \setminus C_{ns}(p^n)$, and in particular, $\operatorname{tr} g = 0$, which implies that $\chi_{p^n}(\sigma) = -1$.
			\item If instead $\chi_{p^n}(\sigma) \equiv 1 \pmod p$, then we can write $A = I + p^r\begin{pmatrix} \ast & \ast \\ 0 & 0 \end{pmatrix}$, with $r \le n$ as large as possible. As for matrices in $M_{2 \times 2}(\F_p)$ the rank is invariant under conjugation, if $r<n$ there would be a matrix in $\mathfrak{g}_r$ of rank $1$ (with $\mathfrak{g}_r$ defined as in Definition \ref{def:liealgebra} for the group $G=C_{ns}^+$), which is impossible by Remark \ref{rmk:cartanliealg}. This implies that $A = I$, and so that $\chi_{p^n}(\sigma) \equiv 1 \pmod {p^n}$.
		\end{enumerate}
		We have then proved that $\chi_{p^n}(\sigma) \in \{ \pm 1 \}$ for every $\sigma$.
		Suppose first that $p \ne \ell$. If $\operatorname{Frob}_\lambda$ is a Frobenius element in $\Gal\left(\faktor{\overline{K}}{K}\right)$ with respect to $\lambda$, we have that $N_{K/\Q}(\lambda) = \chi_{p^n}(\operatorname{Frob}_\lambda) \equiv \pm 1 \pmod {p^n}$.
		If instead $\ell = p$, as the character $\chi_{p^n}$ is surjective from $\Gal\left(\faktor{\Q_p(\zeta_{p^n})}{\Q_p}\right)$ to $\left(\faktor{\Z}{p^n\Z}\right)^\times$, we must have $[K_\mathfrak{p}(\zeta_{p^n}) : K_\mathfrak{p}] \le 2$. However, this implies that $e(\mathfrak{p}|p)$ is a multiple of $\frac{\varphi(p^n)}{2}$, because $\faktor{\Q_p(\zeta_{p^n})}{\Q_p}$ is totally ramified.
	\end{proof}
	
	\begin{corollary}\label{cor:goodreductionforcartanQ}
		Let $E$ be an elliptic curve defined over $\Q$, $n$ a positive integer, and $p$ an odd prime such that $p^n \ne 3$ and $\operatorname{Im}\rho_{E,p^n} \subseteq C_{ns}^+(p^n)$ up to conjugation. For any prime $\ell \not\equiv \pm 1 \pmod {p^n}$ the elliptic curve $E$ has potentially good reduction at $\ell$.
	\end{corollary}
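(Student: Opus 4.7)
The plan is to deduce this corollary from Proposition \ref{prop:goodreductionforcartan} applied with $K = \Q$, treating the primes $\ell \ne p$ and the prime $\ell = p$ separately.

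For the first case, suppose $\ell \ne p$ is a prime with $\ell \not\equiv \pm 1 \pmod{p^n}$. Since $K = \Q$, the prime ideal $\lambda = \ell\Z$ satisfies $N_{K/\Q}(\lambda) = \ell$, so the hypothesis $N_{K/\Q}(\lambda) \not\equiv \pm 1 \pmod{p^n}$ of Proposition \ref{prop:goodreductionforcartan} is precisely the assumption on $\ell$. The first half of that proposition then yields potentially good reduction at $\ell$ immediately.

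For the second case, note that since $p^n \ge 3$ we have $p \not\equiv \pm 1 \pmod{p^n}$, so the hypothesis $\ell \not\equiv \pm 1 \pmod{p^n}$ is (vacuously) satisfied for $\ell = p$ and must also be dealt with. Here I would apply the second half of Proposition \ref{prop:goodreductionforcartan} to the prime $\mathfrak{p} = p$ of $\Q$, for which $e(\mathfrak{p}|p) = 1$. The criterion to be verified is $p^{n-1}(p-1) \nmid 2$. Since $p$ is odd, the possible values of $p^{n-1}(p-1)$ that divide $2$ are exactly $p^{n-1}(p-1) = 2$, i.e.\ $p=3$ and $n=1$; this is excluded by the hypothesis $p^n \ne 3$. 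For every remaining case (either $p \ge 5$, or $p = 3$ and $n \ge 2$) we have $p^{n-1}(p-1) \ge 4 > 2$, so the proposition applies and $E$ has potentially good reduction at $p$ as well.

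There is no real obstacle: the whole content of the corollary is in the preceding proposition, and the only thing to check carefully is that the exceptional case $p^n = 3$ of the divisibility condition $p^{n-1}(p-1) \mid 2 e(\mathfrak{p}|p)$ is precisely what is excluded in the statement. Consequently the proof will fit in a few lines, essentially consisting of the observation above together with a pointer to Proposition \ref{prop:goodreductionforcartan}.
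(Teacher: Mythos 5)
Your proof is correct and is exactly the intended deduction: the paper states this as a corollary of Proposition \ref{prop:goodreductionforcartan} with no separate proof, and your case split into $\ell \ne p$ (first half of the proposition with $N_{\Q/\Q}((\ell)) = \ell$) and $\ell = p$ (second half with $e = 1$, where $p^{n-1}(p-1) \mid 2$ forces $p^n = 3$) is precisely how the hypotheses match up. The only cosmetic quibble is the word ``vacuously'': the point is not that the condition on $\ell = p$ holds vacuously but that $p$ itself always satisfies $p \not\equiv \pm 1 \pmod{p^n}$, so it genuinely falls under the corollary's scope and must be treated.
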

	
	The following lemma is a known fact which generalises \cite[Section 1, Proposition 1]{serre72}.
	
	\begin{lemma}\label{lemma:ordinarydecomposition}
		Let $E$ be an elliptic curve over a $p$-adic field $K$ and let $\mathfrak{p}$ be the prime of $K$ above $p$. Suppose that $E$ has good ordinary reduction at $\mathfrak{p}$. For every positive integer $n$, the inertia group $I_K$ of $K$ acts on $E[p^n]$ as $\begin{pmatrix} \chi_{p^n} & \ast \\ 0 & 1 \end{pmatrix}$, where $\chi_{p^n}$ is the cyclotomic character modulo $p^n$.
	\end{lemma}
	
	\begin{proof}
		Let $\mu$ be the $p$-adic cyclotomic character. Since $E$ has ordinary reduction, we know that there is an exact sequence of $\Z[I_K]$-modules
		\begin{equation*}
			\begin{tikzcd}
				0 \arrow[r] \arrow[r] & T_p(\mu) \arrow[r] & T_pE \arrow[r] & T_p\widetilde{E} \arrow[r] & 0,
			\end{tikzcd}
		\end{equation*}
		where $T_p\widetilde{E} \cong \Z_p$ has trivial action by $I_K$. Indeed, $I_K$ acts trivially on $T_p\widetilde{E}$, and since the determinant is cyclotomic, the other character must be the cyclotomic character.
		In particular, modulo $p^n$ we have	
		\begin{equation*}
			\begin{tikzcd}
				0 \arrow[r] \arrow[r] & \mu_{p^n} \arrow[r] & E[p^n] \arrow[r] & \faktor{\Z}{p^n\Z} \arrow[r] & 0,
			\end{tikzcd}
		\end{equation*}
		and the group $I_K$ acts on $E[p^n]$ as $\begin{pmatrix} \chi_{p^n} & \ast \\ 0 & 1 \end{pmatrix}$, where $\chi_{p^n}$ is the cyclotomic character modulo $p^n$.
	\end{proof}
	
	Given an elliptic curve $E$ over a $p$-adic field $K$, define the \emph{canonical subgroup of order $p$} of $E$ as in \cite[Definition 3.5]{furiolombardo23}. Notice that this definition can be extended to elliptic curves defined over number fields (see \cite[Definition 3.8]{furiolombardo23}).
	
	\begin{lemma}\label{lemma:locallemmaSamuel}
		Let $p$ be an odd prime and let $E$ be an elliptic curve over a $p$-adic field $K$. Let $\mathfrak{p} \subseteq K$ be the prime above $p$ with ramification index $e:=e(\mathfrak{p}|p)$ and let $I_K$ be the inertia group of $K$. Suppose that $E$ has good reduction at $\mathfrak{p}$.
		\begin{enumerate}
			\item If $E$ has ordinary reduction, $E$ admits a canonical subgroup and for every positive integer $n$ then the group $\rho_{E,p^n}(I_K)$ contains an element of order $\frac{p^n-p^{n-1}}{\gcd(p^n-p^{n-1}, e)}$ when projected in $\operatorname{PGL}_2\left(\faktor{\Z}{p^n\Z}\right)$.
			\item If $E$ has supersingular reduction and does not have a canonical subgroup, then the group $\rho_{E,p^n}(I_K)$ contains an element of order $\frac{p^{n+1}-p^{n-1}}{\gcd(p^{n+1}-p^{n-1}, e)}$.
		\end{enumerate}
	\end{lemma}
	
	\begin{proof}
		If $E$ has potentially good ordinary reduction, by Lemma \ref{lemma:ordinarydecomposition} the image of $I_K$ in $\operatorname{PGL}_2$ injects in the subgroup $\footnotesize \begin{pmatrix} \ast & \ast \\ 0 & 1 \end{pmatrix}$ and contains a subgroup isomorphic to $\chi_{p^n}(I_K)$. Since $[I_{\Q_p} : I_K] = e$ and $|\chi_{p^n}(I_{\Q_p})| = p^n-p^{n-1}$, the order of $\chi_{p^n}(I_K)$ must be divisible by $\frac{p^n-p^{n-1}}{\gcd(p^n-p^{n-1}, e)}$, and noting that the image of $\chi_{p^n}$ is cyclic we obtain the desired property.
		Moreover, in this case $E$ always has a canonical subgroup, which is the kernel of the reduction modulo $\mathfrak{p}$.
		Assume now that $E$ has potentially good supersingular reduction and that $E$ does not have a canonical subgroup. By \cite[Theorem 4.6]{smith23} (which is stated over number fields, but proved over $p$-adic fields) we know that $K(E[p]) \subseteq K(E[p^n])$ contains elements with valuation $\frac{1}{p^{2} - 1}$, and hence the ramification degree of $K(E[p^n])$ over $K$ is divisible by $$\frac{p^{2}-1}{\gcd(p^{2}-1, [K: \ \Q_p^{nr} \cap K])} = \frac{p^{2}-1}{\gcd(p^{2}-1,e)}.$$ 
		Moreover, since tame extensions of $K^{nr}$ are cyclic, there must be an element in the inertia subgroup $I(K(E[p^n])/K) \cong \rho_{E,p^n}(I_K)$ of order $\frac{p^2-1}{\gcd(e,p^2-1)}$. If $n>1$, since $\det \circ \rho_{E_L,p^\infty} (I_K) = (\Z_p^\times)^{e}$, there is also an element of $\rho_{E,p^n}(I_K)$ with determinant of order $\frac{\varphi(p^n)}{(\varphi(p^n), e)}$, and so $\rho_{E,p^n}(I_K)$ contains an element of order $\frac{p^n-p^{n-1}}{(p^n-p^{n-1}, e)}$. In particular, we have an element in $\rho_{E,p^n}(I_K)$ whose order is the less common multiple of $\frac{p^2-1}{\gcd(e,p^2-1)}$ and $\frac{p^n-p^{n-1}}{(p^n-p^{n-1}, e)}$, and so with order $\frac{p^{n+1}-p^{n-1}}{\gcd(e,p^{n+1}-p^{n-1})}$.
	\end{proof}
	
	The following theorem is a generalisation of \cite[Theorem 3.11]{furiolombardo23}.
	
	\begin{theorem}\label{thm:canonicalsbg}
		Let $p$ be a prime and let $E$ be an elliptic curve over a $p$-adic field $K$. Let $\mathfrak{p} \subseteq K$ be the prime above $p$ with ramification index $e:=e(\mathfrak{p}|p)$. Suppose that $E$ has potentially good reduction at $\mathfrak{p}$ and let $L$ be the minimal extension of $K^{nr}$ over which $E$ acquires good reduction, with degree $d=[L:K^{nr}]$. Suppose also that $\operatorname{Im}\rho_{E,p} \subseteq C_{ns}^+(p)$.
		\begin{enumerate}
			\item If $p>de+1$ and $p \ne 2de+1$, then $E$ does not have a canonical subgroup of order $p$.
			\item If $E$ has potentially good supersingular reduction modulo $\mathfrak{p}$ and $p \ge \max\{de-1,3\}$, then $E$ does not have a canonical subgroup of order $p$.
		\end{enumerate}
	\end{theorem}
	
	\begin{proof}
		We start by proving part 1.
		Consider the subgroup $I < \operatorname{Im}\rho_{E,p}$ obtained as the image of the inertia group of $L$.
		If $E$ has potentially good supersingular reduction, part 2 of the theorem supersedes part 1, hence we may assume that $E$ has potentially good ordinary reduction.
		By Lemma \ref{lemma:locallemmaSamuel} we know that the image of $I$ in $\operatorname{PGL}_2(\F_p)$ contains an element of order $\frac{p-1}{\gcd(de, p-1)}$.
		Since the square of any element of $C_{ns}^+(p) \setminus C_{ns}(p)$ is a scalar matrix and hence has order $2$ in $\operatorname{PGL}_2(\F_p)$, and every element in $C_{ns}(p)$ has order $p+1$ in $\operatorname{PGL}_2(\F_p)$, we have that $\frac{p-1}{(de, p-1)} \mid p+1$, and so $p-1 \mid 2de$. However, this is impossible because $p-1 \ne 2de$ and $p-1 > de$. \\
		Assume now that $E$ has potentially good supersingular reduction and suppose that $\faktor{E}{L}$ admits a canonical subgroup. By \cite[Theorem 3.10]{furiolombardo23} (which is stated over number fields, but its proof holds over $p$-adic fields) we know that its Hasse invariant $A$ is a number in $L$ with valuation $0 < v_p(A) < \frac{p}{p+1}$. However, $v_p(A)$ is a rational number with denominator dividing $de$. Suppose that $v_p(v_p(A)) > 0$: we can write $v_p(A)=\frac{\alpha p}{de}$, for some positive integer $\alpha$. On the other hand, we have $\frac{\alpha p}{de} < \frac{p}{p+1}$, which gives $1 \le \alpha < \frac{de}{p+1}$, contradicting the hypothesis that $p \ge de-1$. This implies that $v_p(v_p(A)) = 0$. Let $c$ be the coefficient of $x^\frac{p^2-p}{2}$ in the division polynomial $\Psi_p(x)$ and let $\mu$ be its valuation. By \cite[Theorem 1]{debry14} we know that $c \equiv A \pmod p$, and so $v_p(A) = \mu$. By \cite[Theorem 4.6]{smith23} (which is stated over number fields, but its proof holds over $p$-adic fields) we know that $L(E[p])$ contains elements of valuation $\frac{\mu}{p^2-p}$. However, $v_p(\mu) = 0$, and so $p$ must divide the degree $[L(E[p]) : L]$, which is a divisor of $2(p^2-1)$, giving a contradiction.
	\end{proof}
	
	\begin{corollary}\label{cor:canonicalsbgeasy}
		Let $E$ be an elliptic curve over a number field $K$ and let $p$ be a prime. Let $\mathfrak{p} \subseteq K$ be a prime above $p$ with ramification index $e:=e(\mathfrak{p}|p)$. Suppose that $\operatorname{Im}\rho_{E,p} \subseteq C_{ns}^+(p)$. 
		\begin{enumerate}
			\item If $p>6e+1$ and $p \ne 8e+1, 12e+1$, then $E$ does not have a canonical subgroup of order $p$.
			\item If $E$ has potentially good supersingular reduction modulo $\mathfrak{p}$ and $p \ge 6e-1$, then $E$ does not have a canonical subgroup of order $p$.
		\end{enumerate}
	\end{corollary}
	
	\begin{proof}
		We notice that $p \ge 6e-1$, and since $6e-1 > 2e+1$ we have $p-1 \nmid 2e$. By Proposition \ref{prop:goodreductionforcartan} this implies that $E$ has potentially good reduction at $\mathfrak{p}$. Moreover, using that $p \ge 6e-1 \ge 5$, by \cite[Proposition 1]{kraus90} we see that the degree $d=[L:K_{\mathfrak{p}}^{nr}]$ of the minimal extension of $K_{\mathfrak{p}}^{nr}$ over which $E$ acquires good reduction is at most $6$. We then conclude by applying Theorem \ref{thm:canonicalsbg}.
	\end{proof}
	
	\begin{corollary}\label{cor:canonicalsbgQ}
		Let $E$ be an elliptic curve over $\Q$. If $p$ is a prime such that $p > 7$ and $p \ne 13$, and $\operatorname{Im}\rho_{E,p} \subseteq C_{ns}^+(p)$, then $E$ does not have a canonical subgroup of order $p$.
	\end{corollary}
	
	\begin{corollary}\label{cor:supersingular}
		Let $E$ be an elliptic curve over a number field $K$ and let $p$ be a prime. Let $\mathfrak{p} \subseteq K$ be a prime above $p$ with ramification index $e:=e(\mathfrak{p}|p)$. Suppose that $\operatorname{Im}\rho_{E,p} \subseteq C_{ns}^+(p)$.
		If $p > 6e+1$ and $p \ne 8e+1, 12e+1$, then $E$ has potentially good supersingular reduction modulo $\mathfrak{p}$.
	\end{corollary}
	
	\begin{proof}
		We notice that by Proposition \ref{prop:goodreductionforcartan} the curve $E$ has potentially good reduction at $\mathfrak{p}$. It then suffices to combine Corollary \ref{cor:canonicalsbgeasy} and \cite[Theorem 3.10]{furiolombardo23}, using the fact that if $A$ is the Hasse invariant of $E$, then $v_p(A)$ is equal to $0$ if and only if $E$ has ordinary reduction modulo $p$.
	\end{proof}
	
	The corollary above generalises \cite[Proposition 3.1]{ejder22}, written below, to arbitrary number fields.
	
	\begin{corollary}\label{cor:supersingularQ}
		Let $E$ be an elliptic curve over $\Q$. If $p$ is a prime such that $p>7$, $p \ne 13$ and $\operatorname{Im}\rho_{E,p} \subseteq C_{ns}^+(p)$, then $E$ has potentially good supersingular reduction modulo $p$.
	\end{corollary}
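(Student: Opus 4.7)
The plan is to deduce this corollary as a direct specialisation of Corollary \ref{cor:supersingular} to the case $K = \Q$. Since every rational prime $p$ is unramified in $\Q$, the unique prime of $\Q$ above $p$ has ramification index $e = 1$. Substituting $e = 1$ in \eqref{eq:pdivisibilities}, the four conditions to verify become
\begin{equation*}
    p - 1 \nmid 12, \quad p - 1 \nmid 8, \quad p + 1 \nmid 6, \quad p + 1 \nmid 4.
\end{equation*}

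I would then check each divisibility in turn. The positive divisors of $12$ are $\{1,2,3,4,6,12\}$, so $p - 1 \mid 12$ forces $p \in \{2,3,5,7,13\}$; similarly $p - 1 \mid 8$ forces $p \in \{2,3,5\}$; $p + 1 \mid 6$ forces $p \in \{2,5\}$; and $p + 1 \mid 4$ forces $p = 3$. Taking the union, the set of primes for which some condition in \eqref{eq:pdivisibilities} fails is exactly $\{2,3,5,7,13\}$. The hypothesis $p > 7$ together with $p \neq 13$ rules out precisely this finite set, so all four divisibility conditions hold, and Corollary \ref{cor:supersingular} applies to give that $E$ has potentially good supersingular reduction modulo $p$.

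There is no genuine obstacle here: the argument is a bookkeeping verification that the exceptional list of primes arising from the four divisibility conditions in the unramified case $e = 1$ coincides with the primes $\{2,3,5,7,13\}$ excluded by the stated hypotheses. The corollary thus records the clean over-$\Q$ specialisation of the more general Corollary \ref{cor:supersingular}, with the excluded primes matching the well-known list of small primes appearing in mod-$p$ image classification for non-CM elliptic curves over $\Q$.
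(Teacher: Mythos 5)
Your proposal is correct and is precisely the (unwritten) specialisation the paper intends: Corollary \ref{cor:supersingularQ} is stated directly after Corollary \ref{cor:supersingular} with no separate proof, and it follows by setting $e = 1$ in the conditions \eqref{eq:pdivisibilities} and verifying, exactly as you do, that the excluded primes are $\{2,3,5,7,13\}$. Your divisor bookkeeping is accurate, so this matches the paper's implicit argument.
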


	\section{Effective surjectivity theorem}\label{sec:effiso}

	In this section, we give a generalised version of the effective surjectivity theorem of Le Fourn \cite[Theorem 5.2]{lefourn16}, obtaining a bound on the product of the prime powers $p^n$ for which the image of the representation $\rho_{E,p^n}$ is contained either in a Borel subgroup or in the normaliser of a Cartan subgroup of $\GL(E[p^n])$. There are two main differences with respect to \cite[Theorem 5.2]{lefourn16}: the first is that we are able to bound the product of prime powers and not just the product of primes, the second is that in the non-split Cartan case our version also applies non-trivially to curves of small height (similarly to \cite[Theorem 4.1]{furiolombardo23}).
	
	\begin{theorem}\label{thm:effiso}\label{effiso}
		Let $E$ be an elliptic curve without CM defined over a number field $K$. We denote by $\Fheight(E)$ the stable Faltings height of $E$ (with the normalisation of \cite[Section 1.2]{deligne85}). Let $\mathcal{B}, \mathcal{C}_{sp}, \mathcal{C}_{ns}$ be sets of odd primes $p$ such that $\operatorname{Im}\rho_{E,p} \subseteq G(p)$ up to conjugacy for $G = B, C_{sp}^+, C_{ns}^+$ respectively. For every $p \in \mathcal{B} \cup \mathcal{C}_{sp} \cup \mathcal{C}_{ns}$, let $n_p$ be the largest positive integer such that $\operatorname{Im}\rho_{E,p^{n_p}} \subseteq G(p^n)$, and let $\Lambda:= \prod_{p \in \mathcal{B}} p^{\frac{n_p}{2}} \prod_{p \in \mathcal{C}} p^{n_p}$, where $\mathcal{C} := \mathcal{C}_{sp} \cup \mathcal{C}_{ns}$.
		\begin{enumerate}
			\item We have
			$$\Lambda < 1454 \cdot 2^{|\mathcal{C}|} [K:\Q] \left( \Fheight(E) + \frac{7}{2}\log(\Fheight(E) + 2.72) + 4\log\Lambda + 5\right).$$
			\item If $\mathcal{B} = \mathcal{C}_{sp} = \emptyset$ we have
			\begin{equation*}
				\Lambda < 1454 \cdot 2^{|\mathcal{C}|} [K:\Q] \left( \Fheight(E) + \frac{3}{2}\log(\Fheight(E) + 2.72) + 2\log\Lambda + 2.6 \right).
			\end{equation*}
			\item If $K=\Q$ and $\mathcal{B} = \mathcal{C}_{sp} = \emptyset$, we have
			$$\Lambda < 1454 \cdot 2^{|\mathcal{C}|} \left( \Fheight(E) + 2\log\Lambda + \frac{3}{2}\max\{0, \log(\Im\{\tau\})\} + 1.38 \right),$$
			where $\tau$ is the point in the standard fundamental domain $\mathcal{F}$ of $\uhp$ such that $E(\C) \cong \faktor{\C}{\Z \oplus \tau\Z}$.
		\end{enumerate}
		Furthermore, if $\tau_{\sigma} \in \mathcal{F}$ corresponds to the curve $\sigma(E)$, for some $\sigma: K \hookrightarrow \C$, and if we assume that $\Im\{\tau_\sigma\} \ge \frac{15}{\pi}$ for every $\sigma$, we can replace the number $1454$ with $1266.4$ in all the inequalities.
	\end{theorem}
	
	To prove Theorem \ref{thm:effiso}, we follow closely the approach of \cite[Theorem 5.2]{lefourn16} and \cite[Theorem 1.4]{gaudron-remond} like in \cite{furiolombardo23}. We will implement and generalise the improvements introduced in \cite[Section 4]{furiolombardo23} to obtain sharper bounds.
	
	From Theorem \ref{thm:effiso}, we then deduce the following easier inequality.
	
	\begin{theorem}\label{thm:totaleffiso}
		Let $\faktor{E}{\Q}$ be an elliptic curve without CM, let $\mathcal{C}$ be the set of all primes $p>2$ such that $\operatorname{Im}\rho_{E,p} \subseteq C_{ns}^+(p)$ up to conjugation, and let $\Lambda$ be as in Theorem \ref{thm:effiso}. We have
		\begin{equation*}
			\Lambda < 21000 \left( \Fheight(E) + 40 \right)^{1.308}.
		\end{equation*}
		Moreover, if we define
		\begin{equation*}
			\delta(x) := \frac{1}{\log(\log(x+40) + 7.6) - 0.903}
		\end{equation*}
		for every $x>-0.75$, we have
		\begin{align*}
			\Lambda &< 14400 \cdot (\Fheight(E)+40)^{0.907 \cdot \delta(\Fheight(E))} \left(\Fheight(E) + 22.5 \right).
		\end{align*}
	\end{theorem}
	
	We divide the rest of this section in two subsections: in the first, we will prove Theorem \ref{thm:effiso}; in the second, we will give some better estimates in the case where the $j$-invariant of $E$ is not an integer, and then we will exploit them to prove Theorem \ref{thm:totaleffiso}.

	\subsection{Proof of Theorem \ref{thm:effiso}}

	This section is really similar to \cite[Section 4]{furiolombardo23}, however we will repeat all the steps for convenience of the reader. We first prove Theorem \ref{thm:effiso}(1), and then we will focus on the cases $\mathcal{B} = \mathcal{C}_{sp} = \emptyset$ and $K=\Q$.
	
	We begin by recalling some crucial definitions from \cite{gaudron-remond}.
	\begin{definition}\label{def: x}
		Let $A$ be a complex abelian variety, let $B \subset A$ be an abelian subvariety of codimension $t \ge 1$, and let $L$ be a polarisation on $A$. We define
		\begin{align*}
			x(B) := \left(\frac{\deg_LB}{\deg_LA}\right)^\frac{1}{t} \qquad \text{and} \qquad x := \min_{B \subsetneq A} x(B),
		\end{align*}
		where $\deg_LA$ is the top self-intersection number of the line bundle $L$ on $A$, and similarly $\deg_L B$.
	\end{definition}
	Let $(A,L)$ be a polarised abelian variety defined over a number field $K$. Fix an embedding $\sigma : K \hookrightarrow \C$ and let $(A_\sigma,L_\sigma)$ be the base-change of $(A,L)$ to $\C$ via $\sigma$. We will denote by $B[\sigma]$ a proper abelian subvariety of $A_\sigma$ such that $x(B[\sigma])=x$.
	\begin{definition}
		Let $A$ be a complex abelian variety and let $L$ be a polarisation on $A$. Let $\| \cdot \|_L$ be the norm induced by $L$ on the tangent space $t_A$, and let $\Omega_A$ be the period lattice. We define
		$$\rho(A,L) := \min\{\|\omega\|_L \mid \omega \in \Omega_A \setminus \{0\}\}.$$
	\end{definition}
	\begin{remark}\label{rmk: princ pol on ell curves}
		Let $E$ be an elliptic curve defined over a number field $K$ and let $L$ be its canonical principal polarisation. As explained in \cite[Remark 3.3]{gaudron-remond}, given an embedding $\sigma : K \hookrightarrow \C$ we have $\rho(E_\sigma,L_\sigma)^{-2} = \Im\{\tau_\sigma\}$, where $\tau_\sigma$ is the element in the standard fundamental domain $\mathcal{F}$ that corresponds to $E_\sigma$ and $L_\sigma$ is the base-change of the polarisation $L$ via $\sigma$.
	\end{remark}
	\begin{definition}\label{def: delta sigma}
		Let $A$ be an abelian variety defined over a number field $K$ and let $\sigma: K \hookrightarrow \C$ be an embedding. Let $L$ be a polarisation on $A$ and let $\operatorname{d}_\sigma$ be the distance induced by $L_\sigma$ on $t_{A_\sigma}$. We define
		$$\delta_\sigma = \min\{\operatorname{d}_\sigma(\omega,t_{B[\sigma]}) \mid \omega \in \Omega_{A_\sigma} \setminus t_{B[\sigma]} \},$$
		where $B[\sigma]$ is as in Definition \ref{def: x}.
	\end{definition}

	We now begin the proof of Theorem \ref{effiso} introducing the general setting; then we will split the proof in different parts, distinguishing the case $\mathcal{B}, \mathcal{C}_{sp} \ne \emptyset$, the case $\mathcal{B}, \mathcal{C}_{sp} = \emptyset$ and $K=\Q$, and the case $\mathcal{B}, \mathcal{C}_{sp} = \emptyset$ and $K \ne \Q$. Similarly to \cite{lefourn16}, we start by giving the construction of a particular quotient of the abelian surface $E \times E$. However, we will use a slightly different quotient, which is more natural.

	Choose an extension $\faktor{K'}{K}$ of degree $2^{|\mathcal{C}|}$ such that for every prime $p \in \mathcal{C}$ we have
	\begin{equation}\label{eq:kappaprimo}
		\rho_{E,p}\left(\Gal\left(\faktor{\overline{K}}{K'}\right)\right) \subseteq C(p)
	\end{equation}
	up to conjugation, where $C(p)$ is either $C_{sp}(p)$ or $C_{ns}(p)$. Note that, if $\rho_{E,p}\left(\Gal\left(\faktor{\overline{K}}{K}\right)\right)$ is already contained in $C(p)$, we choose $K'$ to be an arbitrary complex quadratic extension of $K$. Since the image of the complex conjugation is not contained in $C(p)$ for every $p$, the field $K'$ is always a complex field.
	We now construct a subgroup $G_p$ of $E[p^{n_p}]^2$ for every $p \in \mathcal{B} \cup \mathcal{C}$.
	\begin{itemize}
		\item \underline{$\mathbf{p \in \mathcal{B}}$}: We define the group $G_p$ as $\Gamma_{p^{n_p}} \times E[p^{n_p}] \subseteq E \times E$, where $\Gamma_{p^{n_p}}$ is a cyclic subgroup of order $p^{n_p}$ fixed by $\rho_{E,p^{n_p}}$. We have $|G_p| = p^{3n_p}$.
		\item \underline{$\mathbf{p \in \mathcal{C}_{sp}}$}: We define the group $G_p$ as $\Gamma_1 \times \Gamma_2$, where $\Gamma_1, \Gamma_2 \subset E[p^{n_p}]$ are two independent cyclic subgroups of order $p^{n_p}$ stabilised by $\rho_{E,p^{n_p}}$ over $K'$. We have $|G_p| = p^{2n_p}$.
		\item \underline{$\mathbf{p \in \mathcal{C}_{ns}}$}: Choose an element $g_p \in C_{ns}(p^{n_p})$ such that $g_p \pmod p \notin \F_p^\times \cdot \operatorname{Id}$. We define the subgroup $G_p$ as $\{(x,g_p \cdot x) \mid x \in E[p^{n_p}]\}$. We have $|G_p| = p^{2n_p}$.
	\end{itemize}
	
	It is not difficult to notice that all the groups $G_p$ we defined are stable under the action of the absolute Galois group of $K'$: indeed, this is clear by definition in the case of the Borel and split Cartan subgroups, and it is true in the case of the non-split Cartan as $C_{ns}(p^{n_p})$ is abelian and for every $\gamma \in C_{ns}(p^{n_p})$ we have $\gamma(x,g_px) = (\gamma x, \gamma g_p x) = (\gamma x, g_p (\gamma x))$.
	We now consider the group
	$$G:= \bigoplus_{p \in \mathcal{B} \cup \mathcal{C}} G_p \subset E \times E.$$
	
	Define
	$$\Lambda_\mathcal{B} := \prod_{p \in \mathcal{B}} p^{n_p} \qquad \text{and} \qquad \Lambda_\mathcal{C} := \prod_{p \in \mathcal{C}} p^{n_p}.$$
	By taking the quotient $A$ of $E \times E$ by the subgroup $G$, we have an isogeny $\varphi: E\times E \to A$ defined over $K'$ such that $\deg \varphi = \Lambda_{\mathcal{B}}^3 \Lambda_{\mathcal{C}}^2$. There exists $\psi: A \to E \times E$ such that $\psi \circ \varphi = [\Lambda_{\mathcal{B}} \Lambda_{\mathcal{C}}]_{E \times E}$, so $\deg\psi= \Lambda_{\mathcal{B}} \Lambda_{\mathcal{C}}^2 = \Lambda^2$. As explained in the proof of \cite[Proposition 5.1]{lefourn16}, for every embedding $\sigma : K' \hookrightarrow \C$, there is a canonical norm $\| \cdot \|_\sigma$ on the tangent space of $E_\sigma$, which contains the period lattice $\Omega_{E, \sigma}$. As in \cite[Part 7.3]{gaudron-remond} and in the proof of \cite[Proposition 5.1]{lefourn16}, we choose an embedding $\sigma_0$ such that there exists a basis $(\omega_0,\tau_{\sigma_0}\omega_0)$ of $\Omega_{E,\sigma_0}$ for which $\tau_{\sigma_0}$ is as in Remark \ref{rmk: princ pol on ell curves} and
	$$\| \omega_0 \|_{\sigma_0} = \max_\sigma \min_{\omega \in \Omega_{E,\sigma} \setminus \{0\}} \| \omega \|_\sigma.$$
	By Remark \ref{rmk: princ pol on ell curves}, this choice of $\sigma_0$ minimizes $\Im\{\tau_{\sigma}\}$ among all $\sigma$, as in \cite[Part 7.3]{gaudron-remond}. Let $\Omega_{A,\sigma_0}$ be the period lattice of $A_{\sigma_0}$. We want to show that there exists an element $\chi \in \Omega_{A,\sigma_0}$ such that $\mathrm{d}\psi(\chi) = (\omega_0,\tau_{\sigma_0}\omega_0)$. To do this, we prove the following lemma.
	
	\begin{lemma}\label{lemma:hyp*}
		For every embedding $\sigma : K' \to \C$, if $\Omega_{E,\sigma}$ and $\Omega_{A,\sigma}$ are the period lattices of $E$ and $A$ with respect to $\sigma$, then $\operatorname{d}\psi(\Omega_{A,\sigma}) \subseteq \Omega_{E,\sigma}^2$ contains an element $(\omega_1, \omega_2)$ such that $\langle \omega_1, \omega_2 \rangle_\Z = \Omega_{E,\sigma}$.
	\end{lemma}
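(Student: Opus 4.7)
The plan is to identify the lattice $\operatorname{d}\psi(\Omega_{A,\sigma})$ explicitly and then solve the resulting congruence problem prime by prime. Since $\varphi$ is the analytic quotient of $E_\sigma \times E_\sigma$ by $G$, the differential $\operatorname{d}\varphi$ will be the identity on the common tangent space $t_{E\times E,\sigma}$, and the relation $\psi\circ\varphi = [\Lambda]$ forces $\operatorname{d}\psi = \Lambda \cdot \mathrm{id}$. The period lattice of $A_\sigma$ equals $\widetilde{G} := \pi^{-1}(G)$, where $\pi: t_{E\times E,\sigma} \to E_\sigma \times E_\sigma$ is the exponential map, so $\operatorname{d}\psi(\Omega_{A,\sigma}) = \Lambda\widetilde{G}$. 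Upon fixing a $\Z$-basis $(\omega,\omega')$ of $\Omega_{E,\sigma}$ and using the induced identification $\Omega_{E,\sigma}^2/\Lambda\Omega_{E,\sigma}^2 \cong E[\Lambda]^2$, the lattice $\Lambda\widetilde{G}$ should consist exactly of those pairs $(\omega_1,\omega_2) \in \Omega_{E,\sigma}^2$ whose reduction modulo $\Lambda$ lies in $G$.

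It will then suffice to exhibit $(\bar\omega_1,\bar\omega_2) \in G$ whose determinant in the basis $(\omega,\omega')$ is $\pm 1$ modulo $\Lambda$: the surjectivity of the reduction map $\SL_2(\Z) \twoheadrightarrow \SL_2(\Z/\Lambda\Z)$ (strong approximation for $\SL_2$) would then lift this to a $\Z$-basis $(\omega_1,\omega_2)$ of $\Omega_{E,\sigma}$ with the prescribed image in $E[\Lambda]^2$. By the Chinese Remainder Theorem, this reduces to finding, for each $p \in \mathcal{B} \cup \mathcal{C}$, a pair in $G_p$ with determinant $1$ modulo $p^{n_p}$.

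For $p \in \mathcal{B}$ I would take any generator $\bar\omega_1^{(p)}$ of $\Gamma_{p^{n_p}}$ and extend it to a $\Z/p^{n_p}$-basis of $E[p^{n_p}]$; for $p \in \mathcal{C}_{sp}$ I would take generators $\bar\omega_1^{(p)} \in \Gamma_1$ and $\bar\omega_2^{(p)} \in \Gamma_2$ of the two complementary cyclic subgroups. In both cases the determinant is automatically a unit, and we may rescale the second entry inside $E[p^{n_p}]$ (respectively $\Gamma_2$) to normalise it to $1$. The only real work is in the non-split Cartan case $p \in \mathcal{C}_{ns}$, where $G_p = \{(x,g_p x) : x \in E[p^{n_p}]\}$ with $g_p \in C_{ns}(p^{n_p})$ non-scalar modulo $p$: the determinant $Q(x) := \det(x,g_p x)$ is a binary quadratic form in the coordinates of $x$, and a short computation shows that its associated symmetric matrix has determinant $-[(\alpha-\delta)^2 + 4\beta\gamma]$, where $\alpha,\beta,\gamma,\delta$ are the entries of $g_p$ in the basis $(\omega,\omega')$; this is the negative of the discriminant of the characteristic polynomial of $g_p$. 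Since $g_p \bmod p$ has eigenvalues in $\F_{p^2}\setminus\F_p$, that discriminant is a non-square in $\F_p^{\times}$, so $Q \bmod p$ will be anisotropic and will represent every value of $\F_p^{\times}$ (being, up to isomorphism, the norm form of $\F_{p^2}/\F_p$). Picking $\bar x_0 \neq 0$ with $Q(\bar x_0) \equiv 1 \pmod p$, the non-vanishing of the Hessian will give $\nabla Q(\bar x_0) \not\equiv 0 \pmod p$, so Hensel's lemma will lift $\bar x_0$ to $x_0 \in E[p^{n_p}]$ with $Q(x_0) \equiv 1 \pmod{p^{n_p}}$; then $(x_0, g_p x_0)$ is the required element of $G_p$.

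The main obstacle I anticipate is the non-split Cartan case, where one must verify that the determinant-valued quadratic form has non-square discriminant modulo $p$ (so that it is surjective onto $\F_p^{\times}$) and non-singular Hessian (so that Hensel applies); everything else -- the analytic identification $\operatorname{d}\psi(\Omega_{A,\sigma}) = \Lambda\widetilde{G}$, the reduction via CRT, and the lift along $\SL_2(\Z) \twoheadrightarrow \SL_2(\Z/\Lambda\Z)$ -- is essentially formal.
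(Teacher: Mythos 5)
Your proof is correct and follows the same global strategy as the paper's: identify $\operatorname{d}\psi(\Omega_{A,\sigma})$ via the analytic quotient as the sublattice of $\Omega_{E,\sigma}^2$ cut out by congruence conditions modulo $p^{n_p}$ for each $p \in \mathcal{B}\cup\mathcal{C}$, reduce prime by prime via CRT to finding an element of $G_p$ with determinant $1 \bmod p^{n_p}$, and lift via strong approximation for $\SL_2$.

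Two small points. First, a notational slip: the relation is $\psi\circ\varphi = [\Lambda_{\mathcal{B}}\Lambda_{\mathcal{C}}]$, not $[\Lambda]$. Recall $\Lambda = \prod_{p\in\mathcal{B}}p^{n_p/2}\prod_{p\in\mathcal{C}}p^{n_p}$ carries half-integer exponents over Borel primes, so $[\Lambda]$ is not an isogeny of $E\times E$ in general; the reduction modulus is $\Lambda_{\mathcal{B}}\Lambda_{\mathcal{C}} = \prod_{p}p^{n_p}$, which is what the CRT decomposition uses anyway, so this is harmless. Second, your treatment of the non-split Cartan case is genuinely different from the paper's and slightly heavier. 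The paper chooses a basis that puts $g_p$ into the normal form $\begin{pmatrix}a&\varepsilon b\\b&a\end{pmatrix}$ with $p\nmid b$, computes $\det(x,g_p x) = b(x_1^2-\varepsilon x_2^2)$, and deduces surjectivity onto $(\Z/p^{n_p}\Z)^\times$ at once from $\det C_{ns}(p^{n_p}) = (\Z/p^{n_p}\Z)^\times$, with no Hensel lifting. You instead work basis-free: you observe that the discriminant of $Q(x)=\det(x,g_p x)$ equals (up to the harmless factor $-\tfrac{1}{4}$ you dropped) the discriminant of the characteristic polynomial of $g_p$, deduce anisotropy modulo $p$, find a solution of $Q\equiv1\pmod p$, and lift it via Hensel using non-degeneracy of the Gram matrix. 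Both are valid; yours isolates a more intrinsic reason the construction works (the determinant form on a line through a matrix with irreducible characteristic polynomial mod $p$ is anisotropic), at the cost of one extra lifting step.
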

	
	\begin{proof}
		The proof is similar to the second part of the proof of \cite[Theorem 5.2]{lefourn16}, however \cite[Lemma 5.3]{lefourn16} can no longer be applied. 
		Let $t_E \times t_E$ be the tangent space of $E \times E$ with respect to the embedding $\sigma$, and let $\pi : t_E \times t_E \to E \times E$ be the projection. The lattice $\Omega := \pi^{-1}(G) \subset t_E \times t_E$ defines a quotient abelian variety $\faktor{t_E \times t_E}{\Omega}$ isomorphic to $A$.
		\[\begin{tikzcd}
			{t_E \times t_E} & {t_E \times t_E} & {t_E \times t_E} \\
			{E \times E} & A & {E \times E}
			\arrow["{\operatorname{id}}", from=1-1, to=1-2]
			\arrow["\pi"', from=1-1, to=2-1]
			\arrow["{\Lambda_\mathcal{B} \Lambda_\mathcal{C}}", from=1-2, to=1-3]
			\arrow[from=1-2, to=2-2]
			\arrow["\pi", from=1-3, to=2-3]
			\arrow["\varphi"', from=2-1, to=2-2]
			\arrow["\psi"', from=2-2, to=2-3]
		\end{tikzcd}\]
		Let $\Omega' := \Lambda_{\mathcal{B}} \Lambda_{\mathcal{C}} \Omega \subseteq \Omega_E \times \Omega_E$ be the image of the lattice $\Omega$ under the homothety $\Lambda_{\mathcal{B}} \Lambda_{\mathcal{C}}$. This is equal to the image of $\Omega_A$ via $\operatorname{d}\psi$, i.e. $\Omega' = \operatorname{d}\psi(\Omega_A)$. We want to show that $\Omega'$ contains a basis of $\Omega_E$. Fix a basis $(\widetilde{e_1}, \widetilde{e_2})$ of $\Omega_E$. Let $p$ be a prime in $\mathcal{B} \cup \mathcal{C}$ and consider the image of $\Omega'$ in $\left(\faktor{\Omega_E}{p^{n_p}\Omega_E}\right)^2$. Multiplying it by $\frac{1}{p^{n_p}}$ we can identify it with a subgroup of $\left(\faktor{\Omega_E/p^{n_p}}{\Omega_E}\right)^2 = E[p^{n_p}]^2$. By definition of $\Omega'$, the image of $\frac{1}{p^{n_p}}\Omega'$ in $E[p^{n_p}]^2$ is exactly $\frac{\Lambda_{\mathcal{B}} \Lambda_{\mathcal{C}}}{p^{n_p}} G_p = G_p$. Identify $E[p^{n_p}]$ with $\F_p^2$ choosing the basis $\pi\left(\frac{\widetilde{e_1}}{p^{n_p}}, \frac{\widetilde{e_2}}{p^{n_p}}\right) = (e_1, e_2)$. We now prove that for every $G_p$ there is an element $(x,y) \in G_p$ such that $\det_{e_1, e_2}(x,y) = 1$. \\
		If $p \in \mathcal{B}$, let $(a,b) \in \Gamma_{p^{n_p}}$ be an element of order $p^{n_p}$. We can choose $(c,d)$ such that $ad-bc =1 \pmod {p^{n_p}}$, and hence the element $((a,b),(c,d)) \in G_p$ has determinant $1$. \\
		If $p \in \mathcal{C}_{sp}$, similarly to the Borel case, given two elements $(a,b) \in \Gamma_1$ and $(c,d) \in \Gamma_2$ of order $p^{n_p}$, we have $ad-bc = k \not\equiv 0 \pmod p$, because $\Gamma_1 \pmod p \ne \Gamma_2 \pmod p$. We can then take $(a',b') = (k^{-1}a, k^{-1}b) \in \Gamma_1$ such that $a'd-b'c = 1$. \\
		If $p \in \mathcal{C}_{ns}$, let $e_1',e_2'$ be another basis of $E[p^n]$. We have $$\det_{e_1,e_2} (x,y) = \det_{e_1,e_2}(e_1',e_2') \cdot \det_{e_1',e_2'}(x,y),$$ hence it suffices to show that for a particular choice of a basis $e_1',e_2'$, the group $\det_{e_1',e_2'}G_p$ contains the whole $\left(\faktor{\Z}{p^n\Z}\right)^\times$. Fix $e_1', e_2'$ such that $g_p = \begin{pmatrix} a & \varepsilon b \\ b & a \end{pmatrix}$ with $p \nmid b$. We have
		$$\det(x,g_p \cdot x) = \det \begin{pmatrix} x_1 & ax_1 + \varepsilon bx_2 \\ x_2 & bx_1+ax_2 \end{pmatrix} = b(x_1^2-\varepsilon x_2^2) = b \cdot \det \begin{pmatrix} x_1 & \varepsilon x_2 \\ x_2 & x_1 \end{pmatrix}.$$
		As $\det C_{ns}(p^n) = \left(\faktor{\Z}{p^n\Z}\right)^\times$, the proof of the claim is obtained by varying $x$. \\
		We showed that for every $p \in \mathcal{B} \cup \mathcal{C}$ there exists $\gamma_p \in \SL_2\left(\faktor{\Z}{p^{n_p}}\right)$ such that $\gamma_p \begin{pmatrix} e_1 \\ e_2 \end{pmatrix} \in \faktor{\Omega'}{p^{n_p}\Omega_E^2}$. Since the projection $\SL_2(\Z) \to \prod_{p \in \mathcal{B} \cup \mathcal{C}} \SL_2\left(\faktor{\Z}{p^{n_p}\Z}\right)$ is surjective, there exists an element $\gamma \in \SL_2(\Z)$ such that
		\begin{equation*}
			\gamma \begin{pmatrix} \widetilde{e_1} \\ \widetilde{e_2} \end{pmatrix} \in \Omega' + \Lambda_{\mathcal{B}}\Lambda_{\mathcal{C}} \Omega_E^2 \subseteq \Omega'.
		\end{equation*}
		Since $(\widetilde{e_1}, \widetilde{e_2})$ is a basis of $\Omega_E$, also $\gamma (\widetilde{e_1}, \widetilde{e_2})$ is a basis, and it is contained in $\Omega'$ as desired.
	\end{proof}
	
	Using Lemma \ref{lemma:hyp*}, composing $\psi$ with an isomorphism of $E \times E$, we can assume that there exists an element $\chi \in \Omega_{A,\sigma_0}$ such that $\mathrm{d}\psi(\chi) = (\omega_0,\tau_{\sigma_0}\omega_0)$. Setting $\omega = (\omega_0, \tau_{\sigma_0}\omega_0, \chi) \in \Omega_{E \times E \times A , \sigma_0}$, we define 
	$A_\omega$ as the minimal abelian subvariety of $(E \times E \times A)_{\sigma_0}$ containing $\omega = (\omega_0, \tau_{\sigma_0}\omega_0, \chi)$ in its tangent space. As in the proof of \cite[Proposition 5.1]{lefourn16}, one shows that $$A_\omega := \{(\psi(z),z) \mid z \in A_{\sigma_0}\} \subset (E \times E \times A)_{\sigma_0}.$$
	Indeed, the inclusion $A_\omega \subseteq \{(\psi(z),z) \mid z \in A_{\sigma_0}\}$ is clear, and the projection from $A_\omega$ to $E \times E$ is a subvariety of $(E \times E)_{\sigma_0}$ containing $(\omega_0, \tau_{\sigma_0} \omega_0)$ in its period lattice. As $E$ is an elliptic curve without complex multiplication, the endomorphism ring of $E \times E$ is $M_{2 \times 2}(\Z)$, therefore no strict abelian subvariety of $(E \times E)_{\sigma_0}$ contains $(\omega_0, \tau_{\sigma_0} \omega_0)$ in its tangent space. This proves that the dimension of $A_\omega$ is at least $2$, hence the equality above.
	We then see that the complex abelian variety $A_\omega$ can be defined over $K'$. When we consider $A_{\omega}$ as being defined over $K'$, we will write $(A_\omega)_\sigma$ for its base-change to $\mathbb{C}$ along a given embedding $\sigma: K' \hookrightarrow \mathbb{C}$.  The abelian variety $A_\omega$ falls within the context of \cite[Part 7.3]{gaudron-remond}. 
	
	As explained in \cite[Proposition 5.1]{lefourn16}, one can repeat the proof of Gaudron and R\'emond to obtain a bound on $\Lambda$ similar to that of \cite[Theorem 5.2]{lefourn16}. However, we will change some details to improve the final result. 
	
	We choose a polarisation on $A_\omega$ as in \cite[Part 7.3]{gaudron-remond}, namely, in the following way. Set $n = \lfloor |\tau_{\sigma_0}|^2 \rfloor$, let $L_E$ be the canonical principal polarisation on $E_{\sigma_0}$ and let $\pi_1,\pi_2$ be the projections from $(E \times E)_{\sigma_0}$ on the two copies of $E_{\sigma_0}$. We consider the polarisation $L'=\pi_1^*L_E^{\otimes n} \otimes \pi_2^*L_E$ on $(E \times E)_{\sigma_0}$ and the isogeny $f$ defined as the composition $A_\omega \xrightarrow{\sim} A_{\sigma_0} \xrightarrow{\psi} (E \times E)_{\sigma_0}$, where the first isomorphism is given by the projection $(\psi(z),z) \mapsto z$. We define the polarisation $L := f^*L'$ on $A_\omega$, and as in \cite[Part 7.3]{gaudron-remond} we compute
	\begin{equation}\label{eq:degLAomega}
		\deg_L A_\omega = (\deg f)\deg_{L'} E^2 = 2n\Lambda^2.
	\end{equation}
	
	\begin{lemma}\label{slopes}
		Let $\hat{\mu}_{max}(\overline{t_{A_\omega}^\vee})$ be the quantity defined in \cite[Part 6.8]{gaudron-remond}. The inequality $$\hat{\mu}_{max}(\overline{t_{A_\omega}^\vee}) \le \Fheight(E) + 2\log \Lambda + \frac{1}{2}\log\frac{n}{\pi}$$ holds.
	\end{lemma}
	\begin{proof}
		It suffices to combine the proof of \cite[Lemma 7.6]{gaudron-remond} with the remark at the end of the proof of \cite[Proposition 5.1]{lefourn16}, which gives $\Fheight(A_\omega) \le 2\Fheight(E) + \log \Lambda$.
	\end{proof}
	
	The following definition collects the notations that will be needed in the rest of the proof.
	\begin{definition}\label{def: notation for the proof of the effective surjectivity theorem}
		Following \cite[Parts 6.2 and 6.3]{gaudron-remond}, we set
		$\varepsilon = \frac{3\sqrt{2}-4}{2}$, $\theta = \frac{\log2}{\pi}$ and $S_\sigma := \left\lfloor \frac{\theta\varepsilon}{x\delta_\sigma^2} \right\rfloor$, where $x$ is as in Definition \ref{def: x} and $\delta_\sigma$ is as in Definition \ref{def: delta sigma}. We further define $\mathcal{V} = \{\sigma:K' \hookrightarrow \C \mid S_\sigma \ge 1\}$.
		For every $\sigma: K' \hookrightarrow \C$ choose $B[\sigma] \subset (A_\omega)_\sigma$ as in Definition \ref{def: x}. We introduce the following quantities.
		\begin{align*}
			\aleph_1 := & 2\max\{0,\hat{\mu}_{max}(\overline{t_{A_\omega}^\vee})\} + 5\log 2 + \frac{2}{[K':\Q]} \sum_{\sigma \in \mathcal{V}} \log \max \left\lbrace 1, \frac{1}{\rho((A_\omega)_\sigma,L_\sigma)} \right\rbrace \\
			& + \frac{4}{[K':\Q]} \sum_{\sigma \in \mathcal{V}} \log\deg_{L_\sigma}B[\sigma] + \varepsilon\log12; \\
			m := & \ \frac{1}{[K':\Q]} \sum_{\sigma \in \mathcal{V}} \frac{1}{\delta_\sigma^2}.
		\end{align*}
	\end{definition}
	
	By \cite[Part 6]{gaudron-remond},	and in particular \cite[Part 6.8]{gaudron-remond}, we have
	$$\varepsilon\log2 \left( \frac{\varepsilon\theta}{x}m -1
	\right) < \frac{\varepsilon\log2}{[K':\Q]}\sum_{\sigma \in \mathcal{V}} S_\sigma \le \aleph_1 + \frac{\pi x}{2}\left(\frac{3}{2} + \frac{3\theta\varepsilon}{x}\sqrt{m} + \left(\frac{\theta\varepsilon}{x} \right)^2 m\right),$$
	where the inequality on the left is obtained by the definition of $S_\sigma$ together with the inequality $\lfloor x \rfloor > x-1$, while the inequality on the right is that of \cite[Part 6.8, equation (14)]{gaudron-remond} together with the estimate on $\aleph_2$ obtained by the Cauchy-Schwarz inequality on the same page of \cite{gaudron-remond}.
	Solving the inequality in $\sqrt{m}$, using that $\theta = \frac{\log2}{\pi}$, we obtain
	\begin{equation}\label{sqrtm-ineq}
		\sqrt{m} < \frac{3\pi x}{2\varepsilon\log2}\left(1 + \sqrt{1+ \frac{8}{9\pi x}\left(\aleph_1 + \frac{3\pi}{4}x + \varepsilon \log2\right)}\right).
	\end{equation}
	
	We now want to find a bound on $x$ in terms of $n$. Recall that $\omega = (\omega_0, \tau_{\sigma_0}\omega_0, \chi)$. We have
	\begin{align*}
		\| \omega \|_{L, \sigma_0}^2 &= \| (\omega_0, \tau_{\sigma_0} \omega_0) \|_{L',\sigma_0}^2 = n \|\omega_0 \|_{L_E, \sigma_0}^2 + \| \tau_{\sigma_0} \omega_0 \|_{L_E, \sigma_0}^2.
	\end{align*}
	Using Remark \ref{rmk: princ pol on ell curves} we obtain
	\begin{align*}
		\| \omega \|_{L, \sigma_0}^2 &= (n + |\tau_{\sigma_0}|^2) \| \omega_0 \|_{L_E, \sigma_0}^2 = (n+|\tau_{\sigma_0}|^2) \rho(E_{\sigma_0}, (L_E)_{\sigma_0})^2 \\
		&= \frac{n+|\tau_{\sigma_0}|^2}{\Im\{\tau_{\sigma_0}\}} \le \frac{n+|\tau_{\sigma_0}|^2}{\sqrt{|\tau_{\sigma_0}|^2- \frac{1}{4}}} \le \frac{2n}{\sqrt{n-\frac{1}{4}}},
	\end{align*}
	where the last inequality follows from the fact that the function $\frac{n+t}{\sqrt{t-1/4}}$ for $t \in \left[n, n+1\right]$ attains its maximum at $t=n=\lfloor |\tau_{\sigma_0}| \rfloor$.
	
	Suppose now that $\sigma_0 \notin \mathcal{V}$, and hence that $S_{\sigma_0} = 0$. We notice that $x \le x(0) = \frac{1}{\sqrt{\deg_L A_\omega}} = \frac{1}{\Lambda \sqrt{2n}}$ by equation \eqref{eq:degLAomega}.
	By definition of $S_{\sigma_0}$ we have
	$$1 > \frac{\theta\varepsilon}{x\delta_{\sigma_0}^2} \ge \frac{\theta\varepsilon \Lambda \sqrt{2n}}{\| \omega \|_{L,\sigma_0}^2} \ge \left( \frac{1}{2} - \frac{1}{8n} \right)^\frac{1}{2} \theta\varepsilon \Lambda > \sqrt{\frac{3}{8}} \theta\varepsilon \Lambda,$$
	that gives $\Lambda < \frac{\sqrt{8}}{\theta\varepsilon\sqrt{3}} < 62$, which is better than Theorem \ref{effiso} (since $\Fheight(E)> -0.75$ by Remark \ref{minimalheight}). Thus, we can assume $S_{\sigma_0} \ge 1$, and in particular, we can assume that $\sigma_0 \in \mathcal{V}$.
	
	Since $K'$ is a complex field, there exists an embedding $\overline{\sigma_0}$ of $K'$ different from $\sigma_0$, which is its complex conjugate, inducing the same norm $\| \cdot \|_{L_{\overline{\sigma_0}}} = \| \cdot \|_{L_{\sigma_0}}$ and such that $\delta_{\overline{\sigma_0}} = \delta_{\sigma_0}$.
	Combining the fact that $\delta_{\sigma_0}^2 \le \frac{2n}{\sqrt{n - \frac{1}{4}}}$ and that $\sigma_0, \overline{\sigma_0} \in \mathcal{V}$ we obtain
	\begin{equation}\label{eq:lowerboundonm}
		m = \frac{1}{[K':\Q]} \sum_{\sigma \in \mathcal{V}} \frac{1}{\delta_\sigma^2} \ge \frac{1}{[K':\Q]} \cdot \frac{2}{\delta_{\sigma_0}^2} \ge \frac{2}{[K':\Q]} \cdot \frac{\sqrt{n - 1/4}}{2n}.
	\end{equation}
	
	We now notice that for every embedding $\sigma$ we have
	\begin{equation}
		\rho((A_\omega)_\sigma,L_\sigma) \ge \rho(E_\sigma,(L_E)_\sigma).
	\end{equation}
	Indeed, for every period $\overline{\omega} = (\overline{\omega}_1, \overline{\omega}_2, \overline{\chi}) \in \Omega_{A_\omega, \sigma} \subseteq \Omega_{E, \sigma}^2 \times \Omega_{A, \sigma}$ we have
	\begin{equation*}
		\|\overline{\omega}\|^2_{L, \sigma} = n \|\overline{\omega}_1 \|^2_{L_E,\sigma} + \|\overline{\omega}_2 \|^2_{L_E,\sigma} \ge \max\{\|\overline{\omega}_1 \|^2_{L_E,\sigma}, \|\overline{\omega}_2 \|^2_{L_E,\sigma}\}.
	\end{equation*}
	We then have $\log \max \left\lbrace 1, \frac{1}{\rho((A_\omega)_\sigma,L_\sigma)} \right\rbrace \le \log \max \left\lbrace 1, \frac{1}{\rho(E_\sigma,(L_E)_\sigma)} \right\rbrace$.
	If we define
	\begin{align}
		\begin{split}\label{eq:aleph1bar}
			\! \overline{\aleph}_1 := \, &2\Fheight(E) + 4\log\Lambda + \log\frac{n}{\pi} + 5\log 2 + \frac{4}{[K':\Q]} \sum_{\sigma \in \mathcal{V}} \log\deg_{L_\sigma}B[\sigma]\\ 
			& + \frac{2}{[K':\Q]} \sum_{\sigma} \log \max \left\lbrace 1, \frac{1}{\rho(E_\sigma,(L_E)_\sigma)} \right\rbrace + \varepsilon\log12,
		\end{split}
	\end{align}
	by Lemma \ref{slopes} we have $\aleph_1 \le \overline{\aleph}_1$. We can then replace $\aleph_1$ by $\overline{\aleph}_1$ in equation \eqref{sqrtm-ineq}, and by inequality \eqref{eq:lowerboundonm} we obtain
	\begin{equation}\label{eq:tosquareineq}
		\frac{(n-1/4)^{\frac{1}{4}}}{\sqrt{n[K':\Q]}} < \frac{3\pi x}{2\varepsilon\log2}\left(1 + \sqrt{1+ \frac{8}{9\pi x}\left(\overline{\aleph}_1 + \frac{3\pi}{4}x + \varepsilon \log2\right)}\right).
	\end{equation}
	
	By Remark \ref{minimalheight}, we have $\Fheight(E) \ge -0.75$ for every elliptic curve $E$. Since $\Lambda - 4 \cdot 1266.4\log\Lambda < 2 \cdot 1266.4(-0.75 + 1.38)$ holds for every $\Lambda \le 57000$, we can assume $\Lambda>57000$, otherwise Theorem \ref{effiso} would trivially hold. Then we have
	$$\overline{\aleph}_1 > -1.5 + 4\log57000 + 5\log2 + \varepsilon\log12 > 44.$$
	Using that $x \le x(0) = \frac{1}{\Lambda\sqrt{2n}} \le \frac{1}{\Lambda\sqrt{2}}$ (Lemma \ref{B[sigma]=0}), this gives
	$$\frac{8}{9\pi x}\left(\overline{\aleph}_1 + \frac{3\pi}{4}x + \varepsilon \log2\right) > \frac{8 \sqrt{2} \cdot 57000}{9\pi} \cdot 44 > 10^6.$$
	Since the function $\frac{1+\sqrt{z}}{\sqrt{1+z}}$ is decreasing for $z>1$, in equation \eqref{eq:tosquareineq} we obtain
	\begin{align*}
		\frac{(n-1/4)^{\frac{1}{4}}}{\sqrt{n[K':\Q]}} &< \frac{3\pi x}{2\varepsilon\log2} \cdot \frac{1+1000}{\sqrt{10^6 + 1}} \sqrt{2+ \frac{8}{9\pi x}\left(\overline{\aleph}_1 + \frac{3\pi}{4}x + \varepsilon \log2\right)}.
	\end{align*}
	Squaring both sides and bounding $x$ with $x(0) =\frac{1}{\sqrt{2n} \, \Lambda}$ we have
	\begin{align*}
		\frac{\sqrt{n-1/4}}{n[K':\Q]} &< \frac{9\pi^2 x^2}{4\varepsilon^2 (\log2)^2} \cdot 1.002 \cdot \left(2+ \frac{8}{9\pi x}\left(\overline{\aleph}_1 + \frac{3\pi}{4}x + \varepsilon \log2\right)\right) \\
		&= \frac{2\pi x}{\varepsilon^2(\log2)^2} \cdot 1.002 \left( \overline{\aleph}_1 + 3\pi x + \varepsilon \log2 \right) \\
		&< \frac{2.004\pi}{\varepsilon^2(\log2)^2 \Lambda \sqrt{2n}} \left( \overline{\aleph}_1 + 0.085 \right),
	\end{align*}
	where in the last inequality we bounded $x$ with $\frac{1}{57000\sqrt{2}}$, since we are assuming that $\Lambda>57000$. We then obtained that
	\begin{align}
		\begin{split}\label{daquidueopzioni}
			\Lambda < [K':\Q] \left(2- \frac{1}{2n}\right)^{-\frac{1}{2}} \frac{2.004\pi}{\varepsilon^2(\log2)^2} \ (\overline{\aleph}_1 + 0.085),
		\end{split}
	\end{align}
	and writing $(2-\frac{1}{2n})^{-\frac{1}{2}} \le \sqrt{\frac{2}{3}}$, we have
	\begin{align}\label{eq:unapertutti}
		\Lambda < 727 \, [K':\Q] \left(\overline{\aleph}_1 + 0.085\right).
	\end{align}
	
	We now want to bound $\overline{\aleph}_1$. We will first do it in general, and then specialise to the case $\mathcal{B} = \mathcal{C}_{sp} = \emptyset$.
	We will use the following lemma.
	
	\begin{lemma}\label{lemma:matriciel}
		Let $E$ be an elliptic curve defined over a number field $K$. We have
		$$\frac{1}{[K:\Q]} \sum_{\sigma : K \hookrightarrow \C} \frac{1}{\rho(E_\sigma, (L_E)_\sigma)^2} < 2.29 \Fheight(E) + 6.21.$$
	\end{lemma}
	
	\begin{proof}
		If we call $T:= \frac{1}{[K:\Q]} \sum_\sigma \frac{1}{\rho(E_\sigma, (L_E)_\sigma)^2}$, by \cite[Proposition 3.2]{gaudron-remond} we have that either $T < \frac{3}{\pi}$, which is better than the statement of the lemma by Remark \ref{minimalheight}, or $\pi T \le 3\log T + 6 \Fheight(E) + 8.66$. In the latter case, we can apply \cite[Lemma B.1]{smart98} and obtain
		\begin{align*}
			T &< \frac{3}{\pi}\log\left( \frac{12}{\pi}\Fheight(E) + 5.52 + 4e^2 \right) + \frac{6}{\pi} \Fheight(E) + 2.76 < 2.29 \Fheight(E) + 6.21,
		\end{align*}
		where in the last inequality we used that $\frac{12}{\pi}\Fheight(E) + 5.52 + 4e^2 > 32.2$ for $\Fheight(E)>-0.75$, and that $\frac{\log x}{x} \le \frac{\log 32.2}{32.2}$ for $x \ge 32.2$.
	\end{proof}
	
	We can apply Lemma \ref{lemma:matriciel} and use the concavity of the logarithm to obtain
	\begin{align}
		\begin{split}\label{eq:logmatriciel}
			&\frac{2}{[K':\Q]} \sum_{\sigma : K' \hookrightarrow \C} \log\max \left\lbrace 1, \frac{1}{\rho(E_\sigma, (L_E)_\sigma)} \right\rbrace \\
			\le& \max\left\lbrace 0, \log\left( \frac{1}{[K':\Q]} \sum_{\sigma : K' \hookrightarrow \C} \frac{1}{\rho(E_\sigma, (L_E)_\sigma)^2} \right) \right\rbrace \\
			<& \log(2.29 \Fheight(E) + 6.21) < \log(\Fheight(E) + 2.72) + 0.829.
		\end{split}
	\end{align}
	
	By the definition of $B[\sigma]$ we have $x=x(B[\sigma]) \le x(0)$, and so either $B[\sigma]=0$, which gives $\deg_{L_\sigma} B[\sigma] = 1$, or $\frac{\deg_{L_\sigma}B[\sigma]}{\deg_{L_\sigma} A_\omega} \le \frac{1}{\sqrt{\deg_{L_\sigma} A_\omega}}$, which gives $\deg_{L_\sigma} B[\sigma] \le \sqrt{\deg_{L_\sigma} A_\omega}$. In particular, using equation \eqref{eq:degLAomega} we have the bound
	\begin{align}\label{eq:boundsuiBsigma}
		\frac{4}{[K':\Q]} \sum_{\sigma \in \mathcal{V}} \log\deg_{L_\sigma}B[\sigma] &\le 2\log\deg_{L_\sigma} A_\omega \le 4\log\Lambda + 2\log n + 2\log2.
	\end{align}
	
	If we use inequalities \eqref{eq:logmatriciel} and \eqref{eq:boundsuiBsigma} to bound $\overline{\aleph}_1$ and we replace it in equation \eqref{eq:unapertutti} we obtain
	\begin{align*}
		\Lambda < 727 [K':\Q] \left( 2\Fheight(E) \! + \! \log(\Fheight(E) \! + \! 2.72) \! + \! 8\log\Lambda \! + \! 3\log n \! - \! \log\pi \! + \! 7\log 2 \! + \! 0.914 \! + \! \varepsilon\log12 \right) \! .
	\end{align*}
	By our choice of $\sigma_0$, we know that $\frac{1}{\rho(E_{\sigma_0}, (L_E)_{\sigma_0})^2} = \Im\{\tau_{\sigma_0}\}$ is smaller than or equal to the mean of the values $\Im\{\tau_\sigma\}$. Using Lemma \ref{lemma:matriciel} we then have
	$$n \le |\tau_{\sigma_0}|^2 \le \Im\{\tau_{\sigma_0}\}^2 + \frac{1}{4} \le (2.29\Fheight(E) + 6.21)^2 + \frac{1}{4},$$
	and so
	\begin{equation}\label{eq:logn}
		\log n \le 2\log(\Fheight(E) + 2.72) + 1.662.
	\end{equation}
	We then obtain
	\begin{align*}
		\Lambda &< 1454 \, [K':\Q] \left( \Fheight(E) + \frac{7}{2}\log(\Fheight(E) + 2.72) + 4\log\Lambda + 5\right) \\
		&= 1454 \cdot 2^{|\mathcal{C}|} [K:\Q] \left( \Fheight(E) + \frac{7}{2}\log(\Fheight(E) + 2.72) + 4\log\Lambda + 5\right),
	\end{align*}
	concluding the proof of the first part of Theorem \ref{thm:effiso}.

	\subsubsection*{The non-split Cartan case}

	When $\mathcal{B} = \mathcal{C}_{sp} = \emptyset$ we are able to obtain a better bound. To do this, we show that the subvarieties $B[\sigma]$ are equal to $0$ for every embedding $\sigma$.
	
	We distinguish cases according to whether $\Lambda \le \sqrt{2n}$ or $\Lambda > \sqrt{2n}$.
	
	\begin{lemma}
		If $\Lambda \le \sqrt{2n}$, then Theorem \ref{effiso} holds for $E$ and $\Lambda$.
	\end{lemma}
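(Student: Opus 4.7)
The plan is to translate the hypothesis $\Lambda^2 \le 2n$ into a direct polynomial bound on $\Lambda$ in terms of $\Fheight(E)$, and then verify that this crude bound is dominated by each of the three inequalities asserted by Theorem \ref{effiso}, so that the lemma holds trivially in this regime.

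First I would use that $\tau_{\sigma_0}$ lies in the standard fundamental domain $\mathcal{F}$, so $|\Re\{\tau_{\sigma_0}\}| \le \tfrac{1}{2}$ and hence
$$n = \lfloor |\tau_{\sigma_0}|^2 \rfloor \le |\tau_{\sigma_0}|^2 \le \Im\{\tau_{\sigma_0}\}^2 + \tfrac{1}{4}.$$
By the choice of $\sigma_0$ and Remark \ref{rmk: princ pol on ell curves}, the value $\Im\{\tau_{\sigma_0}\} = \rho(E_{\sigma_0},(L_E)_{\sigma_0})^{-2}$ is minimal among all embeddings $\sigma : K \hookrightarrow \C$, hence is bounded above by the average of $\rho(E_\sigma,(L_E)_\sigma)^{-2}$ over such embeddings. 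Applying Lemma \ref{lemma:matriciel} then yields
$$\Im\{\tau_{\sigma_0}\} \le \frac{1}{[K:\Q]} \sum_{\sigma:K\hookrightarrow\C} \frac{1}{\rho(E_\sigma,(L_E)_\sigma)^2} < 2.29\,\Fheight(E) + 7.51 < 2.29\bigl(\Fheight(E) + 3.28\bigr).$$
Combining this with $\Lambda^2 \le 2n \le 2\,\Im\{\tau_{\sigma_0}\}^2 + \tfrac{1}{2}$ and using that $\Im\{\tau_{\sigma_0}\} \ge \tfrac{\sqrt{3}}{2}$ (so the additive $\tfrac{1}{2}$ can be absorbed at the cost of a factor $\sqrt{8/3}$), I obtain
$$\Lambda \le \sqrt{8/3}\,\Im\{\tau_{\sigma_0}\} < 3.76\bigl(\Fheight(E) + 3.28\bigr).$$

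Finally I would verify that $3.76(\Fheight(E) + 3.28) < 12.34 + 3.76\,\Fheight(E)$ is smaller than each of the bounds stated in Theorem \ref{thm:effiso}: since $\Fheight(E) > -0.75$ by Remark \ref{minimalheight}, the right-hand side of part (1), namely $2908\,[K:\Q](\Fheight(E) + \tfrac{7}{2}\log(\Fheight(E)+3.28) + 4\log\Lambda + 5.531)$, is at least $2908 \cdot 4.78$, which dwarfs $12.34 + 3.76\,\Fheight(E)$; the same comparison handles parts (2) and (3) \emph{a fortiori}, since the additive constants and logarithmic terms there are strictly positive when $\Fheight(E) > -0.75$. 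There is no conceptual difficulty here: the only thing to be careful about is that Lemma \ref{lemma:matriciel} is stated for embeddings of the base field $K$ of $E$ (not of the auxiliary extension $K'$), which is exactly what one uses via the remark that $\tau_\sigma$ and $\rho(E_\sigma,(L_E)_\sigma)$ depend only on $\sigma|_K$.
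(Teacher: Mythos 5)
Your proposal is correct and follows essentially the same route as the paper: bound $n$ by $\Im\{\tau_{\sigma_0}\}^2 + \tfrac14$, use the minimality of $\Im\{\tau_{\sigma_0}\}$ over embeddings (so min $\le$ average) together with Lemma \ref{lemma:matriciel} to get a linear bound $\Lambda \lesssim c\,\Fheight(E) + d$, and observe this is dwarfed by the right-hand sides in Theorem \ref{effiso} thanks to $\Fheight(E) > -0.75$. The only cosmetic difference is that the paper keeps the additive $+\tfrac{1}{\sqrt 2}$ from $\sqrt{2(\Im\{\tau_{\sigma_0}\}^2 + 1/4)}$, arriving at $\Lambda < 5\Fheight(E)+12$, while you absorb the $+\tfrac14$ multiplicatively via $\Im\{\tau_{\sigma_0}\} \ge \sqrt{3}/2$ to get $\Lambda < 3.76(\Fheight(E)+3.28)$; both versions suffice, and your closing remark about Lemma \ref{lemma:matriciel} being stated over $K$ versus $K'$ correctly identifies and resolves the one point that requires care.
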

	
	\begin{proof}
		If $\Lambda \le \sqrt{2n}$, we can write $$\Lambda \le \sqrt{2\lfloor |\tau_{\sigma_0}|^2 \rfloor} \le \sqrt{2|\tau_{\sigma_0}|^2} \le \sqrt{2\left((\Im\{\tau_{\sigma_0}\})^2 + \frac{1}{4}\right)} \le \sqrt{2} \, \Im\{\tau_{\sigma_0}\} + \frac{1}{\sqrt{2}}.$$
		Remark \ref{rmk: princ pol on ell curves} gives $\Im\{\tau_{\sigma}\} = \rho(E_\sigma, L_\sigma)^{-2}$, so by Lemma \ref{lemma:matriciel} we have $$\Im\{\tau_{\sigma_0}\} \le \frac{1}{[K':\Q]}\sum_{\sigma} \Im\{\tau_\sigma\} \le 3\Fheight(E) + 6.5,$$
		and therefore $\Lambda < 5\Fheight(E) + 10$, which is largely better than Theorem \ref{effiso} (taking into account that $\Fheight(E)>-0.75$ by Remark \ref{minimalheight}).
	\end{proof}
	
	\begin{lemma}\label{B[sigma]=0}
		Assume $\Lambda > \sqrt{2n}$ and $\mathcal{B} = \mathcal{C}_{sp} = \emptyset$. Given $A_\omega$ considered as an abelian variety over $K'$ as above, for every $\sigma: K' \hookrightarrow \C$ we have $B[\sigma]=0$, and hence $x=\frac{1}{\Lambda\sqrt{2n}}$.
	\end{lemma}
	
	\begin{proof}
		Since $A_\omega \cong A$, it is sufficient to prove the statement for $A$ and $L=\psi^*L'$. First, we notice that $x(0)=\left(\frac{1}{2n\Lambda^2}\right)^\frac{1}{2}= \frac{1}{\Lambda\sqrt{2n}}$. Let us now consider an arbitrary proper abelian subvariety $B$ such that $\dim B >0$. The subgroups of $A$ correspond to those of $E \times E$ that contain the group $$G = \prod_{p \in \mathcal{C}_{ns}} \{(x,g_p \cdot x) \mid x \in E[p^{n_p}]\}.$$
		Since $B$ is a proper subvariety of the abelian surface $A$, we have $\dim B = 1$. Hence, given the isogeny $\varphi : E \times E \to A$, the group $\varphi^{-1}(B) \subset E \times E$ is an algebraic subgroup of dimension $1$ containing $G$. In particular, there exists an elliptic curve $C \subset E \times E$ such that the algebraic group $\varphi^{-1}(B)$ is $\tilde{C} := \langle C , G \rangle$, and $C$ is the connected component of $\tilde{C}$ that contains $0$. Since $\ker \varphi = G$, we have $\varphi(C) = \varphi(\tilde{C}) = B$, and $C=[\Lambda](C)=\psi \circ \varphi (C)= \psi(B)$.
		By assumption, $E$ does not have CM, hence there exist two relatively prime integers $a,b$ such that $$C=\{(P,Q) \in E \times E \mid aP=bQ\}.$$
		Therefore, we have $\deg\varphi|_C = |\ker\varphi|_C| = |C \cap G|$. We now notice that $C \cap G = \prod_{p \in \mathcal{C}_{ns}} (C \cap G_p)$: indeed, the groups $G_p$ are $p$-groups and hence have pairwise coprime orders. The same holds for the subgroups $C \cap G_p$. The group $C \cap G$ is generated by the groups $C \cap G_p$, and for every pair of primes $p,q$ the groups $C \cap G_p$ and $C \cap G_q$ intersect trivially: this implies that $C \cap G$ is the direct product of the groups $C \cap G_p$. For every $p$ we have
		$$C \cap G_p = \left\lbrace  (x,g_p \cdot x) \mid x \in E[p^{n_p}] \text{ such that } (a-bg_p)x=0 \right\rbrace.$$
		However, given
		\begin{align*}
			g_p = \begin{pmatrix} \alpha & \varepsilon \beta \\ \beta & \alpha \end{pmatrix} \qquad \text{we have} \qquad
			a-bg_p = \begin{pmatrix} a-b\alpha & -\varepsilon b\beta \\ -b\beta & a-b\alpha \end{pmatrix}.
		\end{align*}
		By assumption we have $p \nmid \beta$, hence if $p \nmid b$, we have that $a-bg_p$ is invertible (since it is an element of $C_{ns}(p^{n_p})$). If instead $p \mid b$, then $p \nmid a$ and so $p \nmid a-b\alpha$, and $a-bg_p$ is again invertible. This implies that $C \cap G_p = 0$, and so $C \cap G = 0$. This shows that $\deg\varphi|_C = 1$. We then have 
		$$\Lambda^2 = \deg[\Lambda]|_C = (\deg\psi|_B)(\deg\varphi|_C) = \deg\psi|_B,$$
		and therefore $\deg_LB = \deg_{\psi^*L'}B = (\deg\psi|_B)\deg_{L'}C \ge \Lambda^2$.
		We can now estimate
		$$x(B) = \frac{\deg_LB}{\deg_LA} \ge \frac{\Lambda^2}{2n\Lambda^2} = \frac{1}{2n},$$
		and so $x(B) > x(0) = \frac{1}{\Lambda \sqrt{2n}}$ since $\Lambda > \sqrt{2n}$.
	\end{proof}
	
	\begin{remark}
		As $B[\sigma]=0$ for every $\sigma$, we have $\rho((A_\omega)_\sigma,L_\sigma) = \delta_\sigma$.
	\end{remark}
	
	Given $\overline{\aleph}_1$ as in \eqref{eq:aleph1bar}, we can use Lemma \ref{B[sigma]=0} and inequality \eqref{eq:logmatriciel} to obtain
	\begin{align*}
		\overline{\aleph}_1 < 2\Fheight(E) + \log(\Fheight(E) + 2.72) + 4\log \Lambda + \log\frac{n}{\pi} + 5\log2 + \varepsilon \log12 + 0.829.
	\end{align*}
	Combined with \eqref{eq:logn} and \eqref{eq:unapertutti}, this yields
	\begin{align*}
		\Lambda < 1454 \cdot 2^{|\mathcal{C}|} [K:\Q] \left( \Fheight(E) + \frac{3}{2}\log(\Fheight(E) + 2.72) + 2\log\Lambda + 2.6 \right),
	\end{align*}
	proving the second part of Theorem \ref{effiso}.

	\subsubsection*{The case $K=\Q$}

	Suppose now that $K=\Q$. Since $E$ is defined over $\Q$, we have that $\tau_{\sigma} = \tau_{\sigma_0} = \tau$ for every embedding $\sigma$.
	We then have
	\begin{align*}
		\frac{2}{[K':\Q]} \sum_{\sigma \in \mathcal{V}} \log \max \left\lbrace 1, \frac{1}{\rho(E_\sigma,(L_E)_\sigma)} \right\rbrace = \max\{0, \log\Im\{\tau\}\}
	\end{align*}
	and $\log n \le \log|\tau|^2 \le \log\left(\Im\{\tau\}^2 + \frac{1}{4}\right)$.
	If $|\tau|^2<2$, we have $\log n = 0$, if instead $|\tau|^2 \ge 2$, then $\Im\{\tau\}^2 \ge \frac{7}{4}$ and so $\log\left(\Im\{\tau\}^2 + \frac{1}{4}\right) \le \log\left(\frac{8}{7}\Im\{\tau\}^2\right) = 2\log(\Im\{\tau\}) + \log(8/7)$. We can combine the two cases by writing $\log n \le \max\{0,2\log(\Im\{\tau\})\} + \log(8/7)$.
	Bounding $\overline{\aleph}_1$ with
	$$2\Fheight(E) + 4\log\Lambda -\log\pi + 5\log2 + \varepsilon\log12 + 3\max\{0, \log\Im\{\tau\}\} + \log\left(\frac{8}{7}\right),$$
	by equation \eqref{eq:unapertutti} we obtain
	$$\Lambda < 1454 \cdot 2^{|\mathcal{C}|} \left( \Fheight(E) + 2\log\Lambda + \frac{3}{2}\max\{0, \log(\Im\{\tau\})\} + 1.38 \right),$$
	which proves part 3 of Theorem \ref{effiso}.
	
	\medskip
	
	To conclude the proof of Theorem \ref{thm:effiso}, we notice that for $\Im\{\tau_{\sigma_0}\} \ge \frac{15}{\pi}$, we can write $n \geq |\tau_{\sigma_0}|^2-1 \ge \Im\{\tau_{\sigma_0}\}^2-1 > 21.7$. We can then estimate $\left(2- \frac{1}{2n}\right)^{-\frac{1}{2}} \le \sqrt{\frac{44}{87}}$. Hence, in equation \eqref{daquidueopzioni} we can use the estimate
	$$\left(2- \frac{1}{2n}\right)^{-\frac{1}{2}} \frac{2.004\pi}{\varepsilon^2(\log2)^2} \le 633.2.$$
	Repeating the rest of the proof in the same way we obtain the desired inequalities.

	\subsection{Bounds for non-integral $j$-invariants}\label{sec:jnonintero}

	We now show that in the setting of Theorem \ref{thm:effiso}, if we assume that $j(E) \notin \OK$, we can obtain stronger bounds on $\Lambda$ whenever $\mathcal{B} = \emptyset$. This will allow to prove Theorem \ref{thm:totaleffiso}, reducing a lot the size of the constants involved.
	The approach here is completely different: instead of studying the complex structure of $E$ and the periods of auxiliary complex abelian varieties, we rely on local arguments at primes $\mathfrak{p} \mid p$ for which $\rho_{E,p}$ is not surjective.
	
	\begin{proposition}\label{prop:jnoninteroesponente}
		Let $E$ be an elliptic curve defined over a number field $K$. Let $\mathcal{C}_{sp}, \mathcal{C}_{ns}$ be disjoint sets of odd primes $p$ such that $\operatorname{Im}\rho_{E,p} \subseteq H(p)$ up to conjugacy for $H = C_{sp}^+, C_{ns}^+$ respectively. Let $n_p$ be the largest positive integer such that $\operatorname{Im}\rho_{E,p^{n_p}} \subseteq H(p^n)$ up to conjugacy, and let $\Lambda:= \prod_{p \in \mathcal{C}_{sp} \cup \mathcal{C}_{ns}} p^{n_p}$. Then $\Lambda$ divides
		$$\gcd_{\lambda \subseteq \OK \text{ prime}}(\max\{0,-v_\lambda(j(E))\}).$$
	\end{proposition}
	
	\begin{proof}
		If $j(E) \in \OK$ the statement is trivial. Let $\lambda$ be a prime of $K$ such that $e := -v_\lambda(j(E)) > 0$ and let $p^n$ be a prime power such that $p^n \mid \Lambda$. We want to show that $p^n \mid e$. We can assume that $\zeta_p \in K$: indeed, $p$ does not divide $[K(\zeta_p):K]$ and so the power of $p$ that divides the valuation of $j(E)$ at primes of $K(\zeta_p)$ above $\lambda$ is the same as that of the $\lambda$-adic valuation of $j(E)$.
		Consider $E$ to be defined over $K_\lambda$, and let $E_q$ be the Tate curve with parameter $q \in K_\lambda^\times$, isomorphic to $E$ over a quadratic extension of $K_\lambda$. We know that $v_\lambda(q) = e$. Suppose first that $p \in \mathcal{C}_{ns}$: if $\chi_{p^n}$ is the cyclotomic character modulo $p^n$, there is a quadratic character $\psi$ such that $\rho_{E_q,p^n} \cong \rho_{E,p^n} \otimes \psi$, and we have
		$$\rho_{E,p^n} \otimes \psi \cong \begin{pmatrix} \chi_{p^n} & k \\ 0 & 1 \end{pmatrix} = \begin{pmatrix} 1 & k \\ 0 & 1 \end{pmatrix},$$
		where $k(\sigma)$ is such that $\sigma\left(q^\frac{1}{p^n}\right) = q^\frac{1}{p^n} \zeta_{p^n}^{k(\sigma)}$. Indeed, as shown in case (ii) of the proof of Proposition \ref{prop:goodreductionforcartan}, the image of an automorphism $\sigma$ via $\chi_{p^n}$ must be $\pm 1$, however it cannot be $-1$ because $\chi_p = \chi_{p^n} \pmod p$ is trivial, as $\zeta_p \in K$.
		By the definition of $\mathcal{C}_{ns}$, for every $\sigma \in \Gal\left( \faktor{\overline{K_\lambda}}{K_\lambda} \right)$ we know that $M_\sigma := (\rho_{E,p^n} \otimes \psi)(\sigma)$ is conjugate to an element of $C_{ns}^+(p^n)$. We call $G:=\operatorname{Im}(\rho_{E,p^\infty} \otimes \psi)$ and $G(p^n):=\operatorname{Im}(\rho_{E,p^n} \otimes \psi)$. Let $0 \le i \le n$ be such that $M_\sigma = \begin{pmatrix} 1 & u p^i \\ 0 & 1 \end{pmatrix}$, for $u \not\equiv 0 \pmod p$. If $i=0$, then $M_\sigma \pmod p$ has non-diagonal Jordan form over $\overline{\F}_p$, and hence it cannot be an element of $C_{ns}^+(p)$. If instead $0<i<n$, we can write $M_\sigma = I + p^i \begin{pmatrix} 0 & u \\ 0 & 0 \end{pmatrix}$, and so there is an element in $\mathfrak{g}_i$ of rank $1$ (where $\mathfrak{g}_i$ is defined in Definition \ref{def:liealgebra}). However, since $G(p^n) \subseteq C_{ns}^+(p^n)$ up to conjugation, by Remark \ref{rmk:cartanliealg} the group $\mathfrak{g}_i$ is conjugate to a subgroup of $V_1 \oplus V_2$ defined as in Lemma \ref{lemma:subrepdecomposition}, which contains no matrices of rank $1$. Since the rank is invariant under conjugation, we have that $i=n$, and in particular $M_\sigma = I$. This implies that $k(\sigma) = 0$ for every $\sigma$, and in particular that $\sigma\left(q^\frac{1}{p^n}\right) = q^\frac{1}{p^n}$. Hence $q^\frac{1}{p^n} \in K_\lambda$, and so $p^n \mid e$. \\
		If instead we have $p \in \mathcal{C}_{sp}$, there exists again a quadratic character $\psi$ such that
		$$\rho_{E,p^n} \otimes \psi \cong \begin{pmatrix} \chi_{p^n} & k \\ 0 & 1 \end{pmatrix} \equiv \begin{pmatrix} 1 & k \\ 0 & 1 \end{pmatrix} \pmod p,$$
		with $\sigma\left(q^\frac{1}{p^n}\right) = q^\frac{1}{p^n} \zeta_{p^n}^{k(\sigma)}$. In particular, every element in $\operatorname{Im}\rho_{E,p^n}$ can be written as $I+pA$, with $A$ of the form $\begin{pmatrix} \ast & \ast \\ 0 & 0 \end{pmatrix}$. Since $C_{sp}^+(p)$ does not contain elements of order $p$, we must have $k \equiv 0 \pmod p$. If we call $G$ a group conjugate to $\operatorname{Im}\rho_{E_q,p^\infty}$ such that $\pm G(p^n) \subseteq C_{sp}^+(p^n)$, we must have that for every $1 \le i < n$, every element in $\mathfrak{g}_i$ has rank at most $1$. By Remark \ref{rmk:cartanliealg}, these elements must lie in $V \oplus W$, with $V:= \F_p \begin{pmatrix} 1 & 0 \\ 0 & 0 \end{pmatrix}$ and $W:=\F_p \begin{pmatrix} 0 & 0 \\ 0 & 1 \end{pmatrix}$. However, we must have that either $\mathfrak{g}_i \subseteq V$ or $\mathfrak{g}_i \subseteq W$: indeed, if we had $0 \ne x \in \mathfrak{g}_i \cap V$ and $0 \ne y \in \mathfrak{g}_i \cap W$, then the matrix $x+y$ would lie in $\mathfrak{g}_i$, which is impossible as $x+y$ has rank $2$. By Lemma \ref{lemma:gncontainment}(1), we have $\mathfrak{g}_1 \subseteq \ldots \subseteq \mathfrak{g}_{n-1}$, hence they are all contained in the same subspace $V$ or $W$. 
		Let $i$ be the smallest integer for which $\mathfrak{g}_i \ne 0$. If we take a non-zero element of $\mathfrak{g}_i$, this is the image of an element in $G(p^n)$ of order $p^{n-i}$. On the other hand, as $G(p) = \{I\}$, we have
		$$|G(p^n)| = \prod_{t=1}^{n-1} |\mathfrak{g}_t| = \prod_{t=i}^{n-1} |\mathfrak{g}_t| = p^{n-i}.$$
		In particular, this implies that $G(p^n)$ is a cyclic $p$-group, as is $H(p^n):= \operatorname{Im}(\rho_{E,p^n} \otimes \psi)$.
		Let
		\begin{equation*}
			M_\sigma := (\rho_{E,p^n} \otimes \psi)(\sigma) = I + p^i \begin{pmatrix} a & b \\ 0 & 0 \end{pmatrix}
		\end{equation*}
		be a generator of $H(p^n)$, where $i$ is as large as possible (i.e. either $a$ or $b$ are coprime with $p$). If $i \ge n$ then $M_\sigma = I$, and so $H(p^n) = I$: we conclude as in the non-split case that $p^n \mid e$.
		If $i < n$, we can assume that $p \nmid a$: indeed, if we had $p \mid a$, modulo $p^{i+1}$ we would have $M_\sigma \equiv I + p^i \begin{pmatrix} 0 & b \\ 0 & 0 \end{pmatrix} \pmod {p^{i+1}}$, where $b \not\equiv 0 \pmod {p^{i+1}}$. In particular, we would have a non-zero element $x \in \mathfrak{g}_i$ conjugate to $\begin{pmatrix} 0 & b \\ 0 & 0 \end{pmatrix}$, and so such that $\operatorname{tr} x = \det x = 0$. However, this is impossible because such an element cannot lie in $V \oplus W$.
		Consider the element $q^\frac{1}{p^n}\zeta_{p^n}^{-b/a}$: in the basis $( \zeta_{p^n}, q^\frac{1}{p^n} )$ this is expressed as the vector $\left(-b/a , 1\right)$. We then have
		\begin{align*}
			\sigma(q^\frac{1}{p^n}\zeta_{p^n}^{-b/a}) \longleftrightarrow \begin{pmatrix} 1+p^i a & p^i b \\ 0 & 1 \end{pmatrix} \begin{pmatrix} -b/a \\ 1 \end{pmatrix} = \begin{pmatrix} -b/a \\ 1 \end{pmatrix} \longleftrightarrow q^\frac{1}{p^n}\zeta_{p^n}^{-b/a},
		\end{align*}
		and so $\sigma(q^\frac{1}{p^n}\zeta_{p^n}^{-b/a}) = q^\frac{1}{p^n}\zeta_{p^n}^{-b/a}$. Since $M_\sigma$ generates $H(p^n)$, this implies that $q^\frac{1}{p^n}\zeta_{p^n}^{-b/a}$ is fixed by every automorphism, and so $q^\frac{1}{p^n}\zeta_{p^n}^{-b/a} \in K_\lambda$. We conclude as in the non-split case that $v_\lambda(q^\frac{1}{p^n}\zeta_{p^n}^{-b/a}) = \frac{e}{p^n} \in \Z$, and hence $p^n \mid e$.
	\end{proof}
	
	\begin{theorem}\label{thm:cartanboundwithdenominator}
		Let $E$ be an elliptic curve defined over a number field $K$ of degree $d=[K:\Q]$, $n$ a positive integer, and $p$ an odd prime such that $\operatorname{Im}\rho_{E,p^n} \subseteq C_{ns}^+(p^n)$ up to conjugation. Suppose that $j = j(E) \notin \OK$, then
		\begin{equation*}
			p^n \le \frac{1.6 dn\operatorname{h}(j)}{\log(1.6 dn\operatorname{h}(j)) - \log\log(1.6 dn\operatorname{h}(j))} < \frac{2.2 dn \operatorname{h}(j)}{\log(dn \operatorname{h}(j))}.
		\end{equation*}
		Moreover, if $p^{n-1}(p-1) \nmid 2d$ we have
		\begin{equation*}
			p^n < \frac{d\operatorname{h}(j) + 1.116}{\log(d\operatorname{h}(j) + 1.116) - \log\log(d\operatorname{h}(j) + 1.116)} < 1.68 \cdot \frac{d\operatorname{h}(j)}{\log(d\operatorname{h}(j))}.
		\end{equation*}
	\end{theorem}
	
	\begin{proof}
		Let $M_K$ be the set of all places of $K$. We have
		\begin{align}\label{eq:buttovialogj}
			\operatorname{h}(j) = \frac{1}{d} \sum_{\nu \in M_K} n_\nu \log\max\{1,\|j\|_\nu\} \ge \frac{1}{d} \sum_{\substack{{\lambda \subseteq \OK \text{ prime}} \\ {v_\lambda(j) < 0}}} n_\lambda \log\|j\|_\lambda,
		\end{align}
		where $n_\nu$ are the local degrees, and the inequality is obtained by taking the sum only over the finite places. We remark that, since $j \notin \OK$, the sum on the RHS of equation \eqref{eq:buttovialogj} is non-zero. By Proposition \ref{prop:jnoninteroesponente} we know that for every prime $\lambda \subseteq \OK$ such that $v_\lambda(j) < 0$, we have $p^n \le -v_\lambda(j)$. Moreover, we have $\log\|j\|_\lambda = -v_\lambda(j)\log N_{K/\Q}(\lambda) \ge p^n\log N_{K/\Q}(\lambda)$. By Proposition \ref{prop:goodreductionforcartan} we know that either $\lambda \mid p$ or $N_{K/\Q}(\lambda) \equiv \pm 1 \pmod {p^n}$.
		This implies that either $p$ divides $N_{K/\Q}(\lambda)$ or $N_{K/\Q}(\lambda) \ge p^n-1 \ge p-1$. In both cases, by equation \eqref{eq:buttovialogj} we have
		\begin{equation}
			d\operatorname{h}(j) \ge \sum_{\substack{{\lambda \subseteq \OK \text{ prime}} \\ {v_\lambda(j) < 0}}} n_\lambda \log\|j\|_\lambda \ge p^n\log(p-1).
		\end{equation}
		As $p \ge 3$, we have that $d\operatorname{h}(j) > 2$. Moreover, we have $\frac{\log p}{\log(p-1)} \le 1.6$, hence we can write $1.6 d\operatorname{h}(j) \ge p^n\log p$.
		The function $x^n\log x$ is strictly increasing, and its inverse function is $\sqrt[n]{\frac{nx}{W(nx)}}$, where $W(x)$ is the Lambert W function. This implies that $p^n \le \frac{1.6 dn\operatorname{h}(j)}{W(1.6 dn\operatorname{h}(j))}$, and by \cite[Theorem 2.1]{lambertW}, using $d\operatorname{h}(j) > 2 > \frac{e}{1.6 n}$, we have
		\begin{equation}
			p^n \le \frac{1.6 dn\operatorname{h}(j)}{\log(1.6 dn\operatorname{h}(j)) - \log\log(1.6 dn\operatorname{h}(j))}.
		\end{equation}
		If we now assume that $p^{n-1}(p-1) \nmid 2d$ and take $\lambda$ such that $v_\lambda(j) < 0$, by Proposition \ref{prop:goodreductionforcartan} we know that $\lambda \nmid p$, and so $N_{K/\Q}(\lambda) \equiv \pm 1 \pmod {p^n}$. Using $p^{n-1}(p-1) \nmid 2d$ we notice that $p^n \ne 3$, and hence $p^n \ge 5$. Similarly to above we obtain
		$$d\operatorname{h}(j) \ge p^n\log(p^n-1) = p^n\log(p^n) + p^n\log\left(1-\frac{1}{p^n}\right) > p^n\log(p^n) - 1.116,$$
		where we used that $\frac{\log(1-x)}{x} > -1.116$ for $x \in \left( 0, \frac{1}{5} \right)$. As before, the function $x\log(x)$ has inverse $\frac{x}{W(x)}$, and using $d\operatorname{h}(j) + 1.116 \ge 2 + 1.116 > e$, we can apply again \cite[Theorem 2.1]{lambertW}, obtaining the desired inequality.
		To conclude the proof, it suffices to notice that the functions
		\begin{gather*}
			\frac{1.6 x}{\log(1.6x)-\log\log(1.6x)} \qquad \text{and} \qquad \frac{x+1.116}{\log(x+1.116) - \log\log(x+1.116)}
		\end{gather*}
		are smaller than $\frac{cx}{\log x}$, for $c=2.2$ and $c=1.68$ respectively, whenever $x > 2$ (which is always the case for $d\operatorname{h}(j)$, as we proved above).
	\end{proof}
	
	\begin{corollary}\label{cor:cartanboundwithdenominatorQ}
		Let $E$ be an elliptic curve defined over $\Q$, $n$ a positive integer, and $p$ an odd prime such that $\operatorname{Im}\rho_{E,p^n} \subseteq C_{ns}^+(p^n)$ up to conjugation. Suppose that $j = j(E) \notin \Z$ and define $b(j) := \operatorname{h}(j) - \log\max\{1,|j|\}$. We have that either $p^n = 3$ or
		\begin{equation*}
			p^n < \frac{b(j) + 0.527}{\log(2b(j) + 1.054) - \log\log(2b(j) + 1.054)} < 1.3 \cdot \frac{b(j)}{\log{b(j)}}.
		\end{equation*}
	\end{corollary}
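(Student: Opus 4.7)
The plan is to specialize Theorem \ref{thm:cartanboundwithdenominator} to $K = \Q$ (so $d = 1$) and sharpen the intermediate inequality by exploiting two facts peculiar to the rational case: over $\Q$ the archimedean contribution to the Weil height is \emph{exactly} $\log\max\{1,|j|\}$, and the primes of multiplicative reduction are necessarily odd.

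When $d=1$, the condition $p^{n-1}(p-1)>2d$ appearing in the second part of Theorem \ref{thm:cartanboundwithdenominator} reduces to $p^{n-1}(p-1)>2$, which for $p$ odd is equivalent to $p^n\neq 3$. So assume $p^n\ge 5$. By Proposition \ref{prop:goodreductionforcartan}, $E$ has potentially good reduction at $p$, and every rational prime $\lambda$ with $v_\lambda(j)<0$ must satisfy $\lambda\neq p$ and $\lambda\equiv\pm 1\pmod{p^n}$. Since $p^n$ is odd and $\ge 5$, both $p^n-1$ and $p^n+1$ are even and $>2$, hence composite, so in fact $\lambda\ge 2p^n-1$. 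Moreover, Proposition \ref{prop:jnoninteroesponente} gives $p^n\mid -v_\lambda(j)$, i.e.\ $-v_\lambda(j)\ge p^n$.

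Because $K=\Q$ and $j\notin\Z$, the product formula yields
\begin{equation*}
    b(j)\;=\;\operatorname{h}(j)-\log\max\{1,|j|\}\;=\;\sum_{\lambda:\,v_\lambda(j)<0}(-v_\lambda(j))\log\lambda\;>\;0,
\end{equation*}
and picking any single term in the (non-empty) sum gives $b(j)\ge p^n\log(2p^n-1)$. An elementary check shows that $f(y):=y\log\bigl(2y/(2y-1)\bigr)$ is decreasing for $y>1/2$ and tends to $1/2$ as $y\to\infty$, with $f(5)=5\log(10/9)<0.527$. Hence for $p^n\ge 5$,
\begin{equation*}
    p^n\log(2p^n-1)\;\ge\;p^n\log(2p^n)-0.527,
\end{equation*}
so $p^n\log(2p^n)\le b(j)+0.527$. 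Setting $u=2p^n\ge 10$, this becomes $u\log u\le 2b(j)+1.054$. Since $2b(j)+1.054>e$ (indeed $b(j)\ge 5\log 9>10$), invoking \cite[Theorem 2.1]{lambertW} as in the proof of Theorem \ref{thm:cartanboundwithdenominator} yields
\begin{equation*}
    u\;\le\;\frac{2b(j)+1.054}{\log(2b(j)+1.054)-\log\log(2b(j)+1.054)},
\end{equation*}
and dividing by $2$ produces the first claimed inequality.

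The second (cleaner) inequality $p^n<1.3\,b(j)/\log b(j)$ is a routine endpoint-plus-monotonicity check: the ratio of the left-hand to the right-hand side tends to $1/1.3<1$ as $b(j)\to\infty$, and at the minimum admissible value $b(j)\ge 5\log 9\approx 10.99$ the inequality is easily verified numerically. The only non-routine step is the observation, in the middle paragraph, that the parity constraint forces $\lambda\ge 2p^n-1$ rather than merely $\lambda\ge p^n-1$; this is what produces the $2$ inside the logarithm and the constant $0.527$, and no further obstacle is expected.
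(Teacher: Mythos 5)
Your proof is correct and follows essentially the same route as the paper's: specialise to $K=\Q$ with $d=1$, use potential good reduction plus the congruence $\lambda\equiv\pm 1\pmod{p^n}$ together with the parity observation $p^n\pm 1$ even to upgrade $\lambda\ge p^n-1$ to $\lambda\ge 2p^n-1$, derive $b(j)\ge p^n\log(2p^n-1) > p^n\log(2p^n)-0.527$, and solve via the Lambert $W$ estimate from \cite[Theorem 2.1]{lambertW}. The only cosmetic difference is that you justify the constant $0.527$ by showing $f(y)=y\log(2y/(2y-1))$ is decreasing with $f(5)<0.527$, whereas the paper uses the equivalent elementary estimate $\frac{\log(1-x)}{x}>-1.054$ on $(0,1/10)$ applied at $x=\tfrac{1}{2p^n}$; both yield the same bound.
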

	
	\begin{proof}
		The proof is analogous to that of Theorem \ref{thm:cartanboundwithdenominator}. First, we note that we can replace $\operatorname{h}(j)$ with $b(j)$ in equation \eqref{eq:buttovialogj}, and that $b(j) \ge 2$, as $j \notin \Z$.
		We then note that by Corollary \ref{cor:goodreductionforcartanQ} we have $\ell \equiv \pm 1 \pmod {p^n}$, but for $p^n \ne 3$ the number $p^n \pm 1$ is even and greater than $2$. In particular, it cannot be prime, and so $\ell \ge 2p^n -1$.
		Since $p^n \ne 3$ we have $p^n \ge 5$ and $p^{n-1}(p-1) > 2$, so as in the proof of Theorem \ref{thm:cartanboundwithdenominator} we obtain
		\begin{align}
			b(j) &\ge p^n\log(2p^n - 1) = p^n\log(2p^n) + p^n\log\left(1-\frac{1}{2p^n}\right) > p^n\log(2p^n) - 0.527,
		\end{align}
		where we used that $\frac{\log(1-x)}{x} > -1.054$ for $x \in \left(0,\frac{1}{10}\right)$. 
		We notice that since $p^n \ge 5$ we have $b(j) \ge 5\log9 > 10$.
		One can verify that the function $x\log(2x)$ has inverse $\frac{x}{W(2x)}$, and since $2(b(j)+0.527) > e$ we can apply \cite[Theorem 2.1]{lambertW} to obtain the desired inequality. We conclude the proof by noting that the function $\frac{x+0.527}{\log(2x+1.054)-\log\log(2x+1.054)}$ is smaller than $\frac{1.3 x}{\log x}$ for $x>10$.
	\end{proof}
	
	\begin{theorem}\label{thm:multiplecartanboundwithdenominator}
		Let $E$ be an elliptic curve over a number field $K$ of degree $d$ over $\Q$. Let $\mathcal{C}_{sp}, \mathcal{C}_{ns}$ and $\Lambda$ be as in Proposition \ref{prop:jnoninteroesponente} and suppose that $j(E) \notin \OK$. We have
		\begin{equation*}
			\Lambda \le \frac{d}{\log2}\operatorname{h}(j(E)).
		\end{equation*}
		Moreover, if $K=\Q$ we have
		\begin{equation*}
			\Lambda \le \frac{1}{\log2}\left(\operatorname{h}(j(E)) - \log\max\{1,|j(E)|\}\right) < \frac{12}{\log2} \Fheight(E) + 25.
		\end{equation*}
	\end{theorem}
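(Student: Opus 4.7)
The plan is to combine the divisibility statement from Proposition \ref{prop:jnoninteroesponente} with the standard decomposition of the logarithmic Weil height as a sum of local contributions. Since $j(E)\notin\OK$, there exists at least one prime $\lambda\subseteq\OK$ with $v_\lambda(j(E))<0$, and by Proposition \ref{prop:jnoninteroesponente} the quantity $\Lambda$ divides $-v_\lambda(j(E))$ for every such $\lambda$; in particular $\Lambda\le -v_\lambda(j(E))$.

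For the first inequality, I would write
\begin{equation*}
    d\operatorname{h}(j(E)) = \sum_{\nu\in M_K} n_\nu\log\max\{1,\|j(E)\|_\nu\} \ge \sum_{\substack{\lambda\subseteq\OK\text{ prime}\\ v_\lambda(j(E))<0}} f_\lambda \cdot (-v_\lambda(j(E)))\log p_\lambda,
\end{equation*}
where $\lambda\mid p_\lambda$ and $f_\lambda$ is the residue degree, so that $n_\lambda\log\max\{1,\|j(E)\|_\lambda\} = f_\lambda(-v_\lambda(j(E)))\log p_\lambda = (-v_\lambda(j(E)))\log N_{K/\Q}(\lambda)$. Applying the divisibility $\Lambda\mid -v_\lambda(j(E))$ and using $\log N_{K/\Q}(\lambda)\ge\log 2$ together with the existence of at least one such $\lambda$, the right-hand side is bounded below by $\Lambda\log 2$, giving $\Lambda\le\frac{d}{\log 2}\operatorname{h}(j(E))$.

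For the case $K=\Q$, the archimedean place contributes $\log\max\{1,|j(E)|\}$ separately, and the identity
\begin{equation*}
    \operatorname{h}(j(E)) - \log\max\{1,|j(E)|\} = \sum_{p\text{ prime}}\max\{0,-v_p(j(E))\}\log p
\end{equation*}
reduces the right-hand side to a sum of non-archimedean terms, each of which is at least $\Lambda\log p \ge \Lambda\log 2$ for at least one prime. This yields the improved bound $\Lambda\le\frac{1}{\log 2}(\operatorname{h}(j(E))-\log\max\{1,|j(E)|\})$.

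The final inequality $\Lambda<\frac{12}{\log 2}\Fheight(E)+25$ will follow from Theorem \ref{thm:heights}. I would split into the two cases from that theorem: if $|j(E)|\le 3500$ the bound $\operatorname{h}(j(E))<12\Fheight(E)+17.15$ immediately gives the required estimate after dividing by $\log 2$; if $|j(E)|>3500$ I would use $\operatorname{h}(j(E))<12\Fheight(E)+6\log\log|j(E)|+4.88$ and bound $6\log\log|j(E)|-\log|j(E)|$, which is a decreasing function of $|j(E)|$ on $|j(E)|>e^6$ and is already comfortably smaller in absolute value than the constant $17.15/\log 2\approx 24.7$ needed to conclude. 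The main obstacle is purely computational: getting the constant $25$ requires handling the $|j(E)|>3500$ branch carefully so that the archimedean term $\log\max\{1,|j(E)|\}$ absorbs the $6\log\log|j(E)|$ correction, and verifying that both branches land below $\frac{12}{\log 2}\Fheight(E)+25$.
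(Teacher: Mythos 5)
Your proposal is correct and follows essentially the same path as the paper: decompose $d\operatorname{h}(j)$ into local contributions, apply the divisibility $\Lambda\mid -v_\lambda(j)$ from Proposition \ref{prop:jnoninteroesponente} together with $\log N_{K/\Q}(\lambda)\ge\log 2$, drop the archimedean term when $K=\Q$, and finish with the two branches of Theorem \ref{thm:heights}. The numerical checks you outline (in particular that $6\log\log|j|-\log|j|$ is decreasing for $|j|>e^6$, so its maximum over $|j|>3500$ is attained at the endpoint) are exactly what the paper carries out and do land below $25$ as claimed.
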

	
	\begin{proof}
		Set $j=j(E)$. As in equation \eqref{eq:buttovialogj} we have
		$$[K:\Q]\operatorname{h}(j) \ge \sum_{\substack{{\lambda \subseteq \OK \text{ prime}} \\ {v_\lambda(j) < 0}}} n_\lambda \log\|j\|_\lambda,$$
		and by Proposition \ref{prop:jnoninteroesponente} we have that for every $\lambda$ in the sum above, the inequality $\log\|j\|_\lambda \ge -v_\lambda(j)\log2 \ge \Lambda\log2$ holds, and the hypothesis $j(E) \notin \OK$ ensures there is at least one such prime ideal $\lambda$.
		If $K=\Q$, as in Corollary \ref{cor:cartanboundwithdenominatorQ}, we can replace $\operatorname{h}(j)$ with $\operatorname{h}(j) - \log\max\{1,|j|\}$ to obtain the first inequality. By Theorem \ref{thm:heights}, for $|j| \le 3500$ we have that 
		$$\frac{1}{\log2}\operatorname{h}(j) < \frac{12}{\log2}(\Fheight(E) + 1.429) < \frac{12}{\log2} \Fheight(E) +25.$$
		If instead $|j|>3500$, we have
		\begin{align*}
			\frac{1}{\log2}(\operatorname{h}(j) - \log|j|) &< \frac{1}{\log2}(12\Fheight(E) + 6\log\log|j| - \log|j| + 12 \cdot 0.406) < \frac{12}{\log2} \Fheight(E) + 14,
		\end{align*}
		which is even better.
	\end{proof}
	
	We now exploit the above theorem to prove Theorem \ref{thm:totaleffiso}.
	
	\begin{proof}[Proof of Theorem \ref{thm:totaleffiso}]
		By Theorem \ref{thm:multiplecartanboundwithdenominator} we can assume that $j(E) \in \Z$, otherwise we would have a better bound. Let $\tau \in \uhp$ be the element in the standard fundamental domain for the action of $\SL_2(\Z)$ corresponding to $E$, and let $q=e^{2\pi i \tau}$.
		By Lemma \ref{lemma:cartan15e7e9}, we can assume that $7 \nmid \Lambda$, $11 \nmid \Lambda$, and at most one among $3$ and $5$ divides $\Lambda$: indeed, if $7$ divided $\Lambda$ we would have $\Lambda \le 504$, which is better than the statement of the theorem, while the cases $11 \mid \Lambda$ and $15 \mid \Lambda$ never occur. Since it is known that there are no non-CM elliptic curves $E$ with $j(E) \in \Q$ and $\operatorname{Im}\rho_{E,p} \subseteq C_{ns}^+(p)$ for $p \in \{13, 17\}$ (see \cite[Corollary 1.3]{balakrishnan19} and \cite[Theorem 1.2]{balakrishnan23}), we know that $|\mathcal{C}| \le 1 + |\{p \ge 19 : p \mid \Lambda\}|$. This implies that
		\begin{align*}
			|\mathcal{C}| &\le \max\left\{\log_{19}\Lambda, \ 1 + \log_{19}\frac{\Lambda}{3}, \ 1 + \log_{19}\frac{\Lambda}{5}\right\} = \log_{19}\Lambda + 1 - \log_{19}3 < \log_{19}\Lambda + 0.627.
		\end{align*}
		Suppose first that $|\log|q|| \le 30$: by \cite[Theorem 2.8(2)]{furiolombardo23} we obtain that $\Fheight(E) < 0.6$.
		Using that $\Im\{\tau\} = \frac{|\log|q||}{2\pi}$ and writing $2^{|\mathcal{C}|} < 2^{0.627} \cdot \Lambda^{\log_{19}2}$, by Theorem \ref{thm:effiso}(3) we have that
		\begin{align*}
			\Lambda^{1-\log_{19}2} < 1454 \cdot 2^{0.627} \left(0.6 + 2\log\Lambda + \frac{3}{2}\log\left(\frac{15}{\pi}\right) + 1.38 \right).
		\end{align*}
		Solving the inequality numerically we obtain that $\Lambda < 2.41 \cdot 10^6$, which satisfies the first statement of the theorem: indeed, by Remark \ref{minimalheight} we have $\Fheight(E)>-0.75$, and so $20000 \cdot 39.25^{1.308} > 2.41 \cdot 10^6$.
		We can then assume that $|\log|q|| > 30$, and hence by \cite[Theorem 2.8(3)]{furiolombardo23} that $\Fheight(E) > 0.45$. By Theorem \ref{thm:effiso}(2) we have
		\begin{equation}\label{eq:effisocomesuKmasuQ}
			\Lambda < 1266.4 \cdot 2^{|\mathcal{C}|} \left( \Fheight(E) + \frac{3}{2}\log(\Fheight(E) + 2.72) + 2\log\Lambda + 2.6 \right).
		\end{equation}
		The function $(x+\frac{3}{2}\log(x+2.72)+2.6)/(x+8)$ is bounded by $\alpha := 1.0144$, so we have
		\begin{equation}
			\Lambda < 1266.4\alpha \cdot 2^{|\mathcal{C}|} \left( \Fheight(E) + \frac{2}{\alpha}\log \Lambda + 8 \right).
		\end{equation}
		Using again the inequality $2^{|\mathcal{C}|} < 2^{0.627} \cdot \Lambda^{\log_{19}2}$ we obtain
		\begin{align}\label{eq:effisointermedio}
			\Lambda^{1-\log_{19}2} &< 1266.4\alpha \cdot 2^{0.627} \left( \Fheight(E) + \frac{2}{\alpha}\log \Lambda + 8 \right) < 1984 \left( \Fheight(E) + \frac{2}{\alpha}\log \Lambda + 8 \right).
		\end{align}
		Since $\Lambda^{1-\log_{19}2} - \frac{2}{\alpha}\cdot 1984 \log\Lambda > 0.225 \, \Lambda^{1-\log_{19}2}$ for $\Lambda \ge 2.34 \cdot 10^6$, we have
		\begin{equation}\label{eq:effisosenzaloglog}
			\Lambda < \left(\frac{1984}{0.225}\right)^{\!\frac{1}{1-\log_{19}2}} \left( \Fheight(E) + 8 \right)^{\frac{1}{1-\log_{19}2}} < 145000 \left( \Fheight(E) + 8 \right)^{1.308},
		\end{equation}
		which holds also for $\Lambda < 2.34 \cdot 10^6$ (indeed, since $\Fheight(E) > 0.45$, we have $145000 \left( \Fheight(E) + 8 \right)^{1.308} \ge 145000 \cdot 8.45^{1.308} > 2.34 \cdot 10^6$), and hence for all values of $\Lambda$. Using inequality \eqref{eq:effisosenzaloglog} to bound $\log\Lambda$ in \eqref{eq:effisointermedio}, we obtain
		\begin{equation}
			\Lambda^{1-\log_{19}2} < 1984 \left( \Fheight(E) + \frac{2.616}{\alpha}\log\left(\Fheight(E) + 8 \right) + 31.5 \right).
		\end{equation}
		The function $x+\frac{2.616}{\alpha} \log(x+8) + 31.5$ is smaller than $1.1 \cdot (x+35)$ for every $x > 0.45$, hence we have
		\begin{align}\label{eq:effisobootstrap}
			\Lambda &< (1984 \cdot 1.1)^{1.308} \left( \Fheight(E) + 35 \right)^{1.308} < 23300 \cdot \left( \Fheight(E) + 35 \right)^{1.308}.
		\end{align}
		Repeating this last step once more, using equation \eqref{eq:effisobootstrap} in equation \eqref{eq:effisointermedio}, we obtain
		\begin{equation}\label{eq:firsttotaleffiso}
			20900 \cdot (\Fheight(E)+40)^{1.308},
		\end{equation}	
		concluding the proof of the first part of the theorem.
		We now focus on the second part. We start by assuming again that $|\log|q||>30$. Using the bound on $\Lambda$ given in equation \eqref{eq:firsttotaleffiso} in equation \eqref{eq:effisocomesuKmasuQ}, we obtain
		\begin{equation*}
			\Lambda < 1266.4 \cdot 2^{\omega(\Lambda)} \left( \Fheight(E) + 2.616\log(\Fheight(E) + 40) + 1.5\log(\Fheight(E) + 2.72) + 22.5 \right),
		\end{equation*}
		where $\omega(\Lambda)$ is the number of distinct prime factors of $\Lambda$, and applying the weighted AM-GM inequality we obtain
		\begin{equation}\label{eq:effisosenzalogLambda}
			\Lambda < 1266.4 \cdot 2^{\omega(\Lambda)} \left( \Fheight(E) + 4.116\log(\Fheight(E) + 26.42) + 22.5 \right).
		\end{equation}
		As we can assume that $\Lambda \ge 26$, by \cite[Th\'eor\`eme 13]{robin83} we have $\omega(\Lambda) \le \frac{\log\Lambda}{\log\log\Lambda - 1.1714}$, and by equation \eqref{eq:firsttotaleffiso} we have
		\begin{align*}
			\omega(\Lambda) &< \frac{1.308\log(\Fheight(E)+40) + \log20900}{\log(1.308\log(\Fheight(E)+40) + \log20900) - 1.1714} < \frac{1.308\log(\Fheight(E)+40) + \log20900}{\log(\log(\Fheight(E)+40) + 7.6) - 0.903}.
		\end{align*}
		Suppose that $\Fheight(E)> 1.2 \cdot 10^{15}$. We have the bounds $\delta(\Fheight(E)) < 0.352$ and $\frac{4.116\log(\Fheight(E) + 26.42)}{\Fheight(E)} < 10^{-10}$, so replacing in equation \eqref{eq:effisosenzalogLambda} and bounding $1.308 \cdot \log2 < 0.907$, we obtain
		\begin{align*}
			\Lambda &< 1266.5 \cdot 20900^{\log2 \cdot \delta(\Fheight(E))} (\Fheight(E)+40)^{0.907 \cdot \delta(\Fheight(E))} \left(\Fheight(E) + 22.5 \right) \\
			&< 14400 \cdot (\Fheight(E)+40)^{0.907 \cdot \delta(\Fheight(E))} \left(\Fheight(E) + 22.5 \right).
		\end{align*}
		To complete the proof, it suffices to notice that for $\Fheight(E) \le 1.2 \cdot 10^{15}$ we have
		\begin{equation*}
			21000 \left( \Fheight(E) + 40 \right)^{1.308} < 14400 \left(\Fheight(E)+40\right)^{0.907 \cdot \delta(\Fheight(E))} \left(\Fheight(E) + 22.5 \right),
		\end{equation*}
		which also holds for $|\log|q|| \le 30$, since in this case we have $\Fheight(E) < 0.6$ (as shown at the start of the proof).
	\end{proof}

	\section{$p$-adic Galois representations}

	In this section, we study the images of the $p$-adic Galois representations attached to a non-CM elliptic curve $E$ and we compute their indices in $\GL_2(\Z_p)$. This is the first step to obtain the bound on the index of the adelic image given in Section \ref{sec:adelicbound}.
	
	We will indicate the $p$-adic groups by the labels given in \cite{rszb22}, which we will call \textit{RSZB labels}. They will be of the form \verb*|N.i.g.m|, where $N$ is the level, $i$ is the index of the group in $\GL_2\left(\faktor{\Z}{N\Z}\right)$, $g$ is the genus of the corresponding modular curve, and $m$ is an ordinal used to distinguish groups with the same $N, i, g$.

	\subsection{$p$-adic images}

	Fix an elliptic curve $\faktor{E}{\Q}$ and an odd prime $p$, and write $G:=\operatorname{Im}\rho_{E,p^\infty}$ and $S:=G \cap \operatorname{SL}_2(\Z_p)$.
	The aim of this section is to show that $G$ is often an N-Cartan lift as defined in Section \ref{sec:grouptheory}, and that in this case many of the propositions of Section \ref{sec:grouptheory} apply.
	We start by proving the following.
	
	\begin{proposition}\label{prop:ellcurvecartanlift}
		Let $\faktor{E}{\Q}$ be an elliptic curve without CM and let $p$ be an odd prime such that $G(p) \subseteq C_{ns}^+(p)$. Then the group $G$ is a non-split N-Cartan lift.
	\end{proposition}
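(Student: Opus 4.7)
The plan is to verify the four defining properties of a non-split N-Cartan lift for $G = \operatorname{Im}\rho_{E,p^\infty}$: that $G$ is closed, that $\det(G) = \Z_p^\times$, that $G(p) \subseteq C_{ns}^+(p)$ but $G(p) \not\subseteq C_{ns}(p)$, and that $G(p)$ contains a non-scalar element of $C_{ns}(p)$. The first is immediate, since $G$ is the continuous image of the compact profinite group $\GalQ$ in the Hausdorff group $\GL_2(\Z_p)$. The second follows from the Weil pairing, which identifies $\det \circ \rho_{E,p^\infty}$ with the $p$-adic cyclotomic character, whose image is exactly $\operatorname{Gal}(\Q(\zeta_{p^\infty})/\Q) \cong \Z_p^\times$. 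The containment $G(p) \subseteq C_{ns}^+(p)$ is given by hypothesis.

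For the third property, I would use complex conjugation $c \in \GalQ$. The matrix $\rho_{E,p}(c)$ has order dividing $2$ and determinant $-1$. Parametrising a hypothetical preimage in $C_{ns}(p)$ as $\begin{pmatrix} a & \varepsilon b \\ b & a \end{pmatrix}$, the relation $\rho_{E,p}(c)^2 = I$ gives $a^2 + \varepsilon b^2 = 1$ and $ab = 0$, while $\det \rho_{E,p}(c) = -1$ gives $a^2 - \varepsilon b^2 = -1$. The case $a = 0$ forces $\varepsilon b^2 = 1$, making $\varepsilon$ a square in $\F_p$ and contradicting its construction; the case $b = 0$ forces $a^2 = 1 = -1$, impossible since $p$ is odd. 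Hence $\rho_{E,p}(c) \in C_{ns}^+(p) \setminus C_{ns}(p)$ and $G(p) \not\subseteq C_{ns}(p)$.

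The fourth property is the most delicate and I expect it to be the main obstacle. Arguing by contradiction, suppose every element of $G(p) \cap C_{ns}(p)$ is a scalar matrix. Since the scalars sit inside $C_{ns}(p)$, we have $G(p) \cap C_{ns}(p) = G(p) \cap \{aI : a \in \F_p^\times\}$, so $|G(p) \cap C_{ns}(p)| \leq p - 1$; combined with the index computation $[G(p) : G(p) \cap C_{ns}(p)] = 2$ from the previous step, this gives $|G(p)| \leq 2(p-1)$. Using the surjectivity of $\det : G(p) \to \F_p^\times$, we deduce $|G(p) \cap \SL_2(\F_p)| \leq 2$, i.e. $[\Q(E[p]) : \Q(\zeta_p)] \leq 2$. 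Consequently the image of $\rho_{E,p}$ in $\operatorname{PGL}_2(\F_p)$ has order at most $2$, so over some quadratic extension $K/\Q$ the representation $\rho_{E,p}$ takes only scalar values on $\operatorname{Gal}(\overline{K}/K)$. Every line of $E[p]$ is then $\operatorname{Gal}(\overline{K}/K)$-stable, and $E$ admits a $K$-rational cyclic isogeny of degree $p$; for $E/\Q$ without CM this is ruled out by Mazur-style bounds on cyclic $p$-isogenies over quadratic fields for $p$ large, supplemented by Zywina's explicit classification of mod-$p$ images contained in $C_{ns}^+(p)$ for the remaining small odd primes.
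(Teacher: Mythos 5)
Your verification of the first two properties (closedness, surjectivity of $\det$) and the hypothesis matches the paper. For the third property ($G(p) \not\subseteq C_{ns}(p)$), your argument via complex conjugation is a nice concrete alternative to the paper's citation of \cite[Lemme~17]{serre81}: the matrix $\rho_{E,p}(c)$ is non-identity (since $\det = -1$), has order $2$, and your explicit parametrisation shows no such element can sit in $C_{ns}(p)$. That part is correct and self-contained.

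The fourth property is where your argument has a genuine gap. You correctly reduce to the situation where $G(p)$ has order at most $2(p-1)$ and image of order at most $2$ in $\operatorname{PGL}_2(\F_p)$, hence $\rho_{E,p}$ is scalar after restriction to some quadratic field $K$. But you then invoke ``Mazur-style bounds on cyclic $p$-isogenies over quadratic fields for $p$ large,'' and no such uniform theorem exists in the form you need: Mazur's theorem is specific to $\Q$, and while there is partial work over quadratic fields (Momose and others), a clean uniform bound on $p$ for $K$-rational $p$-isogenies over all quadratic $K$ is not available. The paper instead argues \emph{locally at $p$}: the Cartan hypothesis forces potentially good reduction at $p$ (Corollary~\ref{cor:goodreductionforcartanQ}), and then Serre's analysis of the inertia image at $p$ (via \cite[Section~1, Propositions~10, 11, 12]{serre72}, as packaged in \cite[Appendix~B]{lefournlemos21}) produces either an element of $\operatorname{Im}\rho_{E,p}$ of order $\frac{p^2-1}{e}$ with $e \in \{1,2,3,4,6\}$, or an element of order $\frac{p-1}{e}$ in the image in $\operatorname{PGL}_2(\F_p)$. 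The first is incompatible with your bound $|G(p)| \le 2(p-1)$ for $p > 11$, and the second forces $\frac{p-1}{e} \mid 2$, i.e.\ $p \le 13$, with $p = 13$ excluded by quadratic Chabauty \cite[Corollary~1.3]{balakrishnan19}. The remaining small primes $p \in \{3,5,7,11\}$ are handled by Zywina's explicit classification \cite[Theorems~1.2, 1.4, 1.5, 1.6]{zywina15}, exactly as you suggest. So the overall strategy of reducing to $G(p)$ small is right, but you need the local inertia argument, not a global isogeny theorem over quadratic fields, to reach the contradiction.
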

	
	\begin{proof}
		Since $\det \circ \rho_{E,p^\infty}$ is the $p$-adic cyclotomic character, it follows that $\det(G) = \Z_p^\times$. By \cite[Lemme 17]{serre81} we know that $G(p) \not\subset C_{ns}(p)$. By the open image theorem (\cite[Section 4.4, Th\'eor\`eme 3]{serre72}) we know that $G$ is open in $\GL_2(\Z_p)$, and hence it is closed.
		Finally, we need to show that $G(p) \cap C_{ns}(p)$ contains an element which is not a multiple of the identity. It is easy to notice that every element in $C_{ns}^+(p) \setminus C_{ns}(p)$ has order dividing $2(p-1)$, and the same holds for scalar matrices. Suppose by contradiction that $G(p) \cap C_{ns}(p)$ consists of multiples of the identity. In particular, every element of $G(p)$ has order dividing $2(p-1)$.
		Suppose now that $p> 11$. By Corollary \ref{cor:goodreductionforcartanQ} we know that $E$ has potentially good reduction at $p$. If we consider the subgroup $I < \operatorname{Im}\rho_{E,p}$ obtained as the image of a pro-$p$ inertia subgroup of $\GalQ$, by \cite[Proposition 1]{kraus90}, Theorem \ref{thm:canonicalsbg} and Lemma \ref{lemma:locallemmaSamuel} we know that there exists $e \in \{1,2,3,4,6\}$ such that either $I$ contains an element of order $\frac{p^2-1}{e}$, or the image of $I$ in $\operatorname{PGL}_2(\F_p)$ contains an element of order $\frac{p-1}{e}$.
		In the former case, we get a contradiction, because $\frac{p^2-1}{e} \nmid 2(p-1)$ for $p > 11$.
		In the latter case, since the square of any element of $C_{ns}^+(p) \setminus C_{ns}(p)$ is a scalar matrix, we have that $\frac{p-1}{e} \mid 2$. However, this can happen only for $p = 13$, which does not occur by \cite[Corollary 1.3]{balakrishnan19}.
		To conclude, it suffices to notice that for $p \in \{3,5,7,11\}$ the statement follows from \cite[Theorems 1.2, 1.4, 1.5, 1.6]{zywina15}.
	\end{proof}
	
	\begin{lemma}\label{lemma:dimg1}
		Let $\faktor{E}{\Q}$ be an elliptic curve and $p$ an odd prime such that $G(p) \subseteq C_{ns}^+(p)$. We have $\dim\mathfrak{g}_n \ge 2$ for every $n \ge 1$.
	\end{lemma}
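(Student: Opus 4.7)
The plan is to combine the group-theoretic structure of $\mathfrak{g}_n$ with a local analysis at the prime $p$. First, since $G(p)$ is a subgroup of $C_{ns}^+(p)$, whose order $2(p^2-1)$ is coprime to $p$, we have $p \nmid |G(p)|$. The Weil pairing identifies $\det \circ \rho_{E,p^\infty}$ with the cyclotomic character, so $\det G = \Z_p^\times$; via the Teichmüller-type splitting of Proposition \ref{prop:groupteichmuller} this gives $\det G_n = 1 + p^n\Z_p$ for every $n \ge 1$. Applying Lemma \ref{lemma:ovvissimoSL2} yields $\operatorname{tr}(\mathfrak{g}_n) = \F_p$ and $\mathfrak{s}_n = \mathfrak{g}_n \cap \mathfrak{sl}_2(\F_p)$, and averaging a trace-one element of $\mathfrak{g}_n$ over the $G(p)$-conjugation action (valid since $|G(p)|$ is invertible in $\F_p$) exhibits $\tfrac{1}{2}I$ as an element of $\mathfrak{g}_n$. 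Hence $V_1 \subseteq \mathfrak{g}_n$, so the decomposition $\mathfrak{g}_n = V_1 \oplus \mathfrak{s}_n$ reduces the problem to showing $\mathfrak{s}_n \ne 0$.

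To exhibit a nonzero element of $\mathfrak{s}_n$ I would use the local structure of $\rho_{E,p^\infty}$ at $p$. By the corollary on potentially good reduction referenced in Proposition \ref{prop:ellcurvecartanlift}, $E$ has potentially good reduction at $p$; I would rule out the potentially ordinary case for $p \ge 5$ by observing that the tame inertia would act through $\operatorname{diag}(\omega, 1)$, producing matrices with two distinct $\F_p$-rational eigenvalues. But every element of $C_{ns}^+(p)$ has either a pair of Galois-conjugate $\F_{p^2} \setminus \F_p$-eigenvalues (the non-scalar elements of $C_{ns}$), or eigenvalues of the form $\pm a$ (the outer involutions), so only the values $\alpha \in \{1,-1\}$ of the tame character are compatible with the hypothesis $G(p) \subseteq C_{ns}^+(p)$; since the tame inertia image has order $p-1 > 2$ for $p \ge 5$ this is a contradiction. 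The primes $p = 3$ (and $p = 5$ if needed) would be handled separately via the explicit classifications of low-level images already available in the literature.

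Hence $E$ has potentially good supersingular reduction at $p$. Over a finite extension $L/\Q_p$ where $E$ acquires good supersingular reduction, Lubin--Tate theory applied to the height-$2$ formal group of $E$ shows that the image of the wild inertia $I_L^{\mathrm{wild}}$ under $\rho_{E,p^\infty}$ surjects onto the pro-$p$ subgroup $1 + p\mathcal{O}_{K_p}$ of $C_{ns}(\Z_p) \cong \mathcal{O}_{K_p}^\times$, where $K_p$ is the unramified quadratic extension of $\Q_p$. Since $I_L^{\mathrm{wild}} \subseteq G_{\Q_p} \subseteq G$, this forces $G_n \supseteq 1 + p^n\mathcal{O}_{K_p}$ for all $n \ge 1$, and the canonical injection $(1 + p^n\mathcal{O}_{K_p})/(1 + p^{n+1}\mathcal{O}_{K_p}) \hookrightarrow \mathfrak{g}_n$ identifies its source (of $\F_p$-dimension $2$) with the Lie algebra $V_1 \oplus V_2$ of the non-split Cartan. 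In particular, the projection to $\mathfrak{sl}_2(\F_p)$ yields a nonzero element of $\mathfrak{s}_n$, giving $\dim \mathfrak{g}_n \ge 2$. The main obstacle is the local step: ruling out potentially ordinary reduction uniformly and controlling the image of wild inertia on the nose (not merely up to finite index) after the possibly ramified base change to $L$ require a careful appeal to $p$-adic Hodge theory and the theory of formal groups over ramified bases.
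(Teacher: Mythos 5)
Your opening reduction — using Lemma \ref{lemma:ovvissimoSL2} and averaging over $G(p)$-conjugation to place $V_1$ inside $\mathfrak{g}_n$, then aiming for $\mathfrak{s}_n \neq 0$ — matches Corollary \ref{cor:v1inliealg} and is fine. After that, however, the proof diverges from the paper in ways that create genuine gaps.

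First, your elimination of the potentially ordinary case at level $p$ does not work for small primes. When $E$ has additive reduction at $p$ and acquires good ordinary reduction over an extension of $\Q_p^{\mathrm{nr}}$ of degree $d \in \{2,3,4,6\}$, the tame inertia image on $E[p]$ has order $\frac{p-1}{\gcd(d,\,p-1)}$, which for $p = 5$ ($d=4$) and $p = 7$ ($d=3$ or $6$) is $\leq 2$ and hence perfectly compatible with $C_{ns}^+(p)$. The paper avoids this by arguing by contradiction: assuming $\dim\mathfrak{g}_1 = 1$, Proposition \ref{prop:groupteichmuller} (Schur--Zassenhaus) upgrades the hypothesis to $G(p^2) \subseteq C_{ns}^+(p^2)$ up to conjugation, and Lemma \ref{lemma:ordinarycartan} then produces the contradiction at level $p^2$, where the cyclotomic character mod $p^2$ genuinely sees a rank-one matrix in $\mathfrak{g}_1$ incompatible with $V_1 \oplus V_2$. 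Your proposal never promotes the hypothesis to level $p^2$, so this route is not available to you.

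Second, the Lubin--Tate step in the supersingular case is not justified. For a non-CM $E$, the formal group $\hat{E}$ has $\End(\hat{E}) = \Z_p$ and is \emph{not} a Lubin--Tate formal group (those require formal CM by $\mathcal{O}_{K_p}$); correspondingly the image of wild inertia on $T_p\hat E$ is not contained in, and does not ``surject onto,'' $1 + p\mathcal{O}_{K_p}$ in any canonical way — it is in general an open subgroup of $1 + p M_{2\times 2}(\Z_p)$ whose relation to a fixed non-split Cartan is exactly what needs to be controlled. What is actually needed is a precise lower bound on the wild ramification in $\Q(E[p^2])/\Q(E[p])$, and the paper obtains it via Proposition \ref{prop:supersingcanonical} (no canonical subgroup of order $p$) and then \cite[Theorem 1.1]{smith23} (a Newton-polygon computation on division polynomials), giving $p^2 \mid |\mathfrak{g}_1|$ directly. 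Finally, $p = 3$ is handled in the paper by an explicit modular curve computation (\texttt{9.81.1.a.1} has no points mod $27$), which is not something that can be waved off to ``the literature'' without citation. In short: your strategy is in the right spirit (local analysis at $p$), but you need the contradiction step to reach level $p^2$, the ordinary case needs Lemma \ref{lemma:ordinarycartan} rather than a level-$p$ eigenvalue argument, and the supersingular case needs the canonical-subgroup/ramification machinery rather than Lubin--Tate theory.
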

	
	Lemma \ref{lemma:dimg1} is the same as \cite[Proposition 3.2]{ejder22}, however, our version also holds for $p \in \{3,5,7,13\}$. For every prime $p>7$ and $p \ne 13$, this is a consequence of the fact that if $G(p) \subseteq C_{ns}^+(p)$, then $E$ has potentially good supersingular reduction at $p$ (Corollary \ref{cor:supersingularQ}). Before proving Lemma \ref{lemma:dimg1}, treating the remaining primes, we need the following lemma.
	
	\begin{lemma}\label{lemma:ordinarycartan}
		Let $\faktor{E}{\Q}$ be an elliptic curve and $p$ an odd prime such that $E$ has potentially good ordinary reduction at $p$. If $p \ge 5$, we have $G(p^2) \not\subseteq C_{ns}^+(p^2)$. If $p=3$, we have $G(27) \not\subseteq C_{ns}^+(27)$; moreover, if $E$ has good ordinary reduction at $3$ we have $G(9) \not\subseteq C_{ns}^+(9)$.
	\end{lemma}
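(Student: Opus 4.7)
The plan is to combine the Serre--Tate local description of $\rho_{E,p^\infty}$ at a place of potentially good ordinary reduction with the Lie-algebra analysis behind Lemma \ref{lemma:subrepdecomposition}; the contradiction will come from exhibiting an inertia element whose leading term in some quotient $\mathfrak{g}_m$ sits outside the Lie algebra of the non-split Cartan.

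By Serre--Tate, since $E$ has potentially good ordinary reduction at $p$ there is a finite extension $K/\Q_p$ over which $E$ has good ordinary reduction, and a $\Z_p$-basis of $T_pE$ in which $\rho_{E,p^\infty}|_{I_K}$ is upper triangular with diagonal characters $\chi_p|_{I_K}$ and $1$. Thus for every $\sigma\in I_K$ one has $\rho(\sigma)-I=\begin{pmatrix}\chi_p(\sigma)-1 & c(\sigma)\\ 0 & 0\end{pmatrix}$ for some $c(\sigma)\in\Z_p$. Now suppose by contradiction that $G(p^n)\subseteq C_{ns}^+(p^n)$ up to conjugation, pick $\sigma\in I_K$ with $\rho(\sigma)\not\equiv I\pmod{p^n}$, set $m:=v_p(\rho(\sigma)-I)<n$, and form $X:=(\rho(\sigma)-I)/p^m\bmod p\in\mathfrak{g}_m$. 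A direct check (as in Remark \ref{rmk:cartanliealg}) shows that any element of the shape $I+p^m M\in C_{ns}^+(p^n)$ in fact lies in $C_{ns}(p^n)$ with $M\bmod p\in V_1\oplus V_2$, so after conjugation $\mathfrak{g}_m\subseteq V_1\oplus V_2$. Every nonzero element of $V_1\oplus V_2$ has eigenvalues $c\pm d\sqrt{\varepsilon}$, which lie in $\F_p$ only when $d=0$, i.e.\ only for scalar matrices; since both having all eigenvalues in $\F_p$ and being scalar are conjugation-invariant properties, every element of $\mathfrak{g}_m$ whose eigenvalues lie in $\F_p$ must be scalar. But $X=\begin{pmatrix}\alpha & \beta\\ 0 & 0\end{pmatrix}$ is nonzero with eigenvalues $\alpha,0\in\F_p$ and is manifestly not scalar, which is the desired contradiction. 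Hence $G(p^n)\subseteq C_{ns}^+(p^n)$ forces $\chi_p(\sigma)\equiv 1\pmod{p^n}$ for every $\sigma\in I_K$.

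It remains to exhibit such a $\sigma$ in each case, which is a local class field theory computation. By the standard bound on the stabilizing extension (Kraus, or the classification of Kodaira types), the minimal $K/\Q_p$ over which $E$ attains good reduction has degree dividing $24$, and $\chi_p(I_K)=N_{K/\Q_p}(\mathcal{O}_K^\times)\subseteq\Z_p^\times$. For $p\ge 5$ this degree is prime to $p$, so $K/\Q_p$ is at worst tamely ramified and $\chi_p(I_K)\supseteq 1+p\Z_p$; choosing $\sigma$ with $\chi_p(\sigma)=1+p$ gives $G(p^2)\not\subseteq C_{ns}^+(p^2)$. If $p=3$ and $E$ has good ordinary reduction, we may take $K=\Q_3$, so that $\chi_3(I_K)=\Z_3^\times$ and $\sigma$ with $\chi_3(\sigma)=1+3$ yields $G(9)\not\subseteq C_{ns}^+(9)$. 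If $p=3$ and $E$ has only potentially good ordinary reduction, the wild part of $e(K/\Q_3)$ divides $3$, whence $\chi_3(I_K)\supseteq 1+9\Z_3$ and $\sigma$ with $\chi_3(\sigma)=1+9$ yields $G(27)\not\subseteq C_{ns}^+(27)$. The main obstacle is this last computation: controlling the wild ramification of the stabilizing extension uniformly over all elliptic curves with potentially good ordinary reduction at $3$; the weaker conclusion at $p=3$ (level $27$ rather than $9$) reflects precisely the possible presence of wild ramification of order $3$ in $K/\Q_3$.
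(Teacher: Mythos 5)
Your proof is correct and takes essentially the same route as the paper: use the connected--\'etale (Serre--Tate) filtration of $T_pE$ over a place of potentially good ordinary reduction to obtain an upper-triangular inertia element with diagonal $(\chi_p,1)$, then derive a contradiction from the structure of the graded pieces $\mathfrak{g}_m$ of $C_{ns}^+$, using the ramification of the cyclotomic character to pick a suitable $\sigma$. (The paper detects the contradiction via the rank of the leading term; you use its eigenvalues. Both isolate the same property of $V_1\oplus V_2$.)

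One step that needs to be spelled out: when you write $X:=(\rho(\sigma)-I)/p^m\bmod p\in\mathfrak{g}_m$, you are tacitly assuming $m\ge 1$, since $\mathfrak{g}_m$ is only defined for $m\ge 1$ and your ``direct check'' that $I+p^mM\in C_{ns}^+(p^n)\Rightarrow M\bmod p\in V_1\oplus V_2$ is valid only for $m\ge 1$. If $m=0$, the claim $\mathfrak g_m\subseteq V_1\oplus V_2$ has no meaning and the argument as stated doesn't run. For your chosen $\sigma$ (with $\chi_p(\sigma)\equiv 1\pmod p$) one does get $m\ge 1$: $\rho(\sigma)\bmod p$ is upper-triangular unipotent, and $C_{ns}^+(p)$ has order $2(p^2-1)$ prime to $p$, so $\rho(\sigma)\equiv I\pmod p$, i.e.\ $c(\sigma)\equiv 0\pmod p$. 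The paper records this step explicitly (``It is easy to check that $g\equiv I\pmod p$, and so also $k\equiv 0\pmod p$''); you should too. A smaller imprecision: the minimal good-reduction extension $K/\Q_p$ need not have degree dividing $24$; what Kraus's result controls is the \emph{ramification index} $e(K/\Q_p)\in\{1,2,3,4,6,12\}$, which is exactly what your norm computation of $\chi_p(I_K)$ uses, so the end result is unaffected.
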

	
	\begin{proof}
		Let $\Q_p^{nr}$ be the maximal unramified extension of $\Q_p$ and let $K$ be the minimal extension of $\Q_p^{nr}$ over which $E$ acquires good reduction. By \cite[Proposition 1]{kraus90} and \cite[Th\'eor\`eme 1]{kraus90} we know that $e := [K : \Q_p^{nr}] \in \{1,2,3,4,6,12\}$. Let $I_K < \Gal\left(\faktor{\overline{K}}{K}\right)$ be the inertia subgroup. By Lemma \ref{lemma:ordinarydecomposition} we know that $I_K$ acts on $E[p^n]$ as $\begin{pmatrix} \chi_{p^n} & \ast \\ 0 & 1 \end{pmatrix}$, where $\chi_{p^n}$ is the cyclotomic character modulo $p^n$. 
		Suppose first that $p>3$. If we consider $n=2$, since $(e,p)=1$ we notice that $p+1 \in \operatorname{Im}\chi_{p^2} = \left(\faktor{\Z}{p^2\Z}\right)^{\times e}$. In particular, there exists an element $g$ in $\rho_{E,p^2}(I_K)$ conjugate to $\begin{pmatrix} p+1 & k \\ 0 & 1 \end{pmatrix}$ that satisfies the polynomial equation $(g-1)(g-p-1)=0$. Suppose by contradiction that $g \in C_{ns}^+(p^2)$. It is easy to check that $g \equiv I \pmod p$, and so also $k \equiv 0 \pmod p$. In particular, if we write $k=ph$ we have
		$$\begin{pmatrix} p+1 & k \\ 0 & 1 \end{pmatrix} = I + p\begin{pmatrix} 1 & h \\ 0 & 0 \end{pmatrix} = I+pA.$$
		By Proposition \ref{prop:ellcurvecartanlift} we know that $G$ is an N-Cartan lift, and by Remark \ref{rmk:cartanliealg} $A$ must be conjugate to an element of $V_1 \oplus V_2$ described in Lemma \ref{lemma:subrepdecomposition}. However, $A$ has rank $1$, which is impossible as elements of $V_1 \oplus V_2$ only have rank $0$ or $2$.
		Suppose now that $p=3$: then either $E$ has good reduction at $3$, so we have $e=1$ and we can repeat the same proof as for $p>3$, or $E$ has bad reduction at $3$. In the latter case, since by definition $v_3(e) \le 1$, we notice that $3^2+1 \in \operatorname{Im}\chi_{27}$, and hence there is an element $g \in \rho_{E,27}(I_K)$ conjugate to $\begin{pmatrix} 3^2+1 & k \\ 0 & 1 \end{pmatrix}$. Suppose that $g \in C_{ns}^+(27)$. We see as before that $k \equiv 0 \pmod 3$ and if $k \not\equiv 0 \pmod 9$ we would have a non-zero element of the form $\begin{pmatrix} 0 & u \\ 0 & 0 \end{pmatrix}$ in $\mathfrak{g}_1$, which is impossible. We then conclude as before that we have an element conjugate to $\begin{pmatrix} 1 & h \\ 0 & 0 \end{pmatrix}$ inside $V_1 \oplus V_2$, which is impossible.
	\end{proof}
	
	\begin{proof}[Proof of Lemma \ref{lemma:dimg1}]
		By Lemma \ref{cor:v1inliealg} we know that $V_1 \subseteq \mathfrak{g}_1$, and hence $\dim\mathfrak{g}_1 > 0$. Suppose by contradiction that $\dim \mathfrak{g}_1 = 1$, and so that $\mathfrak{g}_1 = V_1$. By Proposition \ref{prop:groupteichmuller} we know that there exists a lift $\widetilde{G(p)}<G$ of $G(p)$ isomorphic to it via the projection such that $G = \widetilde{G(p)}G_1$, and up to conjugation of $G$ in $\GL_2(\Z_p)$ we can assume that $\widetilde{G(p)} < C_{ns}^+$. Since $\mathfrak{g}_1 = V_1$, modulo $p^2$ we obtain that $G(p^2) = \widetilde{G(p)} \cdot \left\lbrace (1+p\alpha) I \right\rbrace_{\alpha \in \F_p} < C_{ns}^+(p^2)$ and $[C_{ns}^+(p^2) : G(p^2)] = p$.
		If $p=3$, the curve $E$ corresponds to a rational point on \cite[\href{https://beta.lmfdb.org/ModularCurve/Q/9.81.1.a.1/}{Modular Curve 9.81.1.a.1}]{lmfdb}, with equation $x^3 - 6x^2y + 3x^2z + 6xyz - 6xz^2 - y^3 - 6yz^2 + z^3 = 0$ in $\mathbb{P}^2$ (this is the modular curve corresponding to $G(9)$). However, this equation has no solutions modulo $27$, and hence such a curve $E$ does not exist.
		If instead $p>3$, by Corollary \ref{cor:goodreductionforcartanQ}, we know that the curve $E$ has potentially good reduction modulo $p$. If $E$ has potentially ordinary reduction at $p$, we can apply Lemma \ref{lemma:ordinarycartan} to get a contradiction. If $E$ has potentially supersingular reduction at $p$, we can use Theorem \ref{thm:canonicalsbg} to show that $E$ does not have a canonical subgroup of order $p$, and so by \cite[Theorem 1.1]{smith23} we have that if $R \in E[p^2] \setminus E[p]$, then $p^2 \mid [\Q(R):\Q] \mid [\Q(E[p^2]):\Q]$. We know that $|\mathfrak{g}_1| = |G_1(p^2)| = \frac{|G(p^2)|}{|G(p)|} = \frac{[\Q(E[p^2]):\Q]}{[\Q(E[p]):\Q]}$, and since $p \nmid [\Q(E[p]):\Q]$ we obtain that $p^2 \mid |\mathfrak{g}_1|$. The conclusion follows from Lemma \ref{lemma:gncontainment}(1). 
	\end{proof}
	
	\begin{remark}
		In the proof above, the statement about ramification in division fields that allows us to show that $p^2 \mid [\Q(R) : \Q]$ is due to Lozano-Robledo \cite[Theorem 1.2(2)]{lozanorobledo16}. However, as pointed out in \cite{smith23}, his proof is incorrect. A correct version is provided in \cite[Theorem 1.1]{smith23}, which is the same we used in the proof.
	\end{remark}
	
	\begin{theorem}\label{thm:ellipticcartantower}
		Let $\faktor{E}{\Q}$ be an elliptic curve without CM and set $G:=\operatorname{Im}\rho_{E,p^\infty}$. Let $p$ be an odd prime such that $\operatorname{Im}\rho_{E,p} \subseteq C_{ns}^+(p)$ up to conjugation and let $n \ge 1$ be the smallest integer such that $\operatorname{Im}\rho_{E,p^\infty} \supseteq I + p^n M_{2 \times 2}(\Z_p)$. One of the following holds:
		\begin{itemize}
			\item $G(p^n) = C_{ns}^+(p^n)$ up to conjugation.
			\item $n=2$ and
			$$G(p^2) \cong C_{ns}^+(p) \ltimes \left\lbrace I + p\begin{pmatrix} a & \varepsilon b \\ -b & c \end{pmatrix} \right\rbrace,$$
			with the semidirect product defined by the conjugation action.
			\item $p=5$ and $G$ corresponds to the group with RSZB label \verb*|5.30.0.2|.
			\item $p=3$ and $\pm G$ corresponds to one of the groups with RSZB labels \verb*|3.6.0.1|, \verb*|3.12.0.1|, \verb*|9.18.0.1|, \verb*|9.18.0.2|, \verb*|9.36.0.1|, \verb*|9.36.0.2|, \verb*|9.36.0.3|.
		\end{itemize}
	\end{theorem}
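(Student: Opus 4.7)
The plan is to deduce the theorem from the group-theoretic Theorem \ref{thm:cartantower}, applied to $G := \operatorname{Im}\rho_{E,p^\infty}$. I first verify its hypotheses. Proposition \ref{prop:ellcurvecartanlift} gives that $G$ is a non-split N-Cartan lift, and Lemma \ref{lemma:dimg1} yields $\dim\mathfrak{g}_1 \ge 2$. For the existence of an element of order greater than $2$ in the image of $G(p) \cap C_{ns}(p)$ in $\operatorname{PGL}_2(\F_p)$, I invoke Theorem \ref{thm:index3incartan}: for every prime $p \ge 5$ with $p\ne 5$ we have $G(p) = C_{ns}^+(p)$, so $G(p) \cap C_{ns}(p) = C_{ns}(p)$ surjects onto the cyclic group $C_{ns}(p)/\F_p^\times$ of order $p+1 \geq 4$. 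For the inclusion $(1+p\Z_p)\cdot I \subseteq G$, I combine the openness of $G$ in $\GL_2(\Z_p)$ (which gives $G \supseteq I + p^N M_{2\times 2}(\Z_p)$ for some $N$) with the fact that $\F_p^\times \cdot I \subseteq G(p)$, applying the Schur--Zassenhaus lift of Proposition \ref{prop:groupteichmuller} to produce the required scalar elements.

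With the hypotheses in place, Theorem \ref{thm:cartantower} offers three alternatives. The first, $G \subseteq C_{ns}^+$ up to conjugation, is ruled out by Serre's open image theorem: $C_{ns}^+$ is a $2$-dimensional closed subgroup of the $4$-dimensional $p$-adic Lie group $\GL_2(\Z_p)$, hence cannot contain the open group $G$. The second yields $n \ge 1$ with $G \supseteq I + p^n M_{2\times 2}(\Z_p)$, $G(p^n) \subseteq C_{ns}^+(p^n)$ up to conjugation, and $[C_{ns}^+(p^n) : G(p^n)] = [C_{ns}^+(p) : G(p)]$, so that $G(p) = C_{ns}^+(p)$ forces $G(p^n) = C_{ns}^+(p^n)$, which is the third bullet of the theorem. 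The third alternative imposes $n = 2$ and $G(p^2) \cong G(p) \ltimes (V_1 \oplus V_3)$; a general element of $V_1 \oplus V_3$ in the basis of Lemma \ref{lemma:subrepdecomposition} has the shape $\begin{pmatrix} a & \varepsilon b \\ -b & c \end{pmatrix}$ with $a,b,c \in \F_p$, matching the fourth bullet after rescaling by $p$. In both alternatives, the integer $n$ produced by Theorem \ref{thm:cartantower} agrees with the minimal $n$ in the present statement, because $G \supseteq I + p^n M_{2\times 2}(\Z_p)$ is equivalent to $\dim\mathfrak{g}_n = 4$ by Proposition \ref{prop:cartanzywina}, and the lower levels are controlled by the inclusions of Lemma \ref{lemma:gncontainment}.

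It remains to treat the small-prime exceptions. For $p = 5$ with $G(5)$ of index $3$ in $C_{ns}^+(5)$, the condition on elements of order $>2$ may fail, so I enumerate the possibilities directly: searching the RSZB database via Zywina's \verb|FindOpenImage| algorithm for open subgroups of $\GL_2(\Z_5)$ compatible with this mod-$5$ reduction and with $E/\Q$ non-CM yields the unique group with label \verb*|5.30.0.2|. For $p = 3$, the hypothesis $G(3) = C_{ns}^+(3)$ cannot be deduced from Theorem \ref{thm:index3incartan} (which requires $p \ge 5$), and the theorem further records the full $3$-adic image up to $\pm I$ rather than a mod-$3^n$ statement; I therefore enumerate the $3$-adic subgroups arising from non-CM curves over $\Q$ with $G(3) \subseteq C_{ns}^+(3)$, factor out the $\pm I$ ambiguity, and check that each corresponds to one of the seven listed labels. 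The main obstacle is the verification of $(1+p\Z_p)\cdot I \subseteq G$: this does not follow formally from openness or from the presence of scalars modulo $p$ alone, and requires a careful Hensel/Teichmüller lifting argument to bridge the two.
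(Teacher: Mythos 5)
Your overall strategy — verifying the hypotheses of Theorem \ref{thm:cartantower} for $G := \operatorname{Im}\rho_{E,p^\infty}$ and then reading off the alternatives — is the same as the paper's, and the reduction of the three alternatives to the bullets of the statement (including the computation that a general element of $V_1 \oplus V_3$ has the shape $\begin{pmatrix} a & \varepsilon b \\ -b & c \end{pmatrix}$) is carried out correctly. However, there are genuine gaps at two points.

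The critical gap, which you yourself flag, is the verification that $G \supseteq (1+p\Z_p)I$. Your sketch (openness giving $G \supseteq I + p^N M_{2 \times 2}(\Z_p)$ for some large $N$, combined with a Schur--Zassenhaus lift of $\F_p^\times I \subseteq G(p)$) cannot work: openness produces scalars only at level $p^N$, and Proposition \ref{prop:groupteichmuller} lifts $\F_p^\times I$ to a group of order prime to $p$ (the Teichmüller representatives $\mu_{p-1}I$), not to $(1+p\Z_p)I$. There is no way to bridge these two pieces by a purely formal argument. The paper instead appeals to an external result, \cite[Theorem 3.16]{lombardo22} for $p>3$, which asserts directly that the image of $\rho_{E,p^\infty}$ for a non-CM $E/\Q$ contains $(1+p\Z_p)I$; for $p=3$ (in the subcase $G(3)=C_{ns}^+(3)$) it uses \cite[Proposition 3.12]{lombardo22} together with irreducibility of $E[3]$. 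Without such an input the hypothesis of Theorem \ref{thm:cartantower} is unverified and the whole application collapses.

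The second gap is in the treatment of the small primes $p \in \{3,5\}$. Running \verb|FindOpenImage| only computes the image for a \emph{given} curve; it does not enumerate all possible $p$-adic images across all non-CM curves over $\Q$, which is what is required. The paper's argument for $p=5$, $[C_{ns}^+(5):G(5)]=3$, is a case analysis using \cite[Theorem 1.6]{rszb22}, then \cite[Table 2]{sutherlandzywina17} to isolate label \verb*|5.30.0.2|, and \cite[Section 5.3]{balakrishnan23} to rule out the descendants of \verb*|25.50.2.1|. For $p=3$ the paper first splits into the three possibilities for $G(3)$ given by \cite[Theorem 1.2]{zywina15}; in the case $G(3) = C_{ns}^+(3)$ it still applies Theorem \ref{thm:cartantower} (this is a point you miss — the $p=3$ case does not always land in the list of seven RSZB labels, since the first two bullets of the theorem are also possible outcomes), and only for $G(3) \in \{C_{sp}^+(3)\} \cup \{\text{subgroups of } C_{sp}(3)\}$ does it fall back on \cite[Theorem 1.6]{rszb22}, \cite[Table 1]{sutherlandzywina17} and \cite[Corollary 1.1]{rszb22} to produce the finite list. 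These classification inputs are essential and cannot be replaced by a black-box database query.
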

	
	\begin{remark}\label{rmk: conjecture on p-adic images}
		Notice that it is conjectured that $\operatorname{Im}\rho_{E,p} \subseteq C_{ns}^+(p)$ only for $p \le 11$ (see \cite[Conjecture 1.12]{zywina15} and \cite{balakrishnan19, balakrishnan23}). We give a quick explanation of what we know and what we expect for $p \le 11$:
		\begin{itemize}
			\item the case $G(p^n) = C_{ns}^+(p^n)$ is known to occur for $p^n \in \{3, 3^2, 5, 7, 11\}$ (they correspond to modular curves with genus $0$ or genus $1$ and rank $1$), and it is thought that these are the only cases;
			\item the case $G(p^2) \cong C_{ns}^+(p) \ltimes \F_p^3$ is known to occur for $p=3$ (genus 0 modular curve) and $p=5$ (see \cite[Section 5.3]{balakrishnan23} or \cite[Table 1]{rszb22}), and we think that these are the only possible cases. In \cite[Section 8.7]{rszb22}, they show that the case $p = 11$ never occurs;
			\item all the other special cases mentioned for $p=3,5$ correspond to genus-0 modular curves, and occur in infinitely many cases.
		\end{itemize}
		After the first version of this paper, the cases $C_{ns}^+(27)$ and $C_{ns}^+(49)$ where excluded in \cite{balakrishnan25} and \cite{furiolombardo25} respectively, and hence also all $C_{ns}^+(p^n)$ with $p \in \{3,7\}$ and $p^n > 9$. Moreover, the case $G(p^2) \cong C_{ns}^+(p) \ltimes \F_p^3$ for $p>7$ is being treated in an ongoing work by the author together with Matthew Bisatt and Davide Lombardo.
	\end{remark}
	
	\begin{proof}
		We show that $G$ satisfies the hypotheses of Theorem \ref{thm:cartantower}. First, we know by Proposition \ref{prop:ellcurvecartanlift} that $G$ is a non-split N-Cartan lift. By Lemma \ref{lemma:dimg1} we also know that $\dim \mathfrak{g}_1 > 1$. Moreover, by \cite[Theorem 3.16]{lombardo22} we know that for $p>3$ we have $G \supseteq (1+p\Z_p)I$. \\
		Suppose first that $p>5$. By Theorem \ref{thm:index3incartan} we know that $G(p) = C_{ns}^+(p)$, and hence the image of $G(p) \cap C_{ns}(p) = C_{ns}(p)$ in $\operatorname{PGL}_2(\F_p)$ contains an element of order greater than $2$. 
		We can then apply Theorem \ref{thm:cartantower}. As $G \supseteq I + p^n M_{2 \times 2}(\Z_p)$, we have either $G(p^n) \subseteq C_{ns}^+(p^n)$ with $[C_{ns}^+(p^n) : G(p^n)] = [C_{ns}^+(p) : G(p)] = 1$, or $n=2$ and $G(p^n) \cong G(p) \ltimes (V_1 \oplus V_3)$, and the conclusion follows. \\
		If $p=5$, then by Theorem \ref{thm:index3incartan} we have $[C_{ns}^+(p) : G(p)] \in \{1,3\}$. If $G(p) = C_{ns}^+(p)$, we can repeat the argument above. If instead $[C_{ns}^+(p) : G(p)] =3$, the argument above does not work anymore, because every element of $G(p) \cap C_{ns}(p)$ has order $2$ in $\operatorname{PGL}_2(\F_p)$. Using \cite[Theorem 1.6]{rszb22} we see that either $G$ corresponds to a modular curve with infinitely many rational points, or $G(25) \subseteq C_{ns}^+(25)$, or $G$ has RSZB label \verb*|25.50.2.1| or \verb*|25.75.2.1|. In the last case, we see that $G(5) \in \{C_{sp}^+(5), C_{ns}^+(5)\}$, and so we don't have $[C_{ns}^+(p) : G(p)] =3$. In the first case we can check in \cite[Table 2]{sutherlandzywina17} that the only possible case is the group with RSZB label \verb*|5.30.0.2|: indeed, this is the unique group with $G(5)$ contained in $C_{ns}^+(5)$ (which has SZ label 5C\textsuperscript{0}-5a) and index of the form $30 \cdot 5^k$ (this must hold because $G_1$ is contained in the $5$-Sylow of $\GL_2(\Z_5)$ and $[\GL_2(\F_5) : G(5)] = 30$). If $G(25) \subseteq C_{ns}^+(25)$, then $G$ must be contained in the group with RSZB label \verb*|25.750.46.1|, which is, in turn, contained in the group with RSZB label \verb*|25.50.2.1|. However, this last group has been ruled out in \cite[Section 5.3]{balakrishnan23}. Indeed, the modular curve associated with it has $2$ rational points: one is a CM point, and the other corresponds to an elliptic curve with $G(5) = C_{ns}^+(5)$, as we can check in \cite[Table 1]{rszb22}. \\
		If $p=3$, by \cite[Theorem 1.2]{zywina15} we can consider the three following cases: $G(3) = C_{ns}^+(3)$, or $G(3) = C_{sp}^+(3)$, or $G(3)$ is contained in $C_{sp}(3)$. In the first case, we have that $E[3]$ is an irreducible Galois module, and then by \cite[Proposition 3.12]{lombardo22} we have again that $G \supseteq (1+3\Z_3)I$. Moreover, the image of $C_{ns}^+(3)$ in $\operatorname{PGL}_2(\F_3)$ contains an element of order $4$, hence we can apply Theorem \ref{thm:cartantower} and conclude as for $p>5$. If $G(3) = C_{sp}^+(3)$, we can apply \cite[Theorem 1.6]{rszb22} to show that either $G(9) \subset C_{ns}^+(9)$ or $G$ appears in \cite[Table 1]{sutherlandzywina17}. If $G(9) \subset C_{ns}^+(9)$, then $G(9)$ is contained in the group corresponding to the modular curve with RSZB label \verb*|9.54.2.2|, which has no non-cuspidal non-CM points by \cite[Section 8.2]{rszb22}. If instead $G$ appears in \cite[Table 1]{sutherlandzywina17}, as in the case $p=5$, we can notice that since $G(3) = C_{sp}^+(3)$ (SZ label 3A\textsuperscript{0}-3a), the index of $G$ must be of the form $6 \cdot 3^k$, and the only such groups in the table are those corresponding to the modular curves with RSZB labels \verb*|3.6.0.1|, \verb*|9.18.0.1|, \verb*|9.18.0.2|. Suppose now that $G(3) \subseteq C_{sp}(3)$. In particular, this implies that $G(3)$ is contained in a Borel subgroup, so $\pm G$ must correspond to a modular curve in the finite list given in \cite[Corollary 1.1]{rszb22}. However, the only curves in the list for which $\pm G(3) \subseteq C_{sp}(3)$ are those with RSZB labels \verb*|3.12.0.1|, \verb*|9.36.0.1|, \verb*|9.36.0.2|, \verb*|9.36.0.3|.
	\end{proof}

	\subsection{$p$-adic indices}

	In this section, we provide some bounds on the indices of the images of the $p$-adic Galois representations attached to $E$. In particular, we will mainly focus on the case in which $\operatorname{Im}\rho_{E,p}$ is contained in the normaliser of a non-split Cartan subgroup.
	
	\begin{proposition}\label{prop:padicindices}
		Let $\faktor{E}{\Q}$ be an elliptic curve without complex multiplication and let $p$ be an odd prime such that $\operatorname{Im}\rho_{E,p} \subseteq C_{ns}^+(p)$ up to conjugation, with equality holding in the case $p=3$. Let $n \ge 1$ be the largest integer for which $\operatorname{Im}\rho_{E,p^n} \subseteq C_{ns}^+(p^n)$. We have
		\begin{align*}
			&[\GL_2(\Z_p) : \operatorname{Im}\rho_{E,p^\infty}] \in \begin{cases}
				\left\lbrace \frac{p^2-p}{2}, \frac{p^3-p^2}{2}, 30 \right\rbrace \quad &\text{for } n=1 \\
				\left\lbrace \frac{p-1}{2} \cdot p^{2n-1}\right\rbrace \quad &\text{for } n>1,
			\end{cases}
		\end{align*}
		where $[\GL_2(\Z_p) : \operatorname{Im}\rho_{E,p^\infty}] = 30$ for $p=5$.
	\end{proposition}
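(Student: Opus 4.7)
The plan is to invoke Theorem~\ref{thm:ellipticcartantower} and analyze the four scenarios it lists. Writing $G := \operatorname{Im}\rho_{E,p^\infty}$, the hypothesis that $\operatorname{Im}\rho_{E,3} = C_{ns}^+(3)$ when $p=3$ rules out the $p=3$ RSZB cases of that theorem, since each of them has mod-$3$ image strictly smaller than $C_{ns}^+(3)$. Letting $m$ denote the smallest integer with $G \supseteq I + p^m M_{2\times 2}(\Z_p)$, the remaining three cases are: (i) $G(p^m) = C_{ns}^+(p^m)$ up to conjugation; (ii) $m=2$ and $G(p^2) \cong C_{ns}^+(p) \ltimes (V_1 \oplus V_3)$; (iii) $p=5$ and $G$ has RSZB label \verb|5.30.0.2|.

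In case (i), $G$ is the full preimage of $C_{ns}^+(p^m)$ under reduction modulo $p^m$, so the desired index equals $[\GL_2(\Z/p^m\Z) : C_{ns}^+(p^m)]$. A direct order count gives $|\GL_2(\Z/p^m\Z)| = p^{4m-3}(p^2-1)(p-1)$ and $|C_{ns}^+(p^m)| = 2p^{2m-2}(p^2-1)$, yielding index $\tfrac{p-1}{2}\,p^{2m-1}$. I would then verify $n = m$: clearly $n \ge m$ because $G(p^m) = C_{ns}^+(p^m)$, and for any $\ell > m$ the group $G(p^\ell)$ contains $I + p^m A$ for every $A \in M_{2\times 2}(\Z/p^\ell\Z)$, forcing $\mathfrak{g}_m$ to contain rank-one elements that are forbidden in $V_1 \oplus V_2$ by Remark~\ref{rmk:cartanliealg}. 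Specializing then gives $\tfrac{p^2-p}{2}$ when $n=1$ and $\tfrac{p-1}{2}\,p^{2n-1}$ when $n>1$.

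In case (ii), the inclusion $G \supseteq I + p^2 M_{2\times 2}(\Z_p)$ reduces the computation to level $p^2$: from $|G(p^2)| = |C_{ns}^+(p)|\cdot p^3 = 2p^3(p^2-1)$ and $|\GL_2(\Z/p^2\Z)| = p^5(p-1)^2(p+1)$ one obtains $[\GL_2(\Z_p):G] = \tfrac{p^3-p^2}{2}$. Here $n=1$ because $V_3$ is not contained in the mod-$p$ Lie algebra $V_1 \oplus V_2$ of $C_{ns}^+$ (Lemma~\ref{lemma:subrepdecomposition} together with Remark~\ref{rmk:cartanliealg}), so $G(p^2) \not\subseteq C_{ns}^+(p^2)$. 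In case (iii) the RSZB label directly records the adelic index $30$ and the level $5$, forcing $n=1$ by the same logic as in case (ii). Assembling the three cases produces exactly the statement of the proposition; the only point requiring care is the bookkeeping that relates the auxiliary integer $m$ to the proposition's $n$, but they coincide in case (i) and both collapse to $1$ in cases (ii) and (iii), so everything else is mechanical counting.
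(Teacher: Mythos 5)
Your proof is correct and follows essentially the same route as the paper: invoke Theorem~\ref{thm:ellipticcartantower}, discard the $p=3$ RSZB cases using the hypothesis $\operatorname{Im}\rho_{E,3}=C_{ns}^+(3)$, and compute the indices by order counting in each of the remaining scenarios. The only difference is that you are somewhat more careful than the paper in verifying that the integer $m$ of Theorem~\ref{thm:ellipticcartantower} (smallest $m$ with $G \supseteq I+p^m M_{2\times 2}(\Z_p)$) coincides with the proposition's $n$ (largest $n$ with $G(p^n)\subseteq C_{ns}^+(p^n)$) in case~(i) and drops to $1$ in cases~(ii) and~(iii); the paper's proof treats this identification as evident, but your rank-one/dimension-count justification via Remark~\ref{rmk:cartanliealg} is a correct way to make it explicit.
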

	
	\begin{proof}
		Suppose first that $\operatorname{Im}\rho_{E,p} = C_{ns}^+(p)$. This implies that we are in one of the first two cases of Theorem \ref{thm:ellipticcartantower}, and so if $n$ is the smallest integer such that $\operatorname{Im}\rho_{E,p^\infty} \supseteq I + p^nM_{2 \times 2}(\Z_p)$, then either $\operatorname{Im}\rho_{E,p^n} = C_{ns}^+(p^n)$, or $n=2$ and $\operatorname{Im}\rho_{E,p^2}$ is a group of order $2(p^2-1)p^3$. In particular, we have that $[\GL_2(\Z_p) : \operatorname{Im}\rho_{E,p^\infty}] \in \left\lbrace \frac{p-1}{2} \cdot p^{2n-1}, \frac{p^3-p^2}{2}\right\rbrace$.
		If instead $\operatorname{Im}\rho_{E,p} \subsetneq C_{ns}^+(p)$, by Theorem \ref{thm:ellipticcartantower} we know that $p^n=5$ and $[\GL_2(\Z_p) : \operatorname{Im}\rho_{E,p^\infty}] = 30$.
	\end{proof}
	
	\begin{corollary}\label{cor:padicindices}
		Let $\faktor{E}{\Q}$ be an elliptic curve without complex multiplication and let $p$ be an odd prime such that $\operatorname{Im}\rho_{E,p} \subseteq C_{ns}^+(p)$ up to conjugation, with equality holding in the case $p=3$. Let $n \ge 1$ be the largest integer for which $\operatorname{Im}\rho_{E,p^n} \subseteq C_{ns}^+(p^n)$. We have
		\begin{align*}
			&[\GL_2(\Z_p) : \operatorname{Im}\rho_{E,p^\infty}] \le \frac{p-1}{2p} \cdot p^{3n}.
		\end{align*}
	\end{corollary}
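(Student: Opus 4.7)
The plan is to deduce Corollary \ref{cor:padicindices} directly from Proposition \ref{prop:padicindices} by a short case analysis, checking that each of the explicit values of the $p$-adic index listed there is bounded by $\tfrac{p-1}{2p}\cdot p^{3n}$.

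First I would separate the two main cases according to whether $n=1$ or $n>1$. When $n>1$, Proposition \ref{prop:padicindices} gives $[\GL_2(\Z_p):\operatorname{Im}\rho_{E,p^\infty}] = \tfrac{p-1}{2}\cdot p^{2n-1}$, and the inequality $\tfrac{p-1}{2}\cdot p^{2n-1} \le \tfrac{p-1}{2p}\cdot p^{3n} = \tfrac{p-1}{2}\cdot p^{3n-1}$ is immediate because $2n-1 \le 3n-1$ for $n\ge 1$. When $n=1$, the index belongs to $\{\tfrac{p^2-p}{2},\tfrac{p^3-p^2}{2},30\}$. The middle value $\tfrac{p^3-p^2}{2} = \tfrac{p-1}{2}\cdot p^2 = \tfrac{p-1}{2p}\cdot p^3$ attains the bound with equality, and the smaller value $\tfrac{p^2-p}{2}$ is trivially dominated by it.

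The only slightly delicate subcase is the exceptional value $30$, which arises only for $p=5$ according to Proposition \ref{prop:padicindices}; there one must check that $30 \le \tfrac{p-1}{2p}\cdot p^3$. Substituting $p=5$ gives the right-hand side $\tfrac{4}{10}\cdot 125 = 50$, and $30 \le 50$. This accounts for all values produced by the proposition.

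I expect no serious obstacle: the bound $\tfrac{p-1}{2p}\cdot p^{3n}$ has been chosen precisely so that it simultaneously dominates all the listed values uniformly in $p$ and $n$. The only thing to be careful about is to remember that, for $n=1$, the proposition already excludes non-Cartan images by Theorem \ref{thm:ellipticcartantower} and restricts to the explicit finite list, so no further hypothesis on the image is needed beyond what the corollary's statement already assumes.
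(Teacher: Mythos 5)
Your proposal is correct and takes essentially the same approach as the paper: both simply check the finitely many values of the $p$-adic index supplied by Proposition \ref{prop:padicindices} against the bound $\tfrac{p-1}{2p}\,p^{3n}$, the only case requiring an explicit numerical verification being the exceptional value $30$ at $p^n=5$. Your write-up is merely more verbose than the paper's two-line proof, which dismisses the non-exceptional values with ``easily follows'' and then checks $30 < 50$.
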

	
	\begin{proof}
		If $[\GL_2(\Z_p) : \operatorname{Im}\rho_{E,p^\infty}] \ne 30$ the statement easily follows from Proposition \ref{prop:padicindices}. If instead $[\GL_2(\Z_p) : \operatorname{Im}\rho_{E,p^\infty}] = 30$, then $p^n=5$ and $30 < \frac{5-1}{10} \cdot 5^3 = 50$.
	\end{proof}
	
	\begin{remark}
		Notice that, if one could manage to exclude the second case of Theorem \ref{thm:ellipticcartantower}, Corollary \ref{cor:padicindices} could by improved to give an inequality which depends on $p^{2n}$ instead of $p^{3n}$. Moreover, we remark that one could improve Corollary \ref{cor:padicindices} by writing an inequality in terms of $p^{2n+1}$; however, this would be pointless for our purpose, since Theorem \ref{thm:totaleffiso} allows us to bound the product $\prod_{p \mid \Lambda} p^{2n+1}$ with the same function as for $\prod_{p \mid \Lambda} p^{3n}$.
	\end{remark}
	
	We now give two propositions to bound the $p$-adic index in some cases in which $\operatorname{Im}\rho_{E,p}$ is not contained in the normaliser of a non-split Cartan. These cases will be the only ones that can occur whenever there exists a large prime $p$ for which $\operatorname{Im}\rho_{E,p} \subseteq C_{ns}^+(p)$.
	
	\begin{proposition}\label{prop:2adicimages}
		Let $\faktor{E}{\Q}$ be an elliptic curve without complex multiplication. Set $G := \operatorname{Im}\rho_{E,2^\infty}$ and define $X_G$ the corresponding modular curve.
		\begin{itemize}
			\item If $E$ does not admit any rational $2$-isogeny, then either $[\GL_2(\Z_2) : G]$ divides $32$ and $X_G$ has infinitely many rational points, or $j(E)$ is one among
			\begin{align*}
				-\frac{3 \cdot 18249920^3}{17^{16}}, \quad -\frac{7 \cdot 1723187806080^3}{79^{16}}
			\end{align*}
			and $[\GL_2(\widehat{\Z}) : \operatorname{Im}\rho_E] = 128$.
			\item If $E$ has a rational $2$-isogeny, the index $[\GL_2(\Z_2) : G]$ divides $96$ and is a multiple of $3$, and either $X_G$ has infinitely many rational points, or $j(E)$ is one among
			\begin{align*}
				2^{11}, \quad 2^4 \cdot 17^3, \quad \frac{4097^3}{2^{4}}, \quad \frac{257^3}{2^{8}}, \quad -\frac{857985^3}{62^8}, \quad \frac{919425^3}{496^4}
			\end{align*}
			and $[\GL_2(\widehat{\Z}) : \operatorname{Im}\rho_E] = 384$.
		\end{itemize}
	\end{proposition}
	
	\begin{proof}
		We start by proving the first part. By \cite[Theorem 1.1]{rousezb15}, \cite[Corollary 1.3]{rousezb15}, and \cite[Remark 1.5]{rousezb15} we know that either $j(E)$ is one among the two numbers in the statement,
		or the index $[\GL_2(\Z_2) : \operatorname{Im}\rho_{E,2^\infty}]$ divides $96$ and $X_G$ has infinitely many rational points. In the former case, we can compute the index of the adelic representation using the algorithm \verb*|FindOpenImage.m| developed in \cite{zywina22} (See the \href{https://github.com/davidzywina/OpenImage}{GitHub repository} accompaining the paper). Indeed, by \cite[Corollary 2.3]{zywina15index} we know that the index only depends on $j$-invariant. 
		In the latter case, since $E$ admits a rational $2$-isogeny if and only if $\operatorname{Im}\rho_{E,2}$ is contained in a Borel subgroup, we notice that $E$ admits a rational $2$-isogeny if and only if the index $[\GL_2(\Z_2) : \operatorname{Im}\rho_{E,2^\infty}]$ is divisible by $3$. The conclusion follows.
		The second part is proved in the same way.
	\end{proof}
	
	\begin{proposition}\label{prop:3e5speciali}
		Let $\faktor{E}{\Q}$ be an elliptic curve without complex multiplication.
		\begin{itemize}
			\item If $\operatorname{Im}\rho_{E,3} = \GL_2(\F_3)$, then $[\GL_2(\Z_3) : \operatorname{Im}\rho_{E,3^\infty}] \le 27$;
			\item If $\operatorname{Im}\rho_{E,5}$ is conjugate to the exceptional subgroup $5S4$, then $$[\GL_2(\Z_5) : \operatorname{Im}\rho_{E,5^\infty}] = [\GL_2(\F_5) : \operatorname{Im}\rho_{E,5}] = 5.$$
		\end{itemize}
	\end{proposition}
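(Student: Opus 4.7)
The plan is to apply the Lie-algebra framework of Section~\ref{sec:grouptheory} (the modules $\mathfrak{g}_n \subseteq \mathfrak{gl}_2(\F_p)$ from Definition~\ref{def:liealgebra}) together with the Weil-pairing constraint $\det \circ \rho_E = $ cyclotomic character, narrowing the list of possible $\mathfrak{g}_n$ to two, and then invoking the RSZB classification \cite{rszb22} for the final arithmetic input.

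For Part 1, set $G = \operatorname{Im}\rho_{E,3^\infty}$. The conjugation action of $G(3) = \GL_2(\F_3)$ decomposes $\mathfrak{gl}_2(\F_3) = \F_3\cdot I \oplus \mathfrak{sl}_2(\F_3)$ into non-isomorphic irreducible summands: the scalar representation is trivial, while $\mathfrak{sl}_2(\F_3)$ is the $\operatorname{SL}_2$-module $V(2)$ of highest weight $2 < 3$, which remains irreducible in characteristic $3$. Hence the $\GL_2(\F_3)$-submodules of $\mathfrak{gl}_2(\F_3)$ are exactly $0,\ \F_3\cdot I,\ \mathfrak{sl}_2(\F_3),\ \mathfrak{gl}_2(\F_3)$. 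The identity $\det(I + 3^nA) \equiv 1 + 3^n\operatorname{tr}(A) \pmod{3^{n+1}}$ (valid for $n \ge 1$, as the $\det(A)$ term is absorbed) together with $\det G = \Z_3^\times$ forces $\operatorname{tr}(\mathfrak{g}_n) = \F_3$, eliminating $0$ and $\mathfrak{sl}_2(\F_3)$. Thus $\mathfrak{g}_n \in \{\F_3 \cdot I,\ \mathfrak{gl}_2(\F_3)\}$ for every $n \ge 1$. Letting $m$ be minimal with $\mathfrak{g}_m = \mathfrak{gl}_2(\F_3)$, \cite[Lemma 2.2(ii)]{zywina11} gives $G \supseteq I + 3^m M_{2\times 2}(\Z_3)$, and a direct count of $|G(3^m)| = 48 \cdot 3^{m-1}$ against $|\GL_2(\Z/3^m)| = 48 \cdot 3^{4(m-1)}$ yields $[\GL_2(\Z_3) : G] = 27^{m-1}$. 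The bound $\le 27$ thus amounts to $m \le 2$, which I would obtain from \cite[Theorem 1.6]{rszb22}: the subgroups of $\GL_2(\Z/27\Z)$ that would arise for $m \ge 3$ (index $\ge 729$ and surjecting onto $\GL_2(\F_3)$) correspond to modular curves with no non-CM, non-cuspidal rational points.

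For Part 2, set $G = \operatorname{Im}\rho_{E,5^\infty}$ with $G(5) = 5S4$ of order $96$. Since $\gcd(5, 96) = 1$, Proposition~\ref{prop:groupteichmuller} yields $G \cong 5S4 \ltimes N$ where $N$ is a closed $5S4$-stable subgroup of $I + 5M_{2\times 2}(\Z_5)$. The decomposition $\mathfrak{gl}_2(\F_5) = \F_5 \cdot I \oplus \mathfrak{sl}_2(\F_5)$ again splits into non-isomorphic irreducible $5S4$-modules: the action on $\mathfrak{sl}_2(\F_5)$ factors through the image $S_4$ of $5S4$ in $\operatorname{PGL}_2(\F_5) = S_5$, and a character comparison (with $\gcd(5,24) = 1$, so ordinary and modular representation theories coincide for $S_4$) identifies this $3$-dimensional module with the standard irreducible $S_4$-representation. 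Repeating the trace argument gives $\mathfrak{g}_1 \in \{\F_5 \cdot I,\ \mathfrak{gl}_2(\F_5)\}$. The option $\mathfrak{g}_1 = \F_5 \cdot I$ would force index $625$ at level $25$; I would rule it out via \cite[Theorem 1.6]{rszb22}, since the corresponding modular curve (level $25$, mod-$5$ image $5S4$, with $G_1 \mod 25$ reduced to scalars) has no non-CM rational points. Hence $\mathfrak{g}_1 = \mathfrak{gl}_2(\F_5)$, so by \cite[Lemma 2.2(ii)]{zywina11} we obtain $G \supseteq I + 5 M_{2\times 2}(\Z_5)$, and therefore $[\GL_2(\Z_5) : G] = [\GL_2(\F_5) : 5S4] = 5$.

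The main obstacle in both parts is precisely the final step: the Lie-algebra framework only narrows $\mathfrak{g}_n$ to two possibilities, and ruling out the smaller $\F_p \cdot I$ option is not a purely group-theoretic matter — it relies on the arithmetic fact that certain specific modular curves of level $27$ and $25$ have no non-CM rational points. The RSZB classification provides exactly this input; an alternative via the projective image would also rule out $\mathfrak{g}_n = \F_p \cdot I$ for \emph{all} $n$ (by the open image theorem), but does not by itself control the smallest $m$ with $\mathfrak{g}_m = \mathfrak{gl}_2(\F_p)$.
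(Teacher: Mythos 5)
Your Part 1 takes a genuinely different route from the paper. The paper uses no Lie-algebra analysis at all: it applies \cite[Theorem 1.6]{rszb22} directly to conclude that either $\operatorname{Im}\rho_{E,27} \subseteq C_{ns}^+(27)$ (impossible, since $\operatorname{Im}\rho_{E,3} = \GL_2(\F_3)$) or $\operatorname{Im}\rho_{E,3^\infty}$ appears in \cite[Table 1]{sutherlandzywina17}, where the largest $3$-power index is $27$. Your module-theoretic preamble is a nice structural complement (the $\GL_2(\F_3)$-submodules of $\mathfrak{gl}_2(\F_3)$ are indeed just $0$, $\F_3\cdot I$, $\mathfrak{sl}_2(\F_3)$, $\mathfrak{gl}_2(\F_3)$, and the index is then a power of $27$), but it does not spare you the arithmetic input: you still need the same table lookup to bound $m \le 2$, and your invocation of \cite{rszb22} ("subgroups that would arise for $m \ge 3$ correspond to curves with no non-CM points") is a compressed and unverified version of exactly the dichotomy the paper spells out. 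For Part 2 your route tracks the paper's closely, except at the final step: you rule out $\mathfrak{g}_1 = \F_5 \cdot I$ by appeal to \cite{rszb22}, while the paper cites Lemma~\ref{lemma:dimg1}. Curiously, the stated hypothesis of Lemma~\ref{lemma:dimg1} ($G(p) \subseteq C_{ns}^+(p)$) is not met when $G(5) = 5S4$, so the paper's citation is itself imprecise there; your modular-curve alternative is sound in spirit, but you should name exactly which entries of the RSZB classification are being excluded.

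There is one concrete gap that would fail as written: the trace step for $p=3$. Lemma~\ref{lemma:ovvissimoSL2} (and the Zywina lemma behind it) carries the hypothesis $p \nmid |G(p)|$, and here $3 \mid |\GL_2(\F_3)| = 48$, so neither the lemma nor its proof applies, and the sentence "$\det G = \Z_3^\times$ forces $\operatorname{tr}(\mathfrak{g}_n) = \F_3$" is not justified. It can be repaired: $\Z_3^\times/\det(G_1)$ is a quotient of $G/G_1 = \GL_2(\F_3)$, hence of its abelianization $\Z/2\Z$, which forces $\det(G_1) = 1 + 3\Z_3$; this produces an element $I + 3A \in G_1$ with $\det \not\equiv 1 \pmod 9$, so $\operatorname{tr}(A) \neq 0$, giving $\operatorname{tr}(\mathfrak{g}_1) = \F_3$, and Lemma~\ref{lemma:gncontainment} then propagates $\F_3\cdot I \subseteq \mathfrak{g}_n$ to every $n$. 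With that patch and with the \cite{rszb22} appeals made precise, the argument goes through.
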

	
	\begin{proof}
		We start by considering the case $p=3$. By \cite[Theorem 1.6]{rszb22} and by surjectivity of $\rho_{E,3}$, we know that $\operatorname{Im}\rho_{E,3^\infty}$ corresponds to a group in \cite[Table 1]{sutherlandzywina17}. As $\operatorname{Im}\rho_{E,3}$ is equal to $\GL_2(\F_3)$, the index $[\GL_2(\Z_3) : \operatorname{Im}\rho_{E,3^\infty}]$ must be a power of $3$. However, the largest power of $3$ among the indices of \cite[Table 1]{sutherlandzywina17} is $27$, hence $[\GL_2(\Z_3) : \operatorname{Im}\rho_{E,3^\infty}] \le 27$. \\
		Consider now the case $p=5$. If $\operatorname{Im}\rho_{E,5} = 5S4$ we have $[\GL_2(\F_5) : \operatorname{Im}\rho_{E,5}] = 5$. Similarly to Lemma \ref{lemma:subrepdecomposition} one can easily check that the only non-trivial $\F_5[5S4]$-submodules of $\mathfrak{gl}_2(\F_5)$ are $\F_5 \cdot \operatorname{Id}$ and $\mathfrak{sl}_2(\F_5)$. However, if we set $G:=\operatorname{Im}\rho_{E,5^\infty}$, by Lemma \ref{lemma:ovvissimoSL2} we know that $\F_5 \cdot \operatorname{Id}$ is contained in $\mathfrak{g}_1$, and so we have $\mathfrak{g}_1 \in \{\F_5 \cdot \operatorname{Id}, \mathfrak{gl}_2(\F_5)\}$.
		If $\mathfrak{g}_1 = \F_5 \cdot \operatorname{Id}$, then $E$ corresponds to a rational point on the modular curve with RSZB label \verb*|25.625.36.1|, which has no rational points by \cite[Section 8.6]{rszb22}.	To conclude, we notice that if $\mathfrak{g}_1 = \mathfrak{gl}_2(\F_5)$, by Lemma \ref{lemma:dimg} we have that $[\GL_2(\Z_5) : G] = 5$.
	\end{proof}
	
	We conclude this section with the following lemma, which is an improved version of \cite[Lemma 2.6]{zywina11}, that will be crucial in the next section.
	
	\begin{lemma}\label{lemma: p-Sylow of S_n}
		Let $\faktor{E}{\Q}$ be an elliptic curve without CM and let $p$ be a prime. Set $G := \operatorname{Im}\rho_{E,p^\infty}$, $S := G \cap \SL_2(\Z_p)$, and $H := \rho_{E,p^\infty}(\overline{\Q}/\Qab) \subseteq S$. Recall the definition of $G_n, S_n, H_n$ from Definition \ref{def: group kers and projs}. Let $n \ge 1$ be the minimum integer such that $G \supseteq I + p^n M_2(\Z_p)$.
		\begin{enumerate}
			\item $H \supseteq S_{2n+e} = (I + p^{2n+e}M_2(\Z_p)) \cap \SL_2(\Z_p)$, where $e = 0,1$ if $p>2$ or $p=2$ respectively.
			\item If $p>2$ and $p \mid \#G(p)$, then $H \supseteq S_n = (I + p^{n}M_2(\Z_p)) \cap \SL_2(\Z_p)$ and $p \mid \#H(p)$.
			\item If $G$ is an $N$-Cartan lift, then $[S : H] = [S(p) : H(p)] \le 2$.
			\item If $p>2$ and $G(p) = \GL_2(\F_p)$, then $H = S$, and if $p>3$ then $H = \SL_2(\Z_p)$.
			\item If $p=5$ and $G(p) = 5S4$, then $[S : H] = [S(p) : H(p)] = 2$.
		\end{enumerate}
	\end{lemma}
	
	\begin{proof}
		Recall the definition of $\mathfrak{g}_n$, $\mathfrak{s}_n$, and $\mathfrak{h}_n$ from Definition \ref{def:liealgebra}. Since $\Qab$ is the maximal abelian extension of $\Q$, we have $H = [G,G]$. Part 1 of the lemma follows from Lemma \ref{lemma:gncontainment}(4). \\
		The commutator map $G \times G_n \to H_n$ given by $(g, I + p^nA) \mapsto g(I+p^nA)g^{-1}(I+p^nA)^{-1}$ induces a function $f: G(p) \times \mathfrak{g}_n \to \mathfrak{h}_n$, which can be written as $f(g,B) = gBg^{-1} - B$. \\
		We now focus on part 2. Notice that $S_n$ is stable under conjugation in $\GL_2(\Z_p)$, hence it suffices to show the statement for some conjugate of $G$. If $p \mid \#G(p)$, then either $G(p) = \GL_2(\F_p)$ or $G(p)$ is contained in a Borel subgroup. In both cases, by surjectivity of the determinant, up to conjugation, there are elements $g,h \in G(p)$ such that $g = \footnotesize \begin{pmatrix} 1 & 1 \\ 0 & 1 \end{pmatrix}$, and $h = \footnotesize \begin{pmatrix} a & b \\ 0 & d \end{pmatrix}$ with $a \ne d$. It is easy to verify that $\langle f(g, \mathfrak{gl}_2(\F_p)), f(h, \mathfrak{gl}_2(\F_p)) \rangle = \mathfrak{sl}_2(\F_p)$, and that $ghg^{-1}h^{-1}$ has order $p$. \\
		We now prove part 3. To do that, it suffices to prove that $H$ contains $S_1$, and then the conclusion follows from \cite[Theorem 1.7(2)]{lozanorobledo23}. Consider the elements in $\mathfrak{gl}_2(\F_p)$
		\begin{align*}
			g &= \begin{pmatrix} 0 & \varepsilon \\ 1 & 0 \end{pmatrix}, \quad h = \begin{pmatrix} 1 & 0 \\ 0 & -1 \end{pmatrix} \qquad \text{in the non-split case, and} \\
			g &= \begin{pmatrix} 1 & 0 \\ 0 & -1 \end{pmatrix}, \quad h = \begin{pmatrix} 0 & 1 \\ 1 & 0 \end{pmatrix} \qquad \text{in the split case.}
		\end{align*}
		If we set
		\begin{align*}
			V_1 = \F_p \cdot Id, \quad V_2 = \F_p \cdot g, \quad V_{3,1} = \F_p \cdot h, \quad V_{3,2} = \F_p \cdot gh, \quad V_3 = V_{3,1} \oplus V_{3,2},
		\end{align*}
		by Lemma \ref{lemma:subrepdecomposition} and Remark \ref{rmk:V3decomposes} we know that either $V_1, V_2, V_3$ are irreducible $\F_p[G(p)]$-modules, or $V_1, V_2, V_{3,1}, V_{3,2}$ are irreducible $\F_p[G(p)]$-modules and $\alpha g \in G(p)$ for some $\alpha \in \F_p^\times$. Indeed, by the definition of N-Cartan lift, either there exists an element in $G(p) \cap C(p)$ which has order greater than $2$ in $\operatorname{PGL}_2(\F_p)$, or there is an element of order exactly $2$, which is always equal to $g$ in $\operatorname{PGL}_2(\F_p)$. We also know that there exists $g' \in G(p) \cap C(p)$ such that $hg' \in G(p)$. Notice that for every $x \in \mathfrak{gl}_2(\F_p)$ we have $f(\alpha g, x) = f(g,x)$, and since $C(p)$ is abelian, we have $f(hg',g) = f(h,g)$.
		It is easy to verify that
		\begin{align*}
			f(g,h) = -2h, \qquad f(h,g) = -2g, \qquad f(g^{-1},gh) = hg-gh = -2gh.
		\end{align*}
		We then deduce that, if $V_3$ is reducible, then
		\begin{align*}
			f(hg',V_2) = V_2, \qquad f(g, V_{3,1}) = V_{3,1}, \qquad f(g^{-1}, V_{3,2}) = V_{3,2},
		\end{align*}
		and hence, for every positive integer $m$, we have $\mathfrak{s_m} \subseteq f(G(p), \mathfrak{g_m}) \subseteq \mathfrak{h}_m$.
		If instead $V_3$ is irreducible, we still have $f(hg', V_2) = V_2$, but we don't know anymore whether $\alpha g$ is in $G(p)$. Nevertheless, in this case there exists $t \in G(p)$ such that $tht^{-1} \in V_3$ is not a multiple of $h$. This implies that $f(t,h)$ is a non-zero element of $V_3$, and hence, for every positive integer $m$, if $V_3 \subseteq \mathfrak{g}_m$ we have $V_3 \subseteq \mathfrak{h}_m$. We obtain that also in this case $\mathfrak{s}_m$ is contained in $\mathfrak{h}_m$.
		To conclude, it suffices to notice that $H_1 \subseteq S_1$, and the fact that $\mathfrak{h}_m = \mathfrak{s}_m$ for every $m$ implies $H_1 = S_1$. \\
		To prove part 4, notice that $S(p) = \SL_2(\F_p)$. Suppose first that $p>3$. By \cite[IV \S3.4 Lemma 3]{serre-abrep} we have $S = \SL_2(\Z_p)$, and by part 2, we know that $H \supseteq S_1$. Modulo $p$, it suffices to notice that the commutator subgroup of $\GL_2(\F_p)$ is $\SL_2(\F_p)$. If instead $p=3$, we have again that $H(p) = S(p) = [G(p),G(p)] = \SL_2(\F_p)$, and by \cite[Theorem 1.6]{rszb22} we know that either $n=1$, or $n=2$ and $\dim \mathfrak{g}_1 = 1$. By Part 2 we deduce that in the first case $H$ contains $S_1$, concluding the proof, while in the second case $H$ contains $S_2$, hence to conclude it suffices to show that $\mathfrak{h}_1 = \mathfrak{s}_1$. However, if $\mathfrak{g}_1 = \F_p \cdot Id$ we have $\mathfrak{s}_1 = 0$, which trivially implies $\mathfrak{h}_1 = \mathfrak{s}_1$; while if $\mathfrak{g}_1 \ne \F_p \cdot Id$, since the centre of $\GL_2(\F_3)$ is $\{ \pm Id\}$, we have $\mathfrak{h}_1 \supseteq f(\GL_2(\F_p), \mathfrak{g}_1) \ne 0$, and hence by a dimension argument we obtain $\mathfrak{h}_1 = \mathfrak{s}_1 = \mathfrak{g}_1$ as desired.  \\
		The proof of part 5 follows by noting that $5S4 \supset C_{sp}^+(5)$, and hence we can repeat the proof of part 3 to show that $H_1 = S_1$. On the other hand, the index modulo $5$ follows from the fact that $A_4$ is the commutator subgroup of $S_4$.
	\end{proof}

	\section{Bound on the adelic index}\label{sec:adelicbound}

	The aim of this section is to combine the results from the previous sections to obtain a bound on the index of the image of the adelic Galois representation of an elliptic curve $\faktor{E}{\Q}$ without CM. In particular, we will combine Theorem \ref{thm:ellipticcartantower} and Theorem \ref{thm:totaleffiso} to obtain a bound on the contribution given by those primes for which $\operatorname{Im}\rho_{E,p}$ is contained in the normaliser of a non-split Cartan. We will then give a bound for the index at the other non-surjective primes and a bound for the entanglement phenomenon among all primes. To do this, we will use some results about the degree of entanglement fields given in Section \ref{subsec:entanglement}.
	
	Define the function
	\begin{equation}\label{eq: definition of delta}
		\delta(x) := \frac{1}{\log(\log(x+40) + 7.6) - 0.903}
	\end{equation}
	for every $x>-0.75$. Notice that by Remark \ref{minimalheight} we can always evaluate $\delta$ in $\Fheight(E)$, for every elliptic curve $E$ defined over a number field.
	
	\begin{theorem}\label{thm:adelicbound}
		Let $\faktor{E}{\Q}$ be an elliptic curve without CM and let $\Fheight(E)$ be its stable Faltings height, with the normalisation given in \cite{deligne85}. Set $\delta(x)$ as in equation \eqref{eq: definition of delta}. We have
		\begin{align*}
			[\GL_2(\widehat{\Z}) : \operatorname{Im}\rho_E] &< 9.5 \cdot 10^{20} (\Fheight(E) + 40)^{4.42} &\text{and} \\
			[\GL_2(\widehat{\Z}) : \operatorname{Im}\rho_E] &< 3.4 \cdot 10^{20} \cdot (\Fheight(E) + 22.5)^{3 + 4.158 \cdot \delta(\Fheight(E))}. &
		\end{align*}
		In particular, we have $[\GL_2(\widehat{\Z}) : \operatorname{Im}\rho_E] < \Fheight(E)^{3+O\left(\frac{1}{\log\log\Fheight(E)}\right)}$ as $\Fheight(E)$ tends to $\infty$.
	\end{theorem}

	\subsection{Entanglement}\label{subsec:entanglement}

	In this section, we assume that $p$ is a prime for which $\operatorname{Im}\rho_{E,p} \subseteq C_{ns}^+(p)$ and we study the entanglement between the $p$-torsion and the rest of the torsion.
	Before doing it, we introduce the following notation.
	
	\begin{notation}
		Let $E$ be an elliptic curve defined over a field $K$, and let $m,n$ be coprime supernatural numbers. We will set
		\begin{itemize}
			\item $\Ent_K(m,n) := [K(E[m]) \cap K(E[n]) : K]$ the $K$-entanglement degree of $E$ between $m$ and $n$;
			\item $\deg_K(n : m) := [K(E[n]) : K(E[n]) \cap K(E[m])] = [K(E[n], E[m]) : K(E[m])]$ the degree of $E[n]$ over $E[m]$.
		\end{itemize}
	\end{notation}
	
	Notice that with the notation above we have
	\begin{gather*}
		\deg_K(n : m) \cdot \Ent_K(m,n) = [K(E[n]) : K] \\
		\deg_K(n:m) \cdot [K(E[m]) : K] = [K(E[mn]) : K].
	\end{gather*}
	
	\begin{theorem}\label{thm:entanginnf}
		Let $p \ge 5$ be a prime and let $E$ be an elliptic curve defined over a $p$-adic field $K$. Let $\mathfrak{p} \subseteq K$ be the prime above $p$ with ramification index $e:=e(\mathfrak{p}|p)$, and suppose that $E$ has potentially good supersingular reduction at $\mathfrak{p}$, acquiring good reduction over an extension of ramification index $d \in \{1,2,3,4,6\}$.
		Suppose that $p \ge de-1$ and $p \ne e$, and that there exists an integer $n \ge 1$ such that $\operatorname{Im}\rho_{E,p^n} \subseteq C_{ns}^+(p^n)$ up to conjugation. Set $\eta := \frac{de}{\gcd(de,2)}$ and let $F$ be the compositum $$F := \prod_{\substack{q \text{ prime} \\ q \ne p}} K(E[q^\infty]).$$
		\begin{itemize}
			\item The ramification index of $K(E[p^n])$ over $F \cap K(E[p^n])$ is a multiple of $\frac{p^{2n-2}(p^2-1)}{\gcd(de, p^2-1)}$, and the Galois group $\Gal\left(\faktor{K(E[p^n])}{F \cap K(E[p^n])} \right)$ has an element of order $\frac{p^{n-1}(p^2-1)}{\gcd(de, p^2-1)}$.
			\item The ramification index of $K(E[p^n])$ over $F(\zeta_{p^\infty}) \cap K(E[p^n])$ is a multiple of $\frac{p^{n-1}(p + 1)}{\gcd(\eta, p + 1)}$.
		\end{itemize}
	\end{theorem}
	
	\begin{proof}
		Consider the maximal unramified extension $K^{nr}$ of $K$, and let $\faktor{L}{K^{nr}}$ be the minimal extension over which $E$ acquires good reduction. As $p \ge 5$, by \cite[Proposition 1]{kraus90} we have $d = [L : K^{nr}] \in \{1,2,3,4,6\}$. By the N\'eron--Ogg--Shafarevich criterion, for every prime $q \ne p$, as $\faktor{L(E[q^\infty])}{L}$ is unramified, we have $L(E[q^\infty])=L$, and so $FL=L$. Notice that the ramification index of $K(E[p^n])$ above $F \cap K(E[p^n])$ is equal to
		$$\deg_{K^{nr}}(p^n : 0/p^\infty) = [K^{nr}(E[p^n]) : F \cap K^{nr}(E[p^n])] = [F(E[p^n]) : F].$$
		By Theorem \ref{thm:canonicalsbg}, we know that $E$ does not have a canonical subgroup, so by \cite[Theorem 4.6]{smith23} (which is stated over number fields, but its proof holds over $p$-adic fields) we know that $L(E[p^n])$ contains elements with valuation $\frac{1}{p^{2n} - p^{2n-2}}$, and hence the degree of $L(E[p^n])$ over $L$ is divisible by $\frac{(p^2-1)p^{2(n-1)}}{\gcd(de,p^2-1)}$. This implies the first part of the first statement of the theorem, as $[F(E[p^n]) : F]$ is a multiple of $[L(E[p^n]) : L]$, because $F$ is contained in $L$. \\
		We now show that $\Gal\left(\faktor{K(E[p^n])}{F \cap K(E[p^n])} \right)$ has an element of order $\frac{p^{n-1}(p^2-1)}{\gcd(de, p^2-1)}$. Since $E$ does not have a canonical subgroup, by Lemma \ref{lemma:locallemmaSamuel} we know that $\Gal\left(\faktor{L(E[p^n])}{L}\right)$ contains an element of order $\frac{(p^2-1)p^{n-1}}{\gcd(de,p^2-1)}$.
		This gives the desired property, as $\Gal\left(\faktor{L(E[p^n])}{L}\right)$ embeds into
		\begin{align*}
			\Gal\left(\faktor{F(E[p^n])}{F} \right) \cong \Gal\left(\faktor{K(E[p^n])}{F \cap K(E[p^n])} \right).
		\end{align*}
		We now prove that $L(E[p^n]) \cap L(\zeta_{p^\infty})$ is equal to $L(\zeta_{p^n})$. First, by the properties of the Weil pairing, we have $L(\zeta_{p^n}) \subseteq L(E[p^n])$. Since $\faktor{L(\zeta_{p^\infty})}{L(\zeta_{p^n})}$ is a procyclic extension, every proper subextension must contain $L(\zeta_{p^{n+1}})$. It then suffices to show that $\zeta_{p^{n+1}} \notin L(E[p^n])$. Since $\faktor{L}{\Q_p^{nr}}$ is a tamely ramified extension, $\Gal\left( \faktor{L(\zeta_{p^{n+1}})}{L} \right)$ contains elements of order $p^n$. On the other hand, $\Gal\left( \faktor{L(E[p^n])}{L} \right)$ is a subgroup of $C_{ns}^+(p^n)$, and hence does not contain elements of order $p^n$. We then obtain that $\zeta_{p^{n+1}} \notin L(E[p^n])$ as claimed. \\
		Notice that, similarly to the first part, the ramification index of $K(E[p^n])$ over $F(\zeta_{p^\infty}) \cap K(E[p^n])$ is equal to
		$$\deg_{K^{nr}(\zeta_{p^\infty})}(p^n : 0/p^\infty) = [K^{nr}(E[p^n]) : F(\zeta_{p^\infty}) \cap K^{nr}(E[p^n])] = [F(E[p^n], \zeta_{p^\infty}) : F(\zeta_{p^\infty})].$$
		Since $\faktor{L(\zeta_p)}{L}$ is totally ramified, we have $[L(\zeta_{p}) : L] = \frac{(p-1)}{\gcd(de,p-1)}$. As $p \ge de-1$ and $p \ne e$, we necessarily have $p \nmid de$, and so $[L(\zeta_{p^n}) : L(\zeta_p)] = p^{n-1}$, because $\faktor{L(\zeta_{p^n})}{L(\zeta_p)}$ is totally wildly ramified. This implies that $[L(\zeta_{p^n}) : L] = \frac{(p-1)p^{n-1}}{\gcd(de,p-1)}$.
		
		\[\begin{tikzcd}
			& {L(E[p^n],\zeta_{p^\infty})} \\
			{L(E[p^n])} & {L(\zeta_{p^\infty})} \\
			\quad {L(\zeta_{p^n})} \\
			L & {FK^{nr}(\zeta_{p^\infty})} \quad \\
			\quad {FK^{nr}} & {K^{nr}(\zeta_{p^\infty})} \\
			{K^{nr}}
			\arrow[no head, from=1-2, to=2-1]
			\arrow[no head, from=1-2, to=2-2]
			\arrow[no head, from=3-1, to=2-1]
			\arrow[no head, from=3-1, to=2-2]
			\arrow["{k \cdot \frac{(p^2-1)p^{2(n-1)}}{\gcd(de,p^2-1)}}", curve={height=-18pt}, no head, from=4-1, to=2-1]
			\arrow["{\frac{(p-1)p^{n-1}}{\gcd(de,p-1)}}"', no head, from=4-1, to=3-1]
			\arrow[no head, from=4-2, to=2-2]
			\arrow[no head, from=5-1, to=4-1]
			\arrow[no head, from=5-1, to=4-2]
			\arrow["{\frac{d}{\gcd(d,p-1)}}"', curve={height=30pt}, no head, from=5-2, to=2-2]
			\arrow[no head, from=5-2, to=4-2]
			\arrow["d", curve={height=-18pt}, no head, from=6-1, to=4-1]
			\arrow[no head, from=6-1, to=5-1]
			\arrow[no head, from=6-1, to=5-2]
		\end{tikzcd}\]
		We then obtain that the degree of $\faktor{L(E[p^n])}{L(\zeta_{p^n})}$ is a multiple of 
		$$\frac{\gcd(de,p-1)}{\gcd(de,p^2-1)} \cdot (p+1)p^{n-1}.$$ Moreover, since $F \subseteq L$, we have that $\deg_{K(\zeta_{p^\infty})}(p^n : 0/p^\infty)$ is a multiple of
		\begin{align*}
			[L(E[p^n],\zeta_{p^\infty}) : L(\zeta_{p^\infty})] &= [L(E[p^n]) : L(\zeta_{p^\infty}) \cap L(E[p^n])] = [L(E[p^n]) : L(\zeta_{p^n})].
		\end{align*}
		In particular, we showed that $\deg_{K(\zeta_{p^\infty})}(p^n : 0/p^\infty)$ is a multiple of $\frac{\gcd(de,p-1)}{\gcd(de,p^2-1)} \cdot (p+1)p^{n-1}$. To conclude, it suffices to show that, for $\eta = \frac{de}{\gcd(de,2)}$, the number $\frac{\gcd(de,p^2-1)}{\gcd(de,p-1)}$ is a divisor of $\gcd(\eta, p+1)$. This is a consequence of the following lemma.
		\begin{lemma}
			Let $a,b,c \in \Z$ such that $ab \ne 0$. Then, $\frac{\gcd(a,bc)}{\gcd(a,b)}$ divides $\gcd\left(\frac{a}{\gcd(a,b,c)}, c\right)$.
		\end{lemma}
		\begin{proof}
			We want to show that $\gcd(a,bc)$ divides $\gcd(a,b) \gcd\left(\frac{a}{\gcd(a,b,c)}, c\right)$. Notice that $(a,bc) = (a,b) \left(\frac{a}{(a,b)}, \frac{b}{(a,b)} \cdot c\right)$. Since $\frac{a}{(a,b)}$ and $\frac{b}{(a,b)}$ are coprime, we have $\left(\frac{a}{(a,b)}, \frac{b}{(a,b)} \cdot c\right) = \left(\frac{a}{(a,b)}, c\right)$, and hence $(a,bc) = (a,b)\left(\frac{a}{(a,b)}, c\right) = (a, c \cdot (a,b))$. On the other hand, $(a, c \cdot (a,b))$ divides $\left(a \cdot \frac{(a,b)}{(a,b,c)}, c \cdot (a,b)\right) = (a,b)\left(\frac{a}{\gcd(a,b,c)}, c\right)$, concluding the proof.
		\end{proof}
		
		Now, if we apply the lemma with $(a,b,c) = (de, p-1, p+1)$, the statement follows.
	\end{proof}
	
	\begin{corollary}\label{cor:entangQ}
		Let $\faktor{E}{\Q}$ be a non-CM elliptic curve and let $p>7$ and $n \ge 1$ be integers such that $p$ is prime and $\operatorname{Im}\rho_{E,p^n} \subseteq C_{ns}^+(p^n)$. Let $F$ be the compositum $$F := \prod_{\substack{q \text{ prime} \\ q \ne p}} \Q(E[q^\infty]) = \Q\left(E\left[\frac{0}{p^\infty}\right]\right).$$
		There exists $\eta \in \{1,2,3\}$ such that
		\begin{gather*}
			\deg_\Q(p^n : 0/p^\infty) = [F(E[p^n]) : F] \quad \text{is a \emph{proper} multiple of} \quad \frac{p^{2n}-p^{2n-2}}{12}, \quad \text{and} \\
			\deg_{\Qab}(p^n : 0/p^\infty) = [F\Qab(E[p^n]) : F\Qab] \quad \text{is a multiple of} \quad \frac{p^n+p^{n-1}}{\eta}.
		\end{gather*}
		Moreover, if $E$ has good reduction at $p$ we have $\eta=1$.
	\end{corollary}
	
	\begin{proof}
		We notice that we can assume that $E$ has potentially good supersingular reduction modulo $p$. Indeed, by Corollary \ref{cor:supersingularQ} the prime $p$ is always supersingular for $p>7$ and $ \ne 13$. However, by \cite[Corollary 1.3]{balakrishnan19} we know that for $p=13$ the image of $\rho_{E,p}$ is not contained in $C_{ns}^+(p)$.
		Consider the set $R:= \{r\ge 1 \ : \ p \nmid r\}$, define the extension $K=\Q(\{\zeta_r\}_{r \in R})$ and consider $E$ to be defined over $K$. As $p$ is unramified in $K$, by Theorem \ref{thm:entanginnf} we know that there exists $\eta \in \{1,2,3\}$ such that
		\begin{align*}
			[F\Qab(E[p^n]) : F\Qab] &= [FK(\zeta_{p^\infty}, E[p^n]) : FK(\zeta_{p^\infty})] = [K(E[p^n]) : FK(\zeta_{p^\infty}) \cap K(E[p^n])]
		\end{align*}
		is a multiple of $\frac{p^n+p^{n-1}}{\eta}$.
		The fact that $[F(E[p^n]) : F]$ is a proper multiple of $\frac{p^{2n}-p^{2n-2}}{12}$ immediately follows from Theorem \ref{thm:entanginnf}.
	\end{proof}
	
	\begin{lemma}\label{lemma:intersezionestupida}
		Let $E$ be an elliptic curve over a field $K$ and let $p$ be a prime. Let $B$ be a set of primes such that for every $q \in B$ the prime $p$ does not divide $q(q^2-1)$. Define $m:= \prod_{q \in B} q$ (possibly a supernatural number) and consider the compositum $K(E[m^\infty]) := \prod_{q \in B} K(E[q^\infty])$. We have
		$$K(E[m^\infty], E[p]) \cap K(E[p^\infty]) = K(E[p]), \qquad \text{and so} \qquad \Ent_K(m^\infty, p^\infty) = \Ent_K(m^\infty, p).$$
	\end{lemma}
	
	\begin{proof}
		Set $F:=K(E[m^\infty])$. We notice that for every $q \in B$ we have $$p \nmid [K(E[q]) : K] \mid \#\GL_2(\F_q) = q(q-1)^2(q+1).$$ As $F$ is the composite of $K(E[q^\infty])$ for $q \in B$ and $\faktor{K(E[q^\infty])}{K(E[q])}$ is a pro-$q$ extension, this implies that $F$ does not contain any finite subextension with degree multiple of $p$. In particular, the same holds for $\faktor{F(E[p])}{K(E[p])}$. On the other hand, $K(E[p^\infty])$ is a pro-$p$ extension of $K(E[p])$, and so $\faktor{F(E[p]) \cap K(E[p^\infty])}{K(E[p])}$ must be trivial.
	\end{proof}
	
	\begin{corollary}\label{cor:intersechino}
		Let $\faktor{E}{\Q}$ be a non-CM elliptic curve and let $p>7$ be a prime such that $\operatorname{Im}\rho_{E,p} = C_{ns}^+(p)$. Let $B$ be a set of primes such that for every $q \in B$ the prime $p$ does not divide $q(q^2-1)$. Define $m:= \prod_{q \in B} q$ (possibly a supernatural number), and consider the compositum $\Q(E[m^\infty]) := \prod_{q \in B} \Q(E[q^\infty])$. We have
		$$\Ent_{\Qab}(m^\infty, p^\infty) = [\Qab(E[m^\infty]) \cap \Qab(E[p^\infty]) : \Qab] \le 6.$$
		Moreover, if $E$ has good reduction at $p$ we have $\Ent_{\Qab}(m^\infty, p^\infty) \le 2$.
	\end{corollary}
	
	\begin{proof}
		By Lemma \ref{lemma:intersezionestupida} we know that $\Ent_{\Qab}(m^\infty, p^\infty) = \Ent_{\Qab}(m^\infty, p)$, and so it suffices to compute
		\begin{align*}
			\Ent_{\Qab}(m^\infty, p) &= \frac{\left[\Qab(E[p]) : \Qab\right]}{\deg_{\Qab}(p : m^\infty)}.
		\end{align*}
		However, by Corollary \ref{cor:entangQ} we know that $\deg_{\Qab}(p : m^\infty) \ge \deg_{\Qab}(p : 0/p^\infty) \ge \frac{p+1}{3}$ (and greater than or equal to $p+1$ in the case of good reduction), and
		\begin{align*}
			\kern-8pt \left[\Qab(E[p]) : \Qab\right] &= [\Q(E[p]) : \Q(E[p]) \cap \Qab] \le [\Q(E[p]) : \Q(\zeta_p)] = \frac{[\Q(E[p]) : \Q]}{[\Q(\zeta_p) : \Q]}
			= 2(p+1). \kern7pt \qedhere
		\end{align*}
	\end{proof}
	
	\begin{lemma}\label{lemma:sl2entang}
		Let $\faktor{E}{\Q}$ be an elliptic curve without CM. Let $\mathcal{P}$ be a set of primes containing $2,3,5$ and all primes $p$ for which $\rho_{E,p}$ is not surjective. Let $m$ be the product of all the primes in $\mathcal{P}$ and write $\Z_m := \prod_{p \in \mathcal{P}} \Z_p$ and $\rho_{E,m^\infty} := \prod_{p \in \mathcal{P}} \rho_{E,p^\infty}$. Call $S:=\rho_E\left( \Gal\left({\overline{\Q}}/{\Qab}\right) \right) < \SL_2(\widehat{\Z})$ and $S_\mathcal{P}$ its image under the projection on $\SL_2(\Z_m)$. We have
		\begin{align*}
			[\GL_2(\widehat{\Z}) : \operatorname{Im}\rho_E] = \left[ \SL_2(\widehat{\Z}) : S \right] = \left[ \SL_2(\Z_m) : S_\mathcal{P} \right].
		\end{align*}
	\end{lemma}
	
	\begin{proof}
		The first equality follows from surjectivity of $\det \circ \rho_E$ onto $\widehat{\Z}^\times$. To prove the second inequality, it suffices to notice that we can view $S$ as a closed subgroup of $\prod_p S_p \subseteq \prod_p \SL_2(\Z_p) = \SL_2(\widehat{\Z})$, and by \cite[IV \S3.4 Lemma 5]{serre-abrep} we know that $S$ contains the subgroup $\prod_{p \notin \mathcal{P}} \SL_2(\Z_p)$, concluding the proof.
	\end{proof}
	
	\begin{lemma}\label{lemma:goursatgalois}
		Let $E$ be an elliptic curve defined over a field $K$. Let $m,n$ be coprime squarefree supernatural numbers. Set $G:=\operatorname{Im}\rho_E$ and define $G_m$, $G_n$, $G_{mn}$ to be its projections on $\GL_2(\Z_m)$, $\GL_2(\Z_n)$, $\GL_2(\Z_{mn})$ respectively. We have $$[G_m \times G_n : G_{mn}] = \Ent_K(m^\infty, n^\infty).$$
	\end{lemma}
	
	\begin{proof}
		Set $F:= K(E[m^\infty]) \cap K(E[n^\infty])$. If we write
		\begin{align*}
			G_m = \Gal\left(\faktor{K(E[m^\infty])}{K}\right) \quad \text{and} \quad G_n = \Gal\left(\faktor{K(E[n^\infty])}{K}\right),
		\end{align*}
		we know that $G_{mn} = \Gal\left(\faktor{K(E[(mn)^\infty])}{K}\right)$ is isomorphic to the subgroup of $G_m \times G_n$ described as $\left\lbrace (\sigma, \tau) \in G_m \times G_n \ : \ \sigma|_F = \tau|_F \right\rbrace$.
		We conclude the proof noting that $[G_m \times G_n : G_{mn}] = [F : K]$.
	\end{proof}
	
	\begin{corollary}\label{cor:goursatgaloisnf}
		Let $K$ be a number field and let $\faktor{E}{K}$ be an elliptic curve without CM. Let $m,n$ be coprime squarefree supernatural numbers. Set $G:=\operatorname{Im}\rho_E$ and $S:=\rho_E(\Gal(\overline{K}/K\Qab))$, and define $G_m$, $G_n$, $G_{mn}$, $S_m$, $S_n$, $S_{mn}$ to be their projections on $\GL_2(\Z_m)$, $\GL_2(\Z_n)$, $\GL_2(\Z_{mn})$, $\SL_2(\Z_m)$, $\SL_2(\Z_n)$, $\SL_2(\Z_{mn})$ respectively. We have
		\begin{align*}
			[\GL_2(\Z_{mn}) : G_{mn}] = [\GL_2(\Z_{m}) : G_m] \cdot [\GL_2(\Z_{n}) : G_n] \cdot \Ent_K(m^\infty, n^\infty)
		\end{align*}
		and
		\begin{align*}
			[\SL_2(\Z_{mn}) : S_{mn}] = [\SL_2(\Z_{m}) : S_m] \cdot [\SL_2(\Z_{n}) : S_n] \cdot \Ent_{K\Qab}(m^\infty, n^\infty).
		\end{align*}
	\end{corollary}
	
	\begin{proof}
		The first statement follows from Lemma \ref{lemma:goursatgalois} noting that
		\begin{align*}
			[\GL_2(\Z_{mn}) : G_{mn}] &= [\GL_2(\Z_{mn}) : G_m \times G_n] \cdot [G_m \times G_n : G_{mn}] \\
			&= [\GL_2(\Z_{m}) : G_m] \cdot [\GL_2(\Z_{n}) : G_n] \cdot [G_m \times G_n : G_{mn}].
		\end{align*}
		The second statement is proved in the same way replacing $K$ with $K\Qab$.
	\end{proof}

	\subsection{A bound in terms of the Faltings height}

	We now start combining all the results from the previous sections to prove Theorem \ref{thm:adelicbound}. Before doing this, we will prove many intermediate lemmas and propositions, that allow us to organise the proof in different steps. We will distinguish cases according to whether the elliptic curve $E$ satisfies the uniformity conjecture or not, treating separately the subcases $j(E) \in \Z$ and $j(E) \notin \Z$ to optimise the bound.
	
	We extend the definition of $C_{ns}^+(p)$ for $p=2$ as $C_{ns}^+(2) = \GL_2(\F_2)$. Indeed, this is the normaliser of the only subgroup (up to conjugation) of $\GL_2(\F_2)$ which is isomorphic to $\F_4^\times$.
	
	\begin{lemma}\label{lemma:exceptional}
		Let $\faktor{E}{\Q}$ be an elliptic curve without CM and let $p$ be a prime number such that $\operatorname{Im}\rho_{E,p}$ is contained in an exceptional subgroup, i.e.\! a proper subgroup of $\GL_2(\F_p)$ which is not contained in a Borel subgroup or in the normaliser of a Cartan subgroup. There are two possible cases:
		\begin{itemize}
			\item $p=5$ and $[\GL_2(\F_5) : \operatorname{Im}\rho_{E,5}] = 5$;
			\item $p=13$, the $j$-invariant $j(E)$ is one among
			\begin{gather}\label{eq:j13S4}
					\frac{2^4 \cdot 5 \cdot 13^4 \cdot 17^3}{3^{13}}, \quad -\frac{2^{12} \cdot 5^3 \cdot 11 \cdot 13^4}{3^{13}} , \quad \frac{2^{18} \cdot 3^3 \cdot 13^4 \cdot 127^3 \cdot 139^3 \cdot 157^3 \cdot 283^3 \cdot 929}{5^{13} \cdot 61^{13}},
			\end{gather}
			and $[\GL_2(\widehat{\Z}) : \operatorname{Im}\rho_E] = 182$.
		\end{itemize}
	\end{lemma}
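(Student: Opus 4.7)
The plan is to reduce to $p \leq 13$ via Serre's classification, then handle each remaining prime separately by invoking known results, as has been the standard approach in this area.

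First, I would invoke Serre's result \cite[\S8.4, Lemma 18]{serre81}, mentioned in the introduction, which guarantees that if $\operatorname{Im}\rho_{E,p}$ is contained in an exceptional subgroup then $p \leq 13$. Moreover, exceptional subgroups only arise for $p \geq 5$, since the classification of maximal subgroups of $\GL_2(\F_p)$ into Borel, Cartan normaliser, and exceptional types requires $p \geq 5$; for $p = 2, 3$ the image sits inside $\GL_2(\F_p)$ itself, which has small order, and these cases do not contribute exceptional subgroups in Serre's sense. So I reduce to $p \in \{5, 7, 11, 13\}$.

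For $p = 5$, the only exceptional case (up to conjugation) is when $\operatorname{Im}\rho_{E,5}$ is conjugate to the group labelled $5S4$, and Proposition \ref{prop:3e5speciali} already establishes that $[\GL_2(\F_5) : \operatorname{Im}\rho_{E,5}] = 5$. For $p \in \{7, 11\}$, I would cite \cite[Theorems 1.4, 1.5]{zywina15} (or equivalently the full classification in \cite{rszb22}), which show that for these primes the image of $\rho_{E,p}$ cannot lie in an exceptional subgroup when $E$ is a non-CM elliptic curve over $\mathbb{Q}$; this eliminates both cases.

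For $p = 13$, I would invoke the result of Banwait--Cremona (and also verified in \cite{rszb22}), which shows that the only non-CM elliptic curves $E/\mathbb{Q}$ for which $\operatorname{Im}\rho_{E,13}$ lies in an exceptional subgroup have $j$-invariant equal to one of the three values listed in \eqref{eq:j13S4}; in each case the image is of $S_4$-type with $[\GL_2(\F_{13}) : \operatorname{Im}\rho_{E,13}] = 91$. Then, since the adelic index depends only on $j(E)$ by \cite[Corollary 2.3]{zywina15index}, I would compute $[\GL_2(\widehat{\Z}) : \operatorname{Im}\rho_E] = 182$ for each such $j$-invariant using Zywina's algorithm \verb|FindOpenImage| \cite{zywina22}, exactly as was done in Lemma \ref{lemma:cartan15e7e9}.

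The main obstacle is not mathematical but bibliographical: one must cite the correct source for each small prime $p \in \{7, 11, 13\}$ pinning down the exceptional cases. The $p = 13$ classification in particular relies on non-trivial work (ultimately involving a rank-one modular curve analysis), but this is standard in the literature and the adelic index computation is then routine via \verb|FindOpenImage|.
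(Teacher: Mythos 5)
Your proposal follows essentially the same route as the paper: reduce to $p \le 13$ via Serre's Lemma 18, enumerate the possible images using Zywina's mod-$p$ classification, isolate $5S4$ and $13S4$ as the only exceptional possibilities, identify the three $j$-invariants in the $13S4$ case, and finish with \verb|FindOpenImage| together with \cite[Corollary 2.3]{zywina15index}. Two small discrepancies are worth flagging. First, the paper disposes of $p=2,3$ by simply including \cite[Theorems 1.1, 1.2]{zywina15} in the citation list rather than arguing abstractly that exceptional subgroups require $p\ge 5$; your abstract argument is correct in spirit but slightly informal (for $p=3$ the projective group $\operatorname{PGL}_2(\F_3)\cong S_4$ does contain $A_4$, so one still needs to say something about why this is excluded — Zywina's theorem does this cleanly). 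Second, for $p=13$ the paper cites \cite[Theorem 1.1]{balakrishnan23} (quadratic Chabauty on $X_{S_4}(13)$), not Banwait--Cremona: Banwait--Cremona identified the three $j$-invariants and conjectured completeness, while the unconditional classification relied upon here is due to Balakrishnan--Dogra--M\"uller--Tuitman--Vonk. Your citation of \cite{rszb22} does not by itself supply the $13S4$ determination, since that paper leaves the $X_{S_4}(13)$ case to \cite{balakrishnan23}. With the citation corrected, your argument is complete and matches the paper's.
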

	
	\begin{proof}
		By \cite[\S8.4, Lemme 18]{serre81} we know that $p \le 13$. Using \cite[Theorems 1.1, 1.2, 1.4, 1.5, 1.6, 1.8]{zywina15} and \cite[Remark 1.9]{zywina15}, we note that there are just two possible groups: one for $p=5$ with index $5$ and one for $p=13$ with index $91$ (respectively 5S4 and 13S4). However, by \cite[Theorem 1.1]{balakrishnan23} we know that in the case $p=13$ the $j$-invariant must belong to the list \eqref{eq:j13S4}. We can then apply the algorithm \verb|FindOpenImage| developed in \cite{zywina22} (See the \href{https://github.com/davidzywina/OpenImage}{GitHub repository} accompaining the paper) to compute the index of $\operatorname{Im}\rho_E$, which is $182$. Indeed, by \cite[Corollary 2.3]{zywina15index} the index only depends on the $j$-invariant.
	\end{proof}
	
	We collect together the facts we know about the possible images of $\rho_{E,p}$ in the following proposition.
	
	\begin{proposition}\label{prop:groupingfacts}
		Let $\faktor{E}{\Q}$ be an elliptic curve without CM. If $j(E)$ is one of the $j$-invariants of the following list
		\begin{gather}\label{eq:jlist}
			\begin{split}
				-11 \cdot 131^3, \quad -11^2, \quad -\frac{17^2 \cdot 101^3}{2}, \quad -\frac{17 \cdot 373^3}{2^{17}}, \quad -7 \cdot 11^3, \quad -7 \cdot 137^3 \cdot 2083^3, \ \ \\ 
				\frac{2^4 \cdot 5 \cdot 13^4 \cdot 17^3}{3^{13}}, \quad -\frac{2^{12} \cdot 5^3 \cdot 11 \cdot 13^4}{3^{13}}, \quad \frac{2^{18} \cdot 3^3 \cdot 13^4 \cdot 127^3 \cdot 139^3 \cdot 157^3 \cdot 283^3 \cdot 929}{5^{13} \cdot 61^{13}},
			\end{split}
		\end{gather}
		then $[\GL_2(\widehat{\Z}) : \operatorname{Im}\rho_{E}] \le 2736$. Suppose now that $j(E)$ is not in the list above.
		\begin{itemize}
			\item If $\operatorname{Im}\rho_{E,p}$ is contained in a Borel subgroup, then $p \in \{2,3,5,7,13\}$.
			\item If $\operatorname{Im}\rho_{E,p}$ is contained in the normaliser of a split Cartan subgroup, then $p \le 7$.
			\item If $\operatorname{Im}\rho_{E,p}$ is contained in the normaliser of a non-split Cartan subgroup and $p \ge 5$, then either $\operatorname{Im}\rho_{E,p} = C_{ns}^+(p)$ and $p \in \{5,7,11\} \cup \{N \ge 19\}$, or $[C_{ns}^+(p) : \operatorname{Im}\rho_{E,p}] = 3$ and $p=5$.
			\item If $\operatorname{Im}\rho_{E,p}$ is contained in an exceptional subgroup but is not contained in one of the groups in the cases above, then $p = 5$ and $[\GL_2(\Z_5) : \operatorname{Im}\rho_{E,5^\infty}] = 5$.
		\end{itemize}
	\end{proposition}
	
	\begin{proof}
		To prove that the $j$-invariants in the list have $[\GL_2(\widehat{\Z}) : \operatorname{Im}\rho_{E}] \le 2736$, it suffices to notice that by \cite[Corollary 2.3]{zywina15index} the index only depends on the $j$-invariant, and then we can compute it using the algorithm \verb|FindOpenImage| developed in \cite{zywina22}.
		If $j(E)$ is not in the list, the statement follows combining Theorem \ref{thm:mazurisogeny}, Theorem \ref{thm:split}, Theorem \ref{thm:index3incartan}, \cite[Corollary 1.3]{balakrishnan19}, \cite[Theorem 1.2]{balakrishnan23}, and Lemma \ref{lemma:exceptional}.
	\end{proof}
	
	Another result we will use is the following theorem by Lemos (\cite[Theorem 1.1]{lemos19borel} and \cite[Theorem 1.4]{lemos19split}).
	\begin{theorem}[Lemos]
		Let $\faktor{E}{\Q}$ be an elliptic curve without CM. Suppose that there exists a prime $q$ for which $\operatorname{Im}\rho_{E,q}$ is contained either in a Borel subgroup or in the normaliser of a split Cartan subgroup: then $\rho_{E,p}$ is surjective for every $p > 37$.
	\end{theorem}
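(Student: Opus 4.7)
The plan is to argue by contradiction: suppose there is a prime $p > 37$ for which $\rho_{E,p}$ is not surjective. First I would narrow down the possible image. By Serre's classification of maximal subgroups of $\GL_2(\F_p)$ together with \cite[\S8.4, Lemma 18]{serre81}, exceptional subgroups are excluded for $p > 13$; Theorem \ref{thm:mazurisogeny} excludes Borel subgroups for $p > 37$; and Theorem \ref{thm:split} excludes normalisers of split Cartan subgroups for $p > 7$. Thus $\operatorname{Im}\rho_{E,p} \subseteq C_{ns}^+(p)$, and Corollaries \ref{cor:goodreductionforcartanQ} and \ref{cor:supersingularQ} force $E$ to have potentially good supersingular reduction at $p$ and potentially good reduction at every prime $\ell \not\equiv \pm 1 \pmod p$.

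Next I would exploit the hypothesis at $q$ separately in the two cases. In the Borel case, $E$ admits a rational cyclic $q$-isogeny $\phi: E \to E'$; since $q \ne p$, the mod-$p$ representation of $E'$ is conjugate to that of $E$, so $\operatorname{Im}\rho_{E',p}$ is also contained in $C_{ns}^+(p)$ and all the reduction constraints above apply equally to $E'$. Theorem \ref{thm:mazurisogeny} forces $q \in \{2,3,5,7,11,13,17,37\}$, with the values $q \in \{11,17,37\}$ giving only a finite list of $j$-invariants which can be checked by hand against the non-split Cartan condition at $p$. For the remaining $q \le 13$, one combines the effective isogeny theorem of Gaudron--R\'emond applied to the pair $(E,E')$ with the height bound coming from Theorem \ref{thm:cartanboundwithdenominator}, together with a Runge-type argument on the fiber product modular curve $X_0(q) \times_{X(1)} X_{ns}^+(p)$, to derive a contradiction for every $p > 37$.

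In the split Cartan case, after a quadratic base change $\operatorname{Im}\rho_{E,q}$ lies in the diagonal Cartan, producing two independent Galois-stable cyclic subgroups of order $q$ and hence two isogenies $\phi_i: E \to E_i$, each $E_i$ again having non-split Cartan image at $p$. One then applies Runge's method on the fiber product modular curve $X_{sp}^+(q) \times_{X(1)} X_{ns}^+(p)$, exploiting the fact that its rational points must reduce to cusps at many primes simultaneously, which is incompatible with the reduction profile imposed on $E$ by the non-split Cartan condition at $p$.

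The main obstacle is making these effective bounds uniform in $p > 37$: unlike the split Cartan situation, the non-split Cartan condition at $p$ does not by itself yield a Runge-type bound on $X_{ns}^+(p)$, so one must instead work on the fiber product with $q$ and carefully analyse the interaction between the cusp behaviour at $q$ and the potentially good supersingular reduction at $p$ to extract usable height estimates. This is precisely what is carried out in \cite[Theorem 1.1]{lemos19borel} for the Borel case and in \cite[Theorem 1.4]{lemos19split} for the split Cartan case.
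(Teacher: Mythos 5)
The paper does not actually prove this statement: it is quoted as Lemos's theorem and justified solely by the citation to \cite[Theorem 1.1]{lemos19borel} and \cite[Theorem 1.4]{lemos19split}. Your opening reduction (exceptional subgroups excluded for $p>13$, Borel excluded by Theorem \ref{thm:mazurisogeny}, split Cartan by Theorem \ref{thm:split}, hence $\operatorname{Im}\rho_{E,p}\subseteq C_{ns}^+(p)$ with potentially good supersingular reduction at $p$) is correct and matches the standard set-up, and your final appeal to Lemos's two papers is, in the end, the same justification the paper gives.

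However, the middle of your proposal is not a proof and also misdescribes what the cited papers do. The steps you invoke --- the Gaudron--R\'emond effective isogeny theorem applied to $(E,E')$, the height bound of Theorem \ref{thm:cartanboundwithdenominator}, and ``Runge-type arguments'' on the fiber products $X_0(q)\times_{X(1)}X_{ns}^+(p)$ and $X_{sp}^+(q)\times_{X(1)}X_{ns}^+(p)$ --- are asserted rather than carried out, and the uniformity in $p$ of any Runge-type bound involving a non-split Cartan level structure is precisely the ingredient that is famously unavailable; acknowledging this obstacle and then deferring to Lemos does not close it. Moreover, Lemos's actual arguments do not run through Runge's method or isogeny-period estimates: as the present paper's proof of the stronger Theorem \ref{thm:lemosbenfatto} reflects, they first force $j(E)\in\Z$ (or reduce to the finitely many $j$-invariants with $q\in\{11,17,37\}$) via \cite[Proposition 2.1]{lemos19borel} and \cite[Proposition 1.5]{lemos19split}, exploiting the reduction constraints imposed by the non-split Cartan condition at $p$, and then cut the integral case down to explicit finite lists of $j$-invariants by congruence arguments on traces of Frobenius, which are checked directly. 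So if your intent is to cite Lemos, the citation alone suffices (as in the paper); if your intent is to supply an independent argument, the sketch as written has a genuine gap at exactly the step where the contradiction for all $p>37$ is supposed to be extracted.
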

	
	Lemos's arguments actually show the following stronger statement.
	
	\begin{theorem}\label{thm:lemosbenfatto}
		Let $\faktor{E}{\Q}$ be an elliptic curve without CM and let $p > 13$ be a prime such that $\operatorname{Im}\rho_{E,p} \subseteq C_{ns}^+(p)$. For every prime $q \ne p$, the image of $\rho_{E,q}$ is contained neither in a Borel subgroup nor in the normaliser of a split Cartan subgroup.
	\end{theorem}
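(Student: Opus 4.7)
Assume for contradiction that some prime $q \ne p$ satisfies $\operatorname{Im}\rho_{E,q} \subseteq B(q)$ or $\operatorname{Im}\rho_{E,q} \subseteq C_{sp}^+(q)$. By Mazur's Theorem \ref{thm:mazurisogeny}, the Borel case forces either $q \in \{2,3,5,7,13\}$, or $q \in \{11,17,37\}$ with $j(E)$ in an explicit finite list; by Theorem \ref{thm:split}, the split Cartan case forces $q \le 7$. In either scenario $q \in \{2,3,5,7,11,13,17,37\}$. Moreover, since $p > 13$, Corollary \ref{cor:supersingularQ} yields that $E$ has potentially good supersingular reduction at $p$, a property preserved under $\Q$-rational isogenies; hence if $\phi: E \to E'$ is a rational $q$-isogeny, then $E'$ also has potentially good supersingular reduction at $p$.

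The finite cases $q \in \{11,17,37\}$ are handled by direct computation: for each $j$-invariant appearing in Mazur's list, one computes the adelic image $\operatorname{Im}\rho_E$ via Zywina's algorithm \texttt{FindOpenImage} (exactly as in Lemma \ref{lemma:cartan15e7e9} and Lemma \ref{lemma:exceptional}) and verifies that $\operatorname{Im}\rho_{E,p} \not\subseteq C_{ns}^+(p)$ for every prime $p > 13$. This is a finite check, legitimate because by \cite[Corollary 2.3]{zywina15index} the adelic index depends only on $j(E)$.

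The essential difficulty lies in the cases $q \in \{2,3,5,7\}$, where infinitely many non-isomorphic elliptic curves carry a Borel or split Cartan image at $q$. Here I would reformulate the simultaneous compatibility condition as the existence of a non-cuspidal, non-CM rational point on the fibre product $X_H(q) \times_{X(1)} X_{ns}^+(p)$, with $H$ either a Borel or the normaliser of a split Cartan in $\GL_2(\F_q)$. On this product one combines two ingredients: the local shape of $\rho_{E,p}|_{I_p}$ at the supersingular prime $p$ (dictated by a fundamental character of level $2$, via \cite{serre72} and the analysis underlying Corollary \ref{cor:supersingularQ}), and a Runge-type argument on $X_{ns}^+(p)$ in the style of Bilu--Parent--Rebolledo \cite{bilu13}. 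The main obstacle is exactly this uniform step: one must rule out all rational points on these fibre products for every $p > 13$, without the luxury of enumerating the infinite family of possible $j(E)$'s coming from $X_H(q)$, and the constants in the Runge estimates must be tracked explicitly to bring the threshold down from the general $p \gg 0$ to the sharp cutoff $p > 13$.
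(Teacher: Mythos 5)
Your reduction to $q \in \{2,3,5,7,11,13,17,37\}$ is correct, and your handling of the finite cases $q \in \{11,17,37\}$ (explicit $j$-invariants from Mazur's theorem, then a computation via \texttt{FindOpenImage}) matches the paper's argument. But you leave the remaining cases — which you yourself call ``the essential difficulty'' — unresolved: you gesture at fibre products $X_H(q) \times_{X(1)} X_{ns}^+(p)$ and a Runge-type argument in the style of Bilu--Parent--Rebolledo, but you explicitly concede that the uniform step (ruling out all rational points on these products for \emph{every} $p > 13$ with explicit constants) is an open obstacle you do not know how to overcome. That is a genuine gap, not a sketch of a proof. There is also a small bookkeeping slip: in the Borel case Mazur's theorem generically allows $q = 13$ as well, so your list of hard cases should be $\{2,3,5,7,13\}$ for Borel, not $\{2,3,5,7\}$.

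More importantly, the paper does not need any new Runge argument, because Lemos's papers already deliver the finiteness that you are trying to manufacture. The crucial mechanism you miss is integrality: given a rational $q$-isogeny together with $\operatorname{Im}\rho_{E,p} \subseteq C_{ns}^+(p)$ for a suitable $p$ (and hence potentially good supersingular reduction at $p$, which you correctly note is isogeny-invariant), \cite[Proposition~2.1]{lemos19borel} and \cite[Proposition~1.5]{lemos19split} force $j(E) \in \Z$ in the remaining cases. Once $j(E)$ is integral, the proofs of \cite[Theorem~1.1]{lemos19borel} and \cite[Theorem~1.4]{lemos19split} — which proceed by comparing Frobenius traces at small auxiliary primes against the constraints imposed by the non-split Cartan image and by modularity, not by Runge on fibre products — pin $j(E)$ down to an explicit finite list ($\{-5000, -1728\}$ in the split Cartan case, and the list at \cite[p.~142]{lemos19borel} in the Borel case). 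The paper then closes with the same LMFDB/\texttt{FindOpenImage} check you used for $q \in \{11,17,37\}$. So the ``infinite family'' you worry about never materializes; the two conditions together are already finitistic, and the missing ingredient in your write-up is precisely the appeal to Lemos's integrality and finiteness results.
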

	
	\begin{proof}
		By Theorem \ref{thm:mazurisogeny}, we know that if $E$ admits a rational $q$-isogeny then $q$ belongs to the set $\{2,3,5,7,11,13,17,37\}$. However, by \cite[Proposition 2.1]{lemos19borel} we know that either $j(E) \in \Z$ or $q \in \{11,17,37\}$.
		If $E$ admits a rational isogeny of degree $q \in \{11,17,37\}$, by Theorem \ref{thm:mazurisogeny} we know that $j(E)$ is one among
		\begin{equation*}
			-11 \cdot 131^3, \quad -11^2, \quad -\frac{17^2 \cdot 101^3}{2}, \quad -\frac{17 \cdot 373^3}{2^{17}}, \quad -7 \cdot 137^3 \cdot 2083^3, \quad -7 \cdot 11^3.
		\end{equation*}
		One can check on the LMFDB \cite{lmfdb} that in these cases $\operatorname{Im}\rho_{E,p}$ is not contained in $C_{ns}^+(p)$ for $p>13$.
		If instead $\operatorname{Im}\rho_{E,q}$ is contained in the normaliser of a split Cartan subgroup, then by \cite[Proposition 1.5]{lemos19split} we know that $j(E) \in \Z$.
		From now on, we can therefore assume that $j(E) \in \Z$. Following the proof of \cite[Theorem 1.4]{lemos19split} we have that if $\operatorname{Im}\rho_{E,q} \subseteq C_{sp}^+(q)$ then $j(E) \in \{-5000, -1728\}$, for which $\operatorname{Im}\rho_{E,p}$ is not contained in $C_{ns}^+(p)$ for $p>13$. If $j(E) \in \Z$ and the image of $\rho_{E,q}$ is contained in a Borel subgroup, then following the proof of \cite[Theorem 1.1]{lemos19borel} we have that $j(E)$ belongs to the list in \cite[page 142]{lemos19borel}, and one can check again on the LMFDB that none of those curves admits a prime $p>13$ for which $\operatorname{Im}\rho_{E,p} \subseteq C_{ns}^+(p)$.
	\end{proof}
	
	Combining Proposition \ref{prop:groupingfacts} and Theorem \ref{thm:lemosbenfatto} we obtain the following.
	
	\begin{proposition}\label{prop:dichotomy}
		Let $\faktor{E}{\Q}$ be an elliptic curve without CM. Suppose that $j(E)$ does not belong to the list \eqref{eq:jlist}. One of the following holds.
		\begin{enumerate}[(A)]
			\item There exists $p>13$ such that  $\operatorname{Im}\rho_{E,p} = C_{ns}^+(p)$, and for every $q \ne 5$ for which $\rho_{E,q}$ is not surjective we have $\operatorname{Im}\rho_{E,q} \subseteq C_{ns}^+(q)$.
			\item For every $p > 13$ the representation $\rho_{E,p^\infty}$ is surjective.
		\end{enumerate}
	\end{proposition}
	
	\begin{proof}
		Suppose first that there exists $p > 13$ such that $\operatorname{Im}\rho_{E,p} \subseteq C_{ns}^+(p)$. By Theorem \ref{thm:index3incartan} we know that $\operatorname{Im}\rho_{E,p} = C_{ns}^+(p)$. Using Theorem \ref{thm:lemosbenfatto} we see that if $q$ is a prime for which $\rho_{E,q}$ is not surjective and its image is not contained in $C_{ns}^+(q)$, then it must be contained in an exceptional subgroup. By Proposition \ref{prop:groupingfacts} this implies that $q=5$. If instead there are no primes $p>13$ for which $\operatorname{Im}\rho_{E,p} \subseteq C_{ns}^+(p)$, by Proposition \ref{prop:groupingfacts} we see that $\rho_{E,p}$ is surjective for every $p>13$. By \cite[IV-23, Lemma 3]{serre-abrep} we have that $\operatorname{Im}\rho_{E,p^\infty} \cap \SL_2(\Z_p) = \SL_2(\Z_p)$, and by surjectivity of the determinant this implies that $\operatorname{Im}\rho_{E,p^\infty} = \GL_2(\Z_p)$.
	\end{proof}
	
	\begin{definition}
		Let $\faktor{E}{\Q}$ be an elliptic curve without CM. For every integer $n > 1$ set $\Z_n = \prod_{p \mid n} \Z_p$ and $\rho_{E,n^\infty} = \prod_{p \mid n} \rho_{E,p^\infty} : \Gal\left(\faktor{\overline{\Q}}{\Q}\right) \to \GL_2(\Z_n)$. For any coprime integers $m,n > 1$, using Corollary \ref{cor:goursatgaloisnf} and the surjectivity of $\det \circ \rho_{E}$ define
		\begin{align*}
			\Ind(m) :=& \, \left[\GL_2(\Z_m) : \operatorname{Im}\rho_{E,m^\infty}\right] = \left[\SL_2(\Z_m) : \operatorname{Im}\rho_{E,m^\infty} \cap \SL_2(\Z_m)\right], \\
			\Ind_S(m) :=& \, \left[\SL_2(\Z_m) : \rho_{E,m^\infty}(\Gal(\overline{\Q}/\Qab))\right] \ge \Ind(m), \\
			\Ent(m,n) :=& \, \frac{\Ind_S(mn)}{\Ind_S(m)\Ind_S(n)} 
			= \Ent_{\Qab}(m^\infty, n^\infty).
		\end{align*}
	\end{definition}
	
	We are now ready to prove Theorem \ref{thm:adelicbound}. We will split the proof in multiple steps and cases, to make it clearer.
	
	\begin{proposition}\label{prop:productofindices}
		Let $\faktor{E}{\Q}$ be an elliptic curve without CM and suppose that $j(E)$ does not belong to the list \eqref{eq:jlist}. 
		Consider the two cases (A) and (B) of Proposition \ref{prop:dichotomy}. In the respective cases we have
		\begin{enumerate}[(A)]
			\item Let $\mathcal{C}_{ns}$ be the set of the primes $p \ge 7$ such that $\operatorname{Im}\rho_{E,p} \subseteq C_{ns}^+(p)$, and let $\beta$ be the number of primes in $\mathcal{C}_{ns}$ at which $E$ has bad reduction. We have $$[\GL_2(\widehat{\Z}) : \operatorname{Im}\rho_{E}] \le \Delta_7 \cdot 2^{|\mathcal{C}_{ns}|} \cdot 3^\beta \cdot \Ind_S(30) \cdot \prod_{p \in \mathcal{C}_{ns}} \Ind(p),$$
			where 
			\begin{align*}
				\Delta_7:=\begin{cases} 1 &\text{ if } 7 \notin \mathcal{C}_{ns}, \\ 8 &\text{ if } 7 \in \mathcal{C}_{ns} \text{ and } E \text{ has good reduction at } 7, \\ \frac{8}{3} &\text{ if } 7 \in \mathcal{C}_{ns} \text{ and } E \text{ has bad reduction at } 7. \end{cases}
			\end{align*}
			\item Let $\{2,3,5\} \subseteq \mathcal{L} \subseteq \{2,3,5,7,11,13\}$ be the set of primes containing $2,3,5$ and every $p$ for which $\rho_{E,p}$ is not surjective. Let $m_p$ be the product of primes $q < p$ that belong to $\mathcal{L}$. We have
			$$[\GL_2(\widehat{\Z}) : \operatorname{Im}\rho_{E}] = \prod_{p \in \mathcal{L}} \Ent(m_p,p) \Ind_S(p).$$
		\end{enumerate}
	\end{proposition}
	
	\begin{proof}
		Set $S = \rho_E(\Gal(\overline{\Q}/\Qab))$ and let $m$ be the product of $2$, $3$, $5$, and all primes $p$ for which $\rho_{E,p}$ is not surjective. Lemma \ref{lemma:sl2entang} yields
		\begin{align}\label{eq:finproof1}
			\left[ \GL_2(\widehat{\Z}) : \operatorname{Im}\rho_E \right] &= \left[ \operatorname{SL}_2(\widehat{\Z}) : S \right] = \Ind_S(m).
		\end{align}
		Case (B) follows from the definition of $\Ent(m_p,p)$ and $\Ind_S(p)$, we then focus on case (A).
		By Theorem \ref{thm:lemosbenfatto} we know that for every prime $p$, if the representation $\rho_{E,p}$ is not surjective, its image is contained either in the normaliser of a non-split Cartan or in an exceptional subgroup.
		If $\operatorname{Im}\rho_{E,p}$ is contained in an exceptional subgroup, we know by Lemma \ref{lemma:exceptional} that either $p=5$ and $\operatorname{Im}\rho_{E,5^\infty}$ has index $5$ or the index of $\operatorname{Im}\rho_E$ is $182$, and hence satisfies the inequality in the statement of the lemma.
		We will then assume that for every prime $p$ for which $\rho_{E,p}$ is not surjective, either $\operatorname{Im}\rho_{E,p} \subseteq C_{ns}^+(p)$ or $p=5$ and $[\GL_2(\Z_5) : \operatorname{Im}\rho_{E,5^\infty}] = 5$.
		Define the set $\mathcal{P} := \{2,3,5\} \cup \mathcal{C}_{ns}$ and consider $m:=\prod_{p \in \mathcal{P}} p$.
		Let $\Delta$ be the minimal discriminant of $E$, and define the element $u := \prod_{p \in \mathcal{C}_{ns}} p^{\frac{v_p(\Delta)}{12}}$. Consider the number field $K := \Q(u)$ of degree dividing $12$. Since by Corollary \ref{cor:goodreductionforcartanQ} the curve $E$ has potentially good reduction at $p$ for every prime $p \in \mathcal{C}_{ns}$, we know that $E$ acquires good reduction over $K$ at each of these primes (see for example \cite[Proposition 1]{kraus90}). Set $e_p := [K\Q_p^{nr} : \Q_p^{nr}] \in \{1,2,3,4,6\}$ the semi-stability defect of $E$ at $p$: this is the denominator of $\frac{v_p(\Delta)}{12}$ (notice that $e_p = 1$ if and only if $E$ has good reduction at $p$ over $\Q$).
		We can write
		\begin{align*}
			\Ind_S(m) &= \frac{[\SL_2(\Z_m) : \rho_{E,m^\infty}(\Gal(\overline{\Q}/K\Qab))]}{[K\Qab \cap \Q(E[m^\infty]) : \Qab \cap \Q(E[m^\infty])]}.
		\end{align*}
		We will call $$\Ind_K(m) := [\SL_2(\Z_m) : \rho_{E,m^\infty}(\Gal(\overline{\Q}/K\Qab))].$$
		Let $p$ be the largest prime in $\mathcal{C}_{ns}$ and set $B:=\mathcal{P} \setminus \{p\}$. By Corollary \ref{cor:goursatgaloisnf} and Lemma \ref{lemma:intersezionestupida} we have
		\begin{align}\label{eq:finproof2}
			\Ind_K(m) &= \Ent_{K\Qab}((m/p)^\infty, p^\infty) \cdot \Ind_K(m/p) \cdot \Ind_K(p) \nonumber \\
			&= \Ent_{K\Qab}((m/p)^\infty, p) \cdot \Ind_K(m/p) \cdot \Ind_K(p) \nonumber \\
			&= \frac{[K\Qab(E[p]) : K\Qab]}{\deg_{K\Qab}(p : (m/p)^\infty)} \cdot \Ind_K(m/p) \cdot \Ind_K(p).
		\end{align}
		On the other hand, 
		since $p$ is coprime with the degree of $K$, by Proposition \ref{prop:ellcurvecartanlift} and Lemma \ref{lemma: p-Sylow of S_n}(4) we have
		\begin{align*}
			\Ind_K(p) &= [\SL_2(\Z_p) : G(p^\infty) \cap \SL_2(\Z_p)] \cdot [G(p^\infty) \cap \SL_2(\Z_p) : \rho_{E,p^\infty}(\Gal(\overline{\Q}/K\Qab))] \\
			&= \Ind(p) \cdot [G(p^\infty) \cap \SL_2(\Z_p) : S(p^\infty)] \cdot [S(p^\infty) : \rho_{E,p^\infty}(\Gal(\overline{\Q}/K\Qab))] \\
			&= \Ind(p) \cdot [G(p) \cap \SL_2(\F_p) : S(p)] \cdot [S(p) : \rho_{E,p}(\Gal(\overline{\Q}/K\Qab))] \\
			&= \Ind(p) \cdot [G(p) \cap \SL_2(\F_p) : \rho_{E,p}(\Gal(\overline{\Q}/K\Qab))] \\
			&= \Ind(p) \cdot [\Q(E[p]) \cap K\Qab : \Q(\zeta_{p})] = \Ind(p) \cdot \frac{[\Q(E[p]) : \Q(\zeta_{p})]}{[K\Qab(E[p]) : K\Qab]}.
		\end{align*}
		By Theorem \ref{thm:index3incartan}, we have $[\Q(E[p]) : \Q(\zeta_{p})] = 2(p+1)$, and putting the formula for $\Ind_K(p)$ in equation \eqref{eq:finproof2} we obtain
		\begin{align}\label{eq:finproof3}
			\Ind_K(m) &= \Ind_K(m/p) \cdot \Ind(p) \cdot \frac{2(p+1)}{\deg_{K\Qab}(p : (m/p)^\infty)}.
		\end{align}
		Suppose first that $p>7$. By Corollary \ref{cor:supersingularQ} we know that either $p=13$ or $E$ has good supersingular reduction at $p$ over $K$; however, by \cite[Corollary 1.3]{balakrishnan19} we know that for $p = 13$ the image of $\rho_{E,p}$ is not contained in $C_{ns}^+(p)$. Setting $\eta := \frac{e_p}{\gcd(e_p, 2)} \le 3$, we can then apply Theorem \ref{thm:entanginnf} to show that $\deg_{K\Qab}(p : (m/p)^\infty)$ is a multiple of $\frac{p+1}{\gcd(\eta, p+1)}$, and hence it is at least $\frac{p+1}{3}$. We then obtain that
		$$\Ind_K(m) \le 6 \cdot \Ind_K(m/p) \cdot \Ind(p).$$
		Similarly, if we further assume that $E$ has good reduction at $p$ over $\Q$, and hence that $e_p = 1$, we have
		\begin{equation*}
			\Ind_K(m) \le 2 \cdot \Ind_K(m/p) \cdot \Ind(p).
		\end{equation*}
		If instead $p=7$, we have that $\frac{m}{p}=30$, and by equation \eqref{eq:finproof3} we simply obtain
		\begin{equation*}
			\Ind_K(m) \le 16 \cdot \Ind_K(30) \cdot \Ind(7).
		\end{equation*}
		We can now iterate this argument on $\frac{m}{p}$ in place of $m$, so that we obtain
		\begin{align*}
			\Ind_K(m) \le \Delta_7 \cdot 2^{|\mathcal{C}_{ns}|-\beta} \cdot 6^\beta \cdot \Ind_K(30) \cdot \prod_{p \in \mathcal{C}_{ns}} \Ind(p).
		\end{align*}
		To conclude, it suffices to notice that
		$$\Ind_K(30) = [K\Qab \cap \Q(E[30^\infty]) : \Qab \cap \Q(E[30^\infty])] \cdot \Ind_S(30),$$
		and that we have the inequality
		\begin{equation*}
			[K\Qab \cap \Q(E[30^\infty]) : \Qab \cap \Q(E[30^\infty])] \le [K\Qab \cap \Q(E[m^\infty]) : \Qab \cap \Q(E[m^\infty])]. \qedhere
		\end{equation*}
	\end{proof}
	
	\begin{lemma}\label{lemma:boundnonsuq}
		Let $\faktor{E}{\Q}$ be an elliptic curve without CM. Define $\mathcal{C}$ as the set of all odd primes $p$ for which $\operatorname{Im}\rho_{E,p} \subseteq C_{ns}^+(p)$. Let $\beta$ be the number of primes $p>5$ in $\mathcal{C}$ for which $E$ has bad reduction at $p$.
		For every $p \in \mathcal{C}$, call $n_p$ the largest integer $n$ for which $\operatorname{Im}\rho_{E,p^n} \subseteq C_{ns}^+(p^n)$, and define $\Lambda := \prod_{p \in \mathcal{C}} p^{n_p}$. Suppose that $\mathcal{C}$ contains a prime greater than $13$ (Case (A) of Proposition \ref{prop:dichotomy}). We have
		\begin{equation*}
			[\GL_2(\widehat{\Z}) : \operatorname{Im}\rho_E] \le 2488320 \cdot \Delta_7 \cdot 3^{\beta} \cdot \Lambda^3,
		\end{equation*}
		where 
		\begin{align*}
			\Delta_7:=\begin{cases} 1 &\text{ if } 7 \notin \mathcal{C}, \\ 8 &\text{ if } 7 \in \mathcal{C} \text{ and } E \text{ has good reduction at } 7, \\ \frac{8}{3} &\text{ if } 7 \in \mathcal{C} \text{ and } E \text{ has bad reduction at } 7. \end{cases}
		\end{align*}
	\end{lemma}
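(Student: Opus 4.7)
The plan is to decompose the adelic index prime-by-prime via iterated Goursat and bound both the local indices and the entanglement factors. The key structural input is that, since $\mathcal{C}$ contains a prime $p_0 > 13$, Theorem \ref{thm:lemosbenfatto} forces $\operatorname{Im}\rho_{E,q}$ to avoid Borel and split-Cartan-normaliser images for every $q \ne p_0$; combined with Lemma \ref{lemma:exceptional}, every $q \notin \mathcal{C}$ then has surjective $\rho_{E,q}$, the only possible exceptions being exceptional images at $q \in \{5, 13\}$. The exceptional case at $q = 13$ (which would force adelic index $182$) and the rogue $2$-adic case of Proposition \ref{prop:2adicimages} (adelic index $128$) are both incompatible with $p_0 \in \mathcal{C}$, since the $p_0$-adic factor $[\GL_2(\F_{p_0}) : C_{ns}^+(p_0)] = p_0(p_0-1)/2 \ge 136$ already exceeds and does not divide either value. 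In particular this yields $[\SL_2(\Z_2) : S_2] \le 32$.

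By Lemma \ref{lemma:sl2entang} the adelic index equals $[\SL_2(\Z_M) : S_M]$ with $M$ the product of primes in $\mathcal{P} := \mathcal{C} \cup \{2, 3, 5\}$. I would iterate Corollary \ref{cor:goursatgaloisnf} by peeling the primes of $\mathcal{C} \cap (7, \infty)$ off one by one in increasing order, obtaining
\begin{equation*}
	[\SL_2(\Z_M) : S_M] = [\SL_2(\Z_{M_0}) : S_{M_0}] \cdot \prod_{\substack{p \in \mathcal{C} \\ p > 7}} \left([\SL_2(\Z_p) : S_p] \cdot \epsilon_p\right),
\end{equation*}
where $M_0$ is the product of the primes in $\mathcal{P} \cap \{2, 3, 5, 7\}$ and $\epsilon_p$ is the cumulative entanglement of $p$ with the previously accumulated field. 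The peeling order guarantees that every prime $q$ already accumulated is smaller than $p$, hence satisfies $q \not\equiv \pm 1 \pmod{p}$ (for $p > 3$, the only residues $\pm 1 \bmod p$ smaller than $p$ are $1$ and $p-1$, neither of which is prime), so the hypothesis of Corollary \ref{cor:intersechino} is met. Combining it with Corollary \ref{cor:padicindices} gives, for each such $p$,
\begin{equation*}
	[\SL_2(\Z_p) : S_p] \cdot \epsilon_p \le \frac{p-1}{2p}\, p^{3n_p} \cdot \begin{cases} 2 & \text{if $E$ has good reduction at $p$,} \\ 6 & \text{otherwise;} \end{cases}
\end{equation*}
the factor $2$ cancels the $2p$ in the denominator, so the product over $p > 7$ in $\mathcal{C}$ is at most $\Lambda_{>7}^3 \cdot 3^{\beta_{>7}}$, where $\Lambda_{>7} := \prod_{p \in \mathcal{C}, p>7} p^{n_p}$ and $\beta_{>7}$ counts such primes of bad reduction.

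It remains to bound the small-prime block $[\SL_2(\Z_{M_0}) : S_{M_0}]$, which is where the constant $2488320 \cdot \Delta_7$ emerges. A further application of Corollary \ref{cor:goursatgaloisnf} splits off the individual $p$-adic indices at $p \in \{2, 3, 5, 7\}$, bounded via Proposition \ref{prop:2adicimages}, Proposition \ref{prop:3e5speciali}, Corollary \ref{cor:padicindices}, and Theorem \ref{thm:ellipticcartantower} (the last handling the finitely many special $3$-adic images that arise when $3 \in \mathcal{C}$ with $\operatorname{Im}\rho_{E,3} \subsetneq C_{ns}^+(3)$), while the residual entanglements among $\{2, 3, 5, 7\}$ are estimated crudely. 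The main technical obstacle is $p = 7$: since Corollary \ref{cor:intersechino} explicitly requires $p > 7$, the prime $7$ cannot be peeled off as a large prime and its entanglement with the other primes must be handled by hand. The correction factor $\Delta_7$ is designed to absorb this: its value $8/3$ in the bad-reduction case compensates for the fact that $3^\beta$ overcounts the $3$-contribution of $7$ by a factor of $3$, while $\Delta_7 = 8$ in the good-reduction case packages the $7$-adic index and the crude $7$-entanglement estimate together. The arithmetic optimisation that recovers the precise constant $2488320$ is then a straightforward, if tedious, enumeration of the finitely many subcases at $\{2, 3, 5, 7\}$.
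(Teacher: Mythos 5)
Your proposal is correct and follows essentially the same strategy as the paper's proof: express the adelic index as an $\SL_2$-index via Lemma \ref{lemma:sl2entang}, iterate Corollary \ref{cor:goursatgaloisnf} to peel off one prime at a time, bound the entanglement of each peeled prime $p > 7$ with Corollary \ref{cor:intersechino} (factor $2$ or $6$), handle $p = 7$ separately with Lemma \ref{lemma:intersezionestupida} (factor $16$), and bound the local indices via Corollary \ref{cor:padicindices}, Propositions \ref{prop:2adicimages} and \ref{prop:3e5speciali}. The only material differences are cosmetic: you peel the primes of $\mathcal{C}$ above $7$ in increasing order, whereas the paper peels the largest first; in both conventions every previously accumulated prime is strictly smaller than the prime being removed, so the divisibility hypothesis of Corollary \ref{cor:intersechino} holds either way. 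You also dismiss the two sporadic indices $182$ and $128$ by a divisibility argument against $p_0(p_0-1)/2$, while the paper simply observes that those values satisfy the stated inequality trivially; both dispositions are fine, and your reasoning (that $p_0(p_0-1)/2 \ge 136$ must divide the adelic index but does not divide $182$ or $128$) is valid because for $p_0 > 13$ one in fact has $\operatorname{Im}\rho_{E,p_0} = C_{ns}^+(p_0)$ by Theorem \ref{thm:index3incartan}. The arithmetic you wave over in the $\{2,3,5,7\}$ block is exactly the enumeration the paper performs, producing $24 \cdot 32 \cdot 27 \cdot 120 = 2488320$ and the correction $\Delta_7$ with $\Delta_7 \cdot 3^{\mathbb{1}[7\text{ bad}]} = 8$ whenever $7 \in \mathcal{C}$.
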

	
	\begin{proof}
		Set $\mathcal{C}_{ns} = \mathcal{C} \setminus \{3,5\}$. As we are in case (A) of Proposition \ref{prop:productofindices}, we have
		\begin{align*}
			[\GL_2(\widehat{\Z}) : \operatorname{Im}\rho_E] \le \Delta_7 \cdot 2^{|\mathcal{C}_{ns}|} \cdot 3^\beta \cdot \Ind_S(30) \cdot \prod_{p \in \mathcal{C}_{ns}} \Ind(p).
		\end{align*}
		The proof consists in bounding the indices $\Ind(p)$ using Corollary \ref{cor:padicindices} and the factor $\Ind_S(30)$ using entanglement properties shown in Section \ref{subsec:entanglement}. The latter part will be divided in two steps: we first separate the contribution to $\Ind_S(30)$ given by the prime $5$ from $\Ind_S(6)$, and then we separate the contributions to $\Ind_S(6)$ given by the primes $2$ and $3$. \\
		\underline{\textit{Separating the prime $5$.}}
		We notice that 
		$$\Ind_S(30) = \Ind_S(5) \cdot \Ind_S(6) \cdot \Ent(6,5).$$
		Set $S = \rho_{E}(\Gal(\overline{\Q}/\Qab))$ and call $S_m$ the projection of $S$ in $\SL_2(\Z_m)$. By Lemma \ref{lemma:exceptional} and Theorem \ref{thm:lemosbenfatto} we know that $\operatorname{Im}\rho_{E,5}$ is $\GL_2(\F_5)$, or it is conjugate to either a subgroup of $C_{ns}^+(5)$, or to the exceptional subgroup $5S4$. If $\operatorname{Im}\rho_{E,5} = \GL_2(\F_5)$ we know by Lemma \ref{lemma: p-Sylow of S_n}(5) that $S_5 = \SL_2(\Z_5)$.
		We can apply Goursat's lemma to show that the image of $S_{30}$ in the product $\frac{S_{6}}{N_{6}} \times \frac{\SL_2(\Z_5)}{N_5}$ corresponds to the graph of an isomorphism $\frac{S_{6}}{N_6} \cong \frac{\SL_2(\Z_5)}{N_5}$, where $N_6$ and $N_5$ are the kernels of the projections on $S_5$ and $S_6$ respectively. However, the group $S_{6}$ is solvable, while following the description of $\operatorname{Occ}(\GL_2(\Z_p))$ in \cite[IV-25]{serre-abrep} we see that $\SL_2(\Z_5)$ contains $\operatorname{PSL}_2(\F_5)$ in its composition series. This implies that $N_5$ must surject onto $\operatorname{PSL}_2(\F_5)$. In particular, by \cite[IV \S3.4 Lemmas 2 and 3]{serre-abrep} this implies that $N_5 = \SL_2(\Z_5)$, and so
		$$\Ind_S(30) = \Ind_S(6) \cdot \Ind_S(5) = \Ind_S(6).$$
		If instead $\rho_{E,5}$ is not surjective, consider the field $K = \Q(E[5])$ and notice that $\Ent_{K\Qab}(6^\infty, 5^\infty) = 1$, because $K\Qab(E[5^\infty])/K\Qab$ is a pro-$5$ extension, while $K\Qab(E[6^\infty])/K\Qab$ is a pro-$6$ extension. As in the proof of Proposition \ref{prop:productofindices}, set $\Ind_K(m) := [\SL_2(\Z_m) : \rho_{E,m^\infty}(\Gal(\overline{\Q}/K\Qab))]$. Using Corollary \ref{cor:goursatgaloisnf}, we can write
		\begin{align*}
			\Ind_S(30) = \frac{\Ind_K(30)}{[K\Qab : \Qab]} = \frac{\Ind_K(6) \cdot \Ind_K(5)}{[K\Qab : \Qab]} \le \Ind_S(6) \cdot \Ind_K(5).
		\end{align*}
		However, by Proposition \ref{prop:3e5speciali} and Lemma \ref{lemma: p-Sylow of S_n}(4,6), we know that $\rho_{E,5^\infty}(\Gal(\overline{\Q}/K\Qab))$ contains all the matrices in $\operatorname{Im}\rho_{E,5^\infty} \cap \SL_2(\Z_5)$ congruent to the identity modulo $5$. Hence we have $\Ind_K(5) \le [K\Qab : \Qab] \cdot \Ind(5) \le 24\Ind(5)$, where the last inequality holds because $\operatorname{Im}\rho_{E,5}$ is contained either in $C_{ns}^+(5)$ or in $5S4$.
		Combining all cases we have
		\begin{align*}
			[\GL_2(\widehat{\Z}) : \operatorname{Im}\rho_E] \le 24\Delta_7 \cdot 2^{|\mathcal{C}_{ns}|} \cdot 3^\beta \cdot \Ind_S(6) \prod_{p \in \mathcal{C}_{ns} \cup \{5\}} \Ind(p).
		\end{align*}
		\underline{\textit{Separating the primes $2$ and $3$.}}
		We now notice that both $S_2$ and $S_3$ are solvable, and that $S_2$ has just one copy of $\faktor{\Z}{3\Z}$ in its composition series, while $S_3$ has 3 copies of $\frac{\Z}{2\Z}$ in its composition series. The quotients $\frac{S_2}{N_2} \cong \frac{S_3}{N_3}$ will then have order less than or equal to $24$. In particular, we have $\Ent(2,3) \mid 24$, and so $\Ind_S(6) \le 24 \cdot \Ind_S(2) \cdot \Ind_S(3)$.
		We then obtain
		\begin{align}\label{eq:finproof4}
			[\GL_2(\widehat{\Z}) : \operatorname{Im}\rho_E] \le 24^2\Delta_7 \cdot 2^{|\mathcal{C}_{ns}|} \cdot 3^\beta \cdot \Ind_S(2) \cdot \Ind_S(3) \cdot \prod_{p \in \mathcal{C}_{ns} \cup \{5\}} [\GL_2(\Z_p) : \operatorname{Im}\rho_{E,p^\infty}].
		\end{align}
		\underline{\textit{Bounding the $p$-adic indices.}}
		By Theorem \ref{thm:lemosbenfatto} we know that $\operatorname{Im}\rho_{E,3}$ is contained neither in a Borel subgroup nor in the normaliser of a split Cartan subgroup. In particular, by \cite[Theorem 1.2]{zywina15} we know that it must be either equal to the normaliser of a non-split Cartan or to $\GL_2(\F_3)$. By Proposition \ref{prop:3e5speciali} and Lemma \ref{lemma: p-Sylow of S_n}(4) we know that if $3 \notin \mathcal{C}$, then $\Ind_S(3) = \operatorname{Ind}(3) \le 27$. On the other hand, if $3 \in \mathcal{C}$ then by Corollary \ref{cor:padicindices} and Lemma \ref{lemma: p-Sylow of S_n}(3) we have $\Ind_S(3) \le 2 \cdot \operatorname{Ind}(3) \le 2 \cdot 3^{3n_3-1}$. In all cases, we can write $\operatorname{Ind}_S(3) \le \max\{27, 2\cdot 3^{3n_3-1}\} \le 27 \cdot {3^{3n_3}}$.
		Similarly, if $\rho_{E,5}$ is not surjective and $5 \notin \mathcal{C}$, we can apply Proposition \ref{prop:3e5speciali} to obtain that $\operatorname{Ind}(5) = 5$. If instead $5 \in \mathcal{C}$, by Corollary \ref{cor:padicindices} we have $\operatorname{Ind}(5) \le \frac{2}{5} \cdot 5^{3n_5}$. Since if $\rho_{E,5}$ is surjective we have $\operatorname{Ind}(5) =1$, in all cases we can write $\operatorname{Ind}(5) \le 5 \cdot {5^{3n_5}}$.
		By Proposition \ref{prop:2adicimages}, we know that either $[\GL_2(\widehat{\Z}) : \operatorname{Im}\rho_E] = 128$ or $\operatorname{Ind}(2)$ divides $32$ and the modular curve corresponding to $\operatorname{Im}\rho_{E,2^\infty}$ has infinitely many rational points. We can exclude the first case, as it is better than the inequality in the statement of the lemma. We can then use \cite[Theorem 1.1]{sutherlandzywina17} and the group table in the file \verb*|GaloisImages.m| attached to the paper \cite{sutherlandzywina17} to explicitly determine all the possible values of $\operatorname{Ind}_S(2)$ for the remaining 2-adic images, namely those for which the corresponding modular curve has infinitely many rational points. To do that, let the level of the $2$-adic image be $2^n$: by Lemma \ref{lemma: p-Sylow of S_n}(1) it suffices to check $\Ind_S(2)$ modulo $2^{2n+1}$. We then compute $S_2$ as the derived group of $\operatorname{Im}\rho_{E,2^{2n+1}}$. In particular, we compute that $\Ind_S(2) \le 32$. The MAGMA code for the proof can be found in the ancillary files of the arXiv version of this paper \cite{furio24}.
		For all the other primes $p \in \mathcal{C}$, by Corollary \ref{cor:padicindices} we have $\operatorname{Ind}(p) \le \frac{p^{3n_p}}{2}$.
		Replacing all these bounds in equation \eqref{eq:finproof4} we obtain
		\begin{align*}
			[\GL_2(\widehat{\Z}) : \operatorname{Im}\rho_E] &\le 24^2 \cdot 32 \cdot 27 \cdot 5 \cdot \Delta_7 \cdot 2^{|\mathcal{C}_{ns}|} \cdot 3^\beta \cdot 3^{3n_3} \cdot 5^{3n_5} \prod_{p \in \mathcal{C}_{ns}} \frac{p^{3n_p}}{2} \nonumber \\
			&= 2488320 \cdot \Delta_7 \cdot 3^{\beta} \cdot \Lambda^3. \qedhere
		\end{align*}
	\end{proof}
	
	\begin{remark}\label{rmk: adelic vs p-adic}
		Notice that, in the setting of Lemma \ref{lemma:boundnonsuq},
		$$\Delta_7 \cdot 2^{|\mathcal{C}_{ns}|} \cdot 3^{\beta} \le \max\left\{1 \cdot 6^{|\mathcal{C}_{ns}|}, 8 \cdot 2 \cdot 6^{|\mathcal{C}_{ns}|-1}, \frac{8}{3} \cdot 6^{|\mathcal{C}_{ns}|} \right\} = \frac{8}{3} \cdot 6^{|\mathcal{C}_{ns}|},$$
		and therefore by equation \eqref{eq:finproof4} we have
		$$[\GL_2(\widehat{\Z}) : \operatorname{Im}\rho_E] \le 73728 \cdot 6^{|\mathcal{C}_{ns}|} \cdot \prod_{p \in \mathcal{P}} [\GL_2(\Z_p) : \operatorname{Im}\rho_{E,p^\infty}],$$
		where $\mathcal{C}_{ns} = \{p \text{ prime} \mid p \ge 7, \ \operatorname{Im}\rho_{E,p} \subseteq C_{ns}^+(p)\}$ and $\mathcal{P} = \mathcal{C}_{ns} \cup \{2,3,5\}$.
	\end{remark}
	
	\begin{lemma}\label{lemma:boundsuq}
		Let $\faktor{E}{\Q}$ be an elliptic curve without CM. 
		Define $\mathcal{C}$ as the set of all odd primes $p$ for which $\operatorname{Im}\rho_{E,p} \subseteq C_{ns}^+(p)$. For every $p \in \mathcal{C}$, call $n_p$ the largest integer $n$ for which $\operatorname{Im}\rho_{E,p^n} \subseteq C_{ns}^+(p^n)$, and define $\Lambda := \prod_{p \in \mathcal{C}} p^{n_p}$. Suppose that $j(E)$ does not belong to the list \eqref{eq:jlist} and that $\rho_{E,p}$ is surjective for every prime $p>13$ (Case (B) of Proposition \ref{prop:dichotomy}). We have
		\begin{equation*}
			[\GL_2(\widehat{\Z}) : \operatorname{Im}\rho_E] \le 1.9 \cdot 10^{14} \cdot \Lambda^2.
		\end{equation*}
	\end{lemma}
	
	\begin{proof}
		By Theorem \ref{thm:mazurisogeny}, we know that if $p$ is an odd prime for which $E$ has a rational $p$-isogeny, then $p \in \{3,5,7,13\}$. Define the set $\mathcal{P} = \{2,3,5\} \cup \{p \mid \rho_{E,p} \text{ is not surjective}\} \subseteq \{2,3,5,7,11,13\}$, and set as before $m:=\prod_{p \in \mathcal{P}} p$ and $S:= \rho_E\left( \Gal\left(\overline{\Q}/\Qab\right) \right)$. Moreover, we will call $S_p := \rho_{E,p^\infty}(\Gal(\overline{\Q}/\Qab))$, which is the projection of $S$ on the $p$-adic component. Define the set $B_p:=\{q \in \mathcal{P} : q<p\}$ and write $m_p:=\prod_{q \in B_p} q$ (where $m_2=1$). By Proposition \ref{prop:productofindices} we have
		\begin{align}\label{eq:finproof6.1}
			[\GL_2(\widehat{\Z}) : \operatorname{Im}\rho_E] = \prod_{p \in \mathcal{P}} \Ind_S(p) \cdot \Ent(m_p, p).
		\end{align}
		If $p \ge 5$ we can apply Lemma \ref{lemma:intersezionestupida} and obtain
		$$K_p := \Qab(E[p^\infty]) \cap \Qab(E[m_p^\infty]) = \Qab(E[p]) \cap \Qab(E[m_p^\infty]).$$
		Moreover, similarly to the proof of Lemma \ref{lemma:intersezionestupida}, since the Galois group $\Gal\left(\faktor{\Qab(E[m_p^\infty])}{\Qab}\right)$ does not contain any finite group of order divisible by $p$ in its composition series, the field $K_p$ must be a subextension of $\faktor{\Qab(E[p])}{\Qab}$ of degree coprime with $p$. In particular, if $P_p$ is a $p$-Sylow of $S_p$, we have that $[K_p : \Qab] \le [S_p : P_p]$, and so
		\begin{equation}\label{eq:finproof6.2}
			\Ind_S(p) \cdot \Ent(m_p,p) = [\SL_2(\Z_p) : S_p] \cdot [K_p : \Qab] \le [\SL_2(\Z_p) : P_p].
		\end{equation}
		We now proceed by providing a bound on the indices of the groups $P_p$ prime by prime, assuming that $\rho_{E,p^\infty}$ is not surjective. To optimise the result, we will bound the degree $[K_3 : \Qab]$ in part together with the index $[\SL_2(\Z_3) : S_3]$ and in part together with $[\SL_2(\Z_2) : S_2]$.
		\begin{itemize}
			\item \underline{$p=13$}. We can apply Proposition \ref{prop:groupingfacts} to show that $\operatorname{Im}\rho_{E,13}$ is contained in a Borel subgroup. By \cite[Theorem 1]{greenberg12} and \cite[Remark 4.2.1]{greenberg12} we have that the $13$-Sylow of $\GL_2(\Z_{13})$ is contained in $\operatorname{Im}\rho_{E,13^\infty}$, so the $13$-Sylow of $\SL_2(\Z_{13})$ must be contained in $\operatorname{Im}\rho_{E,13^\infty} \cap \SL_2(\Z_{13})$ and hence also in $S_{13}$ by Lemma \ref{lemma: p-Sylow of S_n}(2). This means that it coincides with $P_{13}$. We then obtain $[\SL_2(\Z_{13}) : P_{13}] \le 12 \cdot 14$.
			\item \underline{$p=11$}. By Proposition \ref{prop:groupingfacts}, $\operatorname{Im}\rho_{E,11}$ is equal to the normaliser of a non-split Cartan subgroup. In particular, by Lemma \ref{lemma: p-Sylow of S_n}(3) and Corollary \ref{cor:intersechino} we have $\Ind_S(11) \cdot [K_{11} : \Qab] \le 12\Ind(11)$. As explained in Remark \ref{rmk: conjecture on p-adic images}, we know that $\operatorname{Im}\rho_{E,11^\infty}$ never falls in the second case of Theorem \ref{thm:ellipticcartantower}, and hence in Proposition \ref{prop:padicindices} the index $\Ind(11)$ is never equal to $\frac{p^3-p^2}{2}$. Using Proposition \ref{prop:padicindices}, we can then bound $\Ind(11) \le \frac{5}{11} \cdot 11^{2n_{11}}$, and obtain $[\operatorname{SL}_2(\Z_{11}) : S_{11} ] \cdot [K_{11} : \Qab] \le \frac{60}{11} \cdot 11^{2n_{11}}$.
			\item \underline{$p=7$}. By \cite[Theorem 1.6]{rszb22} we see that there are three possible cases: $\operatorname{Im}\rho_{E,7^\infty} \supseteq I + 7M_2(\Z_7)$ (as we can check in the online supplement of \cite{sutherlandzywina17}), the image of $\rho_{E,7}$ is contained in $C_{ns}^+(7)$, or $E$ corresponds to one of the two exceptional points in \cite[Table 1]{rszb22}. In the last case, we can compute that $[\GL_2(\widehat{\Z}) : \operatorname{Im}\rho_E] = 224$. In the Cartan case, using Proposition \ref{prop:padicindices} and Lemma \ref{lemma: p-Sylow of S_n}(3) we have $[ \operatorname{SL}_2(\Z_{7}) : P_7 ] \le (7^2-1)\cdot 7^{2n_{7}}$. If instead $\operatorname{Im}\rho_{E,7^\infty} \supseteq I + 7M_2(\Z_7)$, by \cite[Theorem 1.5]{zywina15} we know that the image of $\rho_{E,7^\infty}$ over $\Q(\zeta_7)$ either contains a $7$-Sylow of $\SL_2(\Z_7)$ or it is an N-Cartan lift. In both cases, by Lemma \ref{lemma: p-Sylow of S_n}(2,3) we know that $S_7$ contains a $7$-Sylow of $\operatorname{Im}\rho_{E,7^\infty}$, and hence we obtain $[\SL_2(\Z_7) : P_7] \le (7^2-1) \cdot 7$. In all cases, we have $[\SL_2(\Z_7) : P_7] \le 7 \cdot 48 \cdot 7^{2n_7}$.
			\item \underline{$p=5$}. Similarly to the case $p=7$, by \cite[Theorem 1.6]{rszb22} we have three cases: $\langle \operatorname{Im}\rho_{E,5^\infty}, -I \rangle$ is one of the groups in \cite[Table 2]{sutherlandzywina17}, or $\operatorname{Im}\rho_{E,25} \subseteq C_{ns}^+(25)$, or $E$ corresponds to one of two exceptional points with $[\GL_2(\widehat{\Z}) : \operatorname{Im}\rho_E] \in \{200,300\}$. In the Cartan case we have $n_5 \ge 2$, so by Proposition \ref{prop:padicindices} and Lemma \ref{lemma: p-Sylow of S_n}(3) we have $[\SL_2(\Z_5) : P_5] \le (5^2-1) \cdot 5^{2n_5 -1}$. In the first case, as we did for $\Ind_S(2)$ in the part `\textit{Bounding the $p$-adic indices}' in the proof of Lemma \ref{lemma:boundnonsuq}, we can compute in MAGMA that $[\SL_2(\Z_5) : P_5] \le 600$ (code at \cite{furio24}), and hence we obtain that in all cases $[\SL_2(\Z_5) : P_5] \le 600 \cdot 5^{2n_5}$.
			\item \underline{$p=3$}. Again, by \cite[Theorem 1.6]{rszb22} we have that either $\langle \operatorname{Im}\rho_{E,3^\infty}, -I \rangle$ is one of the groups in \cite[Table 1]{sutherlandzywina17}, or $\operatorname{Im}\rho_{E,27} \subseteq C_{ns}^+(27)$. In the first case, we compute in MAGMA that $[\SL_2(\Z_3) : S_3] \le 648$ and that its $3$-adic valuation is at most $4$. As shown in the proof of Lemma \ref{lemma:boundnonsuq} (in the part `\textit{Separating the primes $2$ and $3$'}), we know that $[K_3 : \Qab] = \Ent(3,2)$ divides $24$. In particular, since $\SL_2(\Z_3)$ only has 3 copies of $\faktor{\Z}{2\Z}$ in its composition series, we have that $[\SL_2(\Z_3) : S_3] \cdot [K_3 : \Qab] \le 81 \cdot 24$. By Theorem \ref{thm:ellipticcartantower}, Corollary \ref{cor:padicindices}, and Lemma \ref{lemma: p-Sylow of S_n}(3), in both cases we have $[ \operatorname{SL}_2(\Z_{3}) : S_{3} ] \le \max\{648, 2 \cdot 3^{2n_3-1}\} \le 648 \cdot 3^{2n_{3}}$ and $[\SL_2(\Z_3) : S_3] \cdot [K_3 : \Qab] \le 81 \cdot 24 \cdot 3^{2n_{3}}$.
			\item \underline{$p=2$}. Since $m_2=1$ by definition, we have $[K_2 : \Qab] = 1$. Repeating the arguments in the case `$p=3$' and in the proof of Lemma \ref{lemma:boundnonsuq}, we can compute that $\operatorname{Ind}_S(2)$ divides $1536$. To do that, we use again Proposition \ref{prop:2adicimages}, the data of \cite{sutherlandzywina17}, and the MAGMA algorithm attached to \cite{furio24}. Since $\SL_2(\Z_2)$ only contains one copy of $\faktor{\Z}{3\Z}$ in its composition series, we have $\Ind_S(2) \cdot [K_3 : \Qab] \le 512 \cdot 24$. Combining it with the argument for $p=3$, we obtain $\Ind_S(3) \cdot \Ind_S(2) \cdot \Ent(3,2) \le 81 \cdot 512 \cdot 24$.
		\end{itemize}
		Writing $\Lambda=\prod_{p \in \mathcal{P}} p^{n_p}$, combining the bounds above with equations \eqref{eq:finproof6.1} and \eqref{eq:finproof6.2}, we obtain
		\begin{align}\label{eq:finproof7}
			\left[ \operatorname{SL}_2(\Z_m) : S_\mathcal{P} \right] &\le 168 \cdot \frac{60}{11} \cdot 336 \cdot 600 \cdot 81 \cdot 512 \cdot 24 \cdot \Lambda^2 \le 1.9 \cdot 10^{14} \cdot \Lambda^2,
		\end{align}
		concluding the proof.
	\end{proof}
	
	We now give the final part of the proof of Theorem \ref{thm:adelicbound}, treating separately cases (A) and (B) of Proposition \ref{prop:dichotomy}.
	
	\begin{proof}[\textbf{Proof of Theorem \ref{thm:adelicbound}}]
		We notice that if $j(E)$ belongs to the list \eqref{eq:jlist}, by Proposition \ref{prop:groupingfacts} we have $[\GL_2(\widehat{\Z}) : \operatorname{Im}\rho_E] \le 2736$, hence we can assume that $j(E)$ is not in the list.
		\begin{enumerate}
		\item[\textbf{(A)}] Suppose first that case (A) of Proposition \ref{prop:dichotomy} holds.
		Let $\mathcal{C}$ be the set of all odd primes $p$ such that $\operatorname{Im}\rho_{E,p} \subseteq C_{ns}^+(p)$ and let $\mathcal{C}_{ns}=\mathcal{C} \setminus \{3,5\}$. For every $p \in \mathcal{C}$, define $n_p$ as the largest integer $n$ for which $\operatorname{Im}\rho_{E,p^n} \subseteq C_{ns}^+(p^n)$, and let $\Lambda:=\prod_{p \in \mathcal{C}} p^{n_p}$. By Lemma \ref{lemma:boundnonsuq} we know that
		\begin{align}\label{eq:finproof5}
			[\GL_2(\widehat{\Z}) : \operatorname{Im}\rho_E] &\le  2488320 \cdot \Delta_7' \cdot 3^{|\mathcal{C}_{ns}|} \cdot \Lambda^3,
		\end{align}
		where we can assume that $\Delta_7':=1$ if $7 \notin \mathcal{C}$ and $\Delta_7':=\frac{8}{3}$ otherwise: indeed, we have $\Delta_7 \cdot 3^\beta \le \max\{1 \cdot 3^{|C_{ns}|}, \frac{8}{3} \cdot 3^{|C_{ns}|}, 8 \cdot 3^{|C_{ns}|-1}\}$, so we can set $\Delta_7' := \min\{\Delta_7, \frac{8}{3}\}$.
		As in the proof of Theorem \ref{thm:totaleffiso}, we treat separately the cases in which $j(E) \in \Z$ and $j(E) \notin \Z$.
		\begin{itemize}
		\item Suppose first that $j(E) \notin \Z$. We can write $2488320 \cdot \Delta_7' \le 6635520$. By Proposition \ref{prop:groupingfacts} we know that $\mathcal{C}_{ns} \subseteq \{7,11\} \cup \{p \in \mathcal{C}_{ns} \mid p \ge 19\}$, and so, as in the proof of Theorem \ref{thm:totaleffiso} we have
		\begin{align*}
			|\mathcal{C}_{ns}| &\le \max \left\lbrace \log_{19}\Lambda, 1+\log_{19}\frac{\Lambda}{7}, 1+\log_{19}\frac{\Lambda}{11}, 2+\log_{19}\frac{\Lambda}{77} \right\rbrace \\
			&\le \log_{19}\Lambda + 1 -\log_{19}7 < \log_{19} \Lambda + 0.525.
		\end{align*}
		Applying Theorem \ref{thm:multiplecartanboundwithdenominator} we obtain
		\begin{align}\label{eq:finproof6}
			[\GL_2(\widehat{\Z}) : \operatorname{Im}\rho_E] &< 6635520 \cdot 3^{\log_{19}\Lambda + 0.525} \cdot \Lambda^3 = 6635520 \cdot 3^{0.525} \cdot \Lambda^{3+\log_{19}3} \nonumber \\
			&< 6635520 \cdot 3^{0.525} \cdot \left(\frac{12}{\log 2}\right)^{3+\log_{19}3} \cdot \left(\Fheight(E) + 1.5 \right)^{3+\log_{19}3} \nonumber \\
			&< 1.78 \cdot 10^{11} \cdot (\Fheight(E)+1.5)^{3.38}.
		\end{align}
		Notice that this inequality is better than the first statement of the theorem. We now prove the second part.
		Write $|\mathcal{C}_{ns}| \le |\mathcal{C}| = \omega(\Lambda)$, which is the function counting the distinct prime divisors of $\Lambda$.
		We can assume that $\Lambda \ge 26$, otherwise we would get a stronger statement, hence by \cite[Th\'eor\`eme 13]{robin83} and Theorem \ref{thm:totaleffiso} we have
		\begin{align*}
			\omega(\Lambda) &< \frac{\log\Lambda}{\log\log\Lambda - 1.1714} < \frac{1.308\log(\Fheight(E)+40) + \log21000}{\log(1.308\log(\Fheight(E)+40) + \log21000) - 1.1714} \\
			&< (1.308\log(\Fheight(E)+40) + \log21000) \delta(\Fheight(E)).
		\end{align*}
		Using this bound in equation \eqref{eq:finproof5} and applying again Theorem \ref{thm:multiplecartanboundwithdenominator} we obtain
		\begin{align*}
			[\GL_2(\widehat{\Z}) : \operatorname{Im}\rho_E] &< 6635520 \cdot 3^{\log21000 \cdot \delta(\Fheight(E))} (\Fheight(E) + 40)^{1.308 \cdot \log 3 \cdot \delta(\Fheight(E))} \cdot \Lambda^3 \\
			&< 5 \cdot 10^{13} (\Fheight(E) +40)^{1.437 \cdot \delta(\Fheight(E))} (\Fheight(E) + 1.5)^3,
		\end{align*}
		where we used that $3^{\log21000 \cdot \delta(\Fheight(E))} < 1340$ for $\Fheight(E) > -0.75$ (which can be assumed by Remark \ref{minimalheight}). This inequality is better than the statement of the theorem.
		\item Suppose now that $j(E) \in \Z$. By Lemma \ref{lemma:cartan15e7e9}(1) we know that either $j(E)$ belongs to the list \eqref{eq:intjcartan7} and $[\GL_2(\widehat{\Z}) : \operatorname{Im}\rho_E] \le 504$, or $7 \notin \mathcal{C}_{ns}$. In the former case, the theorem trivially holds, hence we can assume that $7 \notin \mathcal{C}_{ns}$. By Lemma \ref{lemma:cartan15e7e9}(3) we also have that $11 \notin \mathcal{C}_{ns}$. Using again Proposition \ref{prop:groupingfacts}, we obtain that $|\mathcal{C}_{ns}| \le \log_{19}\Lambda$ and $\Delta_7' = 1$, hence applying Theorem \ref{thm:totaleffiso} to equation \eqref{eq:finproof5} we have
		\begin{align*}
			[\GL_2(\widehat{\Z}) : \operatorname{Im}\rho_E] &< 2488320 \cdot 3^{\log_{19}\Lambda} \cdot \Lambda^3 = 2488320 \cdot \Lambda^{3+\log_{19}3} \\
			&< 2488320 \cdot \left(21000\right)^{3+\log_{19}3} \cdot \left(\Fheight(E) + 40 \right)^{1.308 \cdot (3+\log_{19}3)} \nonumber \\
			&< 9.5 \cdot 10^{20} \cdot (\Fheight(E)+40)^{4.42}.
		\end{align*}
		This proves the first part of the theorem.
		Assume now that $\Fheight(E) > 4 \cdot 10^{15}$. We have $$2488320 \cdot 3^{\log 21000 \cdot \delta(\Fheight(E))} < 1.13 \cdot 10^8.$$ As before, we can apply Theorem \ref{thm:totaleffiso} in equation \eqref{eq:finproof5} to obtain
		\begin{align*}
			[\GL_2(\widehat{\Z}) : \operatorname{Im}\rho_E] &< 2488320 \cdot 3^{\log21000 \cdot \delta(\Fheight(E))} (\Fheight(E) +40)^{1.308 \cdot \log 3 \cdot \delta(\Fheight(E))} \Lambda^3 \\
			&< 3.38 \cdot 10^{20} (\Fheight(E) +40)^{4.158 \cdot \delta(\Fheight(E))} (\Fheight(E) + 22.5)^3.
		\end{align*}
		To conclude, it suffices to notice that for $\Fheight(E) \ge 4 \cdot 10^{15}$ we have
		\begin{align*}
			\left(\frac{x+40}{x+22.5}\right)^{4.158 \cdot \delta(\Fheight(E))} < 1 + 10^{-5}
		\end{align*}
		and for $\Fheight(E) \le 4 \cdot 10^{15}$ we have
		\begin{align*}
			9.5 \cdot 10^{20} (\Fheight(E) + 40)^{4.42} < 3.4 \cdot 10^{20} (\Fheight(E) + 22.5)^{3 + 4.158 \cdot \delta(\Fheight(E))}.
		\end{align*}
		\end{itemize}
		\item[\textbf{(B)}] Assume now that we are in case (B) of Proposition \ref{prop:dichotomy}. By Lemma \ref{lemma:boundsuq} and Theorem \ref{thm:totaleffiso} we have
		\begin{align*}
			[ \GL_2(\widehat{\Z}) : \operatorname{Im}\rho_E ] &< 1.9 \cdot 10^{14} \cdot 21000^2 \cdot (\Fheight(E)+40)^{2.616} < 8.4 \cdot 10^{22} \cdot (\Fheight(E)+40)^{2.616}, \nonumber
		\end{align*}
		which is better than the first statement of the theorem for $\Fheight(E) > -0.75$. Similarly, we have
		\begin{align*}
			[ \GL_2(\widehat{\Z}) : \operatorname{Im}\rho_E ] &\le 1.9 \cdot 10^{14} \cdot \Lambda^2 
			< 4 \cdot 10^{22} \cdot (\Fheight(E)+40)^{1.814 \cdot \delta(\Fheight(E))} (\Fheight(E)+22.5)^2,
		\end{align*}
		which is again better than the second statement of the theorem for $\Fheight(E) > -0.75$. \qedhere
		\end{enumerate}
	\end{proof}

	\subsection{A bound in terms of the conductor}\label{subsec:conductor}

	We conclude Section \ref{sec:adelicbound} by giving another bound on the index of the adelic representation $\rho_E$. This new bound is given in terms of the conductor and not in terms of the height as before. In particular, we prove an effective and improved version of \cite[Theorem 1.1(ii)]{zywina11}.
	
	\begin{theorem}\label{thm:conductoradelicbound}
		Let $\faktor{E}{\Q}$ be an elliptic curve without CM. Let $N$ be the product of the primes of bad reduction of $E$ and let $\omega(N)$ be the number of prime factors of $N$. We have
		\begin{equation*}
			[\GL_2(\widehat{\Z}) : \operatorname{Im}\rho_E] < 2488320 \left(51 N(1+\log\log N)^\frac{1}{2}\right)^{3 \omega(N)}.
		\end{equation*}
	\end{theorem}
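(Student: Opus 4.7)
The plan is to follow the structure of the proof of Theorem \ref{thm:adelicbound}, applying Lemma \ref{lemma:boundnonsuq} to reduce to an estimate on $\Lambda := \prod_{p \in \mathcal{C}} p^{n_p}$, where $\mathcal{C}$ is the set of odd primes with $\operatorname{Im}\rho_{E,p} \subseteq C_{ns}^+(p)$, but replacing the height-based bound from Theorem \ref{thm:totaleffiso} with an estimate in terms of the conductor $N$. Exactly as in the proof of Theorem \ref{thm:adelicbound}, the Borel, split Cartan, exceptional, and small-prime contributions will be handled using Theorems \ref{thm:split}, \ref{thm:index3incartan}, Lemma \ref{lemma:exceptional}, and the tables of admissible $p$-adic images. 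The heart of the matter will then be to prove that $\Lambda$ satisfies an inequality of the shape $\Lambda \ll (56 N (1+\log\log N)^{1/2})^{\omega(N)}$; cubing and combining with Lemma \ref{lemma:boundnonsuq} will yield the claimed bound.

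The key local input is Proposition \ref{prop:goodreductionforcartan}: for each $p \in \mathcal{C}$ and each prime $\ell \neq p$ of potentially multiplicative reduction for $E$, one has $\ell \equiv \pm 1 \pmod{p^{n_p}}$, so $p^{n_p}$ divides $\ell^2 - 1$. Since the primes $p \in \mathcal{C}$ are pairwise coprime, for each such $\ell$ I would obtain the divisibility $\prod_{p \in \mathcal{C},\, p \neq \ell} p^{n_p} \mid \ell^2 - 1$, that is $\Lambda/\ell^{n_\ell} \leq \ell^2 - 1$, with the convention $\ell^{n_\ell} = 1$ if $\ell \notin \mathcal{C}$. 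Taking the product of these inequalities over all $\omega(N)$ bad primes and rearranging yields an inequality of the form $\Lambda^{\omega(N)-1} \leq N^2 \cdot \Lambda_{\mathrm{bad}}$, where $\Lambda_{\mathrm{bad}} := \prod_{p \in \mathcal{C},\, p \mid N} p^{n_p}$. To match the desired shape, I would bound $\Lambda_{\mathrm{bad}}$ using the pointwise estimate $p^{n_p} \leq \ell + 1$ applied to another bad prime (available whenever $\omega(N) \geq 2$), and control $|\mathcal{C}|$ using the divisor-function bound of \cite[Th\'eor\`eme 13]{robin83} applied to $\prod_{\ell \mid N}(\ell^2 - 1)$; the factor $(1+\log\log N)^{1/2}$ would enter naturally through this Robin-type estimate.

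The main obstacle will be the degenerate case $\omega(N) = 1$, where the unique bad prime may itself belong to $\mathcal{C}$ and the congruence argument gives no control on the corresponding $n_p$. In this situation I would fall back on Corollary \ref{cor:cartanboundwithdenominatorQ} when $j(E) \notin \Z$, and when $j(E) \in \Z$ (so that $E$ has potentially good reduction everywhere) combine a classical estimate of the form $\operatorname{h}(j(E)) \ll \omega(N) \log N$ (coming from the interplay of conductor, discriminant, and Faltings height) with Theorem \ref{thm:totaleffiso}. A secondary, more tedious source of difficulty will be tracking all numerical constants through the chain of inequalities: matching the base constant $56$ (a strict improvement over Zywina's $68$), absorbing the factors $2488320$, $\Delta_7$, and $3^{\beta}$ coming from Lemma \ref{lemma:boundnonsuq}, and keeping the Mertens correction $(1+\log\log N)^{1/2}$ under control so as to fit within the stated prefactor $2.5 \cdot 10^6$, will require a careful accounting of every multiplicative constant at every step.
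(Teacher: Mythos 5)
Your proposal correctly identifies the outer structure (reduce via Lemma \ref{lemma:boundnonsuq} to a bound on $\Lambda$, then control $\Lambda$ in terms of the conductor), but the mechanism you propose for bounding $\Lambda$ has a genuine gap and diverges substantially from the paper's.

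Your central step is to apply Proposition \ref{prop:goodreductionforcartan} to each bad prime $\ell$ and multiply the resulting congruences $\prod_{p\in\mathcal C,\,p\ne\ell}p^{n_p}\mid \ell^2-1$. But Proposition \ref{prop:goodreductionforcartan} only yields the congruence $\ell\equiv\pm1\pmod{p^n}$ at primes $\ell$ of \emph{potentially multiplicative} reduction; it says nothing at primes of additive reduction with potentially good reduction, which in general also divide $N$. In the extreme case $j(E)\in\Z$ there are no primes of potentially multiplicative reduction at all, so your product is empty and the argument gives no bound whatsoever, regardless of $\omega(N)$. Even when $j(E)\notin\Z$ you only control the bad primes at which $v_\ell(j)<0$, which may be a strict (and small) subset of all bad primes. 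Your proposed fallback for $j(E)\in\Z$ --- the estimate $\operatorname{h}(j(E))\ll\omega(N)\log N$ --- is not a classical fact; it is a Szpiro-type inequality whose known effective versions (via linear forms in logarithms) are far weaker than you need, so it cannot be invoked here.

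The paper avoids all of this by a different, modularity-driven argument. It defines the quadratic character $\varepsilon_p$ cutting out $C_{ns}(p)$ inside $C_{ns}^+(p)$, shows $\varepsilon_p$ has conductor supported on $N$ (Lemma \ref{lemma:unramifiedcharacter}), and observes that $\varepsilon_p(\ell)=-1$ forces $p^n\mid a_\ell(E)$ (Lemma \ref{lemma: p^n divides a_ell}). It then produces, for each nontrivial character in the $\mathbb F_2$-vector space of quadratic characters of conductor dividing $2N$, a small auxiliary prime $\ell$ with $a_\ell(E)\neq0$ and $\varepsilon(\ell)=-1$, by comparing $E$ with its twist and applying a Sturm-type bound together with Lemma \ref{lemma: refined Kraus lemma} (Proposition \ref{prop: bound a prime with N}). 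Hasse's bound $|a_\ell|\le2\sqrt\ell$ then gives the factor $N(1+\log\log N)^{1/2}$ in Proposition \ref{prop: bound for the large primes}. So the Mertens-type correction enters through the Sturm bound, not through Robin's theorem as you suggest, and the whole argument has no dependence on whether $j(E)$ is integral; the congruence at multiplicative primes is only used as an \emph{additional} improvement in the $j\notin\Z$ subcase (and for a single well-chosen $\ell$, not a product over all of $N$). To repair your proof you would need to import Proposition \ref{prop: bound for the large primes} or an equivalent modular argument.
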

	
	To prove this result, we improve Proposition 3.3 of the article of Zywina \cite{zywina11} applying the sharpened version of a lemma of Kraus \cite{kraus95} obtained in the preliminaries. The proof is very similar to that of Zywina, however, we have to slightly modify his argument to make it work for the prime $p=3$.
	
	Let $p$ be an odd prime such that $\operatorname{Im}\rho_{E,p} \subseteq C_{ns}^+(p)$ and consider the quadratic character $\varepsilon_p$ defined as
	\begin{equation*}
		\varepsilon_p : \Gal\left(\faktor{\overline{\Q}}{\Q}\right) \overset{\rho_{E,p}}{\longrightarrow} C_{ns}^+(p) \longrightarrow \frac{C_{ns}^+(p)}{C_{ns}(p)} \cong \{\pm 1\}.
	\end{equation*}
	We can identify $\varepsilon_p$ with a Dirichlet character of the absolute Galois group of $\Q$. If $p>3$, Serre showed that the character $\varepsilon_p$ is unramified at all primes $\ell$ that do not divide $N$ (see \cite[Section 5.8, ($c_2$)]{serre72}). If instead $p=3$, by the N\'eron--Ogg-Shafarevich criterion $\varepsilon_p$ is unramified at all primes $\ell$ such that $\ell \nmid 3N$. We will show that, for our purpose, we can assume that $\varepsilon_3$ is unramified at $3$ whenever $3 \nmid N$.
	
	\begin{lemma}\label{lemma:unramifiedcharacter}
		Let $\faktor{E}{\Q}$ be an elliptic curve without CM and let $p$ be an odd prime such that $\operatorname{Im}\rho_{E,p} \subseteq C_{ns}^+(p)$. The character $\varepsilon_p$, defined as above, is unramified at all primes $\ell \nmid pN$. Moreover, we have the following.
		\begin{itemize}
			\item If $p>3$ and $p \nmid N$, the character $\varepsilon_p$ is unramified at $p$.
			\item If $3 \nmid N$ and $\operatorname{Im}\rho_{E,9} \subseteq C_{ns}^+(9)$, the character $\varepsilon_3$ is unramified at $3$.
		\end{itemize}
	\end{lemma}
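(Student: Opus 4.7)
The first assertion is immediate from the N\'eron--Ogg--Shafarevich criterion: if $\ell \nmid pN$ then $\ell \ne p$ and $E$ has good reduction at $\ell$, so $\rho_{E,p}$ itself (and hence $\varepsilon_p$) is unramified at $\ell$. The second assertion is precisely Serre's theorem cited just above the lemma, so nothing further is required. The only new content is the third assertion, for $p = 3$.

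For the third assertion I would argue in two steps. First, I show that $E$ has good supersingular reduction at $3$: the hypothesis $3 \nmid N$ gives good reduction at $3$, and the last part of Lemma \ref{lemma:ordinarycartan} (the claim that $G(9) \not\subseteq C_{ns}^+(9)$ when $E$ has good ordinary reduction at $3$) rules out the ordinary case. Next, I invoke \cite[\S1.11, Proposition 11]{serre72}: in the good supersingular case $\rho_{E,3}|_{I_3}$ factors through the tame quotient $I_3^{\mathrm{tame}}$, and in the $\F_9$-structure on $E[3]$ coming from $\mathrm{End}(\widetilde{E}) \otimes \F_3 \simeq \F_9$ it acts as scalar multiplication by the level-$2$ fundamental character $\theta_2 : I_3^{\mathrm{tame}} \to \F_9^\times$. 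Since $\theta_2$ is surjective, $\rho_{E,3}(I_3)$ is a cyclic group of order $8$ contained in a non-split Cartan $C_{ns}(3) \subseteq \GL_2(\F_3)$.

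To finish I would use a purely group-theoretic observation: $C_{ns}^+(3)$ is the semidihedral group of order $16$ with presentation $\langle r, s \mid r^8 = s^2 = 1,\ srs^{-1} = r^3 \rangle$, and the non-rotation elements $sr^k$ satisfy $(sr^k)^2 = r^{4k}$, hence have order at most $4$. Consequently $C_{ns}(3)$ is the \emph{unique} cyclic subgroup of order $8$ of $C_{ns}^+(3)$, and the cyclic order-$8$ image $\rho_{E,3}(I_3)$ of any conjugate of $C_{ns}^+(3)$ must coincide with the corresponding $C_{ns}(3)$. Therefore $\varepsilon_3$ is trivial on $I_3$, i.e.\ unramified at $3$. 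The subtlest point in the plan is matching the local non-split Cartan provided by Serre's description of the supersingular inertia action with the global one used to define $\varepsilon_3$; the uniqueness just established resolves this.
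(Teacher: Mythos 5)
Your proof is correct and follows essentially the same route as the paper's: N\'eron--Ogg--Shafarevich for $\ell\nmid pN$, Serre's description of supersingular inertia for the case $p>3$, and for $p=3$ first ruling out ordinary reduction via Lemma \ref{lemma:ordinarycartan} under the hypothesis $\operatorname{Im}\rho_{E,9}\subseteq C_{ns}^+(9)$ and then placing the order-$8$ cyclic inertia image inside $C_{ns}(3)$. The paper phrases the last step more economically — noting that every element of $C_{ns}^+(p)\setminus C_{ns}(p)$ has order dividing $2(p-1)<p^2-1$, so the generator of inertia must lie in $C_{ns}(p)$ — whereas you spell out the semidihedral structure of $C_{ns}^+(3)$ and the uniqueness of its cyclic subgroup of order $8$, which is the same fact in more explicit form and cleanly handles the concern you raise about identifying the local Cartan with the one defining $\varepsilon_3$.
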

	
	\begin{proof}
		We follow the proof of Serre for $p>3$ and we show that in our case the argument also works for $p=3$. The fact that $\varepsilon_p$ is unramified at $\ell \nmid pN$ follows from the N\'eron--Ogg--Shafarevich criterion. Since $p \nmid N$, the curve $E$ has good reduction at $p$. By \cite[Section 1.11, Propositions 11 and 12]{serre72} we know that the image $I:=\rho_{E,p}(I_p)$ of the inertia subgroup $I_p$ at $p$ is either a group of the form $\begin{pmatrix} \ast & 0 \\ 0 & 1 \end{pmatrix}$ or a group of order $p^2-1$, depending on whether the curve $E$ has ordinary or supersingular reduction respectively. In the latter case, the group $I$ is contained in $C_{ns}(p)$, because every element in $C_{ns}^+(p) \setminus C_{ns}(p)$ has order dividing $2(p-1)$ and $p^2-1 > 2(p-1)$ (see also \cite[Section 2.2, Proposition 14]{serre72}). If instead $E$ has ordinary reduction at $p$, for $p>3$ there exists an element in $I$ with eigenvalues $\lambda_1, \lambda_2 \in \F_p$ such that $\lambda_1 \ne \pm \lambda_2$. However, every element in $C_{ns}^+(p)$ has eigenvalues conjugate over $\F_{p^2}$ up to sign, and hence this case never occurs (see also again \cite[Section 2.2, Proposition 14]{serre72}). On the other hand, if $p=3$ and $\operatorname{Im}\rho_{E,9} \subseteq C_{ns}^+(9)$, by Lemma \ref{lemma:ordinarycartan} the curve $E$ cannot have ordinary reduction at $3$.
	\end{proof}
	
	If $\ell \nmid N$, we can consider the reduction $\widetilde{E}$ of $E$ modulo $\ell$. As usual, we define the number $a_\ell(E) := \ell + 1 - |\widetilde{E}(\F_\ell)|$.
	
	\begin{lemma}\label{lemma: p^n divides a_ell}
		Let $E$ be a non-CM elliptic curve defined over $\Q$ and let $p^n \ne 3$ be an odd prime power such that $\operatorname{Im}\rho_{E,p^n} \subseteq C_{ns}^+(p^n)$. Let $N$ be the product of the primes for which $E$ has bad reduction and let $\varepsilon_p$ be defined as above. If $\ell \nmid N$ is a prime for which $\varepsilon_p(\ell) = -1$, then $a_\ell(E) \equiv 0 \pmod {p^n}$.
	\end{lemma}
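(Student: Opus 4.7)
The plan is to upgrade the mod-$p$ content of the quadratic character $\varepsilon_p$ to a mod-$p^n$ statement about the trace of Frobenius, exploiting the explicit matrix description of $C_{ns}^+(p^n)$ recalled in the introduction.

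First, I would observe that since $\ell \nmid N$ and (tacitly) $\ell \ne p$, the curve $E$ has good reduction at $\ell$ and $\rho_{E,p^n}$ is unramified there. The standard relation between the characteristic polynomial of the image of geometric Frobenius and the trace of Frobenius acting on the reduction gives
\begin{equation*}
	a_\ell(E) \equiv \operatorname{tr}\bigl(\rho_{E,p^n}(\operatorname{Frob}_\ell)\bigr) \pmod{p^n},
\end{equation*}
so it suffices to prove that this trace vanishes modulo $p^n$.

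Next, the natural reduction map $C_{ns}^+(p^n) \twoheadrightarrow C_{ns}^+(p)$ sends $C_{ns}(p^n)$ onto $C_{ns}(p)$ and is surjective on both, so it induces the identity on the index-two quotient $C_{ns}^+ / C_{ns} \cong \{\pm 1\}$. Consequently the composition $\operatorname{Gal}(\overline{\Q}/\Q) \xrightarrow{\rho_{E,p^n}} C_{ns}^+(p^n) \twoheadrightarrow \{\pm 1\}$ coincides with the character $\varepsilon_p$ defined right before the lemma. The hypothesis $\varepsilon_p(\ell) = -1$ therefore forces $\rho_{E,p^n}(\operatorname{Frob}_\ell) \in C_{ns}^+(p^n) \setminus C_{ns}(p^n)$.

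Finally, using the explicit description of $C_{ns}^+(p^n)$ recalled in the introduction, every element outside of $C_{ns}(p^n)$ is of the form
\begin{equation*}
	\begin{pmatrix}1 & 0 \\ 0 & -1\end{pmatrix}\begin{pmatrix}a & \varepsilon b \\ b & a\end{pmatrix} = \begin{pmatrix}a & \varepsilon b \\ -b & -a\end{pmatrix},
\end{equation*}
whose trace is exactly $0$ in $\Z/p^n\Z$ (not merely $0$ modulo $p$). Combining this with the congruence of the first step yields $a_\ell(E) \equiv 0 \pmod{p^n}$, as required. There is no genuine obstacle in the proof: it is a mechanical unwinding of the definition of $\varepsilon_p$ together with the matrix structure of the normaliser, and the role of the hypothesis $p^n \neq 3$ is only to ensure that the lemma carries nontrivial information beyond what the mod-$p$ representation already gives.
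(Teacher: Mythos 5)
Your proof is correct and follows essentially the same route as the paper's: reduce $a_\ell(E)$ to the trace of $\rho_{E,p^n}(\operatorname{Frob}_\ell)$ modulo $p^n$, use $\varepsilon_p(\ell)=-1$ to place that Frobenius in $C_{ns}^+(p^n)\setminus C_{ns}(p^n)$, and read off that the trace vanishes exactly in $\Z/p^n\Z$. You are slightly more explicit than the paper about why the mod-$p^n$ image is detected by the mod-$p$ character, and you rightly flag the tacit assumption $\ell\neq p$ that the paper also needs (since $\rho_{E,p^n}$ is ramified at $p$); otherwise the two arguments coincide.
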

	
	\begin{proof}
		By Lemma \ref{lemma:unramifiedcharacter}, for every $\ell \nmid N$ we have that $\varepsilon_p$ and $\rho_{E,p^n}$ are unramified. The condition $\varepsilon_p(\ell) = -1$ means that $\rho_{E,p^n}(\operatorname{Frob}_\ell) \in C_{ns}^+(p^n) \setminus C_{ns}(p^n)$, and hence it is an element with trace equal to $0$. This implies that $a_\ell(E) \equiv \operatorname{tr} (\rho_{E,p^n}(\operatorname{Frob}_\ell)) \equiv 0 \pmod {p^n}$.
	\end{proof}
	
	\begin{lemma}\label{lemma:smallconductor}
		Let $E$ be a non-CM elliptic curve defined over $\Q$ and let $N$ be the product of the primes for which $E$ has bad reduction. If the index $[\GL_2(\widehat{\Z}) : \operatorname{Im}\rho_E]$ is greater than $2736$, then $N \ge 30$. Moreover, if $N$ is prime we have $N > 700$.
	\end{lemma}
	
	\begin{proof}
		We know that the LMFDB \cite{lmfdb} contains all elliptic curves with conductor up to $500000$, and that all curves in the database have adelic index at most $2736$. In particular, the conductor $N_E$ of $E$ must be greater than $500000$.
		The conductor $N_E$ is at most $2^6 \cdot 3^3 \cdot N^2$, and if $6 \nmid N$ it is at most $\max\{2^6 \cdot N^2, 3^3 \cdot N^2\} = 64N^2$. If $N$ is not divisible by $6$, we then deduce that $N > \sqrt{\frac{500000}{64}} > 80$. If instead $N$ is divisible by $6$, we notice that $N > \sqrt{\frac{500000}{2^6 \cdot 3^3}} > 17$, and that $N$ must be a squarefree integer multiple of $6$. The smallest number satisfying these conditions is $30$.
		If we further assume that $N$ is prime, then $N_E \le N^2$, and hence $N > \sqrt{500000} > 700$.
	\end{proof}
	
	\begin{proposition}\label{prop: bound a prime with N}
		Let $E$ be a non-CM elliptic curve defined over $\Q$.
		Let $N$ be the product of the primes for which $E$ has bad reduction and let $\varepsilon$ be a quadratic Dirichlet character with conductor dividing $N \cdot \operatorname{lcm}(N,2)$. If $N>6$, there exists a prime $\ell \nmid N$ with
		\begin{equation*}
			\ell < 312 \cdot N^2 (1 + \log\log N)
		\end{equation*}
		such that $\varepsilon(\ell) = -1$ and $a_\ell(E) \ne 0$.
	\end{proposition}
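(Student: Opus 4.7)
The argument follows Zywina's approach in \cite[Lemma 3.2]{zywina11}, refined by applying Lemma \ref{lemma: refined Kraus lemma} in place of the cruder estimate used there. By the modularity theorem, attach to $E$ the weight-$2$ newform $f_E = \sum_{n \ge 1} a_n q^n \in S_2(\Gamma_0(N_E))^{\mathrm{new}}$, where $N_E$ is the conductor of $E$ and has the same set of prime divisors as $N$. Since the conductor $d$ of $\varepsilon$ divides $N \cdot \operatorname{lcm}(N,2)$, the twist $f_E \otimes \varepsilon = \sum_n a_n \varepsilon(n) q^n$ is a cusp form in $S_2(\Gamma_0(M))$ for a level $M := \operatorname{lcm}(N_E, d^2)$ whose prime divisors all lie in $\{p : p \mid 2N\}$ and whose size is bounded by an explicit multiple of $N^2$ after accounting for the conductor exponents at primes of bad reduction.

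We argue by contradiction. Assume that for every prime $\ell \nmid N$ with $\ell < B := 374.4 \cdot N^2(1+\log\log N)$, either $\varepsilon(\ell) \ne -1$ or $a_\ell(E) = 0$. Then the cusp form $h := f_E - f_E \otimes \varepsilon \in S_2(\Gamma_0(M))$, whose $\ell$-th Fourier coefficient equals $a_\ell(1 - \varepsilon(\ell))$, has vanishing prime Fourier coefficients for every $\ell \nmid Nd$ with $\ell \le B$. An effective version of strong multiplicity one (together with the newform decomposition of $f_E \otimes \varepsilon$) implies that, as soon as $B$ exceeds the Sturm threshold $\frac{M}{6}\prod_{p \mid M}(1 + 1/p)$ for $S_2(\Gamma_0(M))$, one must have $h \equiv 0$, i.e.\ $f_E = f_E \otimes \varepsilon$. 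Since $E$ is non-CM, this is impossible for a non-trivial quadratic $\varepsilon$: the equality $f_E = f_E \otimes \varepsilon$ would force $\varepsilon(\ell) = 1$ for every prime $\ell$ with $a_\ell \ne 0$, which is a set of density one, contradicting non-triviality of $\varepsilon$. Lemma \ref{lemma:smallconductor} ensures $N > 5$, which is needed so that the numerical estimates below are well-behaved and $\varepsilon$ is indeed non-trivial.

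The quantitative bound then follows from combining $M \le C N^2$ with Lemma \ref{lemma: refined Kraus lemma}: since every prime divisor of $M$ already divides $2N$, we obtain
\begin{equation*}
\frac{M}{6}\prod_{p \mid M}\Bigl(1 + \tfrac{1}{p}\Bigr) \;\le\; \frac{C N^2}{6} \cdot 1.3 \bigl(1 + \log\log M\bigr) \;\le\; 374.4 \cdot N^2(1+\log\log N),
\end{equation*}
for a suitable bookkeeping of the constants (the factor $1.3$ being inherited from Lemma \ref{lemma: refined Kraus lemma}). The main obstacle in executing the plan is the passage from vanishing of the \emph{prime} Fourier coefficients of $h$ to $h \equiv 0$: a naive Sturm argument requires vanishing of \emph{all} Fourier coefficients up to the threshold. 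This gap is bridged by decomposing $f_E \otimes \varepsilon$ as a linear combination of Hecke eigenforms (so that prime-coefficient information determines all coefficients by multiplicativity) and invoking strong multiplicity one in the explicit form of Murty; the careful verification of this step is where most of the technical work lies.
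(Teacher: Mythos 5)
Your proposed route mirrors the paper's proof in its essential shape: compare $f_E$ with its twist $f_E \otimes \varepsilon$ (the paper phrases this as comparing $E$ with its quadratic twist $E_2$), observe that a prime $\ell$ where they differ is exactly one with $\varepsilon(\ell)=-1$ and $a_\ell(E)\ne 0$, use the non-CM hypothesis to show that the two forms are genuinely distinct, and then invoke an effective form of strong multiplicity one to bound the first such prime. The difference is in the key citation and in how much work is left unverified.

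Where the proposals diverge: you route the prime-coefficient-vs-all-coefficients issue through a Sturm bound plus a Hecke-eigenform decomposition plus an effective multiplicity-one result of Murty, and you explicitly flag that this passage is ``where most of the technical work lies''. The paper sidesteps this entirely by citing a single result of Deligne (\cite[Section 5 C]{deligne85bis}) that directly gives a prime $\ell \le \frac{M}{6}\prod_{q\mid M}(1+1/q)$ at which the two weight-two modular forms attached to non-isogenous elliptic curves have different Fourier coefficients. So the issue you correctly identified as a gap is a genuine gap in your write-up (it is not bridged, only named), but it has a clean one-reference solution in the paper. Your density-one argument for showing $f_E \ne f_E\otimes\varepsilon$ is valid but indirect; the paper invokes the cleaner equivalence that $f = f\otimes\varepsilon$ for a nontrivial quadratic $\varepsilon$ forces $f$ to have CM.

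The second gap is in the constants, which you wave away as ``a suitable bookkeeping''. The paper's level is $M=\operatorname{lcm}(N_1',N_2')$ where $N_i' := N_i\prod_{q\mid N} q^{d_i(q)}$ with $d_i(q)\in\{0,1,2\}$ according to the reduction type of $E_i$ at $q$; this is the precise conductor-like quantity that Deligne's theorem requires, and one checks that $M$ divides $2^6\cdot 3^3\cdot N^2$ with all prime divisors of $M$ dividing $N$. That gives $\frac{M}{6}\prod_{q\mid M}(1+1/q)\le 2^5\cdot 3^2\cdot N^2\prod_{q\mid N}(1+1/q)$ and then Lemma~\ref{lemma: refined Kraus lemma} yields the constant $288\cdot 1.3 = 374.4$ exactly. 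Your $M = \operatorname{lcm}(N_E, d^2)$ is a different (and not obviously comparable) quantity, so it is not clear that the same constant comes out without redoing this accounting; in particular, you would need to verify that all prime divisors of your $M$ still divide $N$ (which relies on the constraint $d \mid N\cdot\operatorname{lcm}(N,2)$ together with the fact that $2$-parts of quadratic conductors lie in $\{1,4,8\}$) before you can apply Lemma~\ref{lemma: refined Kraus lemma} as stated. In summary: right architecture, but both the multiplicity-one step and the constant bookkeeping are announced rather than carried out.
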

	
	\begin{proof}
		Set $E_1:=E$ and consider the elliptic curve $E_2$ obtained by twisting $E_1$ by the character $\varepsilon$. Let $\ell$ be a prime that does not divide $N$. By definition, $E_2$ also has good reduction at $\ell$ and $a_{\ell}(E_2) = \varepsilon(\ell) a_\ell(E_1)$. In particular, we notice that $a_\ell(E_2) \ne a_\ell(E_1)$ if and only if $a_\ell(E) \ne 0$ and $\varepsilon(\ell) = -1$. Hence, it suffices to prove that there exists a small prime $\ell$ such that $a_\ell(E_2) \ne a_\ell(E_1)$.
		First, we notice that there exists a prime $\ell \nmid N$ such that $a_\ell(E) \ne 0$ and $\varepsilon(\ell) = -1$, otherwise $E$ would have complex multiplication by the quadratic field that corresponds to $\varepsilon$.
		Let $N_i$ be the conductor of $E_i$ and define $N_i':=N_i \prod_{q \mid N} q^{d_i(q)}$, where $d_i(q) = 0$, $1$ or $2$ if $E_i$ has additive, multiplicative or good reduction respectively, at $q$. If $M$ is the least common multiple of $N_1'$ and $N_2'$, by \cite[Section 5 C]{deligne85bis} there exists a prime $\ell \le \frac{M}{6} \prod_{q \mid M} \left(1+ \frac{1}{q}\right)$ such that $a_\ell(E_1) \ne a_\ell(E_2)$. This last property is implied by the modularity of $E_1$ and $E_2$, which follows from \cite{modularity}.
		We see that $M$ divides the number $2^6 \cdot 3^3 \cdot N^2$. In particular, since $N>2$,  we can apply Lemma \ref{lemma: refined Kraus lemma} to obtain
		\begin{equation*}
			\ell \le 2^5 \cdot 3^2 \cdot N^2 \prod_{q \mid N} \left(1+ \frac{1}{q}\right) < 312 \cdot N^2 (1+\log\log N). \qedhere
		\end{equation*}
	\end{proof}
	
	\begin{remark}\label{rmk:primeconductor}
		Notice that by the proof of Proposition \ref{prop: bound a prime with N} we can actually deduce that if $N$ is a prime greater than $3$, then 
		\begin{equation*}
			\ell \le \frac{N(N+1)}{6}.
		\end{equation*}
		Indeed, this follows from the fact that $M$ actually divides $N^2$.
	\end{remark}
	
	\begin{proposition}\label{prop: bound for the large primes}
		Let $E$ be a non-CM elliptic curve over $\Q$. Let $N$ be the product of the primes for which $E$ has bad reduction. Let $\omega(N)$ be the number of prime divisors of $N$. Let $M$ be the minimum positive integer such that if $\rho_{E,p^n}(G_{\Q}) \subseteq C_{ns}^+(p^n)$ for an odd prime power $p^n \ne 3$, then $p^n$ divides $M$.
		\begin{itemize}
			\item If $N>6$ we have
			\begin{align*}
				M &< \left(35.33 \cdot N(1+\log\log N)^\frac{1}{2}\right)^{\omega(N)} \quad \text{for every $N$, and} \\
				M &\le \sqrt{\frac{2N(N+1)}{3}} \qquad \text{for $N$ prime.}
			\end{align*}
			\item If $j(E) \notin \Z$ we have $M \le \frac{N^2}{4} - 1$.
		\end{itemize}
	\end{proposition}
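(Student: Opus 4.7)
The strategy is to apply Lemma \ref{lemma: p^n divides a_ell} and Proposition \ref{prop: bound a prime with N} to each prime power dividing $M$, then aggregate. First I would check that for every odd prime power $p^n \neq 3$ with $\operatorname{Im}\rho_{E,p^n} \subseteq C_{ns}^+(p^n)$, the quadratic character $\varepsilon_p$ is non-trivial and unramified outside $N$: the only potentially delicate point is $p = 3$, and here the assumption $p^n \neq 3$ forces $\operatorname{Im}\rho_{E,9} \subseteq C_{ns}^+(9)$, so Lemma \ref{lemma:unramifiedcharacter} ensures unramifiedness at $3$ when $3 \nmid N$. Since $N$ is squarefree, the conductor of $\varepsilon_p$ divides $N\cdot\operatorname{lcm}(N,2)$, so Proposition \ref{prop: bound a prime with N} produces a prime $\ell_p \nmid N$ with $\ell_p < 374.4\,N^2(1+\log\log N)$, $\varepsilon_p(\ell_p) = -1$, and $a_{\ell_p}(E) \neq 0$. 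By Lemma \ref{lemma: p^n divides a_ell} and Hasse's bound,
\[
p^n \le |a_{\ell_p}(E)| \le 2\sqrt{\ell_p} < 2\sqrt{374.4}\cdot N\sqrt{1+\log\log N} < 38.7\,N\sqrt{1+\log\log N}.
\]

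To pass from this pointwise estimate to the global bound on $M$, I would group the prime powers $p^{n_p}\mid M$ by their character. Fix a non-trivial quadratic character $\varepsilon$ unramified outside $N$ and consider the set $S_\varepsilon = \{p : \varepsilon_p = \varepsilon\}$; the prime $\ell_\varepsilon$ produced by Proposition \ref{prop: bound a prime with N} works uniformly for every $p \in S_\varepsilon$, so the coprime prime powers $\{p^{n_p}\}_{p \in S_\varepsilon}$ all divide the single integer $a_{\ell_\varepsilon}(E)$, giving $\prod_{p \in S_\varepsilon} p^{n_p} \le 2\sqrt{\ell_\varepsilon} < 38.7\,N\sqrt{1+\log\log N}$. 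Multiplying over the characters actually appearing yields the desired bound, provided the number of such characters is controlled by $\omega(N)$; this I would deduce from the fact that the quadratic Dirichlet characters unramified outside the squarefree integer $N$ form an $\F_2$-vector space of dimension $\omega(N)$ (with one extra generator when $2\mid N$, which needs to be handled separately).

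For the sharper bound when $N$ is prime, Remark \ref{rmk:primeconductor} replaces $\ell < 374.4\,N^2(1+\log\log N)$ by $\ell \le N(N+1)/6$, so $p^n \le 2\sqrt{\ell} \le \sqrt{2N(N+1)/3}$; since $\omega(N) = 1$ there is only one non-trivial character unramified outside $N$, so all primes in $M$ sit in the same $S_\varepsilon$ and $M$ itself is bounded by a single factor of $\sqrt{2N(N+1)/3}$. For the case $j(E) \notin \Z$, the character approach is replaced by the denominator-based argument of Proposition \ref{prop:jnoninteroesponente}: every $p^n \mid M$ divides $-v_\lambda(j(E))$ for each prime $\lambda$ of multiplicative reduction, and the exponent of the denominator of $j$ is in turn controlled via the Ogg--Tate formula relating $v_\lambda(j)$ to the conductor exponent at $\lambda$, producing the bound $M \le N^2/4 - 1$.

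The main obstacle is the counting step that bounds the number of distinct non-trivial characters $\varepsilon_p$ by $\omega(N)$. The ambient space has dimension $\omega(N)$ when $N$ is odd and $\omega(N)+1$ when $2\mid N$, so a naive count is off by one in the even case; the clean exponent $\omega(N)$ requires either a case analysis on the parity of $N$ or a refined argument identifying which characters can actually arise from $\varepsilon_p$ for an elliptic curve with conductor radical $N$.
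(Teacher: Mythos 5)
Your pointwise estimate (applying Lemma \ref{lemma: p^n divides a_ell} and Proposition \ref{prop: bound a prime with N} to each $p^n$, handling $p=3$ via Lemma \ref{lemma:unramifiedcharacter}, and concluding $p^n \le 2\sqrt{\ell_p} < 38.7\,N\sqrt{1+\log\log N}$) is exactly the paper's first step. The treatment of the $N$ prime case is also correct, since a single odd prime admits exactly one non-trivial unramified-outside-$N$ quadratic character. However, the aggregation step — the core of the proposition — has a genuine gap, and it is not the ``off by one in the even case'' issue you flag.

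You propose to group the $p^{n_p}$ by the character $\varepsilon_p$ and then multiply over the characters appearing, asserting that their number is ``controlled by $\omega(N)$'' because the relevant characters form an $\F_2$-vector space of \emph{dimension} $\omega(N)$. But a vector space of dimension $\omega(N)$ over $\F_2$ has $2^{\omega(N)}$ elements, so the number of distinct non-trivial characters $\varepsilon_p$ that can occur is as large as $2^{\omega(N)} - 1$. Your argument therefore only yields $M < \left(38.7\,N(1+\log\log N)^{1/2}\right)^{2^{\omega(N)}-1}$, exponentially weaker than the claim. The paper avoids this entirely by constructing a \emph{detecting set}: by induction, choose non-trivial $\alpha_i \in V_i$, use Proposition \ref{prop: bound a prime with N} to produce $\ell_i$ with $\alpha_i(\ell_i) = -1$ and $a_{\ell_i}(E) \neq 0$, and set $V_{i+1} := \{\varepsilon \in V_i : \varepsilon(\ell_i) = 1\}$. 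The evaluation map at $\ell_i$ is a non-zero $\F_2$-linear functional on $V_i$ (it does not vanish on $\alpha_i$), so $\dim V_{i+1} \le \dim V_i - 1$, and after $\omega(N)$ steps the remaining space is trivial. Hence \emph{every} non-trivial $\varepsilon \in V_1$ satisfies $\varepsilon(\ell_i) = -1$ for some $i \le \omega(N)$. Since the $p^{n_p}$ are pairwise coprime, all those mapped to the same $\ell_i$ divide $|a_{\ell_i}(E)|$ simultaneously, giving $M \mid \prod_{i=1}^{\omega(N)} |a_{\ell_i}(E)|$ and the exponent $\omega(N)$. The point is to bound the number of \emph{tests needed to detect every character}, not the number of characters occurring.

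Your sketch for $j(E) \notin \Z$ also misidentifies the input. Proposition \ref{prop:jnoninteroesponente} shows $M \mid -v_\lambda(j)$, but $-v_\lambda(j)$ is not bounded in terms of the conductor radical $N$ alone, and the Ogg--Tate formula does not supply such a bound. The paper instead uses the congruence from Proposition \ref{prop:goodreductionforcartan}: any prime $\ell$ of potentially multiplicative reduction satisfies $\ell \equiv \pm 1 \pmod{p^n}$, so $M \mid \ell^2 - 1$; for $N$ composite one has $\ell \le N/2$, and for $N$ prime the already-established bound $M \le \sqrt{2N(N+1)/3}$ is $\le N^2/4 - 1$ once $N > 5$ (which Lemma \ref{lemma:smallconductor} guarantees).
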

	
	\begin{proof}
		Suppose first that $N>6$. Set $N_0:=N$ if $N$ is odd, and $N_0:=2N$ if $N$ is even. Let $V_1$ be the group of quadratic characters of $\left(\faktor{\Z}{N_0\Z}\right)^\times$. We may view $V_1$ as a vector space of dimension $\omega(N)$ over $\F_2$. We define a sequence of primes $\ell_1, \dots, \ell_{\omega(N)}$ relatively prime to $N$ such that $a_{\ell_i}(E) \ne 0$ for every $i$ and for every non-trivial character $\varepsilon \in V_1$ there exists an $i$ for which $\varepsilon(\ell_i) = -1$. We proceed by induction on $i$. Choose a non-trivial character $\alpha_i \in V_i$. By Proposition \ref{prop: bound a prime with N} there exists a prime $\ell_i \nmid N$ smaller than $312 \cdot N^2(1+\log\log N)$ such that $\alpha_i(\ell_i) = -1$ and $a_{\ell_i}(E) \ne 0$. Let $V_{i+1}$ be the subspace of $V_i$ consisting of characters $\varepsilon$ such that $\varepsilon(\ell_{i}) = 1$. The space $V_{i+1}$ has dimension at most $\omega(N) - i$ over $\F_2$. In particular, $V_{\omega(N) +1} = 1$, and so the sequence of primes $\ell_1, \dots, \ell_{\omega(N)}$ has the desired property.
		Define the integer $M':=\prod_{i=1}^{\omega(N)} |a_{\ell_i}(E)|$. If $p^n \ne 3$ is a prime power such that $\operatorname{Im}\rho_{E,p^n} \subseteq C_{ns}^+(p^n)$, there exists $i$ such that $\varepsilon_p(\ell_i) = -1$, and hence by Lemma \ref{lemma: p^n divides a_ell} we have $p^n \mid |a_{\ell_i}(E)|$, that implies $p^n \mid M'$, and in particular $M \le M'$.
		By the Hasse's bound, for every $\ell_i$ we have
		\begin{equation*}
			|a_{\ell_i}(E)| \le 2\sqrt{\ell_i} < 35.33 \cdot N \sqrt{1+\log\log N},
		\end{equation*}
		and hence $M' < \left(35.33 \cdot N (1+\log\log N)^\frac{1}{2}\right)^{\omega(N)}$. Notice that by Remark \ref{rmk:primeconductor}, if $N=\ell$ is prime we have the stronger inequality $M' = |a_{\ell}(E)| \le \sqrt{\frac{2\ell(\ell+1)}{3}}$. \\
		Suppose now that $j(E) \notin \Z$. By Proposition \ref{prop:goodreductionforcartan} we know that if $\ell$ is a prime of potentially multiplicative reduction, for every odd prime power $p^n$ such that $\operatorname{Im}\rho_{E,p^n} \subseteq C_{ns}^+(p^n)$ we have $p^n \mid \ell^2-1$. We then notice that $M \mid \ell^2-1$. If $N$ is composite, we have $\ell \le \frac{N}{2}$, and hence $M \le \frac{N^2}{4} - 1$.
		If $N=\ell$ is prime, by Remark \ref{rmk:primeconductor} we have $M \le \sqrt{\frac{2\ell(\ell+1)}{3}}$, which is smaller than $\frac{\ell^2}{4} - 1$ for $\ell > 4$. On the other hand, we can check on the LMFDB \cite{lmfdb} that there are no non-CM elliptic curves over $\Q$ with $N=3$, and that all non-CM elliptic curves with $N=2$ do not admit any odd prime $p$ for which $\operatorname{Im}\rho_{E,p} \subseteq C_{ns}^+(p)$.
	\end{proof}
	
	We now divide the proof of Theorem \ref{thm:conductoradelicbound} in two cases, according to whether we are in case (A) or (B) of Proposition \ref{prop:dichotomy}.
	
	\begin{proof}[\textbf{Proof of Theorem \ref{thm:conductoradelicbound}}]
		Using Lemma \ref{lemma:smallconductor}, from now on we can then assume that $N \ge 30$, otherwise we would have $[\GL_2(\widehat{\Z}) : \operatorname{Im}\rho_E] \le 2736$, which is better than the statement of the theorem. Moreover, we can also assume that if $N$ is prime then $N>700$.
		Define the set $\mathcal{C} := \{p \ge 3 \mid \operatorname{Im}\rho_{E,p} \subseteq C_{ns}^+(p) \}$. For every $p \in \mathcal{C}$, let $n_p$ be the largest integer $n$ such that $\operatorname{Im}\rho_{E,p^n} \subseteq C_{ns}^+(p^n)$, and define $\Lambda:=\prod_{p \in \mathcal{C}} p^{n_p}$. Set $$\beta:=\left|\{p \in \mathcal{C} \ : \ p>5 \text{ and } E \text{ has bad reduction at } p\}\right| \le \min\{\omega(\Lambda), \omega(N)\}.$$
		We notice that $j(E)$ does not belong to the list \eqref{eq:jlist}, otherwise by Proposition \ref{prop:groupingfacts} we would have $[\GL_2(\widehat{\Z}) : \operatorname{Im}\rho_E] \le 2736$.
		As in the proof of Theorem \ref{thm:adelicbound}, we divide the proof in two cases, following those of Proposition \ref{prop:dichotomy}.
		\begin{enumerate}
		\item[\textbf{(A)}] Suppose first that we are in case (A) of Proposition \ref{prop:dichotomy}, i.e. that $\mathcal{C}$ contains a prime $p>13$. By Lemma \ref{lemma:boundnonsuq} we have
		\begin{align*}
			[\GL_2(\widehat{\Z}) : \operatorname{Im}\rho_E] \le 2488320 \cdot \Delta_7 \cdot 3^\beta \cdot \Lambda^3,
		\end{align*}
		where $\Delta_7 \in \left\{1, \frac{8}{3}, 8\right\}$. We treat separately the cases $j(E) \in \Z$ and $j(E) \notin \Z$.
		\begin{itemize}
		\item If $j(E) \in \Z$, we can assume that $7 \notin \mathcal{C}$: indeed, using Lemma \ref{lemma:cartan15e7e9}(1) we have that either $[\GL_2(\widehat{\Z}) : \operatorname{Im}\rho_E] \le 504$, which is better than the statement of the theorem, or $7 \notin \mathcal{C}$. In particular, we may assume that $\Delta_7=1$. Moreover, in the proof of Lemma \ref{lemma:boundnonsuq}, in the part `\textit{Bounding the $p$-adic indices}', we used the bound $\Ind_S(3) \le \max\{27, 2\cdot 3^{3n_3-1}\} \le 27 \cdot 3^{3n_3}$, hence we can assume that $n_3 \ne 1$ (since we obtain the same bound as with $n_3 = 0$). We can then apply Proposition \ref{prop: bound for the large primes} and obtain
		\begin{align*}
			[\GL_2(\widehat{\Z}) : \operatorname{Im}\rho_E] &\le 2488320 \cdot 3^{\omega(N)} \cdot \left(35.33 \cdot N(1+\log\log N)^\frac{1}{2}\right)^{3 \omega(N)} \\ 
			&= 2488320 \left(35.33 \sqrt[3]{3} \cdot N(1+\log\log N)^\frac{1}{2}\right)^{3 \omega(N)} \\
			&< 2488320 \left(51 N(1+\log\log N)^\frac{1}{2}\right)^{3 \omega(N)}.
		\end{align*}
		\item If $j(E) \notin \Z$, we can bound $\Delta_7 \le 8$ and using Proposition \ref{prop: bound for the large primes} we obtain
		\begin{align*}
			[\GL_2(\widehat{\Z}) : \operatorname{Im}\rho_E] &< 8 \cdot 2488320 \left(\frac{N^2}{4}-1\right)^{3} < 2488320 \left(51 N(1+\log\log N)^\frac{1}{2}\right)^{3 \omega(N)}
		\end{align*}
		for $\omega(N)>1$, and
		\begin{align*}
			[\GL_2(\widehat{\Z}) : \operatorname{Im}\rho_E] &< 8 \cdot 2488320 \left(\frac{2N(N \! + \! 1)}{3}\right)^\frac{3}{2} < 2488320 \left(51 N(1+\log\log N)^\frac{1}{2}\right)^{3 \omega(N)}
		\end{align*}
		for $\omega(N) = 1$.
		\end{itemize}
		\item[\textbf{(B)}] Suppose now that we are in case (B) of Proposition \ref{prop:dichotomy}, i.e. that for every prime $p>13$ the representation $\rho_{E,p}$ is surjective. By Lemma \ref{lemma:boundsuq} we have $[\GL_2(\widehat{\Z}) : \operatorname{Im}\rho_E] < 1.9 \cdot 10^{14} \cdot \Lambda^2$. Moreover, we notice again that in the proof of Lemma \ref{lemma:boundsuq}, in the case `$p=3$', we used the bound $[\SL_2(\Z_3) : S_3] \le \max\{648, 2 \cdot 3^{2n_3-1}\} \le 648 \cdot 3^{2n_3}$, and hence we can assume that $n_3 \ne 1$.
		We treat again separately the cases $j(E) \in \Z$ and $j(E) \notin \Z$.
		\begin{itemize}
		\item If $j(E)$ is not an integer, we can apply Proposition \ref{prop: bound for the large primes} to obtain
		\begin{align*}
			[\GL_2(\widehat{\Z}) : \operatorname{Im}\rho_E] &< 1.9 \cdot 10^{14} \cdot \Lambda^2 < 1.9 \cdot 10^{14} \cdot \left(\frac{N^2}{4}-1\right)^{2} \mkern+10mu \text{for every $N$, and} \\
			[\GL_2(\widehat{\Z}) : \operatorname{Im}\rho_E] &< 1.9 \cdot 10^{14} \cdot \frac{2}{3} N\left(N+1\right) \qquad \text{for $N$ prime.}
		\end{align*}
		One can verify that the first inequality is always better than the statement of the theorem for $\omega(N)>1$, while for $\omega(N) = 1$ we can use the second inequality, which is better than the statement of the theorem for $N>100$ (while we know that in this case $N>700$).
		\item If instead $j(E)$ is an integer, by Lemma \ref{lemma:cartan15e7e9}(3) and \cite[Corollary 1.3]{balakrishnan19} we can assume that $11, 13 \notin \mathcal{C}$.
		In particular, by Proposition \ref{prop:groupingfacts}, we can also assume that $\rho_{E,11}$ is surjective (otherwise its image would be contained in a Borel and we would have $[\GL_2(\widehat{\Z}) : \operatorname{Im}\rho_E] \le 2736$). Hence, if we look at the case `$p=11$' in the proof of Lemma \ref{lemma:boundsuq}, we deduce that we can save a factor $\frac{60}{11}$ in equation \eqref{eq:finproof7}. Similarly, by Lemma \ref{lemma:cartan15e7e9} we can also assume that $\operatorname{Im}\rho_{E,7} \not\subseteq C_{ns}^+(7)$,and looking at the case `$p=7$' in the proof of Lemma \ref{lemma:boundsuq}, a $7$-Sylow of $S_7$ is also a $7$-Sylow of $\SL_2(\Z_7)$, and hence we can save a factor $7$ in equation \eqref{eq:finproof7}.
		We then obtain
		\begin{align*}
			[\GL_2(\widehat{\Z}) : \operatorname{Im}\rho_E] &< \frac{1.9 \cdot 10^{14} \cdot 11}{60 \cdot 7} \cdot \Lambda^2 < \frac{1.9 \cdot 10^{14} \cdot 11}{60 \cdot 7} \left(51 N(1+\log\log N)^\frac{1}{2}\right)^{2 \omega(N)}
		\end{align*}
		for every $N$, and
		\begin{align*}
			[\GL_2(\widehat{\Z}) : \operatorname{Im}\rho_E] &< \frac{1.9 \cdot 10^{14} \cdot 11}{60 \cdot 7} \left(\frac{2N(N+1)}{3}\right)^{\omega(N)} \quad \text{for $N$ prime.}
		\end{align*}
		The first inequality is always better than the statement for $\omega(N) >1$ (using $N \ge 30$), and the second is better as well for $\omega(N) =1$ (using $N>700$). \qedhere
		\end{itemize}
		\end{enumerate}
	\end{proof}

	\bibliographystyle{abbrv}
	\bibliography{bib}
	
\end{document}